\documentclass{article}
\usepackage[margin=1in]{geometry}

\usepackage{wrapfig}

\usepackage[utf8]{inputenc} % allow utf-8 input
\usepackage[T1]{fontenc}    % use 8-bit T1 fonts
\usepackage{hyperref}
\hypersetup{
    colorlinks=true,
    citecolor=LightSeaGreen,
    linkcolor=Peach,      
    urlcolor=cyan
    }
\usepackage{url}            % simple URL typesetting
\usepackage{booktabs}       % professional-quality tables
\usepackage{amsfonts}       % blackboard math symbols
\usepackage{nicefrac}       % compact symbols for 1/2, etc.
\usepackage{microtype}      % microtypography
\usepackage[T1]{fontenc}
\usepackage{lmodern} 
\usepackage{comment}
\usepackage{amsmath, amssymb, amsthm}
\usepackage{mathtools}
\usepackage[dvipsnames,svgnames]{xcolor}
\usepackage{dsfont}
\newcommand{\ind}{\mathds{1}}
\usepackage{subcaption}

\usepackage[english]{babel} 
\usepackage[round]{natbib}

\usepackage{enumitem}
\setlist[itemize]{label=-}

\usepackage{pifont}

\newcommand{\E}{\mathbb{E}}
\newcommand{\Tr}{\text{tr}}
\newcommand{\Norm}[1]{\left\lVert#1\right\rVert}
\newcommand{\Inner}[2]{\langle #1, #2 \rangle}

\newcommand{\eqdef}{\stackrel{\text{\rm\tiny def}}{=}}

\newtheorem{lemma}{Lemma}
\newtheorem{proposition}{Proposition}
\newtheorem{corollary}{Corollary}
\theoremstyle{definition}
\newtheorem{definition}{Definition}
\theoremstyle{remark}
\newtheorem{remark}{Remark}

\title{Attention-based PCA}

\author{Rodrigo Maulen-Soto\textsuperscript{\ding{171}} \quad  \& \quad  Claire Boyer\textsuperscript{\ding{169} \ding{168}}}
\date{\today}

\begin{document}

\maketitle

\begin{small}
\noindent\textsuperscript{\ding{171}} Sorbonne Université, CNRS, Laboratoire de Probabilités, Statistique et Modélisation, 75005, Paris, France \\\textsuperscript{\ding{169}} Université Paris-Saclay, CNRS, Inria, Laboratoire de mathématiques d'Orsay, 91405, Orsay, France \\
    \textsuperscript{\ding{168}} Institut Universitaire de France
\end{small}

\begin{abstract}
We study attention mechanisms through the lens of a canonical unsupervised problem: principal component analysis (PCA). We show that, when trained on Gaussian data, both softmax and linear attention layers learn parameters that align with the principal eigenvectors of the covariance matrix, thereby establishing a direct and explicit connection with PCA.

Our analysis covers both finite and infinite prompt regimes. In the infinite-prompt limit, we prove convergence to globally optimal solutions aligned with the leading spectral direction, while in the finite-prompt setting we show that the same behavior emerges up to sampling effects. We further extend the analysis to an in-context setting with spiked Wishart covariances, where attention successfully recovers the underlying signal direction.

These results demonstrate that attention inherently performs PCA-like computations under unsupervised objectives, providing a theoretical foundation for its representation-learning capabilities.
\end{abstract}

\section{Introduction}

Attention-based models \citep{bahdanau2015}, in particular Transformers \citep{vaswani}, have become central in modern machine learning, achieving state-of-the-art results in natural language processing \citep{devlin,bubeck,luong,bahdanau2016} and computer vision \citep{dosovitskiy,liu,stand}. The core attention mechanism computes weighted combinations of token representations based on pairwise interactions, allowing the model to capture long-range dependencies without necessarily relying on fixed positional locality.

A full theoretical understanding of attention-based mechanisms remains incomplete, due to both the architectural complexity and the diversity of tasks they successfully address. A promising research direction toward bridging this gap is to identify key features of real-world problems and study minimal, canonical tasks that retain their core statistical structure, while remaining amenable to rigorous analysis. Notable recent efforts in this direction include \citet{ahn,oswald,yang,zhang2024trained,li2024one,li2023}. However, most existing work focuses on supervised settings, particularly in-context learning \citep{oswald,zhang2024trained,garg,li2023,furuya}, where the goal is to predict the output corresponding to a new query given a prompt consisting of input–output pairs. By contrast, only limited attention has been paid to unsupervised settings, with \citet{clustering} providing one of the few studies exploring the behavior of attention layers in tasks such as clustering.

In this paper, we contribute to this line of research by studying attention layers in an unsupervised setting through the lens of Principal Component Analysis \citep[PCA;][]{pearson1901,hotelling1933,jolliffe2002principal,golub1996}. 
PCA underlies many approaches to dimensionality reduction and feature extraction in statistical learning by relying on the estimation of principal components, i.e., the leading eigenvectors of the data covariance matrix. Understanding how attention layers can extract these spectral directions sheds light on the representation-learning capabilities of Transformers under unsupervised objectives.

\paragraph{Contributions.}
We consider simplified softmax and linear attention layers with rank-one attention parameters. The input tokens are assumed to be i.i.d.\ samples from a multivariate Gaussian distribution 
$\mathcal{N}(0,\Sigma)$. 
Our main contributions can be then summarized as follows.

% \begin{itemize}
%     \item \textbf{Attention performs PCA.}
%     We show that both softmax and linear attention layers can be trained to recover the top eigenvector of the data covariance matrix. This establishes a direct link between attention dynamics and classical spectral methods.

%     \item \textbf{Softmax attention: finite vs.\ infinite prompts.}
%     We analyze a simplified softmax attention model in both finite and infinite prompt regimes. In the infinite prompt limit, we prove convergence to a global minimizer aligned with the principal eigenvector. In the finite prompt setting, we show that the dynamics converge toward the same solution, with deviations explained by sampling effects.

%     \item \textbf{In-context learning with Wishart covariances.}
%     We extend the analysis to a distributional setting where the covariance matrix follows a spiked Wishart model. We show that attention recovers the spike direction both in the population limit and under finite-prompt approximations.

%     \item \textbf{Linear attention exhibits the same phenomenon.}
%     We introduce a linear attention model that admits an explicit analysis for finite prompt lengths and prove that it recovers the same spectral structure. This demonstrates that the emergence of PCA-like behavior is not specific to softmax.%{\color{red}is it a contrib?}
% \end{itemize}
\begin{itemize}
    \item \textbf{A tractable linear attention model.}
    As a warm-up, we provide an explicit analysis for a linear attention model. We show that attention parameters can be trained to recover the top eigenvector of the data covariance, building intuition for the mechanisms at play (full details are provided in the appendix).
     \item \textbf{Softmax attention: from infinite to finite prompts.}
     As for the softmax model, the training dynamics are more delicate to handle due to the nonlinearity of the softmax. We first analyze the tractable infinite-prompt limit, characterizing a global minimizer aligned with the top eigenvector, and then transfer this understanding to the finite-prompt regime. This requires refined concentration arguments, controlling both the optimization landscape (through concentration of critical points) and the training dynamics, ultimately showing concentration around the same solution up to sampling effects.
    % As for the softmax model, the training dynamics are more delicate to handle due to the nonlinearity of the softmax.    We first analyze the tractable infinite-prompt limit, characterizing a global minimizer aligned with the principal eigenvector, and then transfer this understanding to the finite-prompt regime, showing concentration around the same solution up to sampling effects.

\item \textbf{In-context learning under structured covariance models.}
    We extend the analysis to a distributional setting where the covariance follows a spiked Wishart model. In this setting, we show that attention recovers the spike direction both in finite and infinite prompt regimes.
    \item \textbf{Attention performs PCA.}
    Taken together, our results show that rank-one attention layers can be trained in an unsupervised manner to recover the principal component of the input tokens. 
    Under this Gaussian setting, attention-based architectures are shown to detect  the underlying structure of the data.  This provides a clear theoretical connection between attention mechanism and classical spectral methods, positioning attention as an implicit, optimization-driven analogue of PCA.
\end{itemize}

\paragraph{Organization.}
 Section \ref{sec:softmax_version} introduces the problem and studies the convergence of a simplified softmax attention layer in both finite and infinite prompt settings. Section \ref{sec:dist} characterizes the distribution of the resulting encodings and their relation across both regimes. In Section \ref{sec:icl}, we extend the framework to an in-context PCA learning setting.
 The appendices gather results for a simplified linear attention model recovering the principal component, along with proofs of the main results, technical lemmas, and additional numerical experiments.

\section{Training dynamics of a softmax attention layer}
\label{sec:softmax_version}

\subsection{Rank-one softmax attention: model and risk functions}

\paragraph{Setting.}\label{setting}

Let $\mathbb{X}=(X_1,\ldots,X_L)\in\mathbb{R}^{d\times L}$ denote an input prompt made of Gaussian tokens where $X_\ell \overset{\rm i.i.d.}{\sim}\mathcal{N}(0, \Sigma)$, with $\Sigma\in \mathbb{R}^{d\times d}$ a symmetric and definite positive matrix. We consider a simplified softmax attention head acting on such a prompt, defined for $1\leq \ell \leq L$, by
\begin{equation}\label{tlsoft}
    T_L^{\mathrm{soft},\mu}(\mathbb{X})_\ell=\sum_{k=1}^L \mathrm{softmax}(\lambda X_\ell^\top\mu\mu^\top X_k)X_k,
\end{equation}
the $\mathrm{softmax}$ function being applied row-wise.
In this simplified architecture, 
the vector $\mu\in \mathbb{R}^d$ denotes the only attention parameter. This formulation arises from standard architectures where the value matrix is taken to be the identity, and the key and query matrices reduce to row vectors. 
This induces the rank-one structure $K^\top Q=\mu\mu^\top$, which restricts the interaction mechanism of the attention layer and simplifies the analysis.
 We focus in the main text on the softmax attention head, whose theoretical analysis is more delicate due to the softmax nonlinearity, and provide analogous results for a simplified linear counterpart in the appendices.

In order to measure the quality of embedding performed by an attention layer, we consider the following theoretical population risk
\begin{align}\label{rsoftl}
    \mathcal{R}_{\mathrm{soft},L}(\mu)=
    \frac{1}{L}\sum_{\ell=1}^L
    \mathbb{E}\left[\Vert X_\ell-T_L^{\mathrm{soft},\mu}(\mathbb{X})_\ell\Vert^2\right]=\mathbb{E}\left[\Vert X_1-T_L^{\mathrm{soft},\mu}(\mathbb{X})_1\Vert^2\right].
\end{align}
The objective can be viewed as a reconstruction problem, in which the attention mechanism approximates a token $X_1$ as a combination of input tokens leveraging information from the entire prompt. 
In what follows, we assume for simplicity that training is performed through the population risk minimization. Although this idealization departs from the practical procedure, it does not affect the nature of our results, and an empirical counterpart could be handled similarly.

\paragraph{Measure-based formalism.}
To understand the training dynamics of a softmax attention layer, we lean on a measure-based formalism, see e.g.\ \citet{softmaxlinear}. A self-attention rank-one layer with attention parameter $\mu \in \mathbb{R}^d$ can be seen as an operator acting on measures:
\begin{align*}
T^{\lambda, \mu} : \mathcal{P}(\mathbb{R}^d)\times\mathbb{R}^d
&\rightarrow \mathbb{R}^d,\\
(\nu,z)
&\mapsto
T^{\lambda, \mu}[\nu](z)
=
\frac{\int_{\mathbb{R}^d}
\exp(\lambda z^\top \mu \mu^\top z') z'
\, d\nu(z')}
{\int_{\mathbb{R}^d}
\exp(\lambda z^\top \mu \mu^\top z')
\, d\nu(z')},
\end{align*}
so that when the prompt $\mathbb{X}=(X_1,\hdots , X_L)$ is encoded by its associated empirical measure
\[
\hat{\nu}_L
=
\frac{1}{L}\sum_{\ell=1}^L \delta_{X_\ell},
\]
one exactly retrieves the softmax attention formula \eqref{tlsoft}, i.e., $T^{\lambda,\mu}[\hat{\nu}_L](X_\ell)= T_L^{\mathrm{soft},\mu}(\mathbb{X})_\ell$.

A key observation is that when the prompt length grows, the empirical attention operator converges to its infinite-prompt counterpart, i.e.,
\[
T^{\lambda, \mu}
[\hat{\nu}_L](z)
\xrightarrow[L\to\infty]{a.s.}
T^{\lambda, \mu}
[\nu](z),
\]
with $\hat{\nu}_L$ still the empirical measure associated with $L$ i.i.d.\ tokens drawn according to $\nu$.
Moreover, when the token distribution $\nu$ is Gaussian, the infinite-prompt softmax attention becomes a linear operator. Specifically, it was shown in \citet[Lemma 2.1]{softmaxlinear} \citep[see also][Lemma 4.1]{castin}, that if $\nu=\mathcal{N}(0,\Sigma)$ then,
\[
T^{\lambda, \mu}[\nu](z)
=
\lambda\Sigma\mu\mu^\top z,
\] 
and consequently, the finite-prompt estimator converges almost surely to the linear operator
\begin{equation}\label{tisoft}
T_L^{\mathrm{soft},\mu}(\mathbb{X})_1
\xrightarrow[L\to\infty]{a.s.}
T_\infty^{\mathrm{soft},\mu}(X_1)
:=
\lambda\Sigma\mu\mu^\top X_1.
\end{equation}
The same almost sure convergence holds for the corresponding gradient (w.r.t.\ $\mu$) and Hessian of $T^{{\rm soft}, \mu}_L$  (see Lemma~\ref{lemma_as}).
This result shows that in the large-prompt regime, the nonlinear softmax attention layer behaves effectively as a linear operator acting on the token distribution. This convergence enables transferring optimization analyses from the linear operator to softmax attention when the prompt length is sufficiently large. We therefore introduce the theoretical risk of the infinite-prompt layer as $$\mathcal{R}_{\mathrm{soft},\infty}(\mu)=\mathbb{E}[\Vert X_1-T_{\infty}^{\mathrm{soft},\mu}(X_1)\Vert^2],$$ 
which admits the closed form\begin{align}\label{rsoftinfty}
    \mathcal{R}_{\mathrm{soft},\infty}(\mu)&=\Tr(\Sigma)-2\lambda b+\lambda^2ab. 
\end{align}
with 
$a=a(\mu)=\mu^\top\Sigma\mu$ and $b=b(\mu)=\mu^\top\Sigma^2\mu$.
This reduction will be key to the analysis of the optimization landscape for infinite-prompt architectures, and in particular for the characterization of the critical points, as conducted in the next section.

\subsection{Optimization analysis for infinite prompts}
We characterize in what follows the critical points of the infinite-prompt risk $\mathcal{R}_{\mathrm{soft},\infty}$.
\begin{proposition}[Landscape of $\mathcal{R}_{\mathrm{soft},\infty}$]\label{landscapeinf}
Assume that the p.s.d.\ covariance matrix $\Sigma\in\mathbb{R}^{d\times d}$ has a simple spectrum, i.e., distinct eigenvalues $\sigma_1 > \cdots > \sigma_d >0$,  with $u_j$ the associated unit eigenvectors.  
Then all critical points of $\mathcal{R}_{\mathrm{soft},\infty}$ are nondegenerate, and
\[
\mathrm{crit}(\mathcal{R}_{\mathrm{soft},\infty})=\{0\}\cup\left\{\pm \frac{1}{\sqrt{\lambda\sigma_j}}\,u_j :  j=1,\ldots, d\right\}
\]
 Moreover,
\begin{enumerate}
\item the point $0$ is a strict local maximum;
  \item the points $\pm\frac{1}{\sqrt{\lambda\sigma_j}}u_j$, for $j=2,\ldots,d$, are strict saddles;
  \item  the points $\pm\frac{1}{\sqrt{\lambda\sigma_1}}u_1$ are global minimizers of \(\mathcal{R}_{\mathrm{soft},\infty}\). 
  
\end{enumerate}
\end{proposition}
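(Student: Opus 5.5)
We work directly from the closed form \eqref{rsoftinfty}, $\mathcal{R}_{\mathrm{soft},\infty}(\mu)=\Tr(\Sigma)-2\lambda b+\lambda^2 ab$ with $a=\mu^\top\Sigma\mu$ and $b=\mu^\top\Sigma^2\mu$. Using $\nabla a=2\Sigma\mu$ and $\nabla b=2\Sigma^2\mu$, the gradient is
\[
\nabla\mathcal{R}_{\mathrm{soft},\infty}(\mu)=2\lambda\big[(\lambda a-2)\Sigma^2\mu+\lambda b\,\Sigma\mu\big].
\]
Since $\Sigma$ is invertible, a critical point satisfies $\Sigma\mu=\frac{\lambda b}{2-\lambda a}\mu$ when $\lambda a\neq 2$. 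The case $\lambda a=2$ would force $b\,\Sigma\mu=0$, hence $\mu=0$, which contradicts $a=2/\lambda$.

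Thus a nonzero critical point is an eigenvector of $\Sigma$. Because the spectrum is simple, $\mu=t u_j$ for some $j$ and some $t\neq0$. Then $a=t^2\sigma_j$ and $b=t^2\sigma_j^2$, and the eigenvalue equation becomes $\sigma_j(2-\lambda t^2\sigma_j)=\lambda t^2\sigma_j^2$, i.e.\ $t^2=1/(\lambda\sigma_j)$. Together with $\mu=0$, this gives the stated set of critical points. At each $\mu_j^\pm$ we have $\lambda a=1$, which simplifies everything that follows.

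Next we compute the Hessian:
\[
\nabla^2\mathcal{R}=2\lambda\big[(\lambda a-2)\Sigma^2+\lambda b\,\Sigma\big]+4\lambda^2\big[\Sigma^2\mu\mu^\top\Sigma+\Sigma\mu\mu^\top\Sigma^2\big].
\]
At $\mu=0$ this equals $-4\lambda\Sigma^2\prec0$, so $0$ is a nondegenerate strict local maximum.

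At $\mu=\pm u_j/\sqrt{\lambda\sigma_j}$ we have $\lambda a=1$ and $\lambda b=\sigma_j$. The rank-two term equals $4\lambda^2\cdot\frac{2\sigma_j^3}{\lambda\sigma_j}u_ju_j^\top=8\lambda\sigma_j^2u_ju_j^\top$. The Hessian is therefore diagonal in the eigenbasis $(u_k)$, with eigenvalues
\[
2\lambda\big(-\sigma_k^2+\sigma_j\sigma_k\big)=2\lambda\sigma_k(\sigma_j-\sigma_k)\quad (k\neq j),\qquad 8\lambda\sigma_j^2\quad (k=j).
\]
All of these are nonzero because the eigenvalues are distinct, which gives nondegeneracy.

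For $j\ge2$, taking $k=1$ gives the negative eigenvalue $2\lambda\sigma_1(\sigma_j-\sigma_1)<0$, while the direction $k=j$ is positive. Hence these points are strict saddles. For $j=1$, every eigenvalue is positive, so $\pm u_1/\sqrt{\lambda\sigma_1}$ are strict local minima.

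Global minimality needs a separate argument, which we expect to be the main (mild) obstacle, since local minimality alone does not suffice. We first check coercivity. If $\|\mu\|\to\infty$, then $ab\ge\sigma_d^3\|\mu\|^4$ dominates $2\lambda b\le 2\lambda\sigma_1^2\|\mu\|^2$, so $\mathcal{R}\to\infty$. A global minimizer therefore exists and is a critical point, so it suffices to compare values at critical points. At $\mu_j^\pm$,
\[
\mathcal{R}=\Tr(\Sigma)-2\sigma_j+\sigma_j=\Tr(\Sigma)-\sigma_j,
\]
and at $0$ the value is $\Tr(\Sigma)$. The minimum $\Tr(\Sigma)-\sigma_1$ is attained exactly at $\pm u_1/\sqrt{\lambda\sigma_1}$. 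As an independent check, for fixed $b$, minimizing $-2\lambda b+\lambda^2ab$ with $a\ge b/\sigma_1$ (since $\mu^\top\Sigma^2\mu\le\sigma_1\mu^\top\Sigma\mu$) yields $\ge -2\lambda b+\lambda^2b^2/\sigma_1\ge-\sigma_1$ directly.
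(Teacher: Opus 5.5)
Your proof is correct and follows essentially the same route as the paper. You reduce criticality to the eigenvector equation via $\Sigma^{-1}$, exclude the case $\lambda a=2$, solve $\lambda\sigma_j t^2=1$, and read off the inertia from the Hessian diagonalized in the eigenbasis, with eigenvalues $2\lambda\sigma_k(\sigma_j-\sigma_k)$ for $k\neq j$ and $8\lambda\sigma_j^2$ for $k=j$. The only difference is the global step. You prove coercivity explicitly, compare critical values, and add the direct bound via $\Sigma^2\preceq\sigma_1\Sigma$, whereas the paper invokes Lemma~\ref{lem:localtoglobal}; your version is, if anything, more self-contained.
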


\begin{proposition}[Global minimization of $\mathcal{R}_{\mathrm{soft},\infty}$]\label{globalinf}
For almost every initialization $\mu_0\in \mathbb{R}^d$, the solution of \begin{align}\begin{cases}\label{gfi}\tag{$\mathrm{GF}_\infty$}
    \dot{\mu}_{\infty}(t)&=-\nabla \mathcal R_{\mathrm{soft},\infty}(\mu_{\infty}(t)),\\
    \mu_{\infty}(0)&=\mu_0.
\end{cases}\end{align}  converges to $
 \pm \frac{1}{\sqrt{\lambda \, \sigma_1}} \, u_1,$ 
with $(\sigma_1, u_1)$ the leading eigenpair of $\Sigma$.
\end{proposition}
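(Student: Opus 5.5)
Our plan is to combine the landscape description of Proposition~\ref{landscapeinf} with three ingredients: the gradient flow is globally defined and bounded, its trajectories converge to critical points, and the stable manifolds of strict saddles and of strict maxima are negligible. The risk \eqref{rsoftinfty} is the polynomial $\mathcal R_{\mathrm{soft},\infty}(\mu)=\Tr(\Sigma)-2\lambda\,\mu^\top\Sigma^2\mu+\lambda^2(\mu^\top\Sigma\mu)(\mu^\top\Sigma^2\mu)$. It is therefore real-analytic, and its gradient
\[
\nabla\mathcal R_{\mathrm{soft},\infty}(\mu)=-4\lambda\Sigma^2\mu+2\lambda^2\bigl(b\,\Sigma\mu+a\,\Sigma^2\mu\bigr)
\]
is locally Lipschitz, so \eqref{gfi} has a unique maximal solution.

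\textbf{Boundedness and convergence.} The quartic part satisfies $\lambda^2 ab\ge\lambda^2\sigma_d^3\|\mu\|^4$, while the quadratic part is at most $2\lambda\sigma_1^2\|\mu\|^2$. Hence $\mathcal R_{\mathrm{soft},\infty}$ is coercive. Since the risk is nonincreasing along the flow, the trajectory stays in the compact sublevel set $\{\mathcal R_{\mathrm{soft},\infty}\le\mathcal R_{\mathrm{soft},\infty}(\mu_0)\}$, so the solution exists for all $t\ge0$. Two routes then give convergence to a single critical point:
\begin{itemize}
\item By the Łojasiewicz gradient inequality for analytic functions, a bounded gradient-flow trajectory has finite length and converges to one critical point.
\item Alternatively, the $\omega$-limit set is a connected subset of $\mathrm{crit}(\mathcal R_{\mathrm{soft},\infty})$. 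By Proposition~\ref{landscapeinf} this set consists of finitely many points, so the $\omega$-limit set is a singleton.
\end{itemize}

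\textbf{Excluding bad limits.} This is the main step. By Proposition~\ref{landscapeinf}, every critical point other than $\pm(\lambda\sigma_1)^{-1/2}u_1$ is nondegenerate and its Hessian has at least one negative eigenvalue: $0$ is a strict maximum and the others are strict saddles. For the flow $\dot\mu=-\nabla\mathcal R$, the linearization at such a point $p$ is $-\nabla^2\mathcal R(p)$. This matrix is hyperbolic (nondegeneracy) and has at least one positive eigenvalue. By the stable manifold theorem for hyperbolic equilibria, the set of initial conditions whose trajectory converges to $p$ is an immersed $C^1$ manifold of dimension at most $d-1$, hence Lebesgue-null. The manifold is the backward-flow image of the local stable manifold under the flow maps $\phi_{-n}$, which are diffeomorphisms. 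Since there are finitely many such points ($2d-1$ of them), the union of these basins is null.

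\textbf{Conclusion and expected difficulty.} For almost every $\mu_0$, the limit is therefore a critical point that is neither $0$ nor a saddle, i.e.\ $\pm(\lambda\sigma_1)^{-1/2}u_1$. I expect the delicate part to be making the measure-zero argument rigorous globally, by writing the global stable set as the countable union $\bigcup_n\phi_{-n}(W^s_{\mathrm{loc}}(p))$ of null sets. If the continuous-time version seemed unclear, I would instead apply the discrete-time result of Lee et al.\ to the time-one map $\phi_1$, which is a diffeomorphism whose Jacobian at $p$ has an eigenvalue larger than $1$.
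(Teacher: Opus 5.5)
Your proposal is correct and follows the same route as the paper: you take the landscape of Proposition~\ref{landscapeinf} and use the stable manifold theorem to discard the $2d-1$ hyperbolic unstable equilibria, while making explicit the coercivity, global existence and convergence-to-a-single-critical-point steps that the paper's short proof leaves implicit. One small fix is needed, because the backward flow of this quartic risk need not be globally defined: write the basin of $p$ as $\bigcup_n \phi_n^{-1}(W^s_{\mathrm{loc}}(p))$, a countable union of preimages of null sets under the forward maps $\phi_n$, which are defined on all of $\mathbb{R}^d$ and have everywhere invertible Jacobian, so each preimage is null.
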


Proposition \ref{globalinf} shows that training a rank-one softmax attention layer with an infinite-length Gaussian prompt by minimizing the population risk drives the attention parameter toward the leading principal component of the covariance matrix (up to a sign).
Note that all the results and arguments established for the gradient flow in this paper extend to the discrete-time setting, provided that gradient descent is used with a sufficiently small stepsize. We adopt the gradient flow formulation for notational simplicity.
\begin{remark}[PCA with $k$ components] \label{others}
With the knowledge of $u_1$, the second principal component $u_2$ can be obtained via the projected gradient flow
\begin{align}
\begin{cases}\label{pgfi}\tag{$\mathrm{PGF}_\infty$}
\dot{\mu}_{\infty}(t)
=
- \mathrm{P}_{u_1^\perp}\big(\nabla \mathcal R_{\mathrm{soft},\infty}(\mu_{\infty}(t))\big),\\
\mu_{\infty}(0)=\mu_0,
\end{cases}
\end{align}
where $\mathrm{P}_{u_1^\perp}=I_d-u_1u_1^\top$. By Proposition~\ref{landscapeinf}, the only non-degenerate critical points in $u_1^\perp$ that are not strict saddles are $\pm \frac{1}{\sqrt{\lambda\sigma_2}}u_2$, and thus, by the Stable Manifold Theorem~\cite[Theorem III.7]{shub}, the flow converges to these points for a generic initialization. This procedure can be iterated: projecting onto $\mathrm{Span}(u_1,\dots,u_{k-1})^\perp$ yields $u_k$, allowing sequential learning of all eigenvectors through the attention mechanism.
\end{remark}

To quantify the convergence rate of the gradient flow, we leverage Łojasiewicz inequalities, which relate the decay of the objective function to the norm of its gradient near critical points. Since $\mathcal{R}_{\mathrm{soft},\infty}$ is analytic, this inequality holds locally around each critical point, which enables us to derive explicit convergence rates toward the principal component.
\begin{definition}[Łojasiewicz inequality]
Let $f:\mathbb{R}^d \to \mathbb{R}$ be a differentiable function and let $\mu^\star \in \mathbb{R}^d$ be a critical point of $f$. We say that $f$ satisfies the \L ojasiewicz inequality at $\mu^\star$ if there exist constants $C>0$, $\alpha \in [0,1)$, and a neighborhood $U$ of $\mu^\star$ such that
\[
\|\nabla f(\mu)\|
\ge
C\,|f(\mu)-f(\mu^\star)|^{\alpha}
\quad \text{for all } \mu \in U.
\]
\end{definition}

We denote by $\sigma_{\min}(A)$ and $\sigma_{\max}(A)$ the smallest and largest eigenvalues of a matrix $A$.

\begin{proposition}[Local convergence rate on infinite-prompt setting]\label{cr_inf}
The risk $\mathcal R_{\mathrm{soft},\infty}$ satisfies \L ojasiewicz inequality with exponent $1/2$ at each critical point. Set $\mu^\star=\pm u_1 / \sqrt{\lambda \sigma_1}$, then there exist constants $t_0\ge 0$ and $s>0$ such that for all $t\ge t_0$,
\[
\mathcal{R}_{\mathrm{soft},\infty}(\mu_\infty(t))-\mathcal{R}_{\mathrm{soft},\infty}(\mu^\star)
\le 
\left(\mathcal{R}_{\mathrm{soft},\infty}(\mu_\infty(t_0))-\mathcal{R}_{\mathrm{soft},\infty}(\mu^\star)\right)
e^{-s(t-t_0)}.
\]

Besides, for every $\varepsilon>0$ small enough, there exists $t_0>0$ such that
\[
\|\mu_\infty(t)-\mu^\star\|
=
\mathcal{O}\big(e^{-(\tilde{s}-\varepsilon)(t-t_0)}\big),
\]
where
\begin{equation}\label{tildes}
\tilde{s}
=\sigma_{\min}(\nabla^2\mathcal{R}_{\mathrm{soft},\infty}(\mu^\star))=
2\lambda\min\{\sigma_2(\sigma_1-\sigma_2),\sigma_d(\sigma_1-\sigma_d)\}>0.    
\end{equation}

\end{proposition}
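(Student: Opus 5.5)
Our plan is to use the nondegeneracy of critical points from Proposition~\ref{landscapeinf} together with the convergence from Proposition~\ref{globalinf}. First we compute the gradient and Hessian of $\mathcal{R}_{\mathrm{soft},\infty}(\mu)=\Tr(\Sigma)-2\lambda b+\lambda^2 ab$:
\[
\nabla\mathcal{R}_{\mathrm{soft},\infty}(\mu)=-4\lambda\Sigma^2\mu+2\lambda^2\big(b\,\Sigma\mu+a\,\Sigma^2\mu\big),
\]
\[
\nabla^2\mathcal{R}_{\mathrm{soft},\infty}(\mu)=-4\lambda\Sigma^2+2\lambda^2\big(b\Sigma+a\Sigma^2\big)+4\lambda^2\big(\Sigma\mu\mu^\top\Sigma^2+\Sigma^2\mu\mu^\top\Sigma\big).
\]
At $\mu^\star=\pm u_1/\sqrt{\lambda\sigma_1}$ we have $a=1/\lambda$ and $b=\sigma_1/\lambda$. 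The Hessian is then diagonal in the eigenbasis $(u_j)$. Its eigenvalue on $u_j$, $j\ge2$, is
\[
-4\lambda\sigma_j^2+2\lambda\sigma_1\sigma_j+2\lambda\sigma_j^2=2\lambda\sigma_j(\sigma_1-\sigma_j).
\]
Its eigenvalue on $u_1$ is $-4\lambda\sigma_1^2+4\lambda\sigma_1^2+8\lambda\sigma_1^2=8\lambda\sigma_1^2$. Since $\sigma_1>\sigma_j$, the map $\sigma\mapsto\sigma(\sigma_1-\sigma)$ is concave on $[\sigma_d,\sigma_2]$, so its minimum over $j\ge2$ is attained at $j=2$ or $j=d$. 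Moreover $8\lambda\sigma_1^2$ is larger than all of these values. This yields formula~\eqref{tildes}.

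\textbf{Łojasiewicz with exponent $1/2$.} At any nondegenerate critical point $\mu^c$ (and all of them are nondegenerate by Proposition~\ref{landscapeinf}), we Taylor expand:
\[
f(\mu)-f(\mu^c)=\tfrac12(\mu-\mu^c)^\top H(\mu-\mu^c)+O(\|\mu-\mu^c\|^3),\qquad \nabla f(\mu)=H(\mu-\mu^c)+O(\|\mu-\mu^c\|^2),
\]
with $H$ invertible. Hence $\|\nabla f(\mu)\|\ge \tfrac12\sigma_{\min}(|H|)\|\mu-\mu^c\|$ and $|f(\mu)-f(\mu^c)|\le \|H\|\,\|\mu-\mu^c\|^2$ on a small ball. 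Combining the two bounds gives $\|\nabla f\|\ge C|f-f(\mu^c)|^{1/2}$.

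\textbf{Rate in value.} By Proposition~\ref{globalinf}, for a generic initialization $\mu_\infty(t)\to\mu^\star$. So there is $t_0$ with $\mu_\infty(t)\in U$ for all $t\ge t_0$. Set $E(t)=\mathcal{R}_{\mathrm{soft},\infty}(\mu_\infty(t))-\mathcal{R}_{\mathrm{soft},\infty}(\mu^\star)\ge0$ (global minimum). Then
\[
\dot E=-\|\nabla\mathcal{R}_{\mathrm{soft},\infty}\|^2\le -C^2E.
\]
Grönwall's inequality gives the first bound with $s=C^2$.

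\textbf{Sharp rate in parameters.} This is the delicate step, since we want the exact exponent $\tilde s-\varepsilon$. Write $e(t)=\mu_\infty(t)-\mu^\star$, so that $\dot e=-He+r(e)$ with $\|r(e)\|\le K\|e\|^2$ and $H\succeq \tilde s I$. Then
\[
\tfrac{d}{dt}\tfrac12\|e\|^2=-e^\top He+e^\top r(e)\le-(\tilde s-K\|e\|)\|e\|^2.
\]
Given $\varepsilon$, we choose $t_0$ large enough that $K\|e(t)\|\le\varepsilon$ for all $t\ge t_0$, which is possible by convergence. Grönwall then gives $\|e(t)\|\le\|e(t_0)\|e^{-(\tilde s-\varepsilon)(t-t_0)}$. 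The main obstacle is only to check carefully that the Hessian eigenvalue computation is correct at $\mu^\star$, including the rank-two correction term along $u_1$. The rest is standard.
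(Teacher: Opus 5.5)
Your proposal is correct and follows essentially the same route as the paper: you get the Łojasiewicz inequality with exponent $1/2$ from nondegeneracy via Taylor expansion (this is the paper's Lemma~\ref{nondeg12}), apply Grönwall to $E(t)$ for the value rate, and linearize around $\mu^\star$ to obtain $\langle \nabla\mathcal{R}_{\mathrm{soft},\infty}(\mu),\mu-\mu^\star\rangle \ge (\tilde{s}-\varepsilon)\|\mu-\mu^\star\|^2$ for the iterate rate. Bounding the remainder by $K\|e\|^2$ (smoothness) rather than by $\varepsilon\|e\|$ (continuity of the Hessian, as the paper does) is immaterial, and your Hessian eigenvalues $2\lambda\sigma_j(\sigma_1-\sigma_j)$ and $8\lambda\sigma_1^2$ agree with the proof of Proposition~\ref{landscapeinf}; your Grönwall step started at $t_0$ is actually stated more carefully than the paper's $E(t)\le E(0)e^{-c^2t}$.
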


\subsection{Analysis transfer to the finite-prompt case}

In this section, we study how properties of the gradient flow \eqref{gfi} with infinite prompts can be transferred to the finite-prompt flow:
 \begin{align}\begin{cases}\label{gfl}\tag{$\mathrm{GF}_L$}
    \dot{\mu}_L(t)&=-\nabla_{\mu} \mathcal R_{\mathrm{soft},L}(\mu_L(t)),\\
    \mu_L(0)&=\mu_0.
\end{cases}
\end{align}

We begin by formalizing the relationship between the finite-prompt and infinite-prompt risks. Intuitively, as the prompt length $L$ increases, the finite-prompt operator $T_L^{\mathrm{soft},\mu}$ should approximate its population counterpart $T_{\infty}^{\mathrm{soft},\mu}$, leading to convergence of the corresponding risk functions. The following result makes this precise by establishing uniform convergence of the risks and their derivatives up to second order on compact sets.

For $k\in \mathbb{N}$, $\nabla^k f$ denotes the $k$-th order derivative of $f$ (in particular $\nabla^0 f=f$), when the codomain is not $\mathbb{R}$, we also write $D^k f$. For $x\in\mathbb{R}^d,\rho>0$ $B(x,\rho):=\{x'\in\mathbb{R}^d:\Vert x'-x\Vert\leq \rho\}$ is the closed ball centered at $x$ of radius $\rho$. We denote by $\Vert\cdot\Vert_F$ the Frobenius norm.
\begin{proposition}\label{uniform_c2}
We have that:
\begin{enumerate}
\item \label{hyp1}
For $k\in\{0,1,2\}$,
\[
\sup_{\mu\in B(0,\rho)}
\E\!\left[
\|D_\mu^k T_L^{\mathrm{soft},\mu}(\mathbb X)_1
-
D_\mu^k T_{\infty}^{\mathrm{soft},\mu}(X_1)\|_F^2
\right]
= \mathcal{O}(\psi_{k}(L)), 
\]
where $\psi_k(L)=L^{-\epsilon_k}(1+ \ln L)^{1-\epsilon_k}$, and $\epsilon_k=\frac{1}{16(k+3)^2\lambda^2(\mu^\top\Sigma\mu)^2+1}\in (0,1)$.
\item \label{hyp2}
For $k\in\{0,1,2\}$, there exists a constant $C_k=C_k(\rho,\Sigma,\lambda)$, such that
\[
\sup_{\mu\in B(0,\rho)}
\E\!\left[
\|D_\mu^k T_{\infty}^{\mathrm{soft},\mu}(X_1)\|_F^2
\right]
\le C_k.
\]
\item \label{hyp3}
Then for $k\in\{0,1,2\}$ there exists a constant
$C=C(\rho,\Sigma,C_0,C_1,C_2)$ independent of $L$ such that
\[
\sup_{\mu\in B(0,\rho)}
\|
\nabla^k \mathcal R_{\mathrm{soft},L}(\mu)
-
\nabla^k \mathcal R_{\mathrm{soft},\infty}(\mu)
\|_F^2
\le
C \sum_{j=0}^{k} \psi_{j}(L).
\]
In particular,
\[
\nabla^k \mathcal R_{\mathrm{soft},L} \xrightarrow[L\to\infty]{} \nabla^k \mathcal R_{\mathrm{soft},\infty}
\quad \text{uniformly on } B(0,\rho).
\]
\end{enumerate}
\end{proposition}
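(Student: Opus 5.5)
We prove the three items in order. Item \ref{hyp3} follows from items \ref{hyp1} and \ref{hyp2} by Cauchy--Schwarz; the real work is item \ref{hyp1}.

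\textbf{Item \ref{hyp2}.} $T_\infty^{\mathrm{soft},\mu}(X_1)=\lambda\Sigma\mu\mu^\top X_1$ is a polynomial in $\mu$ of degree two, linear in $X_1$. Its first two $\mu$-derivatives are therefore explicit:
\[
D_\mu T_\infty h=\lambda\Sigma(h\mu^\top+\mu h^\top)X_1,
\qquad
D^2_\mu T_\infty[h,h']=\lambda\Sigma(h h'^\top+h'h^\top)X_1 .
\]
Their squared Frobenius norms are bounded by $\lambda^2\|\Sigma\|^2(1+\rho)^4\|X_1\|^2$, up to absolute constants. Taking expectations gives $C_k$ proportional to $\Tr(\Sigma)$.

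\textbf{Item \ref{hyp1}.} Write $T_L=N_L/D_L$ with
\[
N_L=\frac1L\sum_k e^{\lambda X_1^\top\mu\mu^\top X_k}X_k,\qquad D_L=\frac1L\sum_k e^{\lambda X_1^\top\mu\mu^\top X_k}.
\]
Conditionally on $X_1=z$, the terms with $k\ge2$ are i.i.d. Their means are $N(z)=e^{\lambda^2 a (\mu^\top z)^2/2}\lambda\Sigma\mu\mu^\top z$ and $D(z)=e^{\lambda^2a(\mu^\top z)^2/2}$, with $a=\mu^\top\Sigma\mu$, and $N/D=T_\infty$. The $k=1$ term contributes $O(1/L)$ times weights that must also be controlled.

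The difficulty is that the conditional variances involve $e^{2\lambda^2 a(\mu^\top z)^2}$, which is not integrable against the Gaussian law of $X_1$ once $\lambda^2a^2$ is large. I will therefore truncate. Fix a threshold $\tau_L$ and split on the event $\{|\mu^\top X_1|\le \tau_L\}$.

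On the good event, use
\[
\frac{N_L}{D_L}-\frac ND=\frac{N_L-N}{D_L}-\frac{N(D_L-D)}{D\,D_L}.
\]
Lower-bound $D_L$ by Jensen, $D_L\ge e^{\lambda\mu^\top X_1\,\mu^\top\bar X}$, or by a concentration argument. The conditional second moments are then $O\big(L^{-1}e^{c\lambda^2 a\tau_L^2}\big)$. Derivatives in $\mu$ produce polynomial factors in $X_k,\mu$ multiplying the same exponentials, with a growing exponent. This is presumably why the constant depends on $(k+3)$.

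On the bad event, bound $\|T_L\|$ by $\max_k\|X_k\|$, since $T_L$ is a convex combination of the $X_k$. Its derivatives are bounded similarly by polynomial moments of $\max_k\|X_k\|$, which grow like powers of $\ln L$. Combined with Hölder, the Gaussian tail $P(|\mu^\top X_1|>\tau_L)\le e^{-\tau_L^2/(2a)}$ controls this part.

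Now balance the two terms. Choose $\tau_L^2$ proportional to $\ln L/(1+c_k\lambda^2a^2)$, where $c_k=16(k+3)^2$. The good part becomes $L^{-1+\text{power}}$ and the bad part $L^{-\epsilon_k}$ times logarithmic factors, giving exactly $\psi_k(L)=L^{-\epsilon_k}(1+\ln L)^{1-\epsilon_k}$. Uniformity over $B(0,\rho)$ follows because all constants depend continuously on $\mu$ through $a$; one takes the worst case $a\le\rho^2\sigma_1$, or keeps the pointwise exponent as written. This truncation balancing, together with the lower bound on the random denominator $D_L$ uniformly in $\mu$, is the main obstacle. If Jensen proves too weak for $D_L$, I would fall back on the crude bound $D_L\ge L^{-1}e^{\lambda(\mu^\top X_1)^2}$ from the $k=1$ term, which is valid on the good event.

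\textbf{Item \ref{hyp3}.} Write
\[
\mathcal R_L-\mathcal R_\infty=\E\big[\langle T_\infty-T_L,\;2X_1-T_L-T_\infty\rangle\big].
\]
Differentiating under the expectation, which is justified by dominated convergence using the item \ref{hyp2}-type bounds and the $\max_k\|X_k\|$ bounds, $\nabla^k$ of this expression is, by Leibniz, a sum of terms of the form
\[
\E\big[\langle D^j(T_\infty-T_L),\,D^{k-j}(\cdots)\rangle\big],\qquad j\le k.
\]
Apply Cauchy--Schwarz to each term. The first factor is bounded by $\psi_j(L)^{1/2}$ from item \ref{hyp1}. The second factor is bounded by $C_{k-j}^{1/2}$ plus $\psi_{k-j}(L)^{1/2}$, using item \ref{hyp2} and, for $T_L$, the triangle inequality. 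Squaring and summing gives $C\sum_{j\le k}\psi_j(L)$, and uniform convergence follows since $\psi_j(L)\to0$.
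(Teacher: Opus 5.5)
Your proposal is correct and follows essentially the paper's route. You condition on $X_1$, bound moments of the numerator and denominator averages with a Jensen inverse-moment bound, and truncate in $\langle\mu,X_1\rangle$, balancing against the Gaussian tail and the $\max_k\|X_k\|$ bound; this is exactly the trade-off $e^{\alpha\Psi^2}/L+(1+\ln L)e^{-\beta\Psi^2}$ of Lemma~\ref{bestbound} behind Lemma~\ref{lemma_concentration}. You then use Cauchy--Schwarz for item~\ref{hyp3}, with your exact ratio identity replacing the paper's second-order Taylor expansion.

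The only looseness is the final balancing. A threshold merely proportional to $\ln L$ yields $L^{-\epsilon_k}(1+\ln L)$, plus extra powers of $\ln L$ for $k\ge 1$ from moments of $\max_k\|X_k\|$, rather than exactly $\psi_k(L)$. This is harmless because your route gives an exponent constant well below $16(k+3)^2$, but you should state that explicitly rather than write ``presumably''.
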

Assertion \ref{hyp1} is closely related to the concentration results of \citet{softmaxlinear}, which establish bounds for the output and its gradient and suggest similar behavior for the Hessian. However, our setting differs in two key ways. First, derivatives are taken with respect to $\mu$ rather than the product $K^\top Q$ of key and query matrices, which
 necessitates a specific analysis. Additionally, unlike prior work that treats the query token independently, we explicitly handle the autocorrelation induced by its inclusion in the prompt (see Appendix \ref{concentration} for more details).

Leveraging Proposition~\ref{uniform_c2}, we now turn to the finite-prompt setting. We begin by establishing a basic stability property of the infinite-prompt dynamics, namely that its trajectories remain bounded. We then show that this boundedness transfers to the finite-prompt dynamics for sufficiently large prompt lengths, ensuring that both flows remain confined to a common compact region over an infinite time horizon.
\begin{lemma}\label{coercive_bounded}
 Since $\mathcal R_{\mathrm{soft},\infty}$ is coercive, the solution trajectory $\mu_\infty$ of \eqref{gfi} is bounded for all $t\geq 0$. That is, that there exists $\rho\geq \Vert\mu_0\Vert$ such that $$\{\mu_\infty(t): t\geq 0\}\subset\{\mu\in\mathbb{R}^d:\mathcal R_{\mathrm{soft},\infty}(\mu)\leq \mathcal R_{\mathrm{soft},\infty}(\mu_0)\}\subset B(0,\rho).$$ 
 \end{lemma}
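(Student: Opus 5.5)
The plan is to combine two facts: the risk is nonincreasing along the gradient flow, and the sublevel sets of the closed-form risk \eqref{rsoftinfty} are bounded. For monotonicity, $\mathcal R_{\mathrm{soft},\infty}$ is a polynomial in $\mu$ (of degree four), so the gradient field is locally Lipschitz and \eqref{gfi} has a unique maximal solution. Along this solution,
\[
\frac{d}{dt}\mathcal R_{\mathrm{soft},\infty}(\mu_\infty(t))=-\|\nabla \mathcal R_{\mathrm{soft},\infty}(\mu_\infty(t))\|^2\le 0 ,
\]
so $\mathcal R_{\mathrm{soft},\infty}(\mu_\infty(t))\le \mathcal R_{\mathrm{soft},\infty}(\mu_0)$ on the whole existence interval. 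This gives the first inclusion once global existence is known.

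The main step is coercivity. With $a=\mu^\top\Sigma\mu\ge\sigma_d\|\mu\|^2$ and $b=\mu^\top\Sigma^2\mu\ge \sigma_d^2\|\mu\|^2$, write
\[
\mathcal R_{\mathrm{soft},\infty}(\mu)=\Tr(\Sigma)+\lambda b(\lambda a-2).
\]
Once $\|\mu\|^2\ge 3/(\lambda\sigma_d)$, we have $\lambda a-2\ge \lambda a/3>0$. In that regime,
\[
\mathcal R_{\mathrm{soft},\infty}(\mu)\ge \Tr(\Sigma)+\frac{\lambda^2}{3}\,\sigma_d^3\,\|\mu\|^4\to\infty .
\]
So $\mathcal R_{\mathrm{soft},\infty}$ is coercive. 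As a continuous coercive function, its sublevel set $\{\mathcal R_{\mathrm{soft},\infty}\le \mathcal R_{\mathrm{soft},\infty}(\mu_0)\}$ is closed and bounded. Concretely, it sits inside $B(0,\rho)$ for any $\rho$ with
\[
\rho\ \ge\ \max\Bigl\{\sqrt{3/(\lambda\sigma_d)},\ \bigl(3(\mathcal R_{\mathrm{soft},\infty}(\mu_0)-\Tr(\Sigma))_+/(\lambda^2\sigma_d^3)\bigr)^{1/4}\Bigr\}.
\]
Since $\mu_0$ lies in its own sublevel set, taking $\rho$ also at least $\|\mu_0\|$ is automatic.

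It remains to check global existence. The trajectory stays in this compact set on its maximal interval, so it cannot blow up in finite time. Hence the solution exists for all $t\ge0$, and both inclusions hold.

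I expect no real obstacle here. The only point needing care is the coercivity bound, namely handling the negative term $-2\lambda b$ by absorbing it into the quartic term $\lambda^2 ab$ for large $\|\mu\|$. This works because $\Sigma$ is positive definite, so $\sigma_d>0$.
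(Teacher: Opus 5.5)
Your proof is correct and follows the same route as the paper: the risk is nonincreasing along \eqref{gfi}, so the trajectory stays in the sublevel set of $\mu_0$, and coercivity makes that set bounded. The paper takes coercivity and global existence for granted; your explicit bound $\mathcal R_{\mathrm{soft},\infty}(\mu)\ge \Tr(\Sigma)+\tfrac{\lambda^2}{3}\sigma_d^3\|\mu\|^4$ for $\|\mu\|^2\ge 3/(\lambda\sigma_d)$ and your no-blow-up argument supply those details without changing the approach.
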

\begin{proposition}\label{prop:bounded-finite-flow}
Let $\mu_L$ and $\mu_\infty$ be the solutions of \eqref{gfl} and \eqref{gfi} respectively. Let $\rho>0$ be such that $\mu_\infty(t)\in B(0,\rho)$ for all $t\ge 0$. Then, for every $\rho'>\rho$, there exists $L'$ such that,
for all $L\ge L'$, $\mu_L(t)\in B(0,\rho')$ for every  $t\ge 0 .$
In particular, the trajectories $\mu_L$ are uniformly bounded in time for sufficiently large $L$.
\end{proposition}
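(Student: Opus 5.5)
The plan is to combine a finite-horizon tracking estimate with an energy (Lyapunov) argument near the limit of $\mu_\infty$. The key structural fact is that along \eqref{gfl}, $t\mapsto \mathcal R_{\mathrm{soft},L}(\mu_L(t))$ is nonincreasing. Hence, once $\mu_L$ enters a region where $\mathcal R_{\mathrm{soft},L}$ is controlled by $\mathcal R_{\mathrm{soft},\infty}$ and trapped in a small sublevel component, it cannot leave. The constants come from Proposition~\ref{uniform_c2} applied on the ball $B(0,\rho')$. Write $\eta_L:=\sup_{B(0,\rho')}\big(|\mathcal R_{\mathrm{soft},L}-\mathcal R_{\mathrm{soft},\infty}|+\|\nabla\mathcal R_{\mathrm{soft},L}-\nabla\mathcal R_{\mathrm{soft},\infty}\|\big)$, so that $\eta_L\to0$. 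Let $K$ be a Lipschitz constant of $\nabla\mathcal R_{\mathrm{soft},\infty}$ on $B(0,\rho')$, which is finite since the risk is a polynomial.

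\textbf{Step 1 (limit point and trap).} Since $\mathcal R_{\mathrm{soft},\infty}$ is analytic and $\mu_\infty$ is bounded (Lemma~\ref{coercive_bounded}), the Łojasiewicz property of Proposition~\ref{cr_inf} gives $\mu_\infty(t)\to\mu^\star\in\mathrm{crit}(\mathcal R_{\mathrm{soft},\infty})$ with $\|\mu^\star\|\le\rho$. In the generic case of Proposition~\ref{globalinf}, $\mu^\star=\pm u_1/\sqrt{\lambda\sigma_1}$ is a nondegenerate strict minimizer (Proposition~\ref{landscapeinf}). Choose $r>0$ with $B(\mu^\star,r)\subset B(0,\rho')$ (possible since $\rho'>\rho$) and such that the Hessian is positive definite on $B(\mu^\star,r)$. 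Then set $m:=\min_{\|\mu-\mu^\star\|=r}\mathcal R_{\mathrm{soft},\infty}(\mu)-\mathcal R_{\mathrm{soft},\infty}(\mu^\star)>0$.

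\textbf{Step 2 (finite-horizon tracking).} Fix $T$ such that $\|\mu_\infty(t)-\mu^\star\|\le r/4$ for $t\ge T$ and $\mathcal R_{\mathrm{soft},\infty}(\mu_\infty(T))-\mathcal R_{\mathrm{soft},\infty}(\mu^\star)\le m/4$. As long as $\mu_L$ stays in $B(0,\rho')$, Gronwall applied to $\frac{d}{dt}(\mu_L-\mu_\infty)$ yields
\[
\|\mu_L(t)-\mu_\infty(t)\|\le \eta_L\, t\,e^{Kt}.
\]
A continuity (first-exit-time) argument then shows that, for $L$ large enough that $\eta_L Te^{KT}<\min\{\rho'-\rho,\,r/4\}$, the solution $\mu_L$ never leaves $B(0,\rho')$ on $[0,T]$ and satisfies $\|\mu_L(T)-\mu^\star\|\le r/2$. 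Enlarging $L$ further, continuity of $\mathcal R_{\mathrm{soft},\infty}$ combined with $\eta_L\to0$ gives $\mathcal R_{\mathrm{soft},L}(\mu_L(T))\le \mathcal R_{\mathrm{soft},\infty}(\mu^\star)+m/2$.

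\textbf{Step 3 (infinite horizon).} For $t\ge T$, monotonicity gives $\mathcal R_{\mathrm{soft},L}(\mu_L(t))\le \mathcal R_{\mathrm{soft},\infty}(\mu^\star)+m/2$. On the sphere $\partial B(\mu^\star,r)$, however, $\mathcal R_{\mathrm{soft},L}\ge \mathcal R_{\mathrm{soft},\infty}(\mu^\star)+m-\eta_L>\mathcal R_{\mathrm{soft},\infty}(\mu^\star)+m/2$ once $\eta_L<m/2$. Since $\mu_L(T)$ lies inside this sphere, continuity forbids the trajectory from ever crossing it. Therefore $\mu_L(t)\in B(\mu^\star,r)\subset B(0,\rho')$ for all $t\ge T$, and Step 2 covers $t\le T$; this determines $L'$.

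\textbf{Main obstacle.} The difficulty is the non-generic case where $\mu^\star$ is a saddle (or $0$): no sphere around $\mu^\star$ is an energy barrier, and $\mu_L$ may be ejected along the unstable directions. My first attempt would be to keep the energy argument but trap $\mu_L$ in the connected component of $\{\mathcal R_{\mathrm{soft},\infty}\le \mathcal R_{\mathrm{soft},\infty}(\mu^\star)+\delta\}$ explored after leaving $\mu^\star$. The Łojasiewicz inequality with exponent $1/2$ controls the length of gradient paths near every critical point, and the landscape of Proposition~\ref{landscapeinf} has finitely many critical levels. Together these would allow the previous argument to be chained from saddle to lower critical points with a total length budget. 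The point that needs care is showing that these successive descents stay within $\rho'$ and not merely within the radius of the initial sublevel set. If this fails, I would restrict the statement to the full-measure set of initializations given by Proposition~\ref{globalinf}, where Steps 1--3 suffice.
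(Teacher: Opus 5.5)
Your Steps 1--3 are correct whenever $\mu_\infty(t)$ converges to a nondegenerate minimizer. The tracking bound via the Lipschitz constant of $\nabla\mathcal R_{\mathrm{soft},\infty}$ on $B(0,\rho')$, the first-exit argument on $[0,T]$, and the barrier on $\partial B(\mu^\star,r)$ all check out. The gap is that the proposition is claimed for an arbitrary $\mu_0$, and for $\mu_0$ on the stable manifold of a saddle you give no proof. Restricting to generic $\mu_0$ proves a weaker statement. The chaining sketch is also problematic as stated. The connected component of $\{\mathcal R_{\mathrm{soft},\infty}\le \mathcal R_{\mathrm{soft},\infty}(s)+\delta\}$ through a saddle $s$ contains the descent paths to $\pm u_1/\sqrt{\lambda\sigma_1}$. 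It can contain points of norm larger than $\|s\|$, for instance along $u_1$ when $\sigma_2/\sigma_1$ is close to $1$. So an energy trap around $s$ does not keep $\mu_L$ in $B(0,\rho')$ when $\rho'$ is close to $\rho$. The missing idea is to place the barrier at the \emph{initial} level $\mathcal R_{\mathrm{soft},\infty}(\mu_0)$, which both flows share because $\mu_L(0)=\mu_\infty(0)=\mu_0$, rather than at the level of the limit point.

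This is exactly the paper's proof. Take $\rho$ as furnished by Lemma~\ref{coercive_bounded}, so that the whole sublevel set $\{\mathcal R_{\mathrm{soft},\infty}\le\mathcal R_{\mathrm{soft},\infty}(\mu_0)\}$ lies in $B(0,\rho)$. For $\rho'>\rho$, compactness of the sphere $\{\|\mu\|=\rho'\}$ gives $\varepsilon>0$ with $\mathcal R_{\mathrm{soft},\infty}>\mathcal R_{\mathrm{soft},\infty}(\mu_0)+2\varepsilon$ on it. Choose $L'$ so that $\sup_{B(0,\rho')}|\mathcal R_{\mathrm{soft},L}-\mathcal R_{\mathrm{soft},\infty}|\le\varepsilon$ for $L\ge L'$. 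At a first exit time $t^\ast$, monotonicity along \eqref{gfl} gives $\mathcal R_{\mathrm{soft},\infty}(\mu_L(t^\ast))\le\mathcal R_{\mathrm{soft},L}(\mu_L(t^\ast))+\varepsilon\le\mathcal R_{\mathrm{soft},L}(\mu_0)+\varepsilon\le\mathcal R_{\mathrm{soft},\infty}(\mu_0)+2\varepsilon$, a contradiction. This handles every $\mu_0$ at once and uses only the $k=0$ case of Proposition~\ref{uniform_c2}: no tracking, no gradient convergence, no landscape information. Its price is that $B(0,\rho)$ must enclose the initial sublevel set, not merely the trajectory as the statement literally assumes; the two radii can genuinely differ. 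Under the literal hypothesis, your local trap is the right tool for generic $\mu_0$, and the non-generic case can be closed by symmetry instead of chaining. Both risks are invariant under the reflections $I-2u_iu_i^\top$: these commute with $\Sigma$, and $T_L^{R\mu}(R\mathbb X)_1=R\,T_L^{\mu}(\mathbb X)_1$ for orthogonal $R$. Hence both flows preserve $E_j=\mathrm{span}(u_j,\ldots,u_d)$. The stable manifold of $\pm u_j/\sqrt{\lambda\sigma_j}$ for \eqref{gfi} lies in $E_j$, and on $E_j$ that point is a nondegenerate local minimum, so your Steps 1--3 run verbatim inside $E_j$. Trajectories converging to the strict local maximum $0$ must start at $0$, where both gradients vanish because the risks are even.
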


Having established uniform boundedness, we next analyze how closely the finite-prompt dynamics track their infinite-prompt counterpart. The following results show that, on any finite time horizon, the trajectories and their associated risk values converge uniformly as the prompt length increases.
\begin{proposition}\label{conv_traj}
    Let $ \mu_L,\mu_\infty$ be the solutions of \eqref{gfl} and \eqref{gfi} respectively. Then for every $T>0$, $\mu_L$  converges uniformly to $\mu_{\infty}$ on $[0,T]$ as $L\rightarrow\infty$.
\end{proposition}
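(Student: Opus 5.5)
The plan is a standard Grönwall argument, with both trajectories confined to a common compact set. Fix $T>0$ and let $\rho$ be given by Lemma~\ref{coercive_bounded}, so that $\mu_\infty(t)\in B(0,\rho)$ for all $t\ge 0$. Fix any $\rho'>\rho$. By Proposition~\ref{prop:bounded-finite-flow} there is $L'$ such that $\mu_L(t)\in B(0,\rho')$ for all $t\ge0$ and all $L\ge L'$. Hence for $L\ge L'$ both flows live in the compact ball $B(0,\rho')$, and we can work with uniform constants there.

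Next we need a Lipschitz constant for the limiting vector field. The risk $\mathcal R_{\mathrm{soft},\infty}(\mu)=\Tr(\Sigma)-2\lambda b+\lambda^2 ab$ is a polynomial in $\mu$, so its Hessian is bounded on $B(0,\rho')$. Set
\[
K:=\sup_{\mu\in B(0,\rho')}\|\nabla^2\mathcal R_{\mathrm{soft},\infty}(\mu)\|<\infty .
\]
Then $\nabla\mathcal R_{\mathrm{soft},\infty}$ is $K$-Lipschitz on the convex set $B(0,\rho')$. We also need a uniform gradient gap. Apply Proposition~\ref{uniform_c2}, item~\ref{hyp3}, with $k=1$ on $B(0,\rho')$, and set
\[
\delta_L:=\sup_{\mu\in B(0,\rho')}\|\nabla\mathcal R_{\mathrm{soft},L}(\mu)-\nabla\mathcal R_{\mathrm{soft},\infty}(\mu)\|\le \Big(C(\psi_0(L)+\psi_1(L))\Big)^{1/2}\xrightarrow[L\to\infty]{}0 .
\]
One care point: the exponents $\epsilon_k$ depend on $\mu^\top\Sigma\mu$. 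On $B(0,\rho')$ they are bounded below by the value at $\mu^\top\Sigma\mu=\sigma_1\rho'^2$, so the rate is uniform and still vanishes.

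Now compare the two flows. Let $e(t)=\mu_L(t)-\mu_\infty(t)$, so $e(0)=0$ since both start at $\mu_0$. For $t\in[0,T]$,
\[
\|e(t)\|\le\int_0^t\|\nabla\mathcal R_{\mathrm{soft},L}(\mu_L)-\nabla\mathcal R_{\mathrm{soft},\infty}(\mu_L)\|+\|\nabla\mathcal R_{\mathrm{soft},\infty}(\mu_L)-\nabla\mathcal R_{\mathrm{soft},\infty}(\mu_\infty)\|\,ds\le \delta_L t+K\int_0^t\|e(s)\|ds .
\]
Grönwall's inequality then gives
\[
\sup_{[0,T]}\|e\|\le \delta_L T e^{KT}\to0 \quad\text{as } L\to\infty,
\]
which is the claimed uniform convergence on $[0,T]$. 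Existence and uniqueness of $\mu_L$ on $[0,\infty)$ follow because $\mathcal R_{\mathrm{soft},L}$ is $C^2$ (differentiation under the expectation, as already used in Proposition~\ref{uniform_c2}), so the field is locally Lipschitz, and the trajectory is bounded.

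The main obstacle is confinement, i.e.\ knowing a priori that $\mu_L$ stays in a compact set where the uniform estimates apply. This is exactly what Proposition~\ref{prop:bounded-finite-flow} supplies. If one preferred not to rely on it, the alternative is a bootstrap over finite horizons. Let $\tau_L$ be the exit time of $\mu_L$ from $B(0,\rho')$. Running the Grönwall argument on $[0,\min(T,\tau_L)]$ gives $\|e\|\le\delta_L Te^{KT}<\rho'-\rho$ for $L$ large. On that interval $\|\mu_L\|<\rho'$ strictly, so by continuity $\tau_L>T$ whenever $\tau_L\le T$ would otherwise occur; hence $\tau_L>T$. This is a self-contained route for finite $T$.
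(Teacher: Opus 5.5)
Your proof is correct and follows essentially the same route as the paper. The three ingredients match: confinement of both flows to a common ball via Proposition~\ref{prop:bounded-finite-flow}, a uniform gradient gap from Proposition~\ref{uniform_c2}, and Grönwall's inequality applied to $e=\mu_L-\mu_\infty$.

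The differences are minor and slightly in your favor. You split the error through $\nabla\mathcal R_{\mathrm{soft},\infty}(\mu_L)$, so you need only the Lipschitz constant of the polynomial limit field and convergence for $k\le 1$. The paper instead uses a uniform Lipschitz bound on $\nabla\mathcal R_{\mathrm{soft},L}$, which it obtains from the $k=2$ convergence. Your exit-time bootstrap and your remark on the $\mu$-dependence of $\epsilon_k$ also make explicit two points that the paper leaves implicit.
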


As a consequence, we can also control the convergence of the corresponding risk values along the trajectories.
\begin{proposition}\label{conv_values}
    Let $ \mu_L,\mu_\infty$ be the solutions of \eqref{gfl} and \eqref{gfi} respectively. Then $\mathcal R_{\mathrm{soft},L}(\mu_L)$ converges uniformly to $\mathcal R_{\mathrm{soft},\infty}(\mu_{\infty})$ on $[0,T]$ as $L\rightarrow\infty$. 
\end{proposition}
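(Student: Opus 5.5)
The plan is to split the error with the triangle inequality: for $t\in[0,T]$,
\[
\bigl|\mathcal R_{\mathrm{soft},L}(\mu_L(t))-\mathcal R_{\mathrm{soft},\infty}(\mu_\infty(t))\bigr|
\le
\bigl|\mathcal R_{\mathrm{soft},L}(\mu_L(t))-\mathcal R_{\mathrm{soft},\infty}(\mu_L(t))\bigr|
+\bigl|\mathcal R_{\mathrm{soft},\infty}(\mu_L(t))-\mathcal R_{\mathrm{soft},\infty}(\mu_\infty(t))\bigr|,
\]
and show that each term vanishes uniformly in $t\in[0,T]$ as $L\to\infty$.

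\textbf{First step: a common compact set.} By Lemma~\ref{coercive_bounded}, there is $\rho$ with $\mu_\infty(t)\in B(0,\rho)$ for all $t\ge0$. Fix any $\rho'>\rho$. Proposition~\ref{prop:bounded-finite-flow} then gives $L'$ such that $\mu_L(t)\in B(0,\rho')$ for all $t\ge 0$ and all $L\ge L'$. So for large $L$ both trajectories stay in the compact ball $B(0,\rho')$.

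\textbf{First term.} Apply Proposition~\ref{uniform_c2}(\ref{hyp3}) with $k=0$ on $B(0,\rho')$. Since $\mu_L(t)\in B(0,\rho')$,
\[
\sup_{t\in[0,T]}\bigl|\mathcal R_{\mathrm{soft},L}(\mu_L(t))-\mathcal R_{\mathrm{soft},\infty}(\mu_L(t))\bigr|
\le \sup_{\mu\in B(0,\rho')}\bigl|\mathcal R_{\mathrm{soft},L}(\mu)-\mathcal R_{\mathrm{soft},\infty}(\mu)\bigr|
\le \sqrt{C\,\psi_0(L)},
\]
which tends to $0$. This bound in fact holds uniformly over all $t\ge0$.

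\textbf{Second term.} By the closed form \eqref{rsoftinfty}, $\mathcal R_{\mathrm{soft},\infty}$ is a polynomial in $\mu$, hence Lipschitz on $B(0,\rho')$. A valid constant is
\[
M=\sup_{B(0,\rho')}\|\nabla\mathcal R_{\mathrm{soft},\infty}\|,
\]
which is explicit: $\nabla \mathcal R_{\mathrm{soft},\infty}(\mu)=-4\lambda\Sigma^2\mu+2\lambda^2(b\,\Sigma\mu+a\,\Sigma^2\mu)$. The ball is convex, so the mean value inequality applies along the segment joining $\mu_L(t)$ and $\mu_\infty(t)$, giving
\[
\bigl|\mathcal R_{\mathrm{soft},\infty}(\mu_L(t))-\mathcal R_{\mathrm{soft},\infty}(\mu_\infty(t))\bigr|\le M\,\|\mu_L(t)-\mu_\infty(t)\|.
\]
Proposition~\ref{conv_traj} states that $\mu_L\to\mu_\infty$ uniformly on $[0,T]$. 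Hence this term also vanishes uniformly on $[0,T]$, and combining the two terms proves the claim.

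\textbf{Main difficulty.} The only delicate point is making sure both trajectories lie in one fixed compact set, so that the uniform-on-compacts convergence and the Lipschitz bound apply with constants independent of $L$ and $t$. Proposition~\ref{prop:bounded-finite-flow} provides exactly this, so the argument reduces to bookkeeping.
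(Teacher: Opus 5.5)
Your proposal is correct and follows the paper's proof essentially verbatim. You use the same triangle-inequality split: the first term is controlled by the $k=0$ case of Proposition~\ref{uniform_c2} on a ball containing the finite-prompt trajectories (via Proposition~\ref{prop:bounded-finite-flow}), and the second by the regularity of $\mathcal R_{\mathrm{soft},\infty}$ together with Proposition~\ref{conv_traj}; you are slightly more explicit than the paper in placing both trajectories in one convex ball $B(0,\rho')$, so the mean value inequality applies with an $L$-independent Lipschitz constant.
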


Propositions \ref{conv_traj} and \ref{conv_values} establish uniform convergence over any finite time horizon as $L\to \infty$. Our primary objective, however, is to characterize the long-time behavior of the finite-prompt trajectory. To this end, we can compare it by concentration arguments with the infinite-prompt dynamics, which, by Proposition \ref{globalinf}, converges to a global minimizer aligned with the leading eigenvector.

Beyond this comparison, our setting also allows for a sharper result: a direct characterization of the landscape of the finite-prompt risk. This is particularly noteworthy, as identifying critical points of objectives involving a softmax is typically challenging \citep{dohmatob2025understanding,marion2023,duranthon2025statistical, softmaxlinear,clustering}. The result is formalized in the following proposition.

\begin{proposition}\label{criticalpointsl}
Assume that the p.s.d.\ covariance matrix $\Sigma\in\mathbb{R}^{d\times d}$ has simple spectrum composed of the eigenvalues $\sigma_1>\ldots>\sigma_d>0$ with $(u_j)_{j=1}^d$ the associated unit eigenvectors. Consider a Gaussian finite prompt 
$X_1, \dots, X_L \stackrel{\mathrm{i.i.d.}}{\sim} \mathcal{N}(0, \Sigma)$.

Set $\rho>0$ to be large enough so that $B(0,\rho)$ contains all critical points of $\mathcal{R}_{\mathrm{soft},\infty}$ (see Proposition~\ref{landscapeinf}). Then there exists $L_0 \in \mathbb{N}$ such that for all $L \ge L_0$, the set of critical points of $\mathcal{R}_{\mathrm{soft},L}$ contained in $B(0,\rho)$ is finite, nondegenerate, and
$$
\mathrm{crit}(\mathcal{R}_{\mathrm{soft},L})\cap B(0,\rho)
=
\{\mu_{L,0}^\star\}\cup \{\pm \mu_{L,\sigma_j}^\star: j=1,\ldots,d\},
$$
where
\begin{enumerate}
    \item The point $\mu_{L,0}^\star$ is a strict local maximum such that $\mu_{L,0}^\star \xrightarrow[L\to\infty]{} 0$;

    \item The points $\pm\mu_{L,\sigma_j}^\star$, for $j=2,\ldots,d$, are strict saddles such that $\mu_{L,\sigma_j}^\star \xrightarrow[L\to\infty]{} \frac{1}{\sqrt{\lambda \sigma_j}}\, u_j$;

    \item The points $\pm\mu_{L,\sigma_1}^\star$ are strict local minima such that $\mu_{L,\sigma_1}^\star \xrightarrow[L\to\infty]{} \frac{1}{\sqrt{\lambda \sigma_1}}\, u_1$.
\end{enumerate}

\end{proposition}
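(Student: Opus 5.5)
The plan is a $C^2$-perturbation argument for Morse functions, built on Proposition~\ref{landscapeinf} and Proposition~\ref{uniform_c2}. By Proposition~\ref{landscapeinf}, $\mathcal R_{\mathrm{soft},\infty}$ has finitely many critical points $\mu^\star_0=0$ and $\pm\mu^\star_j=\pm u_j/\sqrt{\lambda\sigma_j}$, all nondegenerate and lying in the interior of $B(0,\rho)$. We may enlarge $\rho$ slightly so that this holds, using that Proposition~\ref{uniform_c2} applies on any ball.

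\textbf{Step 1: critical points stay near the infinite-prompt ones.} Choose $r>0$ so small that the balls $B(\mu^\star,r)$ around the $2d+1$ critical points are disjoint, contained in $B(0,\rho)$, and such that on each ball $\nabla^2\mathcal R_{\mathrm{soft},\infty}$ satisfies $\|\nabla^2\mathcal R_{\mathrm{soft},\infty}(\mu)-\nabla^2\mathcal R_{\mathrm{soft},\infty}(\mu^\star)\|\le \delta$. Here $\delta$ is a small fraction of $m:=\min_{\mu^\star}\min|\text{eig}(\nabla^2\mathcal R_{\mathrm{soft},\infty}(\mu^\star))|>0$, which is possible by continuity of the analytic Hessian. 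On the compact set $K=B(0,\rho)\setminus\bigcup_{\mu^\star}\mathring B(\mu^\star,r)$, the gradient $\nabla\mathcal R_{\mathrm{soft},\infty}$ has no zero, so $\eta:=\min_K\|\nabla\mathcal R_{\mathrm{soft},\infty}\|>0$. Proposition~\ref{uniform_c2}(3) with $k=1$ gives $L_1$ such that $\sup_{B(0,\rho)}\|\nabla\mathcal R_{\mathrm{soft},L}-\nabla\mathcal R_{\mathrm{soft},\infty}\|<\eta/2$ for $L\ge L_1$. Hence every critical point of $\mathcal R_{\mathrm{soft},L}$ in $B(0,\rho)$ lies in one of the small balls.

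\textbf{Step 2: exactly one nondegenerate critical point per ball, with the same index.} With $k=2$, we obtain $L_2$ such that $\|\nabla^2\mathcal R_{\mathrm{soft},L}-\nabla^2\mathcal R_{\mathrm{soft},\infty}\|<\delta$ on $B(0,\rho)$. Then on each $B(\mu^\star,r)$ the Hessian $\nabla^2\mathcal R_{\mathrm{soft},L}$ is within $2\delta<m/2$ of $H^\star:=\nabla^2\mathcal R_{\mathrm{soft},\infty}(\mu^\star)$. By Weyl's inequality it is invertible and has the same number of negative eigenvalues as $H^\star$.

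For existence, consider $\Phi_L(\mu)=\mu-(H^\star)^{-1}\nabla\mathcal R_{\mathrm{soft},L}(\mu)$ on $B(\mu^\star,r)$. Its derivative $I-(H^\star)^{-1}\nabla^2\mathcal R_{\mathrm{soft},L}$ has norm at most $2\delta/m\le 1/2$. Moreover,
\[
\|\Phi_L(\mu^\star)-\mu^\star\|\le \|\nabla\mathcal R_{\mathrm{soft},L}(\mu^\star)\|/m,
\]
and the right-hand side is $<r/2$ for $L$ large, again by the $k=1$ uniform bound. So $\Phi_L$ is a contraction of $B(\mu^\star,r)$ into itself, and Banach's fixed point theorem gives exactly one zero of $\nabla\mathcal R_{\mathrm{soft},L}$ in the ball. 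Uniqueness also follows from strong monotonicity of the gradient, since the Hessian is uniformly invertible with a fixed inertia on the convex ball.

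Call these zeros $\mu^\star_{L,0}$ and $\pm\mu^\star_{L,\sigma_j}$. The sign symmetry holds because $T^{\mathrm{soft},\mu}_L$ depends on $\mu\mu^\top$ only, so $\mathcal R_{\mathrm{soft},L}$ is even. Hence the zero in the ball around $-\mu^\star_j$ is the negative of the one around $\mu^\star_j$.

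Convergence to the limits follows by letting $r\to0$ along a sequence, with $L_0(r)\to\infty$. The classification then follows from index preservation combined with Proposition~\ref{landscapeinf}:
\begin{itemize}
\item index $d$ gives a strict local maximum, near $0$;
\item index strictly between $0$ and $d$ gives a strict saddle, for $j\ge2$;
\item index $0$ gives a strict local minimum, for $j=1$.
\end{itemize}
The saddle claim uses that $H^\star$ for $j\ge2$ has both signs, which is part of Proposition~\ref{landscapeinf}. Take $L_0=\max(L_1,L_2,\dots)$.

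\textbf{Main obstacle.} The substance lies entirely in the uniform $C^2$ convergence on $B(0,\rho)$, which is supplied by Proposition~\ref{uniform_c2}. Two points need care. First, $\epsilon_k$ there depends on $\mu$, so one must check that the rate is uniform on the ball. Taking the worst case $\mu^\top\Sigma\mu\le\sigma_1\rho^2$ gives a uniform positive exponent. Second, one must make sure that no critical point of $\mathcal R_{\mathrm{soft},L}$ escapes near $\partial B(0,\rho)$. This is handled by Step 1 once $\rho$ is chosen so that all infinite-prompt critical points are interior, so that $\nabla\mathcal R_{\mathrm{soft},\infty}$ is bounded away from zero near the boundary.
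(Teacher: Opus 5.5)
Your proposal is correct and follows the paper's proof, which combines Proposition~\ref{landscapeinf}, the $C^2_{\mathrm{loc}}$ convergence of Proposition~\ref{uniform_c2}, and the persistence Lemma~\ref{persistence}. That lemma has the same three parts as your argument (gradient bounded below off small balls, one nondegenerate critical point per ball, inertia preserved by spectral continuity), except that it gets local existence and uniqueness from a homotopy continuation rather than your Newton-type contraction $\Phi_L$; note that uniqueness comes from that contraction, not from ``strong monotonicity'', which fails at saddles and at the maximum. Your evenness observation $\mathcal{R}_{\mathrm{soft},L}(-\mu)=\mathcal{R}_{\mathrm{soft},L}(\mu)$ is a welcome addition: it justifies labelling the perturbed points as $\pm\mu^\star_{L,\sigma_j}$, which the paper leaves implicit, and it even forces $\mu^\star_{L,0}=0$.
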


We remark that it suffices to characterize the critical points of $\mathcal{R}_{\mathrm{soft},L}$ within a sufficiently large bounded set. Indeed, we will study the dynamics induced by the gradient flow associated with $\mathcal{R}_{\mathrm{soft},L}$, which corresponding trajectories remain bounded for $L$ large enough (Proposition~\ref{prop:bounded-finite-flow}). As a consequence, only critical points contained in this bounded region are relevant for the analysis.

\begin{proposition}[Local convergence rate on finite-prompt setting]\label{cr_l}
Let $L$ large enough and $\mu_L(t)$ be a solution of \eqref{gfl} with a generic initialization $\mu_0$. Then, $$\mu_L(t)\xrightarrow[t\to\infty]{} \mu_L^\star\in\{\pm \mu_{L,\sigma_1}^\star\}.$$ 
 Moreover, there exist $t_0\ge 0$ and $s>0$ such that for all $t\ge t_0$,
\begin{align*}
&\mathcal{R}_{\mathrm{soft},L}(\mu_L(t))-\mathcal{R}_{\mathrm{soft},L}(\mu_L^\star) \leq
\left(\mathcal{R}_{\mathrm{soft},L}(\mu_L(t_0))-\mathcal{R}_{\mathrm{soft},L}(\mu_L^\star)\right)
e^{-s(t-t_0)}.   
\end{align*}
Besides, for every $\varepsilon>0$ small enough, there exists $t_0>0$ such that
\[
\|\mu_L(t)-\mu_L^\star \|
=
\mathcal{O}\big(e^{-(\tilde{s}_L-\varepsilon)(t-t_0)}\big),
\]
with $\tilde{s}_L = \sigma_{\min}\big(\nabla^2 \mathcal{R}_{\mathrm{soft},L}(\mu_L^\star)\big) > 0$,
and $\tilde{s}_L \xrightarrow[L\to \infty]{} \tilde{s}$,
where \(\tilde{s}\) is defined in \eqref{tildes}.
\end{proposition}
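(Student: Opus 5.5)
The plan is to combine the landscape description of Proposition~\ref{criticalpointsl} with the boundedness of Proposition~\ref{prop:bounded-finite-flow}, and then run the same Łojasiewicz / Stable-Manifold argument used for Propositions~\ref{globalinf} and~\ref{cr_inf}, now applied to $\mathcal R_{\mathrm{soft},L}$.

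\emph{Step 1: convergence to a critical point.} Fix $\mu_0$ and take $\rho$ from Lemma~\ref{coercive_bounded}. Enlarge $\rho$ if needed so that $B(0,\rho)$ contains all critical points of $\mathcal R_{\mathrm{soft},\infty}$, and pick $\rho'>\rho$. By Proposition~\ref{prop:bounded-finite-flow}, for $L$ large the trajectory $\mu_L(t)$ stays in $B(0,\rho')$. Apply Proposition~\ref{criticalpointsl} with radius $\rho'$; this is legitimate since $B(0,\rho')$ still contains the infinite-prompt critical points. Then the critical points of $\mathcal R_{\mathrm{soft},L}$ in that ball are finitely many and nondegenerate. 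Since $t\mapsto \mathcal R_{\mathrm{soft},L}(\mu_L(t))$ is nonincreasing and bounded below, we have $\int_0^\infty\|\nabla\mathcal R_{\mathrm{soft},L}(\mu_L)\|^2<\infty$. Hence the $\omega$-limit set is a nonempty, compact, connected set of critical points. Because these critical points are isolated, it is a single point, so $\mu_L(t)\to\mu_L^\star\in\mathrm{crit}(\mathcal R_{\mathrm{soft},L})\cap B(0,\rho')$.

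\emph{Step 2: genericity.} I need $\mathcal R_{\mathrm{soft},L}$ to be $C^2$; this holds because it is smooth in $\mu$ by dominated differentiation, as used in Proposition~\ref{uniform_c2}. The candidate limits $\mu_{L,0}^\star$ and $\pm\mu_{L,\sigma_j}^\star$ for $j\ge2$ are nondegenerate with at least one negative Hessian eigenvalue. By the Stable Manifold Theorem \citep[Theorem III.7]{shub}, the set of initializations whose flow converges to one of these points is a finite union of lower-dimensional manifolds, which has measure zero. So for generic $\mu_0$ the limit is $\pm\mu_{L,\sigma_1}^\star$. One subtlety is that $L$ depends on $\mu_0$ through $\rho$. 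I would handle this by working with a fixed bounded set of initializations, or by noting that the statement is pointwise in $\mu_0$: ``$L$ large enough'' is allowed to depend on $\mu_0$.

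\emph{Step 3: rates.} At the nondegenerate minimum $\mu_L^\star$, set $H_L=\nabla^2\mathcal R_{\mathrm{soft},L}(\mu_L^\star)\succ0$. A Taylor expansion gives, near $\mu_L^\star$,
\[
\|\nabla\mathcal R_{\mathrm{soft},L}(\mu)\|^2\ge c\,(\mathcal R_{\mathrm{soft},L}(\mu)-\mathcal R_{\mathrm{soft},L}(\mu_L^\star)),
\]
which is the Łojasiewicz inequality with exponent $1/2$. Choose $t_0$ so that the trajectory stays in this neighborhood for $t\ge t_0$. Differentiating $\mathcal R_{\mathrm{soft},L}(\mu_L(t))-\mathcal R_{\mathrm{soft},L}(\mu_L^\star)$ along the flow and applying Grönwall gives the exponential decay of the risk with $s=c$. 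For the trajectory, linearize: $\dot e=-H_Le+O(\|e\|^2)$ with $e=\mu_L-\mu_L^\star$. Then
\[
\tfrac{d}{dt}\|e\|^2\le-2(\sigma_{\min}(H_L)-\varepsilon)\|e\|^2
\]
once $\|e\|$ is small enough, which yields the rate $\tilde s_L-\varepsilon$.

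\emph{Step 4: limit of the rate.} To show $\tilde s_L\to\tilde s$, use Proposition~\ref{uniform_c2}(3) with $k=2$, which gives uniform convergence of Hessians on $B(0,\rho')$. Combined with continuity of $\nabla^2\mathcal R_{\mathrm{soft},\infty}$ and $\mu_{L,\sigma_1}^\star\to u_1/\sqrt{\lambda\sigma_1}$, this gives $H_L\to\nabla^2\mathcal R_{\mathrm{soft},\infty}(\mu^\star)$. Continuity of $\sigma_{\min}$ (Weyl's inequality) then yields $\tilde s_L\to\tilde s$. Since $\tilde s>0$, this also gives $\tilde s_L>0$ for large $L$.

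The main obstacle is Step 2. There are two issues: making the ``$L$ large enough'' uniform enough that genericity is meaningful, and ensuring the Stable Manifold Theorem applies. The latter needs $\mathcal R_{\mathrm{soft},L}$ to be globally $C^2$ with a flow defined for all time, which the boundedness from Proposition~\ref{prop:bounded-finite-flow} provides.
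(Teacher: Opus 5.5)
Your proposal is correct and follows the same route as the paper's proof. The paper also combines four ingredients. First, the landscape of Proposition~\ref{criticalpointsl} together with the Stable Manifold Theorem gives generic convergence to $\pm\mu_{L,\sigma_1}^\star$. Second, the \L{}ojasiewicz inequality with exponent $1/2$ at the nondegenerate minimizer gives the decay of the risk. Third, linearization around $\mu_L^\star$ gives the rate for the iterates. Fourth, $C^2_{\mathrm{loc}}$ convergence of the risks, together with $\mu_{L,\sigma_1}^\star\to u_1/\sqrt{\lambda\sigma_1}$, gives $\tilde s_L\to\tilde s$.

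You go further than the paper's sketch in two places. You explicitly confine the trajectory using Proposition~\ref{prop:bounded-finite-flow}. You also address how ``$L$ large enough'' depends on $\mu_0$, and fixing a bounded set of initializations, as you suggest, resolves this; the paper states its ICL analogue, Proposition~\ref{prop:gd_convergence_icl}, in exactly that form.
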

 \begin{wrapfigure}[15]{r}{0.48\textwidth}
    \centering
    \vspace{-0.3cm}
    \includegraphics[width=1.\linewidth]{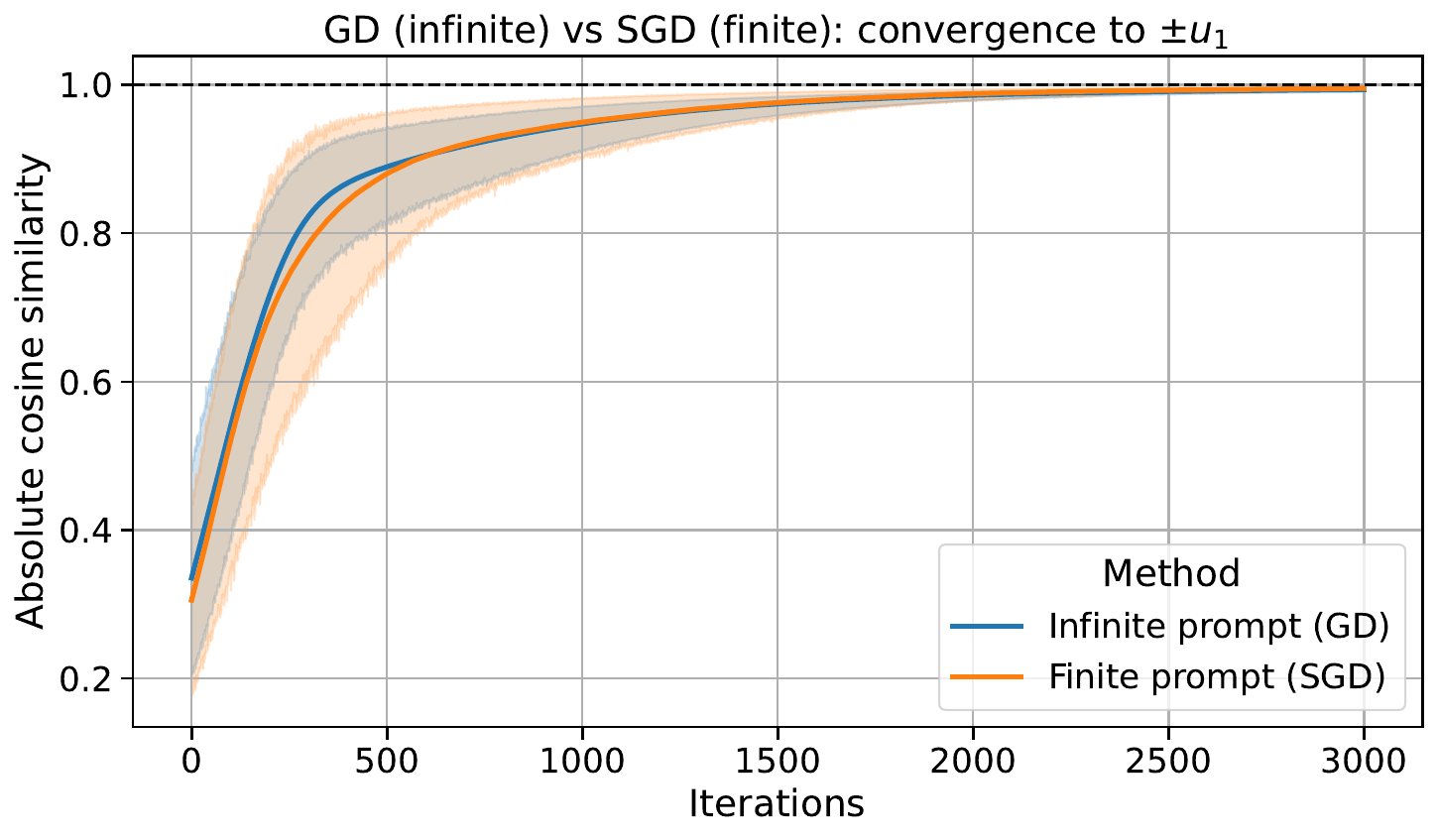}
    \caption{Alignment toward the principal eigenvector over iterations: SGD on $\mathcal{R}_{\mathrm{soft},L}$ ($L=100$) vs GD on $\mathcal{R}_{\mathrm{soft},\infty}$}
    \label{comparison}
\end{wrapfigure}The finite-prompt dynamics converges to the first principal component, sharing the same local convergence behavior as the infinite-prompt limit, with an exponential rate governed by the local Hessian at the minimizer. Moreover, this rate converges to that of the infinite-prompt regime as $L \to \infty$. 
Numerical results in Figure~\ref{comparison} illustrate the alignment toward the leading eigenvector for both gradient descent on $\mathcal{R}_{\mathrm{soft},\infty}$ (assuming direct access to $\Sigma$) and stochastic gradient descent on $\mathcal{R}_{\mathrm{soft},L}$, the latter exhibiting slightly slower and noisier convergence.

\subsection{Connection of attention to Oja's flow}

Interestingly, although our analysis is not motivated by classical results in online PCA, the infinite-prompt risk $\mathcal{R}_{\mathrm{soft},\infty}$ turns out to be closely related to the continuous-time limit of Oja’s rule~\citep{Oja}.

This connection emerges a posteriori from the structure of the gradient flow \eqref{gfi}, which can be written explicitly as
\[
\dot{\mu}_\infty =
4\lambda \Sigma^2 \mu_\infty
- 2\lambda^2\Big[
(\mu_\infty^\top \Sigma^2 \mu_\infty)\,\Sigma \mu_\infty
+ (\mu_\infty^\top \Sigma \mu_\infty)\,\Sigma^2 \mu_\infty
\Big].
\]
To make this link more transparent, we introduce the change of variables $w=\Sigma^{1/2}\mu_\infty$. In these coordinates, the dynamics take the form
\begin{equation}\label{ojalimit}
\dot w= \Sigma\big[A(w)\,\Sigma w - B\,(w^\top \Sigma w)\,  w\big], \quad A(w):=2\lambda(2-\lambda\, w^\top w), \; B:=2\lambda^2.
\end{equation}

This dynamics can be then viewed as a variant of Oja’s flow,
\begin{equation}\label{ojaflow}
 \dot w = \Sigma w - (w^\top \Sigma w)\, w ,
\end{equation}
in which the vector field is premultiplied by the invertible matrix $\Sigma$ and modulated by the coefficients $A(w)$ and $B$. In particular, both flows share the same stationary points, namely the eigenvectors of $\Sigma$, and they will have the same nature (local minima, saddles, or maxima). Equation \eqref{ojaflow} arises in PCA as a continuous-time model for extracting the principal eigenvector of $\Sigma$. Its discrete counterpart, called Oja's rule, is a stochastic online approximation of the ODE~\eqref{ojaflow}. Overall, this shows that, in the infinite-prompt limit, the dynamics of the attention mechanism reduces, up to a linear change of variables, to a generalized version of Oja’s flow, making explicit a connection between the attention mechanism and PCA that, to our knowledge, has not been discussed in the literature.

\section{Distributional properties of the attention-based encoding}\label{sec:dist}

The measure-based perspective allows us to interpret an attention layer with an infinite-length prompt as an operator acting on the input distribution, thereby enabling a characterization of the resulting output distribution in the Gaussian setting. We begin by characterizing the output distribution when the layer has been trained, that is, when the attention parameter $\mu$ has converged.
\begin{proposition}[Distribution of infinite-prompt attention operator]\label{distinf}
Let $\Sigma \in \mathbb{R}^{d\times d}$ be p.s.d.\ with $(\sigma_1,u_1)$ its principal eigenpair. For $\mu \in \mathbb{R}^d$, define 
$\Gamma(\mu) \coloneqq \lambda^2 (\mu^\top \Sigma \mu)\, (\Sigma \mu)(\Sigma \mu)^\top$. 
When $X_1 \sim \mathcal{N}(0, \Sigma)$, then
$$
T_{\infty}^{\mathrm{soft},\mu}(X_1)
= \lambda \Sigma \mu \mu^\top X_1
\sim \mathcal{N}(0, \Gamma(\mu)).
$$

For $\lambda>0$ and $\mu \neq 0$, the matrix $\Gamma(\mu)$ is p.s.d.\ of rank one. Moreover, for $\mu^\star = \frac{1}{\sqrt{\lambda \sigma_1}}\, u_1,$
which is a global minimizer of $\mathcal{R}_{\mathrm{soft},\infty}$ (up to a sign) attained by gradient flow for generic initializations, one has
$$
\Gamma(\mu^\star) = \sigma_1\, u_1 u_1^\top
\qquad\text{and}\qquad
T_{\infty}^{\mathrm{soft},\mu^\star}(X_1)
\sim
\mathcal{N}(0,\sigma_1 u_1 u_1^\top),
$$
that is, the limiting distribution coincides with the law of the projection of $X \sim \mathcal{N}(0,\Sigma)$ onto the principal eigenspace, i.e., $\langle X, u_1\rangle u_1$.
\end{proposition}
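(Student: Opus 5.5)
The plan is a direct computation: the infinite-prompt operator is a fixed linear map of a Gaussian vector, so its law follows from the standard fact that linear images of Gaussians are Gaussian. First, by \eqref{tisoft}, $T_\infty^{\mathrm{soft},\mu}(X_1)=M X_1$ with $M:=\lambda\Sigma\mu\mu^\top$. Since $X_1\sim\mathcal N(0,\Sigma)$, we get $MX_1\sim\mathcal N(0,M\Sigma M^\top)$. We then compute, using the symmetry of $\Sigma$,
\[
M\Sigma M^\top=\lambda^2\,\Sigma\mu\,(\mu^\top\Sigma\mu)\,\mu^\top\Sigma=\lambda^2(\mu^\top\Sigma\mu)(\Sigma\mu)(\Sigma\mu)^\top=\Gamma(\mu).
\]

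Next we check the structural claims. For $\mu\neq0$, positive definiteness of $\Sigma$ gives $\mu^\top\Sigma\mu>0$ and $\Sigma\mu\neq0$. Hence $\Gamma(\mu)$ is a positive multiple of $vv^\top$ with $v=\Sigma\mu\neq0$, so it is p.s.d.\ of rank exactly one. If the statement is read with $\Sigma$ only p.s.d., we must assume $\Sigma\mu\neq0$; this holds automatically at $\mu^\star$ since $\sigma_1>0$. We will note this caveat.

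Then we evaluate at $\mu^\star=u_1/\sqrt{\lambda\sigma_1}$. We have $\Sigma\mu^\star=\sigma_1u_1/\sqrt{\lambda\sigma_1}$ and $\mu^{\star\top}\Sigma\mu^\star=\sigma_1/(\lambda\sigma_1)=1/\lambda$. Substituting,
\[
\Gamma(\mu^\star)=\lambda^2\cdot\frac1\lambda\cdot\frac{\sigma_1^2}{\lambda\sigma_1}u_1u_1^\top=\sigma_1u_1u_1^\top.
\]
The sign $-\mu^\star$ gives the same $\Gamma$, because $\Gamma$ is even in $\mu$. That $\pm\mu^\star$ are global minimizers reached by gradient flow for generic initializations is exactly Propositions~\ref{landscapeinf} and~\ref{globalinf}, so we simply cite them.

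Finally, to identify this law with the projection of $X$, write $\langle X,u_1\rangle u_1=u_1u_1^\top X$, again a linear image of $X$. Its covariance is $u_1u_1^\top\Sigma u_1u_1^\top=\sigma_1u_1u_1^\top$, so the two laws coincide. In fact $\lambda\Sigma\mu^\star\mu^{\star\top}=u_1u_1^\top$, so the two random vectors are even equal almost surely, which is worth remarking.

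There is no real obstacle; the only care needed is the degenerate-Gaussian convention (a normal law with singular covariance) and the p.s.d.-versus-p.d.\ point in the rank-one claim.
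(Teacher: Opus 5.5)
Your proposal is correct and follows the same route as the paper. You write $T_\infty^{\mathrm{soft},\mu}(X_1)$ as a fixed linear image of $X_1$, compute its covariance $\lambda^2(\mu^\top\Sigma\mu)(\Sigma\mu)(\Sigma\mu)^\top$, and evaluate at $\mu^\star$ using $\Sigma u_1=\sigma_1 u_1$. Your two additions are accurate refinements of the paper's terse argument: the caveat that rank one needs $\Sigma\mu\neq 0$ when $\Sigma$ is only p.s.d., and the observation that $\lambda\Sigma\mu^\star\mu^{\star\top}=u_1u_1^\top$, so the two random vectors coincide almost surely.
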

This result highlights that, in the infinite-prompt limit, the attention mechanism effectively performs a rank-one projection of the input Gaussian distribution, and at optimality, recovers the principal eigendirection of $\Sigma$, thus aligning with the objective of  PCA.
This naturally raises the question of how this behavior extends beyond the idealized infinite-prompt regime, which we now investigate by analyzing the output distribution for finite-length prompts.

\begin{corollary}[Wasserstein distance]\label{cor:wasserstein}
Consider Gaussian input tokens $X_1,\ldots,X_L \overset{i.i.d.}\sim \mathcal{N}(0,\Sigma)$.
Then, the squared $2$-Wasserstein distance between the distributions $\mathcal{L}(T_L^{\mathrm{soft},\mu}(\mathbb{X})_1)$ and $\mathcal{L}(T_\infty^{\mathrm{soft},\mu}(X_1))$  of the finite- and infinite-prompt architectures, both parameterized by $\mu$, satisfies
\[
W_2^2\!\big(\mathcal{L}(T_L^{\mathrm{soft},\mu}(\mathbb{X})_1),\,\mathcal{L}(T_\infty^{\mathrm{soft},\mu}(X_1))\big)
=
\mathcal{O}\!\left(
L^{-\epsilon}
(1+\ln L)^{1-\epsilon}
\right),
\]
where $\epsilon=\frac{1}{144\lambda^2(\mu^\top\Sigma\mu)^2+1}\in (0,1)$. 

In particular, when $\mu^\star=\frac{u_1}{\sqrt{\lambda \sigma_1}}$ with $(\sigma_1,u_1)$ the principal eigenpair of $\Sigma$, then  
\[
W_2^2\!\big(\mathcal{L}(T_L^{\mathrm{soft},\mu^\star}(\mathbb{X})_1),\,\mathcal{N}(0,\sigma_1u_1u_1^\top)\big)
=
\mathcal{O}\!\left(
L^{-\frac{1}{145}}
(1+\ln L)^{\frac{144}{145}}
\right).
\]

\end{corollary}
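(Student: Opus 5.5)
The plan is to bound the Wasserstein distance by a coupling. Both random vectors $T_L^{\mathrm{soft},\mu}(\mathbb{X})_1$ and $T_\infty^{\mathrm{soft},\mu}(X_1)$ can be built on the same probability space from the same prompt $\mathbb{X}$, with the infinite-prompt output depending only on the first token $X_1$. The pair is therefore a valid coupling of the two laws, so by definition of $W_2$,
\[
W_2^2\big(\mathcal{L}(T_L^{\mathrm{soft},\mu}(\mathbb{X})_1),\mathcal{L}(T_\infty^{\mathrm{soft},\mu}(X_1))\big)\le \E\big[\|T_L^{\mathrm{soft},\mu}(\mathbb{X})_1-T_\infty^{\mathrm{soft},\mu}(X_1)\|^2\big].
\]

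The right-hand side is exactly the $k=0$ case of Proposition~\ref{uniform_c2}, item~\ref{hyp1}. At order zero the Frobenius norm is the Euclidean norm, and the exponent is
\[
\epsilon_0=\frac{1}{16\cdot 9\,\lambda^2(\mu^\top\Sigma\mu)^2+1}=\frac{1}{144\lambda^2(\mu^\top\Sigma\mu)^2+1},
\]
which matches the statement. The proposition is stated with a supremum over $B(0,\rho)$. For a fixed $\mu$ I would apply it with $\rho=\|\mu\|$, or simply note that its proof gives the pointwise bound with the $\mu$-dependent exponent. This yields $\mathcal{O}(L^{-\epsilon}(1+\ln L)^{1-\epsilon})$.

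For the particular case, I would plug in $\mu^\star=u_1/\sqrt{\lambda\sigma_1}$. Then $\mu^{\star\top}\Sigma\mu^\star=\sigma_1/(\lambda\sigma_1)=1/\lambda$, so $\lambda^2(\mu^{\star\top}\Sigma\mu^\star)^2=1$ and $\epsilon=1/145$, with $1-\epsilon=144/145$. Finally, Proposition~\ref{distinf} identifies $\mathcal{L}(T_\infty^{\mathrm{soft},\mu^\star}(X_1))=\mathcal{N}(0,\sigma_1u_1u_1^\top)$, which gives the second display.

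The main obstacle is entirely inherited from Proposition~\ref{uniform_c2}. Its $k=0$ bound requires controlling the softmax normalization, whose empirical denominator $\frac1L\sum_k \exp(\lambda X_1^\top\mu\mu^\top X_k)$ concentrates only slowly because of heavy-tailed exponential moments. It also requires handling the dependence created by including $X_1$ itself in the prompt. Given that proposition, the corollary reduces to the coupling inequality and the arithmetic above. The only subtlety is making sure the pointwise-in-$\mu$ exponent, rather than a worst case over the ball, is what appears in the bound.
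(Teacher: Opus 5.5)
Your proposal is correct and follows the paper's intended argument, which the paper does not write out. The steps are the synchronous coupling giving $W_2^2\le \E[\|T_L^{\mathrm{soft},\mu}(\mathbb{X})_1-T_\infty^{\mathrm{soft},\mu}(X_1)\|^2]$, the $k=0$ concentration bound with $P^2=144$, and $\mu^{\star\top}\Sigma\mu^\star=1/\lambda$ combined with Proposition~\ref{distinf}. For the pointwise-in-$\mu$ exponent you flagged, cite Lemma~\ref{lemma_concentration} (with $P=12$), which is already stated for fixed $\mu$, rather than the supremum form of Proposition~\ref{uniform_c2}.
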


Proposition~\ref{globalinf} shows that minimizing the infinite-prompt risk $\mathcal{R}_{\mathrm{soft},\infty}$ recovers the principal components. Combined with the control of the deviation between the law of $T_L^{\mathrm{soft},\mu}(\mathbb{X})_1$ and its Gaussian limit $\mathcal{N}(0,\Gamma(\mu))$, as well as Proposition~\ref{cr_l}, this establishes that finite-prompt attention effectively performs an approximate PCA when training is performed through the minimization of $\mathcal{R}_{\mathrm{soft},L}$. 

We also note that the convergence rate observed in practice is significantly faster than the theoretical bounds (see, e.g., Figure \ref{fig:softmax_vs_L}), indicating that these bounds may be conservative.

\section{Toward spiked covariance models}\label{sec:icl}

In this section, we move beyond the fixed design considered so far and consider a setting in which the data remain Gaussian, but the (random) covariance structure depends on the prompt. This perspective places our analysis to some extent within the framework of in-context learning. 
We now make explicit the dependence on the covariance structure in the risks  
\[
\left\{
\begin{array}{l}
\mathcal{R}_{\mathrm{soft},L}^{(\Sigma)}(\mu)=\mathbb{E}_{X_1,\ldots,X_L\sim \mathcal{N}(0,\Sigma)}\left[\Vert X_1-T_{L}^{\mathrm{soft},\mu}(\mathbb{X})_1]\Vert^2\right] \\
\mathcal{R}_{\mathrm{soft},\infty}^{(\Sigma)}(\mu)
=
\mathbb{E}_{X \sim \mathcal{N}(0,\Sigma)}
\Big[\|X - T_{\infty}^{\mathrm{soft},\mu}(X)\|^2\Big]
\end{array}
\right.
\]
where we recall that the latter can be expressed in terms of the covariance matrix $\Sigma$:
\[
\mathcal{R}_{\mathrm{soft},\infty}^{(\Sigma)}(\mu)
=
\operatorname{tr}(\Sigma)
- 2\lambda\, \mu^{\top}\Sigma^{2}\mu
+ \lambda^{2}\, (\mu^{\top}\Sigma\mu)(\mu^{\top}\Sigma^{2}\mu).
\]
This formulation naturally leads to ``in-context learning'' risks, obtained by averaging over the distribution $\mathcal{D}$ of covariance matrices
\begin{equation}\label{icl_L}
\mathcal{R}_{L}^{\mathrm{ICL}}(\mu)
=
\mathbb{E}_{\Sigma \sim \mathcal{D}}
\big[
\mathcal{R}_{\mathrm{soft},L}^{(\Sigma)}(\mu)
\big]
\quad \text{and} \quad
\mathcal{R}_{\infty}^{\mathrm{ICL}}(\mu)
=
\mathbb{E}_{\Sigma \sim \mathcal{D}}
\big[
\mathcal{R}_{\mathrm{soft},\infty}^{(\Sigma)}(\mu)
\big].
\end{equation}

\paragraph{Choice of \(\mathcal{D}\).} 
We consider covariance matrices drawn from a Wishart distribution $W_d(V,n)$ with scale matrix $V$ and $n$ degrees of freedom. This choice yields random covariance matrices that almost surely have a simple spectrum (i.e., distinct eigenvalues), while their expectation aligns with a spiked covariance model of the form
\[
V = \xi^{2} I_d + \theta\, v v^{\top}, \quad \text{for some } \, v \, \text{ such that } \;
 \|v\| = 1.
\]
This mild random setting interpolates between the isotropic case $(\theta=0)$ and a structured anisotropic regime, providing a natural testbed to assess whether rank-one softmax attention layers can recover the latent direction $v$. Training is now modeled via the gradient flow of \eqref{icl_L}, corresponding to an idealized procedure based on the population risk rather than its empirical counterpart, with Gaussian prompts and prompt-dependent Wishart-distributed covariance matrices.

\paragraph{Infinite-prompt analysis.}
The function \(\mathcal{R}_{\infty}^{\mathrm{ICL}}(\mu)\) can be rewritten depending only (see Lemma~\ref{lemma:icl_expansion}) on the norm of the attention parameter $
r = \|\mu\|$  and on the angle $\alpha = \langle \mu, v \rangle$, between the attention parameter and the covariance latent direction, 
so that
\[
\mathcal{R}_{\infty}^{\mathrm{ICL}}(\mu)
=
\tilde{\mathcal{R}}^{\mathrm{ICL}}(r^2,\alpha^2).
\]

This reformulation enables a precise characterization of the critical points of the objective (see Propositions~\ref{families} and~\ref{criticalglobal}), and in particular shows that the only local (and thus global) minima are given by \begin{align}\label{mustaricl}
    \mu^\star=\pm \alpha^\star v,
\end{align} where the scalar $\alpha^\star$ is defined in \eqref{astar}, and determines the optimal magnitude of the parameter along the signal direction $v$. 
In addition, $\mu=0$ is a strict local maximum, and there are $2(d-1)$ saddle points corresponding to directions orthogonal to $v$. 

\begin{comment}
{\color{gray} {\color{red}I would put this into the appendices}Differentiating, if we define $A(r,\alpha)=a_1 r^2 + a_2 \alpha^2 - a_3$, and $
B(r,\alpha)=b_1 r^2 + b_2 \alpha^2 - b_3,$ for $a_i,b_i>0,$ $i=\{1,2,3\}$ explicited in \eqref{ab}, we get
\[
\nabla \mathcal{R}_{\infty}^{\mathrm{ICL}}(\mu)
=
2\big(A(r,\alpha)\mu + B(r,\alpha)\,\alpha v\big),
\]
 Writing \(\mu=\alpha v+w\) with \(w\perp v\), the stationarity condition becomes
\begin{equation}\label{eq:system}
\begin{cases}
A(r,\alpha)w=0,\\
(A(r,\alpha)+B(r,\alpha))\alpha v=0.
\end{cases}
\end{equation}
\subsection{Optimization landscape analysis}

}
\end{comment}

\begin{proposition}[Local convergence rate on ICL infinite-prompt setting]\label{localcrgf}
Consider the gradient flow
\[
\dot{\mu}(t) = - \nabla \mathcal{R}_\infty^{\mathrm{ICL}}(\mu(t)).
\]
%
%and let $\mu^\star = \pm \alpha^\star v $ (for $\alpha^\star$ defined in \eqref{astar})
%be the only global minimizers of $\mathcal{R}_\infty^{\mathrm{ICL}}$. 
Then, for a generic initialization $\mu_0$, the gradient flow satisfies $\mu(t)\xrightarrow[t\rightarrow\infty]{}\mu^\star$, where $\mu^\star$ is defined in \eqref{mustaricl}. Besides for every $\varepsilon>0$ small enough, there exists $t_0>0$ such that
\[
\|\mu(t)-\mu^\star\| =\mathcal{O}( e^{-(\hat{s}-\varepsilon) (t-t_0)}),
\]
with $\hat{s} = \sigma_{\min}\big(\nabla^2 \mathcal{R}_\infty^{\mathrm{ICL}}(\mu^\star)\big)$. When $\xi^2$ is small enough, we have $\hat{s}\sim  2 \frac{\lambda n(n^2+5n+2) \theta \, \xi^2}{n+4},$
so the convergence rate scales proportionally to $\xi^2$ and $\theta$, and quadratically with $n$.
%{\color{red}and so what? }
\end{proposition}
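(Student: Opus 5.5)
The plan has three steps: establish generic convergence through the landscape structure, get the local rate from the Hessian at the minimizer, and obtain the asymptotics of $\hat{s}$ from an explicit Hessian computation.

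\emph{Generic convergence.} By Lemma~\ref{lemma:icl_expansion}, $\mathcal{R}_\infty^{\mathrm{ICL}}(\mu)=\tilde{\mathcal{R}}^{\mathrm{ICL}}(r^2,\alpha^2)$, where $\tilde{\mathcal{R}}^{\mathrm{ICL}}$ is a polynomial. It is quartic with positive leading coefficients, coming from $\lambda^2\E[(\mu^\top\Sigma\mu)(\mu^\top\Sigma^2\mu)]$. Hence the risk is coercive and analytic, and gradient-flow trajectories stay in a sublevel set, as in Lemma~\ref{coercive_bounded}. Analyticity gives a Łojasiewicz inequality at every point. Together with boundedness, this shows each trajectory has finite length and converges to a single critical point. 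Propositions~\ref{families} and~\ref{criticalglobal} classify the critical points: $0$ is a strict local maximum, the $2(d-1)$ points orthogonal to $v$ are strict saddles, and $\pm\alpha^\star v$ are the only local minima. The stable manifolds of the strict saddles and of the maximum are lower-dimensional and hence Lebesgue-null, by the Stable Manifold Theorem \cite[Theorem III.7]{shub}, as in Remark~\ref{others}. So for a generic $\mu_0$ the limit is $\mu^\star=\pm\alpha^\star v$.

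\emph{Local rate.} We will verify that $H:=\nabla^2\mathcal{R}_\infty^{\mathrm{ICL}}(\mu^\star)$ is positive definite and then linearize. Writing $\nabla\mathcal{R}_\infty^{\mathrm{ICL}}(\mu)=H(\mu-\mu^\star)+O(\|\mu-\mu^\star\|^2)$, we have
\[
\tfrac{d}{dt}\|\mu-\mu^\star\|^2\le-2(\hat{s}-\varepsilon)\|\mu-\mu^\star\|^2
\]
once $\|\mu(t)-\mu^\star\|\le\delta(\varepsilon)$. Such a $t_0$ exists because of the convergence above, and Grönwall's inequality gives the bound. This is the same argument as in Proposition~\ref{cr_inf}.

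\emph{Hessian computation.} Use the gradient form $\nabla\mathcal{R}=2(A\mu+B\alpha v)$ with $A=a_1r^2+a_2\alpha^2-a_3$ and $B=b_1r^2+b_2\alpha^2-b_3$. At $\mu^\star$ we have $A+B=0$. Differentiating gives
\[
H=2\bigl[A I+(A_\mu)\mu^{\star\top}\cdots\bigr].
\]
Concretely, $H$ acts on $v^\perp$ as $2A(\mu^\star)I$ and on $v$ as $2(A+B)+$ terms proportional to $(\alpha^\star)^2$ times derivatives of $A$ and $B$. This gives two eigenvalues: $2A(\alpha^\star)$ with multiplicity $d-1$, and $4(\alpha^\star)^2\,\partial_{\alpha^2}\tilde{\mathcal{R}}$-type combinations, namely $4(\alpha^\star)^2\bigl(2(a_1+a_2+b_1+b_2)\bigr)$ up to the exact factors. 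Then $\hat{s}$ is the minimum of the two.

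To get the asymptotics, substitute the Wishart moments $\E\Sigma=nV$, $\E\Sigma^2$, $\E[\Sigma\otimes\Sigma^2]$ with $V=\xi^2I+\theta vv^\top$ into the coefficients $a_i,b_i$ of \eqref{ab}. Then expand in $\xi^2\to0$. The perpendicular eigenvalue $2A$ equals $-2B$, which should vanish as $\xi\to0$ (the isotropic part gives the only curvature transverse to $v$), with leading term of order $\lambda\theta\xi^2 n(n^2+5n+2)/(n+4)$. The $v$-eigenvalue stays of order $\theta^2$, so the minimum is the perpendicular one. The factor $n+4$ will come from $(\alpha^\star)^2$ through the ratio of third- to fourth-moment Wishart coefficients.

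The main obstacle is this last step: computing the Wishart third and fourth moments correctly and checking the cancellation at leading order. The generic-convergence step reuses earlier results almost verbatim.
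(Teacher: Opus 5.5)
Your plan follows the paper's proof. It derives generic convergence from the classification in Propositions~\ref{families}--\ref{criticalglobal} together with the stable manifold theorem. There is one caveat, shared with the paper: for $d\ge3$ the orthogonal critical set is the whole sphere $\{w\perp v:\|w\|^2=a_3/a_1\}$, not $2(d-1)$ points, which is why the Hessian there has $d-2$ zero eigenvalues, so saddle avoidance needs the standard countable-cover form of the center-stable manifold argument. The rate then comes from linearization and Grönwall as in Proposition~\ref{cr_inf}, and $\hat s$ is the smaller Hessian eigenvalue at $\mu^\star$: $2A=2[(a_1+a_2)(\alpha^\star)^2-a_3]$ on $v^\perp$ versus exactly $4(\alpha^\star)^2(a_1+2a_2+b_2)=4(a_3+b_3)$ along $v$ (your extra factor $2$ is a slip). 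Since $A<b_3$ always, $\hat s=2A$ is given by \eqref{hats} for all parameters.

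The step you flag as the main obstacle needs no new Wishart moments, because they are already encoded in the explicit $a_i,b_i$ of \eqref{ab}. As $\xi^2\to0$, $(\alpha^\star)^2\to b_3/b_2=\frac{1}{\lambda(n+4)\theta}$, so $A\approx a_2b_3/b_2-a_3\approx\lambda n\theta\xi^2\bigl[\frac{(n+2)(n+5)}{n+4}-2\bigr]=\lambda n\theta\xi^2\frac{n^2+5n+2}{n+4}$, which gives the stated asymptotic for $\hat s=2A$.
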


We observe that a moderate level of isotropic noise $\xi^2$ can facilitate convergence. Moreover, the convergence rate increases linearly with the signal strength $\theta$ and quadratically with the degrees of freedom $n$.

\paragraph{Finite-prompt analysis.}
The transfer of the infinite-prompt analysis can be done to the finite-prompt one as we establish the uniform convergence on bounded sets of $\nabla^k\mathcal{R}_L^{\mathrm{ICL}}(\mu)$ to $\nabla^k\mathcal{R}_\infty^{\mathrm{ICL}}(\mu)$ for $k\in \{0,1,2\}$ as $L\rightarrow\infty$.
\begin{wrapfigure}[15]{r}{0.48\textwidth}
    \centering
    \vspace{-0.1cm}
    \includegraphics[width=1.\linewidth]{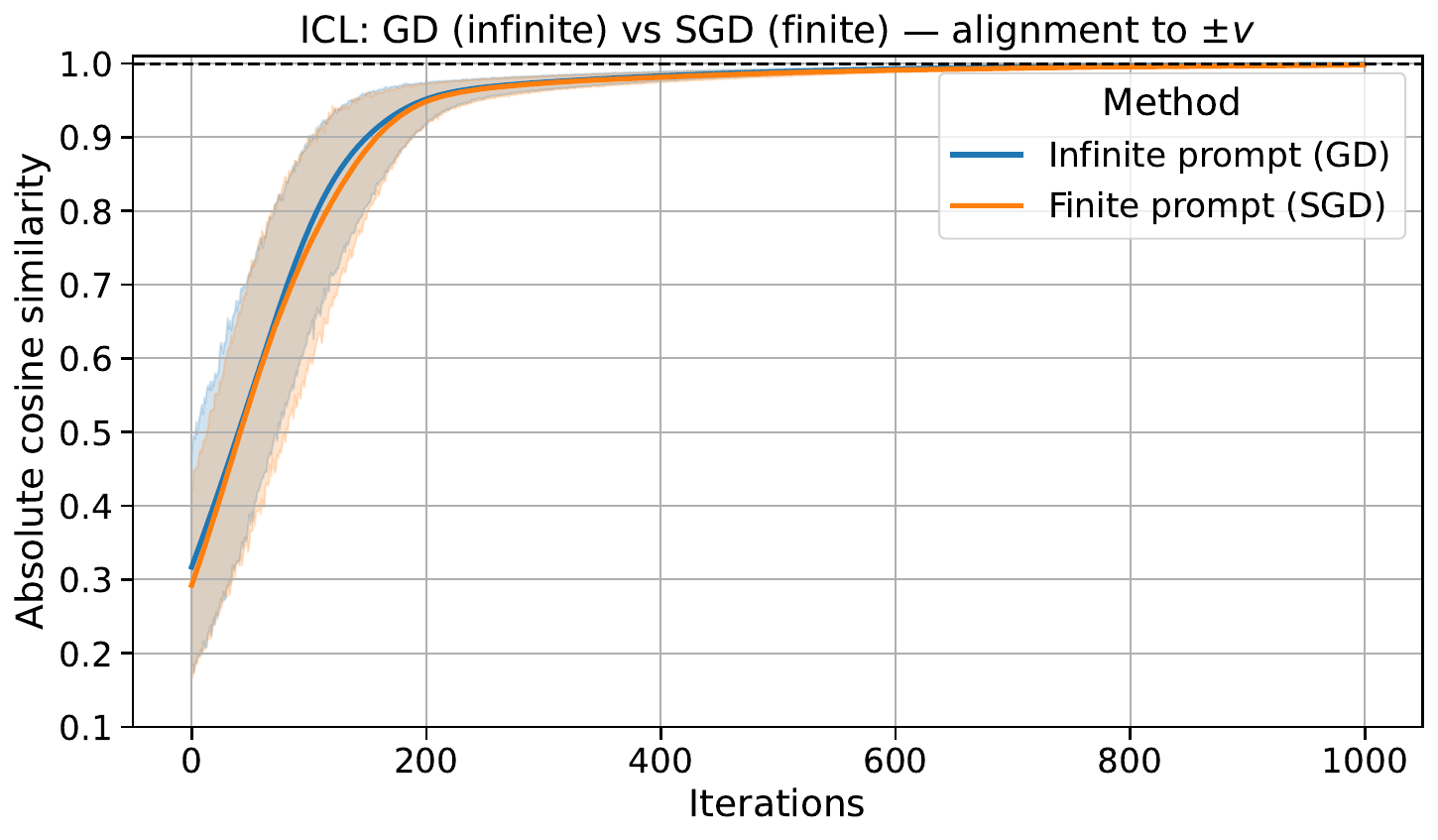}
    \caption{Alignment toward the signal direction $v$ over iterations: SGD on $\mathcal{R}_{L}^{\mathrm{ICL}}$ ($L=100$) vs GD on $\mathcal{R}_{\infty}^{\mathrm{ICL}}$}
    \label{comparison_ICL}
\end{wrapfigure} 
\begin{proposition}\label{uniform_c2_icl}
    For $\rho>0$, $k\in\{0,1,2\}$, \begin{align*}
    \lim_{L\rightarrow\infty}\sup_{\mu\in B(0,\rho)}\Vert \nabla^k \mathcal{R}_L^{\mathrm{ICL}}(\mu)- \nabla^k \mathcal{R}_\infty^{\mathrm{ICL}}(\mu)\Vert_F^2=0.\end{align*}
\end{proposition}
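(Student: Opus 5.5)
The plan is to reduce Proposition~\ref{uniform_c2_icl} to the fixed-covariance statement, Proposition~\ref{uniform_c2}, applied conditionally on $\Sigma$, and then integrate over $\Sigma\sim W_d(V,n)$. For $k\in\{0,1,2\}$ I will first justify exchanging $\nabla^k$ with $\mathbb{E}_{\Sigma\sim\mathcal D}$, so that $\nabla^k\mathcal R^{\mathrm{ICL}}_L(\mu)=\mathbb{E}_\Sigma[\nabla^k\mathcal R^{(\Sigma)}_{\mathrm{soft},L}(\mu)]$, and likewise for $L=\infty$. This requires a dominating function, which I will build from the bounds below. Jensen then gives
\[
\sup_{\mu\in B(0,\rho)}\big\|\nabla^k\mathcal R^{\mathrm{ICL}}_L(\mu)-\nabla^k\mathcal R^{\mathrm{ICL}}_\infty(\mu)\big\|_F^2\le \mathbb{E}_\Sigma\Big[\sup_{\mu\in B(0,\rho)}\big\|\nabla^k\mathcal R^{(\Sigma)}_{\mathrm{soft},L}(\mu)-\nabla^k\mathcal R^{(\Sigma)}_{\mathrm{soft},\infty}(\mu)\big\|_F^2\Big].
\]
The inner quantity tends to $0$ for almost every $\Sigma$ by item~\ref{hyp3} of Proposition~\ref{uniform_c2}. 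Simple spectrum is not needed there, and $\Sigma$ is a.s.\ positive definite once $n\ge d$. The conclusion will then follow from dominated convergence in $\Sigma$.

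The main obstacle is producing an $L$-independent integrable majorant. The constant $C(\rho,\Sigma,\lambda)$ and the rate $\psi_k$ in Proposition~\ref{uniform_c2} depend on $\Sigma$ through $\epsilon_k$, which degrades as $\mu^\top\Sigma\mu$ grows, so the rates are not uniform. I therefore avoid rates and bound each term separately.

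\textbf{Infinite-prompt side.} The term $\nabla^k\mathcal R^{(\Sigma)}_{\mathrm{soft},\infty}(\mu)$ is a polynomial in $\mu$ and in the entries of $\Sigma$ of degree at most $3$. Its supremum over $B(0,\rho)$ is therefore at most $c(\rho,\lambda)(1+\|\Sigma\|^3)$, which has all moments under a Wishart law.

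\textbf{Finite-prompt side.} Here I use the structure of $T_L$. The softmax weights are convex combinations, so $\|T_L\|\le\max_\ell\|X_\ell\|$. Differentiating in $\mu$ produces at most $k$ factors of the form $\lambda(X_1^\top\mu\, X_\ell+\mu^\top X_\ell\, X_1)$, each centered under the softmax weights, so the derivatives are bounded by a polynomial in $\rho$, $\lambda$ and $\max_\ell\|X_\ell\|$ of degree $2k+1$. Since $\mathbb{E}\max_{\ell\le L}\|X_\ell\|^p$ grows like $(\ln L)^{p/2}\|\Sigma\|^{p/2}$, this bound is not uniform in $L$ either.

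To handle this, I split on the event $\{\|\Sigma\|\le M\}$. On that event, the constants in Proposition~\ref{uniform_c2} can be chosen uniformly: inspecting its proof, the constants depend continuously on $\Sigma$ and $\epsilon_k\ge\epsilon_k(M)>0$. Hence the supremum is at most $C_M\sum_j\psi_j^{(M)}(L)\to0$ uniformly.

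On the complement, I need $\mathbb{E}_\Sigma\big[\sup_\mu\|\nabla^k\mathcal R^{(\Sigma)}_{\mathrm{soft},L}\|_F^2\,\mathbf 1_{\|\Sigma\|>M}\big]$ to be small uniformly in $L$. This is where the crude $\max_\ell$ bound fails because of the $\ln L$ growth. Instead I would use the finer estimate implicit in item~\ref{hyp1} of Proposition~\ref{uniform_c2}, combined with item~\ref{hyp2}: $\mathbb{E}\|D^kT_L\|^2\le 2\,\mathbb{E}\|D^kT_\infty\|^2+2\,\mathcal O(\psi_k)$, where I expect the $\mathcal O$ constant to be polynomial in $\|\Sigma\|$, uniformly in $L$. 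The super-polynomial decay of the Wishart tail (Gaussian-chaos moments) then makes this contribution vanish as $M\to\infty$.

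The step I expect to require the most care is verifying that the constants in Proposition~\ref{uniform_c2} grow at most polynomially in $\|\Sigma\|$ and $\|\Sigma^{-1}\|$. If $\|\Sigma^{-1}\|$ enters, I will also restrict to $\{\sigma_{\min}(\Sigma)\ge 1/M\}$; the inverse-Wishart moments are finite for $n$ large relative to $d$, and the general case follows by the same truncation.
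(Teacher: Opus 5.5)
Your plan matches the paper's proof: exchange $\nabla^k$ with $\mathbb{E}_\Sigma$, apply Jensen, and conclude by dominated convergence over $W_d(V,n)$. The key input is that the constant in Proposition~\ref{uniform_c2} grows polynomially in $\|\Sigma\|_2$ while $\sup_{L>1}\psi(L,\Sigma)\le 1+c^2\|\Sigma\|_2^2$ (with $c=20\lambda\rho^2$), so that $C(\Sigma)\sup_{L}\psi(L,\Sigma)$ is an integrable majorant. Your truncation on $\{\|\Sigma\|\le M\}$ is harmless but unnecessary, because the uniform-in-$L$ polynomial bound you invoke on the complement is already a global majorant; and the $\|\Sigma^{-1}\|$ contingency never arises, since $S=\mathbb{E}[\eta_k]\ge 1$ and the bound on $\mathbb{E}[S_\theta^{-p}]$ in Lemma~\ref{boundsnd} does not involve $\Sigma^{-1}$.
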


%{\color{gray}{\color{red}to put in the appendices and to summarize with words as it is very similar to what has been seen before}
%}
Following the argument of Proposition~\ref{criticalpointsl}, one proves that for $L$ large enough, the critical points of $\mathcal{R}_{L}^{\mathrm{ICL}}$ in a compact set correspond to perturbations of those of $\mathcal{R}_{\infty}^{\mathrm{ICL}}$. In particular, in this compact set there is one local maximum near $0$, two local minima near $\pm \alpha^\star v$, denoted $\pm\mu_{L,\parallel}^\star$, and possible saddle points located near the $2(d-1)$ orthogonal critical points of $\mathcal{R}_{\infty}^{\mathrm{ICL}}$ (see Proposition~\ref{criticalpointsicl} for more details). 
As a consequence, by proceeding as before, we obtain the following convergence and rate result for the ICL objective.

\begin{proposition}[ICL finite-prompt convergence and rate]
\label{prop:gd_convergence_icl}
Let $\rho>0$ be sufficiently large and $L\ge L_0$ as in Proposition~\ref{criticalpointsicl}. 
Consider the gradient flow
\[
\dot{\mu}(t) = -\nabla \mathcal{R}_L^{\mathrm{ICL}}(\mu(t)).
\]
Then, for almost every initialization $\mu_0\in B(0,\rho)$,
\[
\mu(t)\xrightarrow[t\to\infty]{} \mu_L^\star \in \{\pm \mu_{L,\parallel}^\star\},
\]

where $\mu_{L,\parallel}^\star \to \alpha^\star v$ as $L\to\infty$, with $\alpha^\star$ defined in \eqref{astar}. Moreover, there exist $t_0\ge 0$ and $\hat{s}_L>0$ such that for all $t\ge t_0$,
\[
\mathcal{R}_L^{\mathrm{ICL}}(\mu(t))-\mathcal{R}_L^{\mathrm{ICL}}(\mu_L^\star)
\le 
\big(\mathcal{R}_L^{\mathrm{ICL}}(\mu(t_0))-\mathcal{R}_L^{\mathrm{ICL}}(\mu_L^\star)\big)
e^{-\hat{s}_L (t-t_0)}.
\]

Besides, for every $\varepsilon>0$ small enough, there exists $t_0>0$ such that
\[
\|\mu(t)-\mu_L^\star\|
=
\mathcal{O}\big(e^{-(\hat{s}_L-\varepsilon)(t-t_0)}\big),
\]
with $
\hat{s}_L = \sigma_{\min}\big(\nabla^2 \mathcal{R}_L^{\mathrm{ICL}}(\mu_L^\star)\big) $, and $\hat{s}_L \xrightarrow[L\to\infty]{} \hat{s},$
where $\hat{s}$ is defined in \eqref{hats}.
\end{proposition}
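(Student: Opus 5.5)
We follow the same route as for Proposition~\ref{cr_l}, with Proposition~\ref{criticalpointsicl} playing the role of Proposition~\ref{criticalpointsl} and Proposition~\ref{uniform_c2_icl} replacing Proposition~\ref{uniform_c2}.

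\emph{Step 1: boundedness.} The first step is to confine the trajectories. $\mathcal{R}_\infty^{\mathrm{ICL}}$ is coercive: it is quartic in $\mu$ with positive leading part $\lambda^2\E[(\mu^\top\Sigma\mu)(\mu^\top\Sigma^2\mu)]$. Hence its sublevel sets are bounded. Mimicking Lemma~\ref{coercive_bounded} and Proposition~\ref{prop:bounded-finite-flow}, we combine this with the uniform $C^0$ and $C^1$ convergence of Proposition~\ref{uniform_c2_icl}. This gives that, for $L\ge L_0$ and $\mu_0\in B(0,\rho)$, the flow of $\mathcal{R}_L^{\mathrm{ICL}}$ stays in a fixed ball $B(0,\rho')$ for all $t\ge0$. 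Enlarging $\rho$ if needed, Proposition~\ref{criticalpointsicl} then applies on that ball.

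\emph{Step 2: convergence to a single critical point.} $\mathcal{R}_L^{\mathrm{ICL}}$ is real-analytic in $\mu$, since it is an expectation of analytic softmax expressions with integrable Gaussian/Wishart moments on compacts. So each bounded trajectory has finite length by the Łojasiewicz gradient inequality and converges to a single critical point in $B(0,\rho')$. By Proposition~\ref{criticalpointsicl} there are finitely many such points, all nondegenerate for $L$ large, because nondegeneracy persists under the $C^2$-uniform convergence. Among them, the perturbation of $0$ is a strict maximum, and the perturbed orthogonal critical points are strict saddles. The Stable Manifold Theorem~\cite[Theorem III.7]{shub} shows that their basins (stable manifolds of dimension $<d$) have Lebesgue measure zero. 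Finitely many null sets remain null, so for a.e.\ $\mu_0$ the limit is $\mu_L^\star\in\{\pm\mu_{L,\parallel}^\star\}$.

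\emph{Step 3: rates.} At $\mu_L^\star$ the Hessian $H_L=\nabla^2\mathcal{R}_L^{\mathrm{ICL}}(\mu_L^\star)$ is positive definite. By Taylor expansion, on a neighborhood $U$,
\[
\|\nabla\mathcal{R}_L^{\mathrm{ICL}}(\mu)\|^2\ge c\,\big(\mathcal{R}_L^{\mathrm{ICL}}(\mu)-\mathcal{R}_L^{\mathrm{ICL}}(\mu_L^\star)\big),
\]
which is the Łojasiewicz inequality with exponent $1/2$. Let $t_0$ be such that $\mu(t)\in U$ for $t\ge t_0$. Then $\frac{d}{dt}(\mathcal{R}-\mathcal{R}^\star)=-\|\nabla\mathcal{R}\|^2\le -c(\mathcal{R}-\mathcal{R}^\star)$, and Grönwall gives the exponential decay of the values. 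The constant can be taken as $c=\hat s_L$ up to shrinking $U$; alternatively any positive $s$ suffices, and we call it $\hat s_L$.

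For the iterates, write $\dot e=-H_Le+O(\|e\|^2)$ with $e=\mu-\mu_L^\star$. Then $\frac{d}{dt}\|e\|^2\le-2(\hat s_L-\varepsilon)\|e\|^2$ once $\|e\|$ is small enough that the quadratic remainder is dominated by $\varepsilon\|e\|$. This yields the $\mathcal{O}(e^{-(\hat s_L-\varepsilon)(t-t_0)})$ bound.

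\emph{Step 4: limit of the rate.} To show $\hat s_L\to\hat s$, use $\mu_L^\star\to\pm\alpha^\star v$ and the uniform $C^2$ convergence on $B(0,\rho)$. Together with the continuity of $\nabla^2\mathcal{R}_\infty^{\mathrm{ICL}}$, these give $H_L\to\nabla^2\mathcal{R}_\infty^{\mathrm{ICL}}(\mu^\star)$. Weyl's inequality then gives convergence of the smallest eigenvalue. The same argument shows $\hat s_L>0$ for $L$ large, since $\hat s>0$.

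\emph{Main obstacle.} The delicate point is Step 1 together with the genericity in Step 2. Boundedness must hold uniformly over the infinite time horizon. For this we use that the finite-$L$ risk is eventually decreasing along the flow: we compare sublevel sets of $\mathcal{R}_L^{\mathrm{ICL}}$ with those of $\mathcal{R}_\infty^{\mathrm{ICL}}$ via the $C^0$ bound, so the trajectory can never exit. We also need that no critical points of $\mathcal{R}_L^{\mathrm{ICL}}$ appear near the boundary of the ball. This is where $\rho$ must be chosen strictly larger than the norms of all infinite-prompt critical points and $\nabla\mathcal{R}_\infty^{\mathrm{ICL}}$ must be bounded below on the annulus. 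We then transfer that lower bound to $\nabla\mathcal{R}_L^{\mathrm{ICL}}$ by Proposition~\ref{uniform_c2_icl}.
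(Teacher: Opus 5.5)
Your route is the paper's: confinement, Proposition~\ref{criticalpointsicl}, the stable manifold theorem, local rates at the nondegenerate minimizers, and $\hat s_L\to\hat s$ via $C^2$ convergence and Weyl. Steps 1, 3 and 4 are fine. The gap is in Step 2, where you claim the critical points of $\mathcal{R}_L^{\mathrm{ICL}}$ in the ball are finitely many and all nondegenerate because nondegeneracy persists under $C^2$ convergence. Lemma~\ref{persistence} requires the limiting critical point to be nondegenerate, and the orthogonal critical points of $\mathcal{R}_\infty^{\mathrm{ICL}}$ are not. By Proposition~\ref{criticalglobal}, their Hessian $2B(r,0)vv^\top+4a_1ww^\top$ vanishes on $\{v,w\}^\perp$. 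This is exactly why Proposition~\ref{criticalpointsicl} only gives neighborhoods $U_j$ in which every critical point is a strict saddle, and says nothing about finiteness or nondegeneracy. In fact, since $\mathcal{R}_\infty^{\mathrm{ICL}}$ depends only on $(r^2,\alpha^2)$, its orthogonal critical set is the whole sphere $\{w\perp v:\ \|w\|^2=a_3/a_1\}$. Moreover, $\mathcal{R}_L^{\mathrm{ICL}}$ is invariant under every orthogonal $Q$ with $Qv=\pm v$, because $Q^\top\Sigma Q\sim W_d(V,n)$ and the attention layer is orthogonally equivariant. So for $d\ge 3$ its critical points near that sphere also form continua. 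Hence your ``finitely many null sets'' step is unavailable. Convergence of a bounded trajectory to a single critical point can no longer come from finiteness of the critical set either. It then rests entirely on real-analyticity of $\mathcal{R}_L^{\mathrm{ICL}}$, which you assert without proof; moment bounds alone do not make a Gaussian average of functions that are analytic only in a strip real-analytic.

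The statement is still true, but you need an additional idea. The cleanest fix uses the symmetry. Invariance under reflections through hyperplanes containing $v$ and $w$ forces $\nabla\mathcal{R}_L^{\mathrm{ICL}}(\alpha v+w)\in\mathrm{span}(v,w)$. So the flow started at $\mu_0=\alpha_0 v+w_0$ with $w_0\neq 0$ stays in the half-plane $\{\alpha v+s\,w_0/\|w_0\|:\ s>0\}$, and there it is the gradient flow of the restriction. In that plane the orthogonal critical point is nondegenerate, with Hessian eigenvalues $4a_1r^2>0$ and $2B(r,0)<0$. Lemma~\ref{persistence} in dimension two, together with Proposition~\ref{uniform_c2_icl} restricted to the plane, then gives finitely many nondegenerate critical points. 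The $\omega$-limit argument yields convergence without any analyticity, and the planar stable manifold theorem makes the bad set null in the half-plane. Integrating in polar coordinates around the $v$-axis gives the almost-everywhere statement in $\mathbb{R}^d$.

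Alternatively, combine item~3 of Proposition~\ref{criticalpointsicl} with center-stable manifolds at every strict saddle and a Lindel\"of countable-subcover argument, as in \citep{lee2016gradient, panageas2017gradient}. You would still have to prove convergence of trajectories separately. For $d=2$ the orthogonal points are nondegenerate and your argument goes through. In all cases the rate statements are unaffected, since they only involve $\pm\mu_{L,\parallel}^\star$.
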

Consequently, minimizing the finite-prompt ICL risk recovers an estimator that asymptotically aligns with the spike direction $v$. This provides an attention-based analogue of spiked PCA in which the latent direction is recovered through training dynamics rather than spectral decomposition. Numerical results in Figure~\ref{comparison_ICL} illustrate the alignment toward the signal direction for both gradient descent on $\mathcal{R}_{\infty}^{\mathrm{ICL}}$ (assuming direct access to $V=\xi^2I_d+\theta vv^\top$) and stochastic gradient descent on $\mathcal{R}_{L}^{\mathrm{ICL}}$.

% This setting connects our analysis to the classical spiked covariance model, a central framework in high-dimensional PCA where one studies the recovery of a low-rank signal direction embedded in isotropic noise. In the usual spectral perspective, the main question is whether the leading empirical eigenvector of a sample covariance matrix aligns with the latent spike, with sharp phase-transition phenomena arising in high-dimensional regimes. Here, the question is different: the spike is not recovered by explicitly diagonalizing a covariance matrix, but through the training dynamics of a rank-one softmax attention layer. Our result can therefore be viewed as an attention-based counterpart of spiked PCA, showing that the population ICL objective drives the attention parameter toward the latent covariance direction.

\section*{Code availability}
Our code is available at \\ \url{https://github.com/rodrigomaulen/Attention_based_PCA}.

\bibliographystyle{plainnat}
\bibliography{cites}
\appendix
\section*{Appendix}
This appendix gathers additional results and technical details supporting the main text. Section \ref{sec:app:linear_version} presents a linear attention model that similarly recovers the principal component. Sections \ref{sec:proof} and \ref{sec:lemmas} gather the proofs of the main results as well as the technical lemmas required throughout the paper. Finally, in Section \ref{sec:numerical} we present numerical experiments that illustrate and support our theoretical findings. 
\section{Linear attention layer}\label{sec:app:linear_version}
In line with \citep{clustering}, for $\mu\in \mathbb{R}^d$ and $X_1,\ldots,X_L$ i.i.d. according to $\mathcal{N}(0,\Sigma)$, we introduce the linear attention operator given by
\begin{equation}\label{simplified}
    T_L^{\mathrm{lin},\mu}(\mathbb{X})_\ell=\frac{\lambda}{L}\sum_{k=1}^L ( X_\ell^\top\mu\mu^\top X_k)X_k.
\end{equation}
We first note that by the strong law of large numbers, for $X\sim \mathcal{N}(0,\Sigma)$,
\[\frac{1}{L}\sum_{k=1}^L \mu^\top X_k X_k\;\xrightarrow[L\to\infty]{\mathrm{a.s.}}\;\mathbb{E}[\mu^\top X X]=\Sigma\mu.\]
Then,
\[
T_L^{\mathrm{lin},\mu}(\mathbb{X})_1
=\frac{\lambda}{L}\sum_{k=1}^L (X_1^\top \mu \,\mu^\top X_k)\,X_k
\;\xrightarrow[L\to\infty]{\mathrm{a.s.}}\;
\lambda\,\Sigma \mu\mu^\top X_1.
\]
This shows that $T_L^{\mathrm{lin},\mu}(\mathbb{X})_1$ and $T_L^{\mathrm{soft},\mu}(\mathbb{X})_1$ share the same almost sure limit when $L\rightarrow\infty$, i.e.,
\[\lim_{L\rightarrow\infty}T_L^{\mathrm{lin},\mu}(\mathbb{X})_1=\lim_{L\rightarrow\infty}T_L^{\mathrm{soft},\mu}(\mathbb{X})_1=T_{\infty}^{\mathrm{soft},\mu}(X_1)=\lambda\Sigma\mu\mu^\top X_1,\quad a.s..\]

\subsection{Risk}

We define the associated population risk as
\begin{equation}\label{rlin}
    \mathcal{R}_{\mathrm{lin},L}(\mu)= \mathbb{E}\left[\Vert X_1-T_L^{\mathrm{lin},\mu}(\mathbb{X})_1\Vert^2\right].
\end{equation}

We start by giving an expression of the risk as a function of the covariance matrix $\Sigma$ of the input sequence. 
\begin{proposition}
    Let $a(\mu) = \mu^T \Sigma \mu$ and $b(\mu) = \mu^T \Sigma^2 \mu$. We have that \begin{equation}\label{rlinl}
    \mathcal{R}_{\mathrm{lin},L}(\mu) = \Tr(\Sigma)-\frac{2\lambda}{L}\Tr(\Sigma) a-\frac{2\lambda(L+1)}{L}b+\frac{\lambda^2(L+2)\Tr(\Sigma)}{L^2}a^2+\frac{\lambda^2(L+2)(L+3)}{L^2}ab.
\end{equation}
\end{proposition}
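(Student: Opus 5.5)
The plan is to expand the square and compute each term with Gaussian moment identities (Isserlis/Wick). Write $s_k:=\mu^\top X_k$, so that $T_L^{\mathrm{lin},\mu}(\mathbb{X})_1=\frac{\lambda}{L}\,s_1\sum_{k=1}^L s_kX_k$. Then
\[
\mathcal{R}_{\mathrm{lin},L}(\mu)=\E\|X_1\|^2-2\,\E\big[X_1^\top T_L^{\mathrm{lin},\mu}(\mathbb{X})_1\big]+\E\big\|T_L^{\mathrm{lin},\mu}(\mathbb{X})_1\big\|^2 .
\]
The first term equals $\Tr(\Sigma)$. For the other two, I will split every sum over token indices according to which indices coincide with $1$ or with each other, and use independence across tokens to factor the expectations.

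\textbf{Basic moments.} For $X\sim\mathcal N(0,\Sigma)$ and $s=\mu^\top X$, I need:
\begin{itemize}
\item $\E[s^2]=a$ and $\E[sX]=\Sigma\mu$, hence $\|\E[sX]\|^2=b$;
\item $\E[s^2\|X\|^2]=a\Tr(\Sigma)+2b$;
\item $\E[s^3X]=3a\,\Sigma\mu$;
\item $\E[s^4\|X\|^2]=3a^2\Tr(\Sigma)+12ab$.
\end{itemize}
Each follows from Wick's formula by summing over pairings. For example, in $\E[s^4 X_i X_i]$ the pairing of $X_i$ with $X_i$ gives $3a^2\Sigma_{ii}$, and the $4\cdot 3$ pairings of each $X_i$ with some $s$ give $12a\,(\Sigma\mu)_i^2$. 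As a sanity check, for $d=1$ this recovers $15\mu^4\sigma^3$.

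\textbf{Cross term.} In $\E[X_1^\top T]=\frac{\lambda}{L}\sum_k\E[s_1s_kX_1^\top X_k]$, the $L-1$ indices $k\neq1$ each contribute $\E[s_1X_1]^\top\E[s_kX_k]=b$. The index $k=1$ contributes $a\Tr(\Sigma)+2b$. Altogether the cross term is $\frac{\lambda}{L}\big[(L+1)b+a\Tr(\Sigma)\big]$, which yields the two linear-in-$\lambda$ terms of \eqref{rlinl}.

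\textbf{Quadratic term.} Write $\E\|T\|^2=\frac{\lambda^2}{L^2}\sum_{k,m}\E[s_1^2s_ks_mX_k^\top X_m]$ and split into five cases:
\begin{itemize}
\item $k=m=1$ (one term): $3a^2\Tr(\Sigma)+12ab$;
\item exactly one of $k,m$ equal to $1$ ($2(L-1)$ terms): $\E[s_1^3X_1]^\top\Sigma\mu=3ab$ each;
\item $k=m\neq1$ ($L-1$ terms): $a\big(a\Tr(\Sigma)+2b\big)$ each;
\item $k\neq m$ with both $\neq1$ ($(L-1)(L-2)$ terms): $ab$ each.
\end{itemize}
Summing, the coefficient of $a^2\Tr(\Sigma)$ is $3+(L-1)=L+2$. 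The coefficient of $ab$ is
\[
12+6(L-1)+2(L-1)+(L-1)(L-2)=L^2+5L+6=(L+2)(L+3),
\]
which matches \eqref{rlinl}.

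\textbf{Main obstacle.} The main risk is bookkeeping rather than any conceptual step. In particular, the mixed terms with exactly one index equal to $1$ are easy to discard by mistake as "odd," but they are not zero, because $s_mX_m$ has nonzero mean $\Sigma\mu$. The sixth-order moment $\E[s^4\|X\|^2]$ also needs a careful count of pairings. I will double-check both by the scalar case $d=1$, where everything reduces to the moments $\E[X^{2p}]=(2p-1)!!\,\sigma^p$.
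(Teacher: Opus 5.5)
Your proposal is correct and follows essentially the paper's proof: you expand the square, split the token sums by which indices coincide, factor by independence, and evaluate the needed second-, fourth- and sixth-order Gaussian moments via Isserlis' theorem (the paper collects exactly these identities in Proposition \ref{computations}). The only difference is bookkeeping. The paper first peels off the self-interaction term $\frac{\lambda}{L}(X_1^\top\mu)^2X_1$ and expands $\|A-B\|^2$, whereas you expand $\|X_1-T\|^2$ directly and split $\sum_{k,m}$; your counts giving $L+2$ and $(L+2)(L+3)$ are right.
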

\begin{proof} The computation starts as follows
    \begin{align*}
        \mathcal{R}_{\mathrm{lin},L}(\mu)&=\mathbb{E}\left[\Vert X_1-\frac{\lambda}{L}\sum_{k=1}^LX_1^\top\mu\mu^\top X_k X_k\Vert^2\right]\\
        &=\mathbb{E}\left[\Vert X_1-\frac{\lambda}{L}\sum_{k=2}^LX_1^\top\mu\mu^\top X_k X_k-\frac{\lambda}{L}(X_1^\top\mu)^2 X_1\Vert^2\right]\\
        &=\mathbb{E}\left[\Vert X_1-\frac{\lambda}{L}\sum_{k=2}^LX_1^\top\mu\mu^\top X_k X_k\Vert^2\right]+\frac{\lambda^2}{L^2}\mathbb{E}\left[(X_1^\top\mu)^4\Vert X_1\Vert^2\right]-2\frac{\lambda}{L}\mathbb{E}\left[(X_1^\top\mu)^2\Vert  X_1\Vert^2\right]\\
        &\quad+2\frac{\lambda^2}{L^2}\sum_{k=2}^L\mathbb{E}\left[(X_1^\top\mu)^3\mu^\top X_k X_1^\top X_k\right]\\
        \end{align*}
        And using Proposition \ref{computations}, we obtain that
        \begin{align*}
          &\mathbb{E}\left[\Vert X_1-\frac{\lambda}{L}\sum_{k=2}^LX_1^\top\mu\mu^\top X_k X_k\Vert^2\right]\\  
        &=\mathbb{E}\left[\Vert X_1\Vert^2\right]-2\frac{\lambda}{L}\sum_{k=2}^L \mathbb{E}\left[X_1^\top\mu\mu^\top X_k X_1^\top X_k\right]+\frac{\lambda^2}{L^2}\sum_{k=2}^L \mathbb{E}[(X_1^\top\mu)^2]\mathbb{E}[(X_k^\top\mu)^2\Vert X_k\Vert^2]\\
        &\quad+2\frac{\lambda^2}{L^2}\sum_{2\leq k<j\leq L}\mathbb{E}[(X_1^\top\mu)^2]\mathbb{E}[X_k^\top\mu X_j^\top\mu X_k^\top X_j]\\
        &=\Tr[\Sigma]-2\lambda(L-1)b+\frac{\lambda^2}{L^2}(L-1)[\Tr(\Sigma)a^2 + 2ab]+\frac{\lambda^2}{L^2}(L-1)(L-2)ab,
    \end{align*}
    where we used
    \begin{align*}
        \mathbb{E}\left[(X_1^\top\mu)^4\Vert X_1\Vert^2\right]=3\Tr(\Sigma)a^2+12ab,
    \end{align*}
    \begin{align*}
        \mathbb{E}\left[(X_1^\top\mu)^2\Vert  X_1\Vert^2\right]=\Tr(\Sigma)a+2b,
    \end{align*}
    \begin{align*}
        \mathbb{E}\left[(X_1^\top\mu)^3\mu^\top X_2 X_1^\top X_2\right]=3ab.
    \end{align*}
    Finally, 
    \begin{align*}
    \mathcal{R}_{\mathrm{lin},L}(\mu)&=\Tr[\Sigma]-2\frac{\lambda}{L}(L-1)b+\frac{\lambda^2}{L^2}(L-1)[\Tr(\Sigma)a^2 + 2ab]+\frac{\lambda^2}{L^2}(L-1)(L-2)ab\\
    &\quad+\frac{\lambda^2}{L^2}(3\Tr(\Sigma)a^2+12ab)-2\frac{\lambda}{L}(\Tr(\Sigma)a+2b)+6\frac{\lambda^2}{L^2}(L-1) ab\\
    &=\Tr(\Sigma)-2\frac{\lambda}{L}\Tr(\Sigma) a-2\frac{\lambda}{L}(L+1)b+\frac{\lambda^2}{L^2}(L+2)\Tr(\Sigma)a^2+\frac{\lambda^2}{L^2}(L+2)(L+3)ab.
\end{align*}
\end{proof}

\begin{comment}
\subsection{Simplification}
Let $\lambda > 0$ and $L \geq 1$. We consider the minimization of the following objective function $\mathcal{R}_{\mathrm{lin},L}(\mu)$:
\begin{equation}
    \mathcal{R}_{\mathrm{lin},L}(\mu) = \Tr(\Sigma)-2\lambda\Tr(\Sigma) a-2\frac{\lambda}{L}(L+1)b+\lambda^2(L+2)\Tr(\Sigma)a^2+\lambda^2(L+2)(L+3)ab.
\end{equation}
\end{comment}

\subsection{Optimization landscape analysis}

We first express the gradient of $\nabla \mathcal{R}_{\mathrm{lin},L}$ with respect to $\mu$, (using $\nabla a = 2\Sigma \mu$ and $\nabla b = 2\Sigma^2 \mu$) as
\begin{align*}
    \nabla \mathcal{R}_{\mathrm{lin},L}(\mu) &=  -4 \frac{\lambda}{L}  \Tr(\Sigma)  \Sigma \mu
   - 4 \frac{\lambda}{L}(L+1)   \Sigma^2 \mu + 4 \frac{\lambda^2}{L^2}(L+2)   \Tr(\Sigma)   a   \Sigma \mu \\
&\quad + 2 \frac{\lambda^2}{L^2}(L+2)(L+3) \Big( b   \Sigma \mu + a   \Sigma^2 \mu \Big),\\
    &=\alpha(\mu)\Sigma \mu+\beta(\mu)\Sigma^2 \mu,
\end{align*}
where $\alpha(\mu)= -4 \frac{\lambda}{L}  \Tr(\Sigma) + 4 \frac{\lambda^2}{L^2}(L+2)   \Tr(\Sigma)   a(\mu) + 2\frac{\lambda^2}{L^2}(L+2)(L+3)   b(\mu)$, and $\beta(\mu)= -4 \frac{\lambda}{L}(L+1) + 2\frac{\lambda^2}{L^2}(L+2)(L+3)   a(\mu)$. Setting $\nabla \mathcal{R}_{\mathrm{lin},L}(\mu) = 0$ leads to 
\[
    \alpha(\mu)\Sigma \mu+\beta(\mu)\Sigma^2 \mu = 0.
\]
We first note that $\mu=0$ is a critical point. Now consider the case $\mu\neq 0$, multiplying by $\Sigma^{-1}$ (since $\Sigma$ is invertible) gives
\begin{equation}\label{alphabeta}
    \alpha(\mu) \mu+\beta(\mu)\Sigma \mu = 0.
\end{equation}

This equation implies that $\Sigma \mu$ is aligned with the direction of $\mu$, i.e., $\Sigma \mu = -\frac{\alpha(\mu)}{\beta(\mu)} \mu,$ whenever $\beta(\mu)\neq 0$. Assume by contradiction that $\beta(\mu)=0$, then $\alpha(\mu)$ would be fixed accordingly, and the critical point condition would further impose $\alpha(\mu)=0$, which would in turn fix $b(\mu)$ to a negative value, which is contradictory with its definition.  Hence, for any non-zero critical point, one must have $\beta(\mu)\neq 0$. It follows that $\mu$ must be an eigenvector of $\Sigma$ with eigenvalue $-\frac{\alpha(\mu)}{\beta(\mu)}$. 

It follows that the critical points are given by $\mu=0$ and points of the form $\gamma_i u_i$ for $i=1,\ldots,d$ with $u_i$ are unit eigenvectors of $\Sigma$ with associated eigenvalue $\sigma_i>0$, and $\gamma_i\neq 0$. Plugging $\mu=\gamma_i u_i$ into the equation \eqref{alphabeta} and simplifying we get the condition $$-\Tr(\Sigma)-(L+1)\sigma_i+\frac{\lambda}{L}(L+2)\Tr(\Sigma)\gamma_i^2\sigma_i+\frac{\lambda}{L}(L+2)(L+3)\gamma_i^2\sigma_i^2=0.$$
Solving for $\gamma_i$, we obtain $$\gamma_i=\pm \sqrt{\frac{ \Tr(\Sigma) + (L+1)\sigma_{i}}{\frac{\lambda}{L}  \sigma_{i} (L+2) (\Tr(\Sigma) + (L+3)\sigma_{i})}}.$$

\begin{proposition}[Characterization of critical points]\label{characterization}
Let $\Sigma\in\mathbb{R}^{d\times d}$ be symmetric positive definite with simple spectrum, and let
$(\sigma_i,u_i)_{i=1}^d$ be its eigenpairs, where $u_i$ are unit eigenvectors and $\sigma_1>\ldots>\sigma_d$.
For $i\in\{1,\ldots,d\}$, define
\[
\mu_i^{\pm} = \pm \gamma_i^\star u_i,
\qquad
\gamma_i^\star
= \sqrt{\frac{ \Tr(\Sigma) + (L+1)\sigma_{i}}{\frac{\lambda}{L}  \sigma_{i} (L+2) (\Tr(\Sigma) + (L+3)\sigma_{i})}}.
\]

Then $\mathrm{crit}(\mathcal{R}_{\mathrm{lin},L})=\{0\}\cup\{\mu_i^{\pm}: i= 1,\ldots,d\}$. We have that $\nabla^2 \mathcal{R}_{\mathrm{lin},L}(0)$ is negative definite, thus $0$ is a local maxima. Moreover, the Hessian $\nabla^2 \mathcal{R}_{\mathrm{lin},L}(\mu_i^{\pm})$ is diagonal in the eigenbasis of $\Sigma$. The eigenvalue of
$\nabla^2 \mathcal{R}_{\mathrm{lin},L}(\mu_i^{\pm})$ associated with the eigenvector $u_j$ is given by
\[
\begin{cases} 
& 8\frac{\lambda}{L}\sigma_i(\Tr(\Sigma)+(L+1)\sigma_i) \text{, if } j=i,\\
& 2\frac{\lambda}{L}\sigma_j(\sigma_i-\sigma_j)\frac{(L-1)\Tr(\Sigma)+(L+1)(L+3)\sigma_i}{\Tr(\Sigma)+(L+3)\sigma_i}\text{, if } j\neq i.
\end{cases}
\]

In particular, if $i=1$, the Hessian $\nabla^2 \mathcal{R}_{\mathrm{lin},L}(\mu_1^{\pm})$ is positive definite.
If $i>1$, the Hessian $\nabla^2 \mathcal{R}_{\mathrm{lin},L}(\mu_i)$ has both positive and negative
eigenvalues. Consequently, $\mu_1^{\pm}=\pm\alpha_1^\star u_1$ are global minimizers of $\mathcal{R}_{\mathrm{lin},L}$,
whereas $\mu_i^{\pm}=\pm\gamma_i^\star u_i$ is a strict saddle point for every $i>1$.
\end{proposition}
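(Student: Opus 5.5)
The critical set has already been identified in the computation preceding the statement: $0$ together with $\pm\gamma_i^\star u_i$. It remains to compute the Hessian at each critical point and to read off its signature.

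\textbf{Hessian formula.} Differentiating $\nabla \mathcal{R}_{\mathrm{lin},L}(\mu)=\alpha(\mu)\Sigma\mu+\beta(\mu)\Sigma^2\mu$, and using $\nabla a=2\Sigma\mu$ and $\nabla b=2\Sigma^2\mu$, gives
\[
\nabla^2\mathcal{R}_{\mathrm{lin},L}(\mu)=\alpha(\mu)\Sigma+\beta(\mu)\Sigma^2+\Sigma\mu\,\nabla\alpha(\mu)^\top+\Sigma^2\mu\,\nabla\beta(\mu)^\top,
\]
where
\[
\nabla\alpha=8\tfrac{\lambda^2}{L^2}(L+2)\Tr(\Sigma)\Sigma\mu+4\tfrac{\lambda^2}{L^2}(L+2)(L+3)\Sigma^2\mu,
\qquad
\nabla\beta=4\tfrac{\lambda^2}{L^2}(L+2)(L+3)\Sigma\mu .
\]

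At $\mu=0$ we have $a=b=0$, so the Hessian reduces to $-4\frac{\lambda}{L}\Tr(\Sigma)\Sigma-4\frac{\lambda}{L}(L+1)\Sigma^2$. This is negative definite, so $0$ is a strict local maximum.

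\textbf{Hessian at $\mu=\gamma u_i$.} Here $\Sigma\mu=\sigma_i\mu$ and $\Sigma^2\mu=\sigma_i^2\mu$. The rank-one terms are therefore multiples of $u_iu_i^\top$, and the Hessian is diagonal in the eigenbasis $(u_j)$.

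For $j\neq i$, the eigenvalue is $\alpha\sigma_j+\beta\sigma_j^2=\sigma_j(\alpha+\beta\sigma_j)$. The critical-point equation \eqref{alphabeta} gives $\alpha=-\beta\sigma_i$, so this equals $\beta\,\sigma_j(\sigma_j-\sigma_i)$. Substituting $a=\gamma_i^{\star2}\sigma_i$ into $\beta=-4\frac{\lambda}{L}(L+1)+2\frac{\lambda^2}{L^2}(L+2)(L+3)a$, and using the closed form of $\gamma_i^{\star2}$, should give
\[
-\beta=2\tfrac{\lambda}{L}\,\frac{(L-1)\Tr(\Sigma)+(L+1)(L+3)\sigma_i}{\Tr(\Sigma)+(L+3)\sigma_i}.
\]
This follows by putting $-2(L+1)\bigl(\Tr(\Sigma)+(L+3)\sigma_i\bigr)+(L+3)\bigl(\Tr(\Sigma)+(L+1)\sigma_i\bigr)$ over a common denominator. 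In particular $\beta<0$.

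For $j=i$, the eigenvalue is $\sigma_i(\alpha+\beta\sigma_i)+\sigma_i\gamma^2\bigl(u_i^\top\nabla\alpha+\sigma_i u_i^\top\nabla\beta\bigr)/\gamma$. The first term vanishes by stationarity, so only the rank-one terms remain. Evaluating them and simplifying with the formula for $\gamma_i^{\star2}$ should yield $8\frac{\lambda}{L}\sigma_i\bigl(\Tr(\Sigma)+(L+1)\sigma_i\bigr)$.

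\textbf{Signature and global minimality.} With $\beta<0$, the eigenvalue in direction $u_j$ has the sign of $\sigma_i-\sigma_j$, and the $u_i$ eigenvalue is positive. For $i=1$ all eigenvalues are positive. For $i>1$, the direction $u_1$ gives a negative eigenvalue and the direction $u_i$ a positive one, so these points are strict saddles.

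For global minimality I use coercivity: the quartic term $\frac{\lambda^2}{L^2}(L+2)(L+3)ab$ dominates at infinity. Hence a global minimizer exists, and it must be a critical point. Since $0$ and the saddles are not local minima, the minimizer lies in $\{\mu_1^\pm\}$. By the symmetry $\mu\mapsto-\mu$, both points are global minimizers.

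\textbf{Main obstacle.} The main difficulty is the algebraic simplification of the $u_i$-direction eigenvalue and of $\beta$ at $\gamma_i^\star$ to the stated closed forms. It is routine but error-prone. To control it, I will first verify the reduction $\alpha=-\beta\sigma_i$ and express everything in the variable $s=\lambda\gamma^2\sigma_i$, which appears linearly in the stationarity condition.
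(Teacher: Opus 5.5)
Your proposal is correct and follows essentially the same route as the paper: you write the Hessian as $\alpha(\mu)\Sigma+\beta(\mu)\Sigma^2$ plus rank-one terms along $\Sigma\mu$, evaluate it at $0$ and at $\pm\gamma_i^\star u_i$, and simplify using the closed form of $(\gamma_i^\star)^2$; your algebra for $-\beta$ and for the $u_i$-eigenvalue checks out. Using stationarity $\alpha=-\beta\sigma_i$ to factor the $u_j$-eigenvalue as $-\beta\,\sigma_j(\sigma_i-\sigma_j)$ is slightly cleaner bookkeeping than the paper's direct substitution, and your coercivity-plus-symmetry argument makes explicit the global-minimality step that the paper only asserts with ``Consequently''.
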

\begin{proof} 

We compute the Hessian of $\mathcal{R}_{\mathrm{lin},L}$ at a generic $\mu$,
    \begin{align*}
        \nabla^2 \mathcal{R}_{\mathrm{lin},L}(\mu)&=\Big(-4\frac{\lambda}{L}\Tr(\Sigma)+4\frac{\lambda^2}{L^2}(L+2)\Tr(\Sigma) a(\mu)+2\frac{\lambda^2}{L^2}(L+2)(L+3) b(\mu)\Big)\Sigma
\\
&\quad+\Big(-4\frac{\lambda}{L}(L+1)
+2\frac{\lambda^2}{L^2}(L+2)(L+3) a(\mu)\Big)\Sigma^{2}+8\frac{\lambda^2}{L^2}(L+2)\Tr(\Sigma)\Sigma\mu(\Sigma\mu)^{\top}
\\
&\quad +4\frac{\lambda^2}{L^2}(L+2)(L+3)\Big(\Sigma\mu(\Sigma^{2}\mu)^{\top}+\Sigma^{2}\mu(\Sigma\mu)^{\top}\Big).
    \end{align*}
    Then $$\nabla^2 \mathcal{R}_{\mathrm{lin},L}(0)=-4\frac{\lambda}{L}\Sigma(\Tr(\Sigma)+(L+1)\Sigma).$$ And evaluating at $\mu_i^{\pm}=\pm\gamma_i^\star u_i$ we obtain 

    \[
\begin{aligned}
\nabla^{2}\mathcal{R}_{\mathrm{lin},L}(\mu_i^{\pm})=&\Big(-4\frac{\lambda}{L}\Tr(\Sigma)+4\frac{\lambda^2}{L^2}(L+2)\Tr(\Sigma) (\gamma_i^\star)^2\sigma_i+2\frac{\lambda^2}{L^2}(L+2)(L+3) (\gamma_i^\star)^2\sigma_i^{2}\Big)\Sigma
\\
&\quad+\Big(-4\frac{\lambda}{L}(L+1)+2\frac{\lambda^2}{L^2}(L+2)(L+3) (\gamma_i^\star)^2\sigma_i\Big)\Sigma^{2}\\
&\quad+8\frac{\lambda^2}{L^2}(L+2)(\gamma_i^\star)^2\sigma_i^{2}\Big(\Tr(\Sigma)+(L+3)\sigma_i\Big)u_i u_i^{\top}.
\end{aligned}
\]
Consequently, \[\begin{aligned}
    \nabla^{2}\mathcal{R}_{\mathrm{lin},L}(\mu_i^{\pm})u_i=&\Big(-4\frac{\lambda}{L}\Tr(\Sigma)+4\frac{\lambda^2}{L^2}(L+2)\Tr(\Sigma)(\gamma_i^\star)^2\sigma_i+2\frac{\lambda^2}{L^2}(L+2)(L+3)(\gamma_i^\star)^2\sigma_i^{2}\Big)\sigma_i u_i
\\
&\quad+\Big(-4\frac{\lambda}{L}(L+1)+2\frac{\lambda^2}{L^2}(L+2)(L+3)(\gamma_i^\star)^2\sigma_i\Big)\sigma_i^{2}u_i\\
&\quad+8\frac{\lambda^2}{L^2}(L+2)(\gamma_i^\star)^2\sigma_i^{2}\Big(\Tr(\Sigma)+(L+3)\sigma_i\Big)u_i.\\
=&\quad 8\frac{\lambda}{L}\sigma_i(\Tr(\Sigma)+(L+1)\sigma_i)u_i,
\end{aligned}\]
where the last equality comes from replacing $(\gamma_i^\star)^2$ into the expression. Besides, for $j\neq i$
\[\begin{aligned}
    \nabla^{2}\mathcal{R}_{\mathrm{lin},L}(\mu_i^{\pm})u_j=&\Big(-4\frac{\lambda}{L}\Tr(\Sigma)+4\frac{\lambda^2}{L^2}(L+2)\Tr(\Sigma)(\gamma_i^\star)^2\sigma_i+2\frac{\lambda^2}{L^2}(L+2)(L+3)(\gamma_i^\star)^2\sigma_i^{2}\Big)\sigma_j u_j
\\
&\quad+\Big(-4\frac{\lambda}{L}(L+1)+2\frac{\lambda^2}{L^2}(L+2)(L+3)(\gamma_i^\star)^2\sigma_i\Big)\sigma_j^{2}u_j\\
=&\quad 2\frac{\lambda}{L}\sigma_j(\sigma_i-\sigma_j)\frac{(L-1)\Tr(\Sigma)+(L+1)(L+3)\sigma_i}{\Tr(\Sigma)+(L+3)\sigma_i}u_j,
\end{aligned}\]
\end{proof}

\begin{proposition}
    The function $\mathcal{R}_{\mathrm{lin},L}$ is coercive and locally Lipschitz. Hence, the gradient flow dynamic
\[
\dot{\mu}(t)=  -\nabla \mathcal{R}_{\mathrm{lin},L}(\mu(t)),
\]
converges to a critical point of $\mathcal{R}_{\mathrm{lin},L}$. Furthermore, for almost every initialization, the sequence converges to one of the two global minimizers, $\mu^{\star} = \pm\alpha_1^\star u_1$, where $u_1$ is the normalized principal eigenvector of $\Sigma$, and 
\[
\alpha_1^\star = \sqrt{ \frac{ \Tr(\Sigma) + (L+1)\sigma_{1}}{\frac{\lambda}{L}  \sigma_{1} (L+2) (\Tr(\Sigma) + (L+3)\sigma_{1})}}.
\]
Normalizing the limit point $\mu^\star$ recovers the principal eigenvector $u_1$ up to a sign. Besides, \[\begin{aligned}
    \sigma_1=&\quad-\frac{\alpha(\mu^\star)}{\beta(\mu^\star)}=-\frac{-2  \Tr(\Sigma) +  2\frac{\lambda}{L}(L+2) \Tr(\Sigma) \mu^\star\Sigma\mu^\star + \frac{\lambda}{L}(L+2)(L+3)  \mu^\star\Sigma^2\mu^\star}{-2 (L+1) + \frac{\lambda}{L}(L+2)(L+3) \mu^\star\Sigma\mu^\star}.\\
    %=& \frac{(L+1)-\lambda\Vert\mu^\star\Vert^2\Tr(\Sigma)(L+2)+\sqrt{((L+1)-\lambda\Vert\mu^\star\Vert^2\Tr(\Sigma)(L+2))^2+4\lambda\Vert\mu^\star\Vert^2\Tr(\Sigma)(L+2)(L+3)}}{2\lambda\Vert\mu^\star\Vert^2(L+2)(L+3) }.
\end{aligned}\]
\end{proposition}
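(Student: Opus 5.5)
Coercivity comes from the explicit form \eqref{rlinl}. The function $\mathcal{R}_{\mathrm{lin},L}$ is a polynomial in $\mu$, and its quartic part is
\[
\frac{\lambda^2(L+2)}{L^2}\Big(\Tr(\Sigma)\,a^2+(L+3)\,ab\Big)\ \ge\ \frac{\lambda^2(L+2)}{L^2}\,\Tr(\Sigma)\,\sigma_d^2\,\|\mu\|^4,
\]
since $a\ge\sigma_d\|\mu\|^2$ and $b\ge\sigma_d^2\|\mu\|^2\ge 0$. The negative terms are at most quadratic, of order $\|\mu\|^2$. Hence $\mathcal{R}_{\mathrm{lin},L}(\mu)\to\infty$ as $\|\mu\|\to\infty$. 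Being polynomial, $\mathcal{R}_{\mathrm{lin},L}$ is analytic, so its gradient is locally Lipschitz and the flow has a unique local solution. The risk is nonincreasing along the flow, so the trajectory stays in the sublevel set $\{\mathcal{R}_{\mathrm{lin},L}\le \mathcal{R}_{\mathrm{lin},L}(\mu_0)\}$. This set is compact by coercivity, which gives global existence and boundedness, exactly as in Lemma~\ref{coercive_bounded}.

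Next I show that the trajectory converges to a single critical point. Since $\mathcal{R}_{\mathrm{lin},L}$ is real analytic and the trajectory is bounded, the Łojasiewicz gradient inequality applies, and the classical Łojasiewicz argument gives finite length of the trajectory, hence convergence to one critical point. Alternatively, Proposition~\ref{characterization} shows the critical set is finite, namely $\{0\}\cup\{\mu_i^\pm\}$. The $\omega$-limit set is then connected, nonempty, and contained in this finite set, so it reduces to a single point.

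For the generic statement I use Proposition~\ref{characterization}. The point $0$ has a negative definite Hessian, and each $\mu_i^\pm$ with $i>1$ has a Hessian with a negative eigenvalue, namely the $u_1$-direction eigenvalue $2\frac{\lambda}{L}\sigma_1(\sigma_i-\sigma_1)(\cdots)<0$. All of these points are hyperbolic, since no Hessian eigenvalue vanishes by the simple spectrum, and each is unstable for the flow. By the Stable Manifold Theorem \cite[Theorem III.7]{shub}, each has a stable manifold of dimension at most $d-1$, hence of Lebesgue measure zero. The set of initializations converging to some critical point other than $\mu_1^\pm$ is a finite union of such null sets. So for almost every $\mu_0$ the limit lies in $\{\pm\gamma_1^\star u_1\}$, with $\gamma_1^\star=\alpha_1^\star$. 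These two points are global minimizers: the global minimum exists by coercivity, is attained at a critical point, and must be a local minimum, and $\mu_1^\pm$ are the only local minima. Normalizing $\mu^\star/\|\mu^\star\|=\pm u_1$ gives the principal direction.

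The formula for $\sigma_1$ follows directly from \eqref{alphabeta} at $\mu^\star$. There $\Sigma\mu^\star=-\frac{\alpha(\mu^\star)}{\beta(\mu^\star)}\mu^\star$ with $\beta(\mu^\star)\neq 0$, as argued before Proposition~\ref{characterization}, and $\Sigma\mu^\star=\sigma_1\mu^\star$. Dividing numerator and denominator by $2\lambda/L$ gives the displayed expression.

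The only delicate point is the measure-zero claim, which needs the critical points to be hyperbolic so that the Stable Manifold Theorem applies. This is guaranteed by the explicit Hessian eigenvalues in Proposition~\ref{characterization}, all nonzero under the simple-spectrum assumption.
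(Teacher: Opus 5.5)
Your proof is correct and follows the same route as the paper. Coercivity and analyticity give a bounded trajectory that converges, via Łojasiewicz or the finite critical set, to a single critical point. The Hessian classification of Proposition~\ref{characterization}, combined with the Stable Manifold Theorem, then excludes $0$ and $\mu_i^\pm$ for $i>1$ for almost every initialization. You also spell out several steps the paper leaves implicit: the explicit quartic lower bound behind coercivity, why $\mu_1^\pm$ are global minimizers, and the derivation of the $\sigma_1$ identity from \eqref{alphabeta}.
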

\begin{proof}
Since $\mathcal{R}_{\mathrm{lin},L}$ is a polynomial in $\mu$, it is $C^\infty$ and locally Lipschitz. Moreover, $\mathcal{R}_{\mathrm{lin},L}$ is coercive, this implies that the sublevel set $\mathcal{S} = \{ \mu : \mathcal{R}_{\mathrm{lin},L}(\mu) \leq \mathcal{R}_{\mathrm{lin},L}(\mu^0) \}$ is compact for any initialization $\mu^0$.
%Restricted to this compact set, the gradient $\nabla \mathcal{R}_{\mathrm{lin},L}$ is Lipschitz continuous with constant $L_{0}$. 
%Then for a stepsize $\gamma < 2/L_{0}$, the gradient descent iteration $\mu^{k+1} = \mu^k - \gamma \nabla \mathcal{R}_{\mathrm{lin},L}(\mu^k)$ produces a sequence where 
The gradient flow trajectory $\mathcal{R}_{\mathrm{lin},L}(\mu(t))$ is non-increasing and $\|\nabla \mathcal{R}_{\mathrm{lin},L}(\mu(t))\| \to 0$. Furthermore, since $\mathcal{R}_{\mathrm{lin},L}$ is polynomial, it satisfies the \L{}ojasiewicz inequality at every critical point, thus $\mu(t)$ converges to the set of critical points given by Proposition \ref{characterization}, namely $\mathrm{crit}(\mathcal{R}_{\mathrm{lin},L}) = \{0\} \cup \{\mu_i^{\pm}:i=1,\ldots,d\}$.\\
%We view gradient descent as the iteration of a map $G(\mu) = \mu - \gamma \nabla \mathcal{R}_{\mathrm{lin},L}(\mu)$. A critical point $\mu^\star$ of $\mathcal{R}$ is a fixed point of $G$. The Jacobian of this map is $J(\mu^\star) = I - \gamma \nabla^2 \mathcal{R}(\mu^\star)$.
%A fixed point $\mu^\star$ is \textit{unstable} if $J(\mu^\star)$ has at least one eigenvalue with magnitude strictly greater than 1. This occurs if $\nabla^2 \mathcal{R}(\mu^\star)$ has a strictly negative eigenvalue $\nu < 0$ (provided $\gamma$ is sufficiently small), since the corresponding eigenvalue of $J$ would be $1 - \gamma\nu > 1$.

Using the classification from Proposition~\ref{characterization}, the Stable Manifold Theorem \cite[Theorem III.7]{shub} states that the set of initial conditions converging to an unstable fixed point of a $C^2$ diffeomorphism has Lebesgue measure zero. 
Consequently, for almost all initializations, the sequence cannot converge to $0$ or any $\mu_i$ with $i > 1$.
Since the sequence is bounded and must converge to a critical point, it converges to the stable minimizer $\mu_1^{\pm} = \pm\alpha_1^\star u_1$.
\end{proof}

\begin{remark}
As in Remark~\ref{others}, once the leading eigenvector $u_1$ has been recovered, the second principal component $u_2$ can be obtained by constraining the dynamics to the orthogonal subspace $u_1^\perp$. To this end, we consider the projected gradient flow
\begin{align}
\begin{cases}\label{pgfilin}
\dot{\mu}(t)
=
- \mathrm{P}_{u_1^\perp}\big(\nabla \mathcal R_{\mathrm{lin},L}(\mu(t))\big),\\
\mu(0)=\mu_0,
\end{cases}
\end{align}
where $\mathrm{P}_{u_1^\perp}=I_d-u_1u_1^\top$ denotes the orthogonal projection onto $u_1^\perp$. This projection removes the component along $u_1$, allowing the dynamics to generically recover $u_2$.
\end{remark}

\section{Proofs}\label{sec:proof}
In the following section, we present the proofs of the propositions and lemmas stated in the main text.

\subsection{Proof of Lemma \ref{coercive_bounded}}
\begin{proof}
Along the gradient flow \eqref{gfi}, the map
$t\mapsto \mathcal R_{\mathrm{soft},\infty}(\mu_\infty(t))$ is non-increasing. Hence, for all $t\ge 0$,
\[
\mathcal R_{\mathrm{soft},\infty}(\mu_\infty(t))
\le
\mathcal R_{\mathrm{soft},\infty}(\mu_0).
\]
By coercivity of $\mathcal R_{\mathrm{soft},\infty}$, the sublevel set $[\mathcal R_{\mathrm{soft},\infty}\leq R_{\mathrm{soft},\infty}(\mu_0)]$ is bounded. Therefore, there exists $\rho\geq \Vert\mu_0\Vert$ such that
$\mu_\infty(t)\in B(0,\rho)$ for all $t\ge 0$, which proves the claim.
\end{proof}
\subsection{Proof of Proposition \ref{landscapeinf}}
\begin{proof}
Compute gradients using $\nabla_\mu a = 2\Sigma\mu$ and $\nabla_\mu b = 2\Sigma^2\mu$:
\[
\nabla_\mu \mathcal{R}_{\mathrm{soft},\infty}(\mu) = -4\lambda\Sigma^2\mu + 2\lambda^2\big(b \Sigma\mu + a \Sigma^2\mu\big).
\]
At a nonzero critical point, divide by $2$ and rearrange to get
\[
(\lambda a-2)\Sigma^2\mu + \lambda b \Sigma\mu = 0.
\]
Multiplying by $\Sigma^{-1}$ yields
\[
(\lambda a-2)\Sigma\mu + \lambda b\mu = 0\]
Assuming $\lambda a-2\neq 0$, we can rearrange the previous equation into the following one
\[
\Sigma\mu = c(\mu)\mu,
\]
for $c(\mu)=-\dfrac{\lambda b}{\lambda a-2}$. If $\lambda a-2=0$, we would have 
$$\lambda b\mu=0\Longleftrightarrow \frac{b}{a}\mu=0,
$$
which has no solution for $\mu\neq 0$ (since $\Sigma$ is positive definite). Thus any nonzero critical $\mu$ is an eigenvector of $\Sigma$. Writing $\Sigma v=\sigma v$ and $\mu=\alpha v$ we obtain
\[
a=\alpha^2\sigma,\qquad b=\alpha^2\sigma^2,
\]
and stationarity reduces to
\[
\lambda\sigma\alpha^2 = 1 \quad\Longrightarrow\quad \alpha = \pm\frac{1}{\sqrt{\lambda\sigma}}.
\]
Hence the nonzero critical points are precisely $\pm\frac{1}{\sqrt{\lambda\sigma}}v$ for eigenpairs $(\sigma,v)$, together with the trivial critical point $\mu=0$.

\begin{comment}
Evaluate the objective at a nonzero critical point. For $\mu=\pm\frac{1}{\sqrt{\lambda\sigma}}v$ we have $a=1/\lambda$ and $b=\sigma/\lambda$, so
\[
F(\mu)=\operatorname{tr}(\Sigma)-2\lambda b + \lambda^2 a b
= \operatorname{tr}(\Sigma)-\sigma.
\]
Therefore the value of \(F\) at a nonzero critical point is $\operatorname{tr}(\Sigma)-\sigma$. Minimizing this over eigenvalues \(\sigma\) is equivalent to \emph{maximizing} \(\sigma\). Consequently any critical point associated with the top eigenvalue \(\sigma_1\) attains the smallest possible value
\[
F_{\min}=\operatorname{tr}(\Sigma)-\sigma_1,
\]
so such critical points are global minimizers of \(F\).
\end{comment}
Now let us compute the Hessian,
\[
\nabla^2_\mu \mathcal{R}_{\mathrm{soft},\infty}(\mu)
= -4\lambda\Sigma^2
+4\lambda^2\big(\Sigma^2\mu \mu^\top\Sigma+\Sigma\mu \mu^\top\Sigma^2\big)
+2\lambda^2\big(b\Sigma+a\Sigma^2\big).
\]
Evaluate $\nabla^2_\mu \mathcal{R}_{\mathrm{soft},\infty}$ at a critical $\mu_{\star}^{\pm}=\pm\alpha v$ with $\Sigma v=\sigma v$ and $\alpha^2=\frac{1}{\lambda\sigma}$. For any eigenvector $w$ of $\Sigma$ with $\Sigma w=\tau w$ one obtains
\[
\nabla^2_\mu \mathcal{R}_{\mathrm{soft},\infty}(\mu_{\star}^{\pm})w =
\begin{cases}
2\lambda\tau(\sigma-\tau) w, & w\perp v,\\
8\lambda\sigma^2 w, & w=v,
\end{cases}
\]
which follows from substituting $a=1/\lambda$, $b=\sigma/\lambda$ and simplifying. From this:
\begin{itemize}
 \item At $\mu=0$ we have $a=b=0$ and $\nabla^2_\mu \mathcal{R}_{\mathrm{soft},\infty}(0)=-4\lambda\Sigma^2$, which is negative definite; hence $\mu=0$ is a strict local maximum.
  \item If there exists $\tau>\sigma$ (in particular if $\sigma<\sigma_1$) then for that $\tau$ the curvature $2\lambda\tau(\sigma-\tau)$ is negative, so the critical point is a strict saddle.
  \item If $\sigma=\sigma_1$, then for every eigenvalue $\tau<\sigma_1$ we have $\tau(\sigma_1-\tau)>0$, so the Hessian at $\mu_{\star}^{\pm}=\pm\frac{1}{\sqrt{\lambda\sigma_1}}u_1$
is positive definite. Moreover, evaluating the objective yields
\[
\mathcal{R}_{\mathrm{soft},\infty}(\mu_{\star}^{\pm})=\operatorname{tr}(\Sigma)-\sigma_1,
\]
and by Lemma \ref{lem:localtoglobal}, these local minimizers are, in fact, global minimizers.
 
\end{itemize}

%This completes the proof: the only global minimizers are the points proportional to vectors in the top eigenspace of \(\Sigma\), and any critical point coming from a smaller eigenvalue is a strict saddle.
\end{proof}
\subsection{Proof of Proposition \ref{globalinf}}
\begin{proof}
The claim follows from the fact that this function is algebraic, and for such a function, gradient flow (and gradient descent with a proper stepsize) avoids strict saddle points for almost every initialization. By the Stable Manifold Theorem \cite[Theorem III.7]{shub}, the set of initial conditions whose trajectories converge to such a point is contained in its stable manifold, which has Lebesgue measure zero. Hence almost every initialization converges to a local minimum (see, e.g., \citep{lee2016gradient, panageas2017gradient}).
\end{proof}
\subsection{Proof of Proposition \ref{cr_inf}}
\begin{proof}
Since $\mathcal{R}_{\mathrm{soft},\infty}$ is a polynomial, it is real analytic. Moreover, by Proposition \ref{landscapeinf} its critical points are finite and nondegenerate. Therefore, by Corollary \ref{cor1} it satisfies the \L ojasiewicz inequality with exponent $\theta=\frac{1}{2}$ in a neighborhood of each critical point. Let
\[
E(t)=\mathcal{R}_{\mathrm{soft},\infty}(\mu_\infty(t))-\mathcal{R}_{\mathrm{soft},\infty}(\mu^\star).
\]
Along the flow,
\[
\dot{E}(t)
=
-\|\nabla \mathcal{R}_{\mathrm{soft},\infty}(\mu_\infty(t))\|^2.
\]
By the \L ojasiewicz inequality, there exists $t_0>0$ and $c>0$ such that for every $t\geq t_0$
\[
\|\nabla \mathcal{R}_{\mathrm{soft},\infty}(\mu_\infty(t))\|
\ge
c\,E(t)^{1/2}.
\]
Thus
\[
\dot{E}(t)\le -c^2 E(t),
\]
and Grönwall's inequality yields
\[
E(t)\le E(0)e^{-c^2 t}.
\]

Next, since $\mu^\star$ is a nondegenerate minimizer, $\nabla^2 \mathcal{R}_{\mathrm{soft},\infty}(\mu^\star)$ is positive definite. By continuity of the Hessian, for any $\varepsilon>0$ there exists a neighborhood of $\mu^\star$ such that for all $\mu$ in this neighborhood,
\[
\|\nabla^2 \mathcal{R}_{\mathrm{soft},\infty}(\mu)-\nabla^2 \mathcal{R}_{\mathrm{soft},\infty}(\mu^\star)\|
\le \varepsilon.
\]
Using the integral form of Taylor's theorem,
\[
\nabla \mathcal{R}_{\mathrm{soft},\infty}(\mu)
=
\nabla^2 \mathcal{R}_{\mathrm{soft},\infty}(\mu^\star)(\mu-\mu^\star)
+
r(\mu),
\]
where $\|r(\mu)\|\le \varepsilon \|\mu-\mu^\star\|$. Therefore,
\[
\langle \nabla \mathcal{R}_{\mathrm{soft},\infty}(\mu), \mu-\mu^\star\rangle
\ge
\langle \nabla^2 \mathcal{R}_{\mathrm{soft},\infty}(\mu^\star)(\mu-\mu^\star),\mu-\mu^\star\rangle
-
\varepsilon \|\mu-\mu^\star\|^2.
\]
Since the smallest eigenvalue of $\nabla^2 \mathcal{R}_{\mathrm{soft},\infty}(\mu^\star)$ is
\[
\tilde{s}
=
2\lambda\min\{\sigma_2(\sigma_1-\sigma_2),\sigma_d(\sigma_1-\sigma_d)\},
\]
it follows that
\[
\langle \nabla \mathcal{R}_{\mathrm{soft},\infty}(\mu), \mu-\mu^\star\rangle
\ge
(\tilde{s}-\varepsilon)\|\mu-\mu^\star\|^2.
\]
By Proposition \ref{globalinf}, we have $\mu_\infty(t)\to\mu^\star$, then there exists $t_0>0$ such that this inequality holds for all $t\ge t_0$. Then
\[
\frac{d}{dt}\|\mu_\infty(t)-\mu^\star\|^2
=
-2\langle \nabla \mathcal{R}_{\mathrm{soft},\infty}(\mu_\infty(t)), \mu_\infty(t)-\mu^\star\rangle
\le
-2(\tilde{s}-\varepsilon)\|\mu_\infty(t)-\mu^\star\|^2.
\]
Applying Grönwall's inequality,
\[
\|\mu_\infty(t)-\mu^\star\|^2
\le
\|\mu_\infty(t_0)-\mu^\star\|^2 e^{-2(\tilde{s}-\varepsilon)(t-t_0)},
\]
we conclude that
\[
\|\mu_\infty(t)-\mu^\star\|
\le
C e^{-(\tilde{s}-\varepsilon)(t-t_0)}.
\]
\end{proof}
\subsection{Proof of Proposition \ref{uniform_c2}}
\begin{proof}
For simplicity, we will use the following shortcut notation
\[
T_L := T_L^{\mathrm{soft},\mu}(\mathbb X)_1,
\qquad
T_\infty := T_{\infty}^{\mathrm{soft},\mu}(X_1).
\]

\medskip
\noindent

    We first note that Assertion \ref{hyp1} is a direct consequence of Lemma \ref{lemma_concentration}. Besides, Assertion \ref{hyp2} directly follows from \[\sup_{\mu\in B(0,\rho)}\Vert D_{\mu}^k T_\infty\Vert_F\leq C_k'\Vert X_1\Vert,\]
for $C_0'=\lambda\Vert\Sigma\Vert\rho^2, C_1'=2\lambda\Vert\Sigma\Vert\rho, C_2'=2\lambda\Vert\Sigma\Vert$. Defining 
\[C_k=(C_k')^2\Tr(\Sigma), \quad k\in \{0,1,2\},\] we get the required. 

Now we prove Assertion \ref{hyp3}, from Assertion \ref{hyp1} with $k=0$ and Assertion \ref{hyp2},
\[
\E[\|T_L\|^2]
\le
2\E[\|T_L-T_\infty\|^2]
+2\E[\|T_\infty\|^2]
\le
2\psi_0(L)+2C_0.
\]
Hence $\sup_{L,\mu}\E[\|T_L\|^2]<\infty$.
The same argument using $k=1,2$ gives uniform $L^2$-bounds for
$D_\mu T_L$ and $D_\mu^2 T_L$.
\medskip

\noindent
\paragraph{Case $k=0$.}

Using
\[
\|a\|^2-\|b\|^2
=2\langle b,a-b\rangle+\|a-b\|^2,
\]
we obtain
\[
\mathcal R_{\mathrm{soft},L}-\mathcal R_{\mathrm{soft},\infty}
=
2\E[\langle X_1-T_\infty, T_\infty-T_L\rangle]
+\E[\|T_L-T_\infty\|^2].
\]
By Cauchy--Schwarz and the uniform $L^2$-bounds,
\[
|\mathcal R_{\mathrm{soft},L}-\mathcal R_{\mathrm{soft},\infty}|
\le
C\E[\|T_L-T_\infty\|^2]^{1/2}.
\]
Taking the supremum over $\mu$ and squaring gives
\[
\sup_{\mu\in B(0,\rho)}|\mathcal R_{\mathrm{soft},L}-\mathcal R_{\mathrm{soft},\infty}|^2
\le C\psi_0(L).
\]

\medskip
\noindent
\paragraph{Case $k=1$.}

Differentiating under the expectation gives
\[
\nabla \mathcal R_{\mathrm{soft},L}
=
-2 \E[(X_1-T_L)^\top D_\mu T_L].
\]
Hence
\[
\nabla \mathcal R_{\mathrm{soft},L}-\nabla \mathcal R_{\mathrm{soft},\infty}
=
-2\E[(X_1-T_L)^\top(D_\mu T_L-D_\mu T_\infty)]
-2\E[(T_\infty-T_L)^\top D_\mu T_\infty].
\]
Each term is bounded using Cauchy--Schwarz and the uniform $L^2$ bounds:
\[
\|\nabla \mathcal R_{\mathrm{soft},L}-\nabla \mathcal R_{\mathrm{soft},\infty}\|_F^2
\le
C\big(
\E[\|D_\mu T_L-D_\mu T_\infty\|_F^2]
+
\E[\|T_L-T_\infty\|^2]
\big).
\]
Taking the supremum over $\mu$ yields
\[
\sup_{\mu\in B(0,\rho)}
\|\nabla \mathcal R_{\mathrm{soft},L}-\nabla \mathcal R_{\mathrm{soft},\infty}\|_F^2
\le
C\big(\psi_1(L)+\psi_0(L)\big).
\]

\medskip
\noindent
\paragraph{Case $k=2$.}

We compute
\[
\nabla^2 \mathcal R_{\mathrm{soft},L}
=
2\E[(D_\mu T_L)^\top(D_\mu T_L)]
-
2\E[(X_1-T_L)^\top D_\mu^2 T_L].
\]

and the analogous formula for $\mathcal R_{\mathrm{soft},\infty}$.
Set
\[
\Delta T := T_L-T_\infty, 
\qquad
\Delta DT := D_\mu T_L-D_\mu T_\infty,
\qquad
\Delta D^2T := D_\mu^2 T_L-D_\mu^2 T_\infty.
\]

\medskip
\noindent
Regarding the first term of the Hessian, we rewrite it as

\[
(D_\mu T_L)^\top(D_\mu T_L)
-
(D_\mu T_\infty)^\top(D_\mu T_\infty)
=
(\Delta DT)^\top(\Delta DT)
+
(D_\mu T_\infty)^\top(\Delta DT)
+
(\Delta DT)^\top(D_\mu T_\infty).
\]

Therefore,
\begin{align*}
&\E\!\big[(D_\mu T_L)^\top(D_\mu T_L)\big]
-
\E\!\big[(D_\mu T_\infty)^\top(D_\mu T_\infty)\big] \\
&=
\E\!\big[(\Delta DT)^\top(\Delta DT)\big]
+
\E\!\big[(D_\mu T_\infty)^\top(\Delta DT)\big]
+
\E\!\big[(\Delta DT)^\top(D_\mu T_\infty)\big].
\end{align*}

\medskip
\noindent
With respect to the second term, we have
\[
(X_1-T_L)^\top D_\mu^2 T_L
-
(X_1-T_\infty)^\top D_\mu^2 T_\infty
=
A_1+A_2,
\]
where
\[
A_1
:=
(X_1-T_L)^\top(D_\mu^2 T_L-D_\mu^2 T_\infty),
\]
\[
A_2
:=
\big[(X_1-T_L)-(X_1-T_\infty)\big]^\top D_\mu^2 T_\infty.
\]

Since $(X_1-T_L)-(X_1-T_\infty)=T_\infty-T_L=-\Delta T$, we obtain
\[
A_2
=
-(\Delta T)^\top D_\mu^2 T_\infty.
\]

Hence, 
\begin{align*}
&\E\!\big[(X_1-T_L)^\top D_\mu^2 T_L\big]
-
\E\!\big[(X_1-T_\infty)^\top D_\mu^2 T_\infty\big] \\
&=
\E\!\big[(X_1-T_L)^\top \Delta D^2T\big]
-
\E\!\big[(\Delta T)^\top D_\mu^2 T_\infty\big].
\end{align*}

\medskip
\noindent

Combining the previous expansions,
\begin{align*}
\nabla^2 \mathcal R_{\mathrm{soft},L}
-
\nabla^2 \mathcal R_{\mathrm{soft},\infty}
=
&\;2\E[(\Delta DT)^\top(\Delta DT)] \\
&+2\E[(D_\mu T_\infty)^\top(\Delta DT)]
+2\E[(\Delta DT)^\top(D_\mu T_\infty)] \\
&-2\E[(X_1-T_L)^\top \Delta D^2T] \\
&+2\E[(\Delta T)^\top D_\mu^2 T_\infty].
\end{align*}

\medskip
\noindent

Using $\| \E [Z] \|_F^2 \le \E[\|Z\|_F^2]$ and Cauchy--Schwarz:

\[
\|\E[(\Delta DT)^\top(\Delta DT)]\|_F
\le
\E[\|\Delta DT\|_F^2],
\]

\[
\|\E[(D_\mu T_\infty)^\top(\Delta DT)]\|_F
\le
\E[\|D_\mu T_\infty\|_F^2]^{1/2} \E[\|\Delta DT\|_F^2]^{1/2},
\]

\[
\|\E[(X_1-T_L)^\top \Delta D^2T]\|_F
\le
\E[\|X_1-T_L\|^2]^{1/2}
\E[\|\Delta D^2T\|_F^2]^{1/2},
\]

\[
\|\E[(\Delta T)^\top D_\mu^2 T_\infty]\|_F
\le
\E[\|\Delta T\|^2]^{1/2}\E[\|D_\mu^2 T_\infty\|_F^2]^{1/2}.
\]

By Assertion \ref{hyp2}, for $\mu\in B(0,\rho)$ there exists $C>0$ such that
\[
\|\nabla^2 \mathcal R_{\mathrm{soft},L}-\nabla^2 \mathcal R_{\mathrm{soft},\infty}\|_F^2
\le
C\big(
\E[\|T_L-T_\infty\|^2]
+
\E[\|D_\mu T_L-D_\mu T_\infty\|_F^2]
+
\E[\|D_\mu^2 T_L-D_\mu^2 T_\infty\|_F^2]
\big).
\]
Taking the supremum over $\mu$ concludes the proof.
\end{proof}
\subsection{Proof of Proposition \ref{prop:bounded-finite-flow}}
\begin{proof}
By Lemma \ref{coercive_bounded}, if $\Vert\mu\Vert\geq \rho$, then $\mathcal R_{\mathrm{soft},\infty}(\mu)>\mathcal R_{\mathrm{soft},\infty}(\mu_0)$. Thus, for every $\rho'>\rho$ we have that if $\Vert\mu\Vert=\rho'>\rho$ then \[\mathcal R_{\mathrm{soft},\infty}(\mu) > \mathcal R_{\mathrm{soft},\infty}(\mu_0),\] by compactness of $\{\mu\in\mathbb{R}^d:\Vert\mu\Vert=\rho'\}$ and continuity of $\mathcal R_{\mathrm{soft},\infty}$, thus \[\min_{\Vert\mu\Vert=\rho'}\mathcal R_{\mathrm{soft},\infty}(\mu) > \mathcal R_{\mathrm{soft},\infty}(\mu_0),\] so there exists $\varepsilon>0$ such that \[\min_{\Vert\mu\Vert=\rho'}\mathcal R_{\mathrm{soft},\infty}(\mu) > \mathcal R_{\mathrm{soft},\infty}(\mu_0)+2\varepsilon,\] and then \begin{equation}\label{coercive}
\|\mu\| = \rho' \quad \Longrightarrow \quad
\mathcal R_{\mathrm{soft},\infty}(\mu) > \mathcal R_{\mathrm{soft},\infty}(\mu_0) + 2\varepsilon.    
\end{equation}
Fix any $\rho'>\rho$, then there exists $\varepsilon>0$ that satisfies \eqref{coercive}. By Proposition \ref{uniform_c2}, we have uniform convergence of $\mathcal R_{\mathrm{soft},L}$
to $\mathcal R_{\mathrm{soft},\infty}$ on $B(0,\rho')$, there exists
$L'\in\mathbb{N}$ such that for all $L\ge L'$,
\[
\sup_{\|\mu\|\le \rho'} 
\bigl|
\mathcal R_{\mathrm{soft},L}(\mu)
-
\mathcal R_{\mathrm{soft},\infty}(\mu)
\bigr|
\le \varepsilon.
\]
We argue by contradiction. Assume that for some $L\ge L'$, the trajectory
$\mu_L$ does not remain in $B(0,\rho')$ for every $t\geq 0$. Then, there exists a first exit time
$t^\ast>0$ such that
\[
\|\mu_L(t^\ast)\|=\rho',
\qquad
\|\mu_L(t)\|<\rho' \ \text{for all } t<t^\ast.
\]
Since $\mathcal R_{\mathrm{soft},L}$ decreases along the flow,
\[
\mathcal R_{\mathrm{soft},L}(\mu_L(t^\ast))
\le
\mathcal R_{\mathrm{soft},L}(\mu_0).
\]
Using uniform convergence of the risk on $B(0,\rho')$ at $\mu_L(t^\ast)$ and at $\mu_0$ gives
\[
\mathcal R_{\mathrm{soft},\infty}(\mu_L(t^\ast))
\le \mathcal R_{\mathrm{soft},L}(\mu_L(t^\ast)) + \varepsilon
\le \mathcal R_{\mathrm{soft},L}(\mu_0) + \varepsilon
\le \mathcal R_{\mathrm{soft},\infty}(\mu_0) + 2\varepsilon.
\]

This is a contradiction with \eqref{coercive}. Hence, for all $L\ge L'$, the trajectory $\mu_L$ remains in $B(0,\rho')$ for all $t\ge 0$, proving the lemma.
\end{proof}
\subsection{Proof of Proposition \ref{conv_traj}}
\begin{proof}
   Fix $T>0$, by Proposition Lemma \ref{prop:bounded-finite-flow}, there exists $\rho>0$ and $L'\in\mathbb{N}$, such that for every $L\geq L'$ the trajectories
  $\mu_L(t)$ and $\mu_\infty(t)$ of \eqref{gfl} and \eqref{gfi} satisfy
  $\mu_L(t),\mu_\infty(t)\in B(0,\rho)$ for all $t\in[0,T]$.
  Besides, in Proposition \ref{uniform_c2} we have shown that for $k\in \{0,1,2\}$ $$\nabla^k \mathcal R_{\mathrm{soft},L}\underset{L\to\infty}{\to}\nabla^k \mathcal R_{\mathrm{soft},\infty}$$
  uniformly on $B(0,\rho)$, and $\nabla^2 \mathcal R_{\mathrm{soft},L}$
  are uniformly bounded on $B(0,\rho)$ (since it converges
  uniformly). Hence, there exists a constant $K>0$ such that for all
  $L\geq L'$ and all $x,y\in B(0,\rho)$
  \[
    \|\nabla \mathcal R_{\mathrm{soft},L}(x)-\nabla \mathcal R_{\mathrm{soft},L}(y)\|
    \le K\|x-y\|,
    \qquad
    \|\nabla \mathcal R_{\mathrm{soft},\infty}(x)-\nabla \mathcal R_{\mathrm{soft},\infty}(y)\|
    \le K\|x-y\|.
  \]
  Set
  \[
    \zeta(L):=\sup_{x\in B(0,\rho)}\|\nabla \mathcal R_{\mathrm{soft},L}(x)
    -\nabla \mathcal R_{\mathrm{soft},\infty}(x)\|.
  \]
  By uniform convergence $\zeta(L)\to0$ as $L\to\infty$. Consider the difference $e_L(t):=\mu_L(t)-\mu_\infty(t)$. Differentiating and using
  the definitions \eqref{gfl}, \eqref{gfi} we obtain
  \[
    e_L'(t) = -\nabla \mathcal R_{\mathrm{soft},L}(\mu_L(t))
             +\nabla \mathcal R_{\mathrm{soft},\infty}(\mu_\infty(t)).
  \]
  Using the triangle inequality and the Lipschitz bound,
  \begin{align*}
    \frac{\mathrm d}{\mathrm d t}\|e_L(t)\|
    &\le \|e_L'(t)\|
      \\
      &\le \|\nabla \mathcal R_{\mathrm{soft},L}(\mu_L(t))
        -\nabla \mathcal R_{\mathrm{soft},L}(\mu_\infty(t))\|+\|\nabla \mathcal R_{\mathrm{soft},L}(\mu_\infty(t))
        -\nabla \mathcal R_{\mathrm{soft},\infty}(\mu_\infty(t))\|\\
    &\le K\|e_L(t)\| + \zeta(L).
  \end{align*}
  This differential inequality together with $e_L(0)=0$, let us integrate  to obtain
  \[
    \|e_L(t)\| \le \zeta(L) \int_0^t e^{K(t-s)}\,\mathrm ds
    = \zeta(L) \frac{e^{Kt}-1}{K}.
  \]
  Therefore, for every $t\in[0,T]$,
  \[
    \sup_{0\le s\le T}\|\mu_L(s)-\mu_\infty(s)\|
    \le \zeta(L) \frac{e^{KT}-1}{K} \xrightarrow[L\to\infty]{} 0,
  \]
  which proves that $\mu_L\to\mu_\infty$ uniformly on $[0,T]$.
\end{proof}
\subsection{Proof of Proposition \ref{conv_values}}
\begin{proof}
We use the following bound
    $$\Vert \mathcal R_{\mathrm{soft},L}(\mu_L)-\mathcal R_{\mathrm{soft},\infty}(\mu_{\infty})\Vert\leq \Vert \mathcal R_{\mathrm{soft},L}(\mu_L)-\mathcal R_{\mathrm{soft},\infty}(\mu_L)\Vert+\Vert \mathcal R_{\mathrm{soft},\infty}(\mu_L)-\mathcal R_{\mathrm{soft},\infty}(\mu_{\infty})\Vert.$$
    The first term goes to 0 uniformly on $[0,T]$ as $\mu_L$ is bounded by Proposition Lemma \ref{prop:bounded-finite-flow} and Proposition \ref{uniform_c2} for $k=0$. The second term goes to 0 uniformly on $[0,T]$ as $\mathcal R_{\mathrm{soft},\infty}$ is $C^2$, combined with Proposition \ref{conv_traj}.
\end{proof}
\subsection{Proof of Proposition \ref{criticalpointsl}}
\begin{proof}
    We use the characterization of critical points for $\mathcal{R}_{\mathrm{soft},\infty}$ given in Proposition \ref{landscapeinf}, this gives us $\mathrm{crit}(\mathcal{R}_{\mathrm{soft},\infty})=\{\mu^{(1)},\ldots,\mu^{(2d+1)}\}$. Let $f_L=\mathcal{R}_{\mathrm{soft},L}$ and $f=\mathcal{R}_{\mathrm{soft},\infty}$,  Proposition \ref{uniform_c2} shows convergence in $C_{loc}^2$ of $f_L$ towards $f$ as $L\rightarrow\infty$. Then we use Lemma \ref{persistence} with $f_L$ and $f$, this gives us $r_k>0$ such that $B(\mu^{(k)},r_k)$ are pairwise disjoint and that for every $\rho>0$ big enough such that $\cup_{k=1}^{2d+1} B(\mu^{(k)},r_k)\subset B(0,\rho)$. Then for big enough $L$, $$\mathrm{crit}(f_L)\cap B(0,\rho)=\{\mu_L^{(1)},\ldots,\mu_L^{(\Lambda)}\},$$ with $\mu_L^{(k)}\rightarrow\mu^{(k)}$ as $L\rightarrow\infty$ for every $k\in\{1,\ldots,2d+1\}$, and if $\mu^{(k)}$ is a strict saddle point (resp. local minimum) for $f$, then $\mu_L^{(k)}$ is a strict saddle point (resp. local minimum) for $f_L$. 
\end{proof}
\begin{comment}
\subsection{Proof of Proposition \ref{globall}}
\begin{proof}
Let $\mathrm{crit}(\mathcal{R}_{\mathrm{soft},\infty})
=\{\mu^{(1)},\ldots,\mu^{(2d+1)}\}$. 
From the proof of Proposition~\ref{criticalpointsl}, there exist $r_1,\ldots,r_{2d+1}>0$ and $\rho_1>0$ such that 
$\cup_{k=1}^{2d+1} B(\mu^{(k)},r_k)\subset B(0,\rho_1)$. Moreover, by Proposition~\ref{prop:bounded-finite-flow}, there exist $\rho_2>0$ and $L_0\in\mathbb{N}$ such that for all $L\ge L_0$ the gradient flow trajectory $\mu_L(t)$ generated by \eqref{gfl} remains in $B(0,\rho_2)$ for every $t\ge0$. Let $\rho=\max\{\rho_1,\rho_2\}$. Then for $L$ large enough, every critical point reachable by the dynamics \eqref{gfl} lies in $B(0,\rho)$ and must therefore coincide with one of the critical points characterized in Proposition~\ref{criticalpointsl}. Among these, the only stable ones are $\pm\mu_{L,\sigma_1}^\star$. The conclusion then follows as in the proof of Proposition~\ref{globalinf}.
\end{proof}
\end{comment}
\subsection{Proof of Proposition \ref{cr_l}}
\begin{proof}

For $L$ large enough, Proposition~\ref{criticalpointsl} ensures that the critical points of $\mathcal{R}_{\mathrm{soft},L}$ are finite and nondegenerate, with two local minimizers $\pm \mu_{L,\sigma_1}^\star$. Hence, for a generic initialization, the gradient flow converges to one of them.

The exponential decay of the objective follows from the Łojasiewicz inequality, while the convergence rate of the iterates is given by linearization around $\mu_L^\star$, yielding $\tilde{s}_L = \sigma_{\min}(\nabla^2 \mathcal{R}_{\mathrm{soft},L}(\mu_L^\star))>0$. The limit $\tilde{s}_L \to \tilde{s}$ follows from $C^2_{\mathrm{loc}}$ convergence of the risks.
\end{proof}

\subsection{Proof of Proposition \ref{distinf}}
\begin{proof}
The result follows from classical properties of linear transformations of Gaussian vectors, yielding
\[
T_{\infty}^{\mathrm{soft},\mu}(X_1) \sim \mathcal{N}(0,\Gamma(\mu)).
\]
The rank-one structure is immediate. Evaluating at $\mu^\star$ and using $\Sigma u_1=\sigma_1 u_1$ gives $\Gamma(\mu^\star)=\sigma_1 u_1 u_1^\top$, which corresponds to the law of $\langle X,u_1\rangle u_1$.
\end{proof}
\subsection{Proof of Proposition \ref{localcrgf}}
\begin{proof}

The convergence rate is analogous to the proof of Proposition \ref{cr_inf}. From algebraic manipulation we can check that for $a_i, b_i$ defined in \eqref{ab}, we have $$ \frac{(a_1+a_2)(a_3+b_3)}{a_1 + 2 a_2 + b_2}-a_3<2(a_3+b_3),$$
thus
\begin{equation}\label{hats}
    \begin{split}
         \hat{s} &= 2\,\frac{(a_1 + a_2) b_3 - a_3(a_2 + b_2)}{a_1 + 2 a_2 + b_2}\\
         &=\frac{2 \lambda n \theta \xi^2 \left( d n \xi^2 - d \xi^2 + n^{2} \theta + n^{2} \xi^2 + 5 n \theta + 4 n \xi^2 + 2 \theta + 3 \xi^2 \right)}{d \xi^2 + n \theta + n \xi^2 + 4 \theta + 3 \xi^2}\\
         &=\frac{2 \lambda n \theta \xi^2 \left[ \theta (n^2+5n+2) + \xi^2 (n^2 + dn + 4n - d + 3) \right]}
{(n+4)\theta + (d+n+3)\xi^2  }.
    \end{split}
\end{equation}

For $\xi^2$ small enough, this reduces to
\[
\hat{s} \sim 2 \frac{\lambda n(n^2+5n+2) \theta \, \xi^2}{n+4} .
\]

Hence the gradient flow converges locally with an exponential rate arbitrarily close to $\hat{s}$.
\end{proof}
\subsection{Proof of Proposition \ref{uniform_c2_icl}}
\begin{proof}
 Since we have proven in Proposition \ref{uniform_c2} that $\nabla^k\mathcal{R}_{\mathrm{soft},L}^{(\Sigma)}$ converges uniformly (as $L\rightarrow\infty$) on compact sets $\nabla^k\mathcal{R}_{\mathrm{soft},\infty}^{(\Sigma)}$ for $k\in\{0,1,2\}$, i.e., there exists $C(\Sigma)>0$, $$\psi(L,\Sigma)=L^{-\frac{1}{c^2\Vert\Sigma\Vert_2^2+1}}(1+ \ln L)^{\frac{c^2\Vert\Sigma\Vert_2^2}{c^2\Vert\Sigma\Vert_2^2+1}}$$ with $c=20\lambda\rho^2>0$, and $\lim_{L\rightarrow\infty} \psi(L,\Sigma)=0$, such that \[
\sup_{\mu\in B(0,\rho)}
\|
\nabla^k \mathcal R_{\mathrm{soft},L}(\mu)
-
\nabla^k \mathcal R_{\mathrm{soft},\infty}(\mu)
\|_F^2
\le
C(\Sigma) \psi(L,\Sigma).
\] 
 Thus for $k\in\{0,1,2\}$, since $\mathcal{R}_{\mathrm{soft},L}^{(\Sigma)}$ and $\mathcal{R}_{\mathrm{soft},\infty}^{(\Sigma)}$ are $C^2(\mathbb{R}^d)$ functions and $$\mathbb{E}_\Sigma\left[\sup_{\mu\in B(0,\rho)}\nabla^k \mathcal{R}_{\mathrm{soft},L}^{(\Sigma)}(\mu)\right]<\infty, \quad \mathbb{E}_\Sigma\left[\sup_{\mu\in B(0,\rho)}\nabla^k \mathcal{R}_{\mathrm{soft},\infty}^{(\Sigma)}(\mu)\right]<\infty.$$ By the dominated convergence theorem we have that $$\nabla^k\mathcal{R}_L^{\mathrm{ICL}}(\mu)=\mathbb{E}_{\Sigma}[\nabla^k\mathcal{R}_{\mathrm{soft},L}^{(\Sigma)}(\mu)], \quad \nabla^k \mathcal{R}_\infty^{\mathrm{ICL}}(\mu)=\mathbb{E}_\Sigma[\nabla^k\mathcal{R}_{\mathrm{soft},\infty}^{(\Sigma)}(\mu)],$$ and then get \begin{align*}
    \sup_{\mu\in B(0,\rho)}\Vert \nabla^k \mathcal{R}_L^{\mathrm{ICL}}(\mu)- \nabla^k \mathcal{R}_\infty^{\mathrm{ICL}}(\mu)\Vert_F^2&=\sup_{\mu\in B(0,\rho)}\Vert \nabla^k \mathbb{E}_{\Sigma}[\mathcal{R}_{\mathrm{soft},L}^{(\Sigma)}(\mu)]- \nabla^k \mathbb{E}_{\Sigma}[\mathcal{R}_{\mathrm{soft},\infty}^{(\Sigma)}(\mu)]\Vert_F^2\\
    &\leq \sup_{\mu\in B(0,\rho)}\Vert \mathbb{E}_{\Sigma}[\nabla^k \mathcal{R}_{\mathrm{soft},L}^{(\Sigma)}(\mu)- \nabla^k \mathcal{R}_{\mathrm{soft},\infty}^{(\Sigma)}(\mu)]\Vert_F^2\\
    &\leq \mathbb{E}_{\Sigma}\left[\sup_{\mu\in B(0,\rho)}\Vert \nabla^k \mathcal{R}_{\mathrm{soft},L}^{(\Sigma)}(\mu)- \nabla^k \mathcal{R}_{\mathrm{soft},\infty}^{(\Sigma)}(\mu)\Vert_F^2\right]\\
    &\leq \mathbb{E}_{\Sigma\sim W_d(V,n)}[C(\Sigma) \psi(L,\Sigma)].
\end{align*}
Now, we define $\zeta(\Sigma)=C(\Sigma)\sup_{L> 1} \psi(L,\Sigma)$, it is direct to show that $$\sup_{L> 1} \psi(L,\Sigma)\leq 1+c^2\Vert\Sigma\Vert_2^2,$$ and then $$\mathbb{E}_{\Sigma\sim W_d(V,n)}[\zeta(\Sigma)]\leq \mathbb{E}_{\Sigma\sim W_d(V,n)}[C(\Sigma)(1+c^2\Vert\Sigma\Vert_2^2)].$$ Since $C(\Sigma)$ depends polynomially on $\Vert\Sigma\Vert_2$ and that for the Wishart distribution every finite moment is bounded, we get that $\mathbb{E}_{\Sigma\sim W_d(V,n)}[\zeta(\Sigma)]<\infty$, by dominated convergence theorem we get that $\lim_{L\rightarrow\infty}\mathbb{E}_{\Sigma\sim W_d(V,n)}[C(\Sigma) \psi(L,\Sigma)]=\mathbb{E}_{\Sigma\sim W_d(V,n)}[C(\Sigma) \lim_{L\rightarrow\infty}\psi(L,\Sigma)]=0$, and then for $k\in\{0,1,2\}$, \begin{align*}
    \lim_{L\rightarrow\infty}\sup_{\mu\in B(0,\rho)}\Vert \nabla^k \mathcal{R}_L^{\mathrm{ICL}}(\mu)- \nabla^k \mathcal{R}_\infty^{\mathrm{ICL}}(\mu)\Vert_F^2=0\end{align*}

\end{proof}

\subsection{Proof of Proposition \ref{prop:gd_convergence_icl}}
\begin{proof}
    Without loss of generality, by the arguments of Proposition \ref{prop:bounded-finite-flow}, we can assume that the trajectory remains bounded and contained in $B(0,\rho)$.  In particular, all critical points in $B(0,\rho)$ are those identified in Proposition~\ref{criticalpointsicl}, and the only stable ones are the two local minimizers $\pm \mu_{L,\parallel}^\star$. Applying the Stable Manifold Theorem \cite[Theorem III.7]{shub}, we conclude that, for almost every initialization in $B(0,\rho)$, the trajectory converges to one of these two minimizers. The convergence rates come from the same reasoning as for Proposition \ref{cr_l}.
\end{proof}
\section{Technical results}\label{sec:lemmas}
In this section, we present the technical results required for our analysis.
\subsection{Gaussian computations}
We compute specific higher-order moments required for the construction of the objective function.

\begin{proposition}\label{computations}
Let $X_1,X_2 \stackrel{\mathrm{iid}}{\sim} \mathcal N(0,\Sigma)$ in $\mathbb R^d$ and let $\mu\in\mathbb R^d$. We have
\begin{enumerate}
    \item $\E[\Norm{X_1}^2]=\Tr(\Sigma)$.
    \item $\E\left[ (X_1^\top\mu)^2 \right] = \mu^T \Sigma \mu.$
    \item $\E\left[ \Inner{X_1}{X_2} \Inner{X_1}{\mu} \Inner{X_2}{\mu} \right] = \mu^T \Sigma^2 \mu.$
    \item $\E\left[ \Inner{X_1}{X_2} \Inner{X_1}{\mu}^3 \Inner{X_2}{\mu} \right] = 3(\mu^T \Sigma \mu)(\mu^T \Sigma^2 \mu).$
    \item $\E\left[ \Inner{X_1}{\mu}^2 \Norm{X_1}^2 \right] = \Tr(\Sigma)(\mu^T \Sigma \mu) + 2 \mu^T \Sigma^2 \mu.$
    \item $\E\left[ \Inner{X_1}{\mu}^4\Norm{X_1}^2 \right] = 3\Tr(\Sigma)(\mu^T \Sigma \mu)^2 + 12 (\mu^T \Sigma \mu)(\mu^T \Sigma^2 \mu).$
    
\end{enumerate}

\end{proposition}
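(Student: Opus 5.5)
The plan is to reduce all six identities to Isserlis' (Wick's) theorem for centered Gaussian vectors, written in terms of the linear forms $\langle X_1,\mu\rangle$ and the coordinates of $X_1,X_2$. Items 1 and 2 are immediate: $\E[\|X_1\|^2]=\Tr(\E[X_1X_1^\top])=\Tr(\Sigma)$ and $\E[(X_1^\top\mu)^2]=\mu^\top\Sigma\mu$.

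For the items involving both $X_1$ and $X_2$ (items 3 and 4), I will first condition on $X_1$ and use independence. Since $\E[\langle X_1,X_2\rangle\langle X_2,\mu\rangle\mid X_1]=X_1^\top\Sigma\mu$, item 3 becomes $\E[(X_1^\top\mu)(X_1^\top\Sigma\mu)]=\mu^\top\Sigma\Sigma\mu=\mu^\top\Sigma^2\mu$. In the same way, item 4 reduces to $\E[(X_1^\top\mu)^3(X_1^\top\Sigma\mu)]$. This is a fourth moment of the jointly Gaussian pair $(X_1^\top\mu,X_1^\top\Sigma\mu)$, and Wick gives $3\,\mathrm{Var}(X_1^\top\mu)\,\mathrm{Cov}(X_1^\top\mu,X_1^\top\Sigma\mu)=3(\mu^\top\Sigma\mu)(\mu^\top\Sigma^2\mu)$.

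For items 5 and 6, I will write $\|X_1\|^2=\sum_i (e_i^\top X_1)^2$ and apply Wick to the monomials $Y^{2k}Z_i^2$, where $Y=X_1^\top\mu$, $Z_i=e_i^\top X_1$, $\E[Y^2]=a$, $\E[YZ_i]=(\Sigma\mu)_i$ and $\E[Z_i^2]=\Sigma_{ii}$. For $k=1$ the pairings are $\E[Y^2]\E[Z_i^2]+2\E[YZ_i]^2$, which after summing over $i$ gives $\Tr(\Sigma)a+2\|\Sigma\mu\|^2=\Tr(\Sigma)a+2b$. For $k=2$ I count pairings of $Y^4Z_i^2$:
\begin{itemize}
\item pairing $Z_i$ with $Z_i$ and the four $Y$'s among themselves gives $3a^2\Sigma_{ii}$;
\item pairing both $Z_i$'s with $Y$'s and the remaining two $Y$'s together gives $4\cdot3\,a(\Sigma\mu)_i^2=12a(\Sigma\mu)_i^2$.
\end{itemize}
Summing over $i$ yields $3\Tr(\Sigma)a^2+12ab$.

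The only step needing care is the combinatorial count in item 6, namely ordered choices of the two $Y$'s paired with the $Z$'s (4·3=12) versus the pure-$Y$ pairings (3). I will double-check this count on the scalar case $d=1$. There $\E[X^6]=15\sigma^3$, and the formula gives $3\sigma\cdot\sigma^2+12\sigma\cdot\sigma^2=15\sigma^3$, which is consistent.
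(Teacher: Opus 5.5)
Your proposal is correct and follows the paper's proof essentially step for step: you condition on $X_1$ to reduce items 3 and 4 to single-vector Gaussian moments, then apply Isserlis' theorem with the same $3+12$ pairing count for item 6. Your one-dimensional check $\E[X^6]=15\sigma^3$ is a useful confirmation of that count, which the paper states without verifying.
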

\begin{proof}
All identities follow from linearity of expectation, independence of $X_1$ and $X_2$, and Isserlis' theorem \citep{isserlis} for centered Gaussian vectors.
\begin{enumerate}
    \item  Since $\|X_1\|^2=\sum_{i=1}^d X_{1,i}^2$ and $\E[X_{1,i}^2]=\Sigma_{ii}$,
\[
\E[\|X_1\|^2]=\sum_{i=1}^d \Sigma_{ii}=\Tr(\Sigma).
\]

\item Expanding $(X_1^\top\mu)^2$ and using $\E[X_{1,i}X_{1,j}]=\Sigma_{ij}$ gives
\[
\E[(X_1^\top\mu)^2]
= \sum_{i,j}\mu_i\mu_j\Sigma_{ij}
= \mu^\top\Sigma\mu.
\]

\item Conditioning on $X_1$ and using independence,
\[
\E[\Inner{X_1}{X_2}\Inner{X_2}{\mu}\mid X_1]
= X_1^\top \E[X_2X_2^\top]\mu
= X_1^\top\Sigma\mu.
\]
Therefore,
\[
\E[\Inner{X_1}{X_2}\Inner{X_1}{\mu}\Inner{X_2}{\mu}]
= \E[(X_1^\top\mu)(X_1^\top\Sigma\mu)]
= \mu^\top\Sigma^2\mu.
\]

\item Conditioning again on $X_1$,
\[
\E[\Inner{X_1}{X_2}\Inner{X_1}{\mu}^3\Inner{X_2}{\mu}]
= \E[(X_1^\top\mu)^3(\mu^\top\Sigma X_1)].
\]
For a centered Gaussian vector $X$ and vectors $\mu_0,\mu_1$, Isserlis' theorem yields
\[
\E[(\mu_0^\top X)^3(\mu_1^\top X)]
= 3(\mu_0^\top\Sigma \mu_0)(\mu_0^\top\Sigma \mu_1).
\]
Applying this with $a=\mu_0$ and $\mu_1=\Sigma\mu$ gives the claim.

\item Expanding
\[
\Inner{X_1}{\mu}^2\Norm{X_1}^2
= \sum_{i,j,k}\mu_i\mu_j X_{1,i}X_{1,j}X_{1,k}^2
\]
and applying Isserlis' theorem to the fourth-order moment yields two types of pairings, leading to
\[
\E[\Inner{X_1}{\mu}^2\Norm{X_1}^2]
= \Tr(\Sigma)(\mu^\top\Sigma\mu) + 2\mu^\top\Sigma^2\mu.
\]

\item Similarly,
\[
\Inner{X_1}{\mu}^4\Norm{X_1}^2
= \sum_{i,j,\ell,m,k}\mu_i\mu_j\mu_\ell\mu_m
X_{1,i}X_{1,j}X_{1,\ell}X_{1,m}X_{1,k}^2.
\]
Applying Isserlis' theorem to the sixth-order moment, pairings where the two copies of $k$ are paired together contribute
$3\Tr(\Sigma)(\mu^\top\Sigma\mu)^2$, while the remaining $12$ pairings contribute
$12(\mu^\top\Sigma\mu)(\mu^\top\Sigma^2\mu)$. Summing both terms gives the result.
\end{enumerate}
\end{proof}

\subsection{Optimization preliminaries}
%In most of this work, we will be able to explicitly characterize the set of critical points of the risk functions we study. The following lemma will therefore be useful to identify the global minimizers.

In this subsection, we gather several optimization results that will be used throughout this work.

\begin{lemma}\label{lem:localtoglobal}
Let $f:\mathbb{R}^d\to\mathbb{R}$ be a $C^1(\mathbb{R}^d)$ coercive function. Assume that the set of critical points of $f$ is completely characterized and every critical point is either a strict saddle or a local minimum. Assume moreover that all local minima attain the same value $m$. Then every local minimum of $f$ is a global minimum, and
\[
\min_{x\in\mathbb{R}^d} f(x)=m.
\]
\end{lemma}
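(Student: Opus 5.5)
The plan is to show that $f$ attains its infimum, that any global minimizer must be one of the listed local minima, and then to read off the common value $m$.

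\emph{Existence of a minimizer.} Since $f$ is coercive and continuous, the sublevel set $\{x : f(x)\le f(x_0)\}$ is nonempty, closed and bounded for any $x_0\in\mathbb{R}^d$. By Weierstrass, $f$ attains its minimum on this compact set at some $x^\star$. This $x^\star$ is then a global minimizer of $f$ on all of $\mathbb{R}^d$, because every point outside the sublevel set has a larger value.

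\emph{Identifying the minimizer.} Because $f\in C^1$ and $x^\star$ is an interior global minimizer, $\nabla f(x^\star)=0$, so $x^\star$ belongs to the characterized set of critical points. By hypothesis it is either a strict saddle or a local minimum. It cannot be a strict saddle. With the Hessian definition used in the paper, the Hessian has a negative eigenvalue with eigenvector $w$, and a second-order Taylor expansion gives $f(x^\star+tw)<f(x^\star)$ for small $t$. If "strict saddle" is taken more loosely as "not a local minimum", the conclusion is immediate. Hence $x^\star$ is a local minimum, and by assumption $f(x^\star)=m$.

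\emph{Conclusion.} It follows that $\min_{\mathbb{R}^d} f = m$. Every local minimum attains the value $m$, which equals the global minimum, so every local minimum is a global minimum.

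The only point needing care is the strict-saddle exclusion. If the Hessian is used there, it implicitly requires $C^2$ regularity at critical points, which all the paper's applications (polynomial risks) satisfy. I would state the argument with the Hessian-based definition and remark that, under the weaker "not a local extremum" reading, the step is trivial. The rest is routine.
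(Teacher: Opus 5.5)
Your proof is correct and follows the paper's argument: coercivity and continuity give a global minimizer, $C^1$ regularity makes it a critical point, the strict-saddle alternative is ruled out so it is a local minimum with value $m$, and hence $\min f = m$ and every local minimum is global. Two of your additions refine the argument without changing the route: you justify the existence step via compact sublevel sets, and you note that excluding strict saddles through a negative Hessian eigenvalue tacitly needs $C^2$ regularity, a point the paper passes over.
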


\begin{proof}
Since $f$ is coercive and continuous, it attains its global minimum at some point $x^\star\in\mathbb{R}^d$. Since $f$ is $C^1$, any global minimizer satisfies $\nabla f(x^\star)=0$, hence $x^\star$ is a critical point. By assumption, every critical point is either a strict saddle or a local minimum. A strict saddle cannot be a minimizer, therefore $x^\star$ must be a local minimum. Since all local minima have value $m$, we obtain
\[
f(x^\star)=m.
\]
Thus $m=\min f$, and every local minimum is a global minimum.
\end{proof}

\begin{lemma}\label{nondeg12}
Let $f:\mathbb{R}^d \to \mathbb{R}$ be $C^2$ and let $x^\star$ be a nondegenerate critical point, i.e.,
\[
\nabla f(x^\star)=0
\quad \text{and} \quad
\nabla^2 f(x^\star) \text{ is invertible}.
\]
Then there exist constants $C>0$ and a neighborhood $U$ of $x^\star$ such that
\[
\|\nabla f(x)\|
\ge
C\,|f(x)-f(x^\star)|^{1/2}
\quad \text{for all } x \in U.
\]
\end{lemma}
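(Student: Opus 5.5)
The plan is a local second-order Taylor comparison around $x^\star$. Set $H:=\nabla^2 f(x^\star)$. It is symmetric and invertible, so its smallest singular value $m:=\min_i|\sigma_i(H)|$ is positive and $\|Hv\|\ge m\|v\|$ for all $v$. Let $M:=\|H\|$ denote its operator norm.

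By continuity of $\nabla^2 f$ at $x^\star$, I will pick $r>0$ such that
\[
\|\nabla^2 f(y)-H\|\le \tfrac{m}{2}\quad\text{for all } y\in B(x^\star,r),
\]
and take $U:=B(x^\star,r)$.

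\emph{Lower bound on the gradient.} I apply the integral form of Taylor's theorem to $\nabla f$, using $\nabla f(x^\star)=0$:
\[
\nabla f(x)=\int_0^1\nabla^2 f\big(x^\star+s(x-x^\star)\big)(x-x^\star)\,ds = H(x-x^\star)+r_1(x),
\]
where $\|r_1(x)\|\le \tfrac m2\|x-x^\star\|$. It follows that $\|\nabla f(x)\|\ge \tfrac m2\|x-x^\star\|$ on $U$.

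\emph{Upper bound on the function gap.} I use second-order Taylor with integral remainder, again using that the gradient vanishes at $x^\star$:
\[
f(x)-f(x^\star)=\int_0^1(1-s)\,(x-x^\star)^\top\nabla^2 f\big(x^\star+s(x-x^\star)\big)(x-x^\star)\,ds.
\]
On $U$ the Hessian has norm at most $M+\tfrac m2$. Together with $\int_0^1(1-s)\,ds=\tfrac12$, this gives
\[
|f(x)-f(x^\star)|\le \tfrac12\big(M+\tfrac m2\big)\|x-x^\star\|^2.
\]

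\emph{Combining the two bounds.} The second estimate gives $\|x-x^\star\|\ge \big(2/(M+m/2)\big)^{1/2}|f(x)-f(x^\star)|^{1/2}$. Substituting this into the gradient bound yields
\[
\|\nabla f(x)\|\ge C\,|f(x)-f(x^\star)|^{1/2},\qquad C=\tfrac m2\sqrt{\tfrac{2}{M+m/2}}>0,
\]
for all $x\in U$.

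There is no real obstacle. The only point needing care is that nondegeneracy must be used through the invertibility bound $\|Hv\|\ge m\|v\|$, not through positive definiteness. This matters because $x^\star$ may be a saddle, and the absolute value in $|f(x)-f(x^\star)|$ together with the norm bound on the Hessian handles both signs uniformly. Note also that $C^2$ regularity suffices; analyticity is not needed.
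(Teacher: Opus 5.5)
Your proof is correct and is essentially the paper's argument. You use a first-order Taylor expansion of $\nabla f$ together with the invertibility bound $\|\nabla^2 f(x^\star)v\|\ge m\|v\|$ to get $\|\nabla f(x)\|\ge \tfrac m2\|x-x^\star\|$, then a second-order Taylor estimate to get $|f(x)-f(x^\star)|\le c\,\|x-x^\star\|^2$, and combine the two. The only cosmetic difference is in the function-gap bound: you use the integral remainder with a uniform Hessian bound on the same ball, whereas the paper writes it as the quadratic form at $x^\star$ plus an $\varepsilon$-small remainder.
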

\begin{proof}
 Since $\nabla^2 f(x^\star)$ is invertible, there exists $c_1>0$ such that
\[
\|\nabla^2 f(x^\star)v\|\ge c_1\|v\|
\quad \text{for all } v\in\mathbb{R}^d.
\]

By Taylor's theorem with integral remainder,
\[
\nabla f(x)
=
\nabla^2 f(x^\star)(x-x^\star)
+
r(x),
\]
where
\[
r(x)
=
\int_0^1 \big(\nabla^2 f(x^\star+t(x-x^\star))-\nabla^2 f(x^\star)\big)(x-x^\star)\,dt.
\]

Since $\nabla^2 f$ is continuous, for any $\varepsilon>0$ there exists $\delta>0$ such that for all $\|x-x^\star\|\le\delta$,
\[
\|\nabla^2 f(x)-\nabla^2 f(x^\star)\|\le \varepsilon.
\]
Hence
\[
\|r(x)\|\le \varepsilon \|x-x^\star\|.
\]

Thus
\[
\|\nabla f(x)\|
\ge
\|\nabla^2 f(x^\star)(x-x^\star)\| - \|r(x)\|
\ge
(c_1-\varepsilon)\|x-x^\star\|.
\]

Choosing $\varepsilon=c_1/2$, we obtain
\[
\|\nabla f(x)\|
\ge
c_2\|x-x^\star\|
\]
for $c_2=\frac{c_1}{2}>0$ and all $\|x-x^\star\|\le\delta$.

Next, by Taylor expansion of $f$,
\[
f(x)-f(x^\star)
=
\frac{1}{2}(x-x^\star)^T \nabla^2 f(x^\star) (x-x^\star)
+
\tilde r(x),
\]
where $|\tilde r(x)|\le \varepsilon \|x-x^\star\|^2$ for $\|x-x^\star\|$ small enough.

Since $\nabla^2 f(x^\star)$ is bounded, there exists $c_3>0$ such that
\[
|(x-x^\star)^T \nabla^2 f(x^\star) (x-x^\star)|
\le c_3 \|x-x^\star\|^2.
\]
Thus
\[
|f(x)-f(x^\star)|
\le
c_4 \|x-x^\star\|^2
\]
for some $c_4>0$.

Combining both estimates yields
\[
\|\nabla f(x)\|
\ge
c_2 \|x-x^\star\|
\ge
\frac{c_2}{\sqrt{c_4}} |f(x)-f(x^\star)|^{1/2}.
\]
\end{proof}

\begin{corollary}\label{cor1}
Let $f:\mathbb{R}^d \to \mathbb{R}$ be real analytic and assume that all its critical points are nondegenerate and finite. Then for each critical point $x^\star$, there exist $C>0$ and a neighborhood $U$ of $x^\star$ such that
\[
\|\nabla f(x)\| \ge C |f(x)-f(x^\star)|^{1/2}
\quad \text{for all } x \in B(x^\star,\varepsilon).
\]
\end{corollary}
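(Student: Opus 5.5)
The plan is to derive Corollary~\ref{cor1} directly from Lemma~\ref{nondeg12}. Real analyticity is needed only to guarantee the regularity that the lemma asks for. A real analytic function $f:\mathbb{R}^d\to\mathbb{R}$ is in particular $C^\infty$, hence $C^2$. Therefore the only hypothesis of Lemma~\ref{nondeg12} left to check at a given critical point $x^\star$ is nondegeneracy, and this is assumed: $\nabla^2 f(x^\star)$ is invertible.

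Concretely, I fix an arbitrary critical point $x^\star$ from the finite set $\mathrm{crit}(f)=\{x^{(1)},\ldots,x^{(m)}\}$. Lemma~\ref{nondeg12} then gives constants $C_k>0$ and neighborhoods $U_k$ of $x^{(k)}$ on which
\[
\|\nabla f(x)\|\ge C_k\,|f(x)-f(x^{(k)})|^{1/2}.
\]
Inside the proof of that lemma, the neighborhood is explicitly a ball $B(x^\star,\delta)$: it is where the Hessian varies by at most $c_1/2$ and where the second-order Taylor remainder is controlled. So I will take $\varepsilon=\delta_k$, which reconciles the ball $B(x^\star,\varepsilon)$ in the statement with the neighborhood $U$. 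Since the critical set is finite, I can also take $\varepsilon$ to be the minimum of the $\delta_k$ and $C$ to be the minimum of the $C_k$. This gives one uniform choice that works at every critical point, which is convenient for the later application in Proposition~\ref{cr_inf}.

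There is no real obstacle here; the only point needing care is bookkeeping. The statement mixes a neighborhood $U$ with a ball $B(x^\star,\varepsilon)$, and I will make explicit that $\varepsilon$ is the radius produced in Lemma~\ref{nondeg12}. I would also note in passing that the exponent $1/2$ could be obtained from the general Łojasiewicz theorem for analytic functions only with some exponent $\theta\in[1/2,1)$. Nondegeneracy is exactly what sharpens this to $1/2$, so the argument relies on Lemma~\ref{nondeg12} rather than on analyticity.
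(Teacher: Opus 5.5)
Your proposal is correct and follows the paper's route. The paper gives no separate proof of Corollary~\ref{cor1} and treats it as an immediate consequence of Lemma~\ref{nondeg12}: analyticity only supplies $C^2$ regularity, nondegeneracy supplies the invertible Hessian, and $\varepsilon$ is the radius of the ball produced in that lemma. Your uniform choice of $C$ and $\varepsilon$ over the finite critical set is a harmless extra, since finiteness plays no role in the pointwise inequality.
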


\begin{lemma}\label{bestbound}
Let $\alpha,\beta>0$ and $L\ge 1$. Define, for $\Psi\ge 0$,
\[
\varphi_L(\Psi)
\;:=\;
\frac{e^{\alpha \Psi^2}}{L}
\;+\;
(1+\ln L) e^{-\beta \Psi^2}.
\]
Then $\varphi_L$ admits a unique minimizer $\Psi(L)\ge 0$, given by
\[
\Psi^2(L)
=
\frac{1}{\alpha+\beta}
\Bigl(
\ln L + \ln(1+\ln L) + \ln(\beta/\alpha)
\Bigr).
\]
Moreover, the optimal value satisfies
%\[
%\inf_{\Psi\ge 0} \varphi_L(\Psi)
%=
%\Theta\!\left(
%L^{-\frac{\beta}{\alpha+\beta}}
%(1+\ln L)^{\frac{\alpha}{\alpha+\beta}}
%\right),
%\]
%and in particular,
\[
\inf_{\Psi\ge 0} \varphi_L(\Psi)
=
\Theta\!\left(
\psi_{\alpha,\beta}(L)
\right),
\]
where $\psi_{\alpha,\beta}(L)=L^{-\frac{\beta}{\alpha+\beta}}
(1+\ln L)^{\frac{\alpha}{\alpha+\beta}}$.
\end{lemma}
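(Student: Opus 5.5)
We work directly with $t:=\Psi^2\ge 0$ and study the scalar function
\[
g(t)=\frac{e^{\alpha t}}{L}+(1+\ln L)e^{-\beta t}.
\]
Its derivative is
\[
g'(t)=\frac{\alpha}{L}e^{\alpha t}-\beta(1+\ln L)e^{-\beta t}.
\]
Since $g''>0$, the function $g$ is strictly convex on $\mathbb{R}$. Moreover $g'(t)=0$ holds if and only if
\[
e^{(\alpha+\beta)t}=\frac{\beta}{\alpha}\,L(1+\ln L).
\]
This gives the unique stationary point $t^\star=\frac{1}{\alpha+\beta}\bigl(\ln L+\ln(1+\ln L)+\ln(\beta/\alpha)\bigr)$, which is the global minimizer of $g$ on $\mathbb{R}$.

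The only subtlety is the constraint $t\ge 0$. We have $t^\star\ge 0$ exactly when $\frac{\beta}{\alpha}L(1+\ln L)\ge 1$. Since $L(1+\ln L)\to\infty$, this holds for all $L$ beyond some threshold $L_{\alpha,\beta}$, and in that regime $\Psi(L)=\sqrt{t^\star}$ is the unique minimizer over $\Psi\ge0$. The map $\Psi\mapsto\Psi^2$ is a bijection of $[0,\infty)$, so uniqueness transfers. For the finitely many smaller $L$ (only relevant when $\beta<\alpha$), the minimizer is $\Psi=0$ by monotonicity of $g$ on $[0,\infty)$. This does not affect the asymptotic $\Theta$ statement, and we would state the explicit formula for large $L$, or read it with the convention $t^\star\ge0$. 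This boundary bookkeeping is the main point needing care; the rest is algebra.

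For the optimal value, we substitute $t^\star$. Set $E=e^{(\alpha+\beta)t^\star}=\frac{\beta}{\alpha}L(1+\ln L)$. Then
\[
\frac{e^{\alpha t^\star}}{L}=\frac{E^{\alpha/(\alpha+\beta)}}{L}=\Bigl(\frac{\beta}{\alpha}\Bigr)^{\frac{\alpha}{\alpha+\beta}}L^{-\frac{\beta}{\alpha+\beta}}(1+\ln L)^{\frac{\alpha}{\alpha+\beta}}.
\]
At the stationary point the two terms are balanced, $\beta(1+\ln L)e^{-\beta t^\star}=\frac{\alpha}{L}e^{\alpha t^\star}$, so the second term equals $\frac{\alpha}{\beta}$ times the first. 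Hence
\[
\inf_{\Psi\ge 0}\varphi_L=\Bigl(1+\frac{\alpha}{\beta}\Bigr)\Bigl(\frac{\beta}{\alpha}\Bigr)^{\frac{\alpha}{\alpha+\beta}}\psi_{\alpha,\beta}(L).
\]
This is an exact constant multiple of $\psi_{\alpha,\beta}(L)$, with a constant depending only on $\alpha,\beta$, which gives the $\Theta$ claim for large $L$. For the finitely many small $L$, both sides are positive and finite, so the $\Theta$ relation holds after enlarging the constants.
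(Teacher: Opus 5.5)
Your proof is correct and follows the same route as the paper: you substitute $t=\Psi^2$, use strict convexity, solve $g'(t)=0$ to get $e^{(\alpha+\beta)t}=\frac{\beta}{\alpha}L(1+\ln L)$, and plug back in, and your balancing identity makes exact the paper's remark that the second term ``has the same order''. Your handling of the case $t^\star<0$ (which occurs when $\beta<\alpha$ and $L$ is small, e.g.\ $L=1$) is a genuine improvement: the paper's proof silently assumes $t^\star\ge 0$, and for such $L$ its stated formula for $\Psi^2(L)$ is negative.
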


\begin{proof}
Set $x=\Psi^2\ge 0$ and define
\[
f_L(x)=\frac{e^{\alpha x}}{L}+(1+\ln L)e^{-\beta x}.
\]
The function $f_L$ is strictly convex on $\mathbb{R}_+$ %since
%\[
%f_L''(x)
%=
%\frac{\alpha^2 e^{\alpha x}}{L}
%+
%\beta^2(1+\ln L)e^{-\beta x}
%>
%0.
%\]
Therefore, any critical point is the unique global minimizer. Differentiating,
\[
f_L'(x)
=
\frac{\alpha e^{\alpha x}}{L}
-
\beta(1+\ln L)e^{-\beta x}.
\]
Solving $f_L'(x)=0$ yields
\[
e^{(\alpha+\beta)x}
=
\frac{\beta}{\alpha} L(1+\ln L),
\]
which gives the stated expression for $\Psi^2(L)$. Substituting this value into either term of $f_L$,
\[
\frac{e^{\alpha \Psi^2(L)}}{L}
=
L^{-\frac{\beta}{\alpha+\beta}}
(1+\ln L)^{\frac{\alpha}{\alpha+\beta}}
\left(\frac{\beta}{\alpha}\right)^{\frac{\alpha}{\alpha+\beta}},
\]
and the term $(1+\ln L)e^{-\beta\Psi^2(L)}$ has the same order. The claim follows.
\end{proof}

\begin{lemma}[Persistence of nondegenerate critical points]\label{persistence}
Let $f \in C^2(\mathbb{R}^d)$ and let $x^\star$ be a nondegenerate critical point of $f$, i.e.
\[
\nabla f(x^\star)=0,
\qquad 
\nabla^2 f(x^\star)\ \text{is invertible}.
\]
Assume $f_n \to f$ in $C^2_{\mathrm{loc}}(\mathbb{R}^d)$. 
Then there exist $r>0$ and $n_0\in\mathbb{N}$ such that for all $n \ge n_0$:

\begin{enumerate}
\item $f_n$ has a unique critical point $x_n^\star$ in $B(x^\star,r)$,
\item $x_n^\star$ is nondegenerate and moreover $x_n^\star \to x^\star$ as $n \to \infty$,
\item If the set of critical points of $f$ is finite, i.e.,
\[
\mathrm{crit}(f)=\{x^{(1)},\ldots,x^{(\Lambda)}\},
\]
and each $x^{(k)}$, $k\in\{1,\ldots,\Lambda\}$, is nondegenerate, let
$r_k>0$ be such that $B(x^{(k)},r_k)$ are pairwise disjoint. Let $\rho>0$ such that
\[
\bigcup_{k=1}^{\Lambda} B(x^{(k)},r_k)\subset B(0,\rho).
\]
Then for $n$ large enough,
\[
\mathrm{crit}(f_n)\cap B(0,\rho)
=
\{x_n^{(1)},\ldots,x_n^{(\Lambda)}\}.
\]
Moreover, if for some $k\in\{1,\ldots,\Lambda\}$, $x^{(k)}$ is a strict
saddle (resp. a strict local minimum), then $x_n^{(k)}$ is also a strict
saddle (resp. a strict local minimum) for $n$ large enough.
\end{enumerate}
\end{lemma}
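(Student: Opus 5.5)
The plan is to treat the critical points of $f_n$ as zeros of the maps $F_n:=\nabla f_n$, which are perturbations of $F:=\nabla f$ in $C^1$ on compact sets, and to apply a quantitative inverse function / contraction argument around each nondegenerate zero of $F$.

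\textbf{Items 1 and 2.} Set $H:=\nabla^2 f(x^\star)$, which is invertible, and let $c:=\|H^{-1}\|^{-1}>0$. By continuity of $\nabla^2 f$, choose $r>0$ with $\|\nabla^2 f(x)-H\|\le c/4$ on $B(x^\star,r)$. By $C^2_{\mathrm{loc}}$ convergence, pick $n_0$ such that for $n\ge n_0$ we have
\[
\sup_{B(x^\star,r)}\|\nabla^2 f_n-\nabla^2 f\|\le c/4
\quad\text{and}\quad
\sup_{B(x^\star,r)}\|\nabla f_n-\nabla f\|\le \eta_n,
\]
where $\eta_n\to0$. Then $\|\nabla^2 f_n(x)-H\|\le c/2$ on the ball. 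Two consequences follow.
\begin{enumerate}
\item \emph{Uniqueness.} For $x,y$ in the ball,
\[
\|\nabla f_n(x)-\nabla f_n(y)\|\ge \|H(x-y)\|-\tfrac{c}{2}\|x-y\|\ge \tfrac{c}{2}\|x-y\|,
\]
so $\nabla f_n$ is injective there and there is at most one critical point.
\item \emph{Existence.} Consider $\Phi_n(x)=x-H^{-1}\nabla f_n(x)$. Its Jacobian satisfies $\|I-H^{-1}\nabla^2 f_n\|\le 1/2$, so $\Phi_n$ is a contraction. Moreover,
\[
\|\Phi_n(x^\star)-x^\star\|=\|H^{-1}\nabla f_n(x^\star)\|\le \eta_n/c ,
\]
so $\Phi_n$ maps $B(x^\star,r)$ into itself once $\eta_n/c\le r/2$. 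Banach's fixed point theorem then gives a zero $x_n^\star$, and the same estimate yields $\|x_n^\star-x^\star\|\le 2\eta_n/c\to0$.
\end{enumerate}
Nondegeneracy of $x_n^\star$ follows from $\|\nabla^2 f_n(x_n^\star)-H\|\le c/2<c$, since a perturbation smaller than $\|H^{-1}\|^{-1}$ preserves invertibility.

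\textbf{Item 3.} Apply items 1 and 2 to each $x^{(k)}$, shrinking the radii to $\min(r_k, r^{(k)})$; this keeps the balls disjoint and gives $x_n^{(k)}$ for $n$ large. It remains to exclude critical points of $f_n$ in the compact set
\[
K:=B(0,\rho)\setminus\bigcup_k \mathrm{int}\,B(x^{(k)},r_k).
\]
On $K$, $\nabla f$ does not vanish, so by compactness and continuity $\min_K\|\nabla f\|=:\delta>0$. Uniform convergence of $\nabla f_n$ on $B(0,\rho)$ gives $\|\nabla f_n\|\ge\delta/2$ on $K$ for large $n$. This is the step needing care: the balls used in the uniqueness argument must be exactly the $B(x^{(k)},r_k)$ removed from $K$, so I will fix the $r_k$ small enough from the start.

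\textbf{Type preservation.} Eigenvalues of symmetric matrices are $1$-Lipschitz in operator norm (Weyl's inequality). The Hessian $\nabla^2 f(x^{(k)})$ has no zero eigenvalue, and
\[
\|\nabla^2 f_n(x_n^{(k)})-\nabla^2 f(x^{(k)})\|\le \|\nabla^2 f_n(x_n^{(k)})-\nabla^2 f(x_n^{(k)})\|+\|\nabla^2 f(x_n^{(k)})-\nabla^2 f(x^{(k)})\|\to0 .
\]
Hence the signs of all eigenvalues persist for $n$ large. A strict local minimum (positive definite Hessian) therefore stays a strict local minimum, and a strict saddle (some negative eigenvalue) stays a strict saddle.
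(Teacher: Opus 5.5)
Your proof is correct. For items 1--2 it takes a different and more elementary route than the paper.

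The paper proceeds in three steps:
\begin{enumerate}
\item It bounds $\|\nabla^2 f_n(x)^{-1}\|\le 2m$ on $B(x^\star,r)$ by a Neumann series.
\item It deforms $\nabla f$ into $\nabla f_n$ through the homotopy $H_n(t,x)=(1-t)\nabla f(x)+t\nabla f_n(x)$, and invokes an external continuation theorem to follow a curve of zeros from $x^\star$ to $x_n^\star:=x_n(1)$. Uniqueness is asserted from the local invertibility of $D_xH_n$.
\item It proves $x_n^\star\to x^\star$ separately, after shrinking $r$, from $\|x-x^\star\|\le(2/\Gamma)\|\nabla f(x)\|$.
\end{enumerate}

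Your simplified-Newton map $\Phi_n(x)=x-H^{-1}\nabla f_n(x)$ needs only the mean value inequality and Banach's fixed point theorem. It gives existence, uniqueness, nondegeneracy and the bound $\|x_n^\star-x^\star\|\le 2\eta_n/c$ all at once.

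It also justifies uniqueness properly. Invertibility of the Jacobian at each point does not by itself make $\nabla f_n$ injective on the ball; $f(x,y)=e^x\cos y$ on a ball of radius larger than $\pi$ is a counterexample. Your estimate $\|\nabla f_n(x)-\nabla f_n(y)\|\ge\frac c2\|x-y\|$ does give injectivity, because it uses that $\nabla^2 f_n$ stays within $c/2$ of the single matrix $H$ on a convex set. What the homotopy buys is a continuous branch of zeros joining $x^\star$ to $x_n^\star$, which this lemma does not need.

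Item 3 and the type-preservation step are the same as in the paper: the gradient is bounded below away from small balls around the critical points, uniform $C^1$ convergence transfers this to $f_n$, and Weyl's inequality handles the Hessian. Two small points favour your version:
\begin{itemize}
\item Your set $K=B(0,\rho)\setminus\bigcup_k\mathrm{int}\,B(x^{(k)},r_k)$ is genuinely compact. The paper's $K^c\cap B(0,\rho)$ is not closed, though this is harmless since its closure still avoids $\mathrm{crit}(f)$.
\item Shrinking the prescribed radii to those where uniqueness holds shows that the set identity is valid for any admissible choice of disjoint $r_k$, not only for radii chosen small in advance.
\end{itemize}
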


\begin{proof}
\begin{enumerate}
    \item 

\textbf{Uniform invertibility of the Hessian.}
Since $\nabla^2 f(x^\star)$ is invertible and $\nabla^2 f$ is continuous, 
there exist $r>0$ and $m>0$ such that
\[
\|\nabla^2 f(x)^{-1}\| \le m
\quad \text{for all } x \in B(x^\star,r).
\]

Because $f_n \to f$ in $C^2_{\mathrm{loc}}$, for $n$ sufficiently large,
\[
\sup_{x\in B(x^\star,r)}
\|\nabla^2 f_n(x) - \nabla^2 f(x)\|
\le \frac{1}{2m}.
\]
Hence for $x \in B(x^\star,r)$, we define $E_n(x)=\nabla^2 f_n(x) - \nabla^2 f(x)$
\[
\nabla^2 f_n(x)
=
\nabla^2 f(x) + E_n(x),
\qquad 
\|E_n(x)\|\le \frac{1}{2m}.
\]
Then, for $n$ sufficiently large
$\nabla^2 f_n(x)$ is invertible and let $A(x)=\nabla^2 f(x)$, then
\[\nabla^2 f_n(x)=A(x)+E_n(x)\]
And we have the identity \[\nabla^2 f_n(x)^{-1}=(I_d+A(x)^{-1}E_n(x))^{-1}A(x)^{-1},\]
where $$\Vert A(x)^{-1}E_n(x)\Vert\leq \Vert A(x)^{-1}\Vert\Vert E_n(x)\Vert\leq m\frac{1}{2m}=\frac{1}{2}.$$
By the Neumann Series Theorem we get that \[\Vert(I_d+A(x)^{-1}E_n(x))^{-1}\Vert\leq \frac{1}{1-\frac{1}{2}}= 2.\]

Thus, for $n$ sufficiently large, we get
\[
\|\nabla^2 f_n(x)^{-1}\|=\|(I_d+A(x)^{-1}E_n(x))^{-1}\|\|A(x)^{-1}\|
\le 2m.
\]
\medskip
\textbf{Definition of the homotopy.}
Define
\[H_n(t,x)=
(1-t)\nabla f(x)+t \nabla f_n(x),\qquad t\in[0,1].\]
Then
\[
D_x H_n(t,x)=(1-t)\nabla^2 f(x)+t \nabla^2 f_n(x),\]
\[
D_t H_n(t,x)
=
\nabla f_n(x) - \nabla f(x).
\]
By the uniform invertibility of the Hessian, $D_x H_n(t,x)$ is invertible on
$[0,1]\times B(x^\star,r)$ for $n$ sufficiently large.\\
\medskip
\textbf{Technical condition.}
Let
\[
M := \sup_{(t,x)\in[0,1]\times B(x^\star,r)}\|(D_x H_n(t,x))^{-1}\|.
\]
From the uniform invertibility of $\nabla^2 f$ and $\nabla^2 f_n$, $M<\infty$ uniformly for $n$ large.

Since $f_n \to f$ in $C^1_{\mathrm{loc}}$,
\[
\sup_{x\in B(x^\star,r)}\|\nabla f_n(x)-\nabla f(x)\|\longrightarrow 0.
\]
Therefore, for $n$ sufficiently large,
\[
\sup_{(t,x)\in[0,1]\times B(x^\star,r)}
\|(D_x H_n(t,x))^{-1}D_t H_n(t,x)\|\le M\sup_{x\in B(x^\star,r)}\|\nabla f_n(x)-\nabla f(x)\|< r.
\]
\medskip

\textbf{Conclusion.} Since
\[
H_n(0,x^\star)=\nabla f(x^\star)=0,
\]
Then by \cite[Theorem 4.2.1]{implicit} yields a continuous curve $x_n(t)$ with
\[
H_n(t,x_n(t))=0,
\qquad 
x_n(0)=x^\star,
\qquad 
x_n(t)\in B(x^\star,r).
\]
Define $x_n^\star := x_n(1)$. Then
\[
\nabla f_n(x_n^\star)=0,
\qquad 
x_n^\star \in B(x^\star,r).
\]
Uniqueness follows from the local invertibility of $D_x H_n$.\\
\item 
Let $\Gamma:=\frac{1}{\Vert [\nabla^2 f(x^\star)]^{-1}\Vert}$, by continuity of $\nabla^2 f$, shrinking $r$ if necessary, we have for every $x\in B(x^\star, r)$, $$\Vert \nabla^2 f(x)-\nabla^2 f(x^\star)\Vert\leq \frac{\Gamma}{2},$$ and since $\Vert\nabla^2 f(x^\star)v\Vert\geq \Gamma \Vert v\Vert$, using Taylor we get $$\nabla f(x)=\nabla^2 f(x^\star)(x-x^\star)+\int_{0}^1 (\nabla^2 f(x^\star+t(x-x^\star))-\nabla^2 f(x^\star))(x-x^\star)dt.$$

We take norms, and after bounding, we obtain
\[
\|x-x^\star\|
\leq
\frac{2}{\Gamma}\|\nabla f(x)\|.
\]
Applying this estimate with \(x=x_n^\star\), we would need to show that $\Vert\nabla f(x_n^\star)\Vert\to 0$ in order to show that $x_n^\star\to x^\star$. Since \(\nabla f_n(x_n^\star)=0\),
\[
\nabla f(x_n^\star)
=
\nabla f(x_n^\star)-\nabla f_n(x_n^\star),
\]
and from \(f_n\to f\) in \(C^1_{\mathrm{loc}}(\mathbb R^d)\), we conclude that $x_n^\star\to x^\star$ as $n\to\infty$.

Besides,
\[
\begin{aligned}
\|\nabla^2 f_n(x_n^\star)-\nabla^2 f(x^\star)\|
&\leq
\|\nabla^2 f_n(x_n^\star)-\nabla^2 f(x_n^\star)\| +
\|\nabla^2 f(x_n^\star)-\nabla^2 f(x^\star)\|.
\end{aligned}
\]
Since \(f_n\to f\) in \(C^2_{\mathrm{loc}}(\mathbb R^d)\) and \(x_n^\star\to x^\star\),
\[
\nabla^2 f_n(x_n^\star)\to \nabla^2 f(x^\star).
\]
As \(\nabla^2 f(x^\star)\) is invertible, \(x_n^\star\) is nondegenerate for \(n\) sufficiently large.
\item Assume now that 
\[
\mathrm{crit}(f)=\{x^{(1)},\ldots,x^{(\Lambda)}\},
\qquad 
\nabla^2 f(x^{(k)}) \ \text{is invertible for all } k.
\]

Since the set is finite, define
\[
\delta:=\frac12\min_{k\neq \ell}\|x^{(k)}-x^{(\ell)}\|>0.
\]

Applying items (1)--(2) to each $x^{(k)}$, there exist $r_k\in(0,\delta)$ and 
$n_k\in\mathbb{N}$ such that for all $n\ge n_k$:

\begin{enumerate}
\item $f_n$ has a unique critical point $x_n^{(k)}$ in $B(x^{(k)},r_k)$,
\item $x_n^{(k)}\to x^{(k)}$,
\item $x_n^{(k)}$ is nondegenerate.
\end{enumerate}

Let 
\[
n_0:=\max_{k\in\{1,\ldots,\Lambda\}} n_k.
\]

For $n\ge n_0$ the balls $B(x^{(k)},r_k)$ are disjoint and
\[
\{x_n^{(1)},\ldots,x_n^{(\Lambda)}\}
\subset \mathrm{crit}(f_n).
\]

It remains to show that there are no other critical points in $B(0,\rho)$.

Let
\[
K:=\bigcup_{k=1}^{\Lambda}B(x^{(k)},r_k)\subset B(0,\rho),
\]
then \begin{equation}\label{inc1}
\{x_n^{(1)},\ldots,x_n^{(\Lambda)}\}
\subseteq \mathrm{crit}(f_n)\cap B(0,\rho).    
\end{equation}

Since $f$ has no critical point in $K^c\cap B(0,\rho)$ and $\nabla f$ is
continuous, the compactness of $K^c\cap B(0,\rho)$ implies
\[
\eta_\rho
:=
\inf_{x\in K^c\cap B(0,\rho)}
\|\nabla f(x)\|
>0.
\]

Because $f_n\to f$ in $C^1_{\mathrm{loc}}$, we have
\[
\sup_{x\in B(0,\rho)}
\|\nabla f_n(x)-\nabla f(x)\|\to0 .
\]

Hence for $n$ large enough and every $x\in K^c\cap B(0,\rho)$,
\[
\|\nabla f_n(x)\|
\ge
\|\nabla f(x)\|
-
\|\nabla f_n(x)-\nabla f(x)\|
\ge
\frac{\eta_\rho}{2}
>0 .
\]

Therefore $f_n$ has no critical point in $K^c\cap B(0,\rho)$.
Let $x\in \mathrm{crit}(f_n)\cap B(0,\rho)$, if $x\in K^c$ we get a contradiction with the previous founding. Then $x\in K$, and there exists $k\in \{1,...,\Lambda\}$ such that $x\in B(x^{(k)},r_k)$, since the only critical point of $f_n$ inside this ball is $x_n^{(k)}$,  we conclude that $x=x_n^{(k)}$ and
\[
\mathrm{crit}(f_n)\cap B(0,\rho)
\subseteq
\{x_n^{(1)},\ldots,x_n^{(\Lambda)}\},
\]
then by \eqref{inc1} we conclude the equality of sets.

Finally, since $x_n^{(k)}\to x^{(k)}$ and $f_n\to f$ in $C^2_{\mathrm{loc}}$,
\[
\nabla^2 f_n(x_n^{(k)}) \to \nabla^2 f(x^{(k)}).
\]
Because $\nabla^2 f(x^{(k)})$ is invertible, its eigenvalues are bounded away
from zero. By continuity of the spectrum, the inertia of
$\nabla^2 f_n(x_n^{(k)})$ coincides with that of $\nabla^2 f(x^{(k)})$ for
$n$ large enough. Hence if $x^{(k)}$ is a strict local minimum (resp. a
strict saddle), then $x_n^{(k)}$ is also a strict local minimum (resp. a
strict saddle) for $n$ large enough.

\end{enumerate}
\end{proof}
\subsection{Almost sure and \texorpdfstring{$L^2-$}{L2} convergence of encodings}\label{concentration}

In the following subsection, we present the lemmas needed to establish almost sure and $L^2$ convergence of $T_{\mathrm{soft},L}^\mu$ to $T_{\mathrm{soft},\infty}^\mu$. 

\begin{lemma}\label{lemma_as}
    Consider $X_1,\ldots,X_L$ i.i.d $\mathcal{N}(0,\Sigma)$, and $$T_L(X_1)=\frac{\sum_{k=1}^L X_k \mathrm{exp}(\lambda \langle X_1,\mu\rangle\langle X_k,\mu\rangle)}{\sum_{k=1}^L \mathrm{exp}(\lambda \langle X_1,\mu\rangle\langle X_k,\mu\rangle)}.$$
    And $T_{\infty}(X_1)=\lambda\Sigma\mu\mu^\top X_1$, we have that $T_L\to T_{\infty}$ a.s., and similarly for its Jacobian $D_\mu T_L(X_1)\to D_\mu T_{\infty}(X_1)$ a.s., and its Hessian $D_\mu^2 T_L(X_1)\to D_\mu^2 T_{\infty}(X_1)$ a.s..
\end{lemma}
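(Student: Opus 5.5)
Throughout, fix $\mu$ and a realization of $X_1$, and condition on $X_1$. Write $s:=\lambda\langle X_1,\mu\rangle$ and $g(x):=e^{s\langle x,\mu\rangle}$, so that
\[
T_L(X_1)=\frac{\frac1L\sum_{k=1}^L X_k g(X_k)}{\frac1L\sum_{k=1}^L g(X_k)} .
\]
The plan is to apply the strong law of large numbers separately to the numerator and the denominator. The diagonal term $k=1$ is not independent of $X_1$, but it contributes a single summand divided by $L$, which tends to $0$ almost surely. The remaining sums over $k=2,\dots,L$ are, conditionally on $X_1$, averages of i.i.d.\ variables. Their expectations are given by Gaussian moment-generating identities. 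For $X\sim\mathcal N(0,\Sigma)$, $\E[e^{s\mu^\top X}]=e^{s^2a/2}$ with $a=\mu^\top\Sigma\mu$. Differentiating in the linear-form direction gives $\E[Xe^{s\mu^\top X}]=s\Sigma\mu\,e^{s^2a/2}$. Both are finite since Gaussian exponential moments exist. The ratio of the two limits is therefore $s\Sigma\mu=\lambda\Sigma\mu\mu^\top X_1=T_\infty(X_1)$, and the limiting denominator is strictly positive. A Fubini argument (conditional a.s.\ convergence for a.e.\ $X_1$) then gives $T_L\to T_\infty$ almost surely.

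For the Jacobian and Hessian with respect to $\mu$, I will differentiate the ratio explicitly. Every derivative of $T_L$ is a rational expression whose ingredients are empirical averages of the form
\[
\frac1L\sum_{k} X_k^{\otimes p}\,(\langle X_1,\mu\rangle)^{q}\,(\langle X_k,\mu\rangle)^{r}\,X_1^{\otimes m}\,e^{\lambda\langle X_1,\mu\rangle\langle X_k,\mu\rangle},
\]
with small integer exponents, divided by powers of the denominator average. Equivalently, they are the weighted covariance and third-moment tensors of the softmax-reweighted empirical measure. Each such average converges almost surely, again by the SLLN conditionally on $X_1$ with the diagonal term discarded, to its population counterpart. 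The integrability needed for this comes from the bound $\E\|X\|^pe^{c|\mu^\top X|}<\infty$.

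The remaining task is to identify the limit expression with $D_\mu T_\infty$ and $D^2_\mu T_\infty$. The cleanest route is to note that the population quantities are exactly $D^k_\mu$ of the map
\[
\mu\mapsto \frac{\E[Xe^{\lambda\langle X_1,\mu\rangle\langle X,\mu\rangle}]}{\E[e^{\lambda\langle X_1,\mu\rangle\langle X,\mu\rangle}]}=\lambda\Sigma\mu\mu^\top X_1 .
\]
Differentiation under the expectation is justified by dominated convergence, using local-in-$\mu$ exponential moment bounds. Hence the limit of $D^k_\mu T_L$ is $D^k_\mu$ of the limit, for $k=1,2$. Since $\mu$ is fixed, no uniformity in $\mu$ is needed, so this is pointwise almost sure convergence.

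The main obstacle is bookkeeping rather than ideas. Two points need care. First, the diagonal term $k=1$ must be shown to vanish after division by $L$; its derivatives involve $e^{\lambda\langle X_1,\mu\rangle^2}$ times polynomials in $X_1$, which is finite for each fixed realization, so this holds. Second, the exchange of derivative and expectation must be checked at second order.
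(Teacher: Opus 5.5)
Your proposal is correct and follows essentially the same route as the paper. Both condition on $X_1=z$, discard the diagonal $k=1$ term after division by $L$, apply the SLLN to the numerator, the denominator and their $\mu$-derivatives, and then pass from conditional to unconditional almost sure convergence. Your way of identifying the derivative limits is a cleaner justification than the paper's. You differentiate under the expectation in the population ratio, which equals $\lambda\Sigma\mu\mu^\top z$ identically in $\mu$, and justify this by dominated convergence. The paper instead computes the first-order limit explicitly and only asserts the second-order identification (``we verify'').
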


\begin{proof}
Consider $X\sim \mathcal{N}(0,\Sigma)$ and let us fix $X_1=z$, and define $\eta_k(z)=\eta_k(\mu,z)=\mathrm{exp}(\lambda \langle z,\mu\rangle\langle X_k,\mu\rangle)$, then $$T_L(z)=\frac{\sum_{k=1}^L \eta_k(z)X_k}{\sum_{k=1}^L \eta_k(z)}=\frac{\frac{1}{L}\eta_1(z)X_1+\frac{L-1}{L}\frac{1}{L-1}\sum_{k=2}^L \eta_k(z)X_k}{\frac{1}{L}\eta_1(z)+\frac{L-1}{L}\frac{1}{L-1}\sum_{k=2}^L \eta_k(z)}.$$
By the strong law of large numbers, we have that $$\lim_{L\rightarrow\infty}T_L(z)=\frac{\mathbb{E}[X\mathrm{exp}(\lambda\langle X,\mu\rangle\langle z,\mu\rangle)]}{\mathbb{E}[\mathrm{exp}(\lambda\langle X,\mu\rangle\langle z,\mu\rangle)]}=\lambda\Sigma\mu\mu^\top z, \text{ a.s..}$$  Therefore $$\mathbb{P}\left(\lim_{L\rightarrow\infty} T_L(X_1)=\lambda\Sigma\mu\mu^\top X_1\Big|X_1\right)=1.$$
Taking expectation w.r.t. $X_1$, we get that $\mathbb{P}\left(\lim_{L\rightarrow\infty} T_L(X_1)=\lambda\Sigma\mu\mu^\top X_1\right)=1$ or $\lim_{L\rightarrow\infty} T_L(X_1)=\lambda\Sigma\mu\mu^\top X_1$ a.s..

\paragraph{First order.} Let $$N_L(\mu,z)=\frac{1}{L-1}\sum_{k=2}^L \eta_k(\mu,z)X_k,\quad  S_L(\mu,z)=\frac{1}{L-1}\sum_{k=2}^L \eta_k(\mu,z),$$ then $T_L(z)=\frac{N_L(\mu,z)}{S_L(\mu,z)}+R_{1,L}(z)$ and $$D_{\mu} T_L(z)=\frac{1}{S_L(\mu,z)}D_{\mu}(N_L(\mu,z))-\frac{1}{S_L(\mu,z)^2}N_L(\mu,z)(D_{\mu} S_L(\mu,z))^\top+D_{\mu}R_{1,L}(z).$$
Using the strong law of large numbers, we get (since $\lim_{L\rightarrow\infty}D_{\mu}R_{1,L}(z)=0$ a.s.) \begin{align*}
    \lim_{L\rightarrow\infty} D_{\mu} T_L(z)&=\frac{\mathbb{E}[X(\nabla_{\mu}\mathrm{exp}(\lambda\langle X,\mu\rangle\langle z,\mu\rangle)) ^\top]}{\mathbb{E}[\mathrm{exp}(\lambda\langle X,\mu\rangle\langle z,\mu\rangle)]]}-\frac{\mathbb{E}[X\mathrm{exp}(\lambda\langle X,\mu\rangle\langle z,\mu\rangle)](\mathbb{E}[\nabla_{\mu}\mathrm{exp}(\lambda\langle X,\mu\rangle\langle z,\mu\rangle)])^\top}{\mathbb{E}[\mathrm{exp}(\lambda\langle X,\mu\rangle\langle z,\mu\rangle)]]^2}\\
    &=\lambda(\Sigma\mu z^\top+(\mu^\top z)\Sigma)=D_{\mu} T_{\infty}(z). 
\end{align*}
And we conclude as before that $\lim_{L\rightarrow\infty} D_{\mu} T_L(X_1)=D_{\mu} T_{\infty}(X_1)$ a.s..

\paragraph{Second-order.}
Condition on $X_1=z$ and recall that
\[
N_L(\mu,z)=\frac{1}{L-1}\sum_{k=2}^L \eta_k(\mu,z)X_k,\qquad
S_L(\mu,z)=\frac{1}{L-1}\sum_{k=2}^L \eta_k(\mu,z),
\]
so that
\[
T_L(z)=\frac{N_L(\mu,z)}{S_L(\mu,z)}+R_{1,L}(z).
\]
Differentiating twice w.r.t. $\mu$ and using the quotient rule,
\[
D_\mu^2 T_L(z)
= D_\mu^2\!\left(\frac{N_L(z)}{S_L(z)}\right)+D_\mu^2 R_{1,L}(z),
\]
where 
\[
\begin{aligned}
D_\mu^2\!\left(\frac{N_L}{S_L}\right)[h]
&= \frac{1}{S_L} D_\mu^2 N_L[h]
- \frac{1}{S_L^2} \Big(
(D_\mu N_L)\langle D_\mu S_L, h\rangle
+ (D_\mu S_L)\langle D_\mu N_L, h\rangle
\Big)\\
&\quad - \frac{1}{S_L^2} N_L D_\mu^2 S_L[h]
+ \frac{2}{S_L^3} N_L \langle D_\mu S_L, h\rangle D_\mu S_L .
\end{aligned}
\]
 By the strong law of large numbers applied componentwise to 
\[
\big\{X_k\eta_k(\mu,z),\;D_\mu[X_k\eta_k(\mu,z)],\;D_\mu^2[X_k\eta_k(\mu,z)]\big\}_{k\ge2},
\]
and to
\[
\big\{\eta_k(\mu,z),\;D_\mu\eta_k(\mu,z),\;D_\mu^2\eta_k(\mu,z)\big\}_{k\ge2},
\]
we obtain almost surely the convergence of $N_L$, $D_\mu N_L$, $D_\mu^2 N_L$ and of $S_L$, $D_\mu S_L$, $D_\mu^2 S_L$ toward their respective expectations. We verify
\[
\lim_{L\to\infty}D_\mu^2 T_L(z)=D_\mu^2 T_{\infty}(z),
\]
where, using the closed form
\[
T_{\infty}(z)=\lambda \Sigma\mu\mu^\top z,
\]
the Hessian satisfies,
\[
D_\mu^2 T_{\infty}(z)[h]
=\lambda\big(\Sigma h z^\top+\langle h,z\rangle \Sigma\big).
\]
Finally, as before we conclude
\[
\lim_{L\to\infty}D_\mu^2 T_L(X_1)=D_\mu^2 T_{\infty}(X_1)\quad\text{a.s.}
\]

\end{proof}

\begin{lemma}\label{lemma_concentration}
 Consider $X,X_1,\ldots,X_L$ i.i.d $\mathcal{N}(0,\Sigma)$, and $$T_L^{\mu}(X_1)=\frac{\sum_{k=1}^L X_k \mathrm{exp}(\lambda \langle X_1,\mu\rangle\langle X_k,\mu\rangle)}{\sum_{k=1}^L \mathrm{exp}(\lambda \langle X_1,\mu\rangle\langle X_k,\mu\rangle)},\quad T_{\infty}^{\mu}(X_1)=\lambda\Sigma\mu\mu^\top X_1,$$
      and we define $g:\mathbb{N}\times\mathbb{R}_+ \times\mathbb{R}^d\times\mathbb{R}^{d\times d}\times\mathbb{N}\rightarrow\mathbb{R}$ by $$g(L,\lambda,\mu,\Sigma,P)=L^{-\frac{1}{P^2\lambda^2(\mu^\top\Sigma\mu)^2+1}}(1+ \ln L)^{\frac{P^2\lambda^2(\mu^\top\Sigma\mu)^2}{P^2\lambda^2(\mu^\top\Sigma\mu)^2+1}},$$ then we have that $$\mathbb{E}[\Vert T_L^{\mu}(X_1)- T_{\infty}^{\mu}(X_1)\Vert^2]= \mathcal{O}\!\left( g(L,\lambda,\mu,\Sigma,12)\right).$$ Also,  $$\mathbb{E}[\Vert D_\mu T_L^{\mu}(X_1)- D_\mu T_{\infty}^{\mu}(X_1)\Vert_F^2]= \mathcal{O}\!\left( g(L,\lambda,\mu,\Sigma,16)\right),$$
And  $$\mathbb{E}[\Vert D_\mu^2 T_L^{\mu}(X_1)- D_\mu^2 T_{\infty}^{\mu}(X_1)\Vert_F^2]= \mathcal{O}\!\left( g(L,\lambda,\mu,\Sigma,20)\right).$$
     
We note $\lim_{L\rightarrow\infty} \sup_{\mu\in B(0,\rho)}g(L,\lambda,\mu,\Sigma,P)=0$, since the dependency of $\mu$ inside the $\mathcal{O}-$term is polynomial, $T_L^{\mu}(X_1)$ and $DT_L^{\mu}(X_1)$ converge in $L^2$ uniformly over compact sets of $\mu$ to $T_{\infty}^{\mu}(X_1)$ and $DT_{\infty}^{\mu}(X_1)$, respectively, as $L \to \infty$.

\end{lemma}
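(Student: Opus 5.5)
Conditional on $X_1=z$, the plan is to write $T_L^\mu(z)=N_L/S_L+R_{1,L}$, where $N_L=\frac{1}{L-1}\sum_{k\ge2}\eta_k X_k$ and $S_L=\frac{1}{L-1}\sum_{k\ge2}\eta_k$ with $\eta_k=\exp(\lambda\langle z,\mu\rangle\langle X_k,\mu\rangle)$, exactly as in Lemma~\ref{lemma_as}. The self-interaction term $R_{1,L}$ carries the $\frac1L\eta_1 X_1$ contributions. Conditionally on $z$, the tokens $X_k$ with $k\ge2$ are i.i.d. Their population counterparts are $\bar N=\E[\eta X]$ and $\bar S=\E[\eta]=\exp(\tfrac{\lambda^2}{2}(\mu^\top z)^2 a)$ with $a=\mu^\top\Sigma\mu$, and $\bar N/\bar S=\lambda\Sigma\mu\mu^\top z$.

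The main obstacle is that $\bar S$ and the variance of $\eta$, namely $\E[\eta^2]=\exp(2\lambda^2(\mu^\top z)^2a)$, grow like $\exp(c\,(\mu^\top z)^2)$. For this reason one cannot bound $\E\|N_L/S_L-\bar N/\bar S\|^2$ uniformly in $z$. I would therefore split according to whether $|\mu^\top z|\le \Psi$ or not, with a truncation level $\Psi$ chosen later.

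\textbf{Good event $|\mu^\top z|\le\Psi$.} I would use the decomposition
\[
\frac{N_L}{S_L}-\frac{\bar N}{\bar S}=\frac{(N_L-\bar N)-\tfrac{\bar N}{\bar S}(S_L-\bar S)}{S_L}.
\]
The numerator has conditional second moment of order $\frac{1}{L}\E[\eta^2(\|X\|^2+\|\lambda\Sigma\mu\mu^\top z\|^2)]$, which is bounded via Gaussian exponential moments (Isserlis and completing the square) by $\frac{\mathrm{poly}(\|z\|)}{L}e^{c_1\lambda^2 a\Psi^2}$. The factor $1/S_L$ is controlled by Jensen, $S_L\ge\exp\big(\lambda\langle z,\mu\rangle\frac{1}{L-1}\sum\langle X_k,\mu\rangle\big)$, whose inverse moments are explicit Gaussian quantities of order $e^{O(\Psi^2/L)}$. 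Combining the two with H\"older gives a contribution $\lesssim e^{\alpha\Psi^2}/L$ with $\alpha\propto\lambda^2a^2$; the constant depends on the H\"older exponents and is the source of the $P$ in $g$. The term $R_{1,L}$ is of order $\eta_1\|z\|/(L S_L)$ and is absorbed into the same bound.

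\textbf{Bad event $|\mu^\top z|>\Psi$.} Here I would use the crude bound that $T_L$ is a convex combination of the $X_k$, so $\|T_L\|\le\max_k\|X_k\|$ and $\E\max_k\|X_k\|^4=O((1+\ln L)^2)$. Together with the explicit form of $T_\infty$ and Cauchy--Schwarz against $\mathbb P(|\mu^\top X_1|>\Psi)^{1/2}\le e^{-\Psi^2/(4a)}$, this contributes $(1+\ln L)e^{-\beta\Psi^2}$.

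Adding the two regimes and optimizing over $\Psi$ via Lemma~\ref{bestbound} yields $L^{-\beta/(\alpha+\beta)}(1+\ln L)^{\alpha/(\alpha+\beta)}$. The ratio $\alpha/\beta\asymp P^2\lambda^2a^2$ gives exactly $g(L,\lambda,\mu,\Sigma,P)$. The first and second $\mu$-derivatives are handled identically. By the quotient-rule formulas of Lemma~\ref{lemma_as}, they involve additional factors $S_L^{-2},S_L^{-3}$ and derivatives $D_\mu\eta_k,D_\mu^2\eta_k$, which carry polynomial weights in $\langle X_k,\mu\rangle,\langle z,\mu\rangle,\|X_k\|,\|z\|$. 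Each extra inverse power of $S_L$ and each extra H\"older split increases the exponent constant, which is why $P$ goes from $12$ to $16$ to $20$. On the bad event, the derivatives are convex-combination-type expressions bounded by polynomials in $\max_k\|X_k\|$ and $\|z\|$.

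Uniformity over $\mu\in B(0,\rho)$ follows because every constant is polynomial in $\|\mu\|$ and monotone in $a\le\|\Sigma\|\rho^2$. Hence $\sup_{\mu\in B(0,\rho)} g\to0$, giving uniform $L^2$ convergence on compact sets.
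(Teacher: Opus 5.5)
Your proposal is correct and follows the paper's skeleton: condition on $X_1=z$, peel off the self-interaction term $R_{1,L}$, control $N_L,S_L$ by Rosenthal-type moment bounds and H\"older, then balance a good and a bad regime in $|\langle\mu,z\rangle|$ via Lemma~\ref{bestbound}. The key estimates, however, differ.

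The paper expands $N_L/S_L$ to second order around $(N,S)$, with a remainder at $(N_\theta,S_\theta)$. It bounds inverse moments by the arithmetic Jensen estimate $\E[S_\theta^{-p}]\le 2e^{p^2\xi/2}$, so every inverse power of $S_\theta$ enters the total H\"older exponent. The top remainder terms, e.g.\ $\|N_\theta\|^2(S_L-S)^4S_\theta^{-6}$, are what produce $P=12,16,20$.

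You instead use the exact quotient identity and the geometric-mean bound $S_L\ge\exp\big(\lambda\langle z,\mu\rangle\tfrac{1}{L-1}\sum_{k\ge2}\langle X_k,\mu\rangle\big)$. Its inverse moments cost only $e^{O(\Psi^2/L)}$. You also write out the bad-event estimate (convex combination, Gaussian maxima, Gaussian tail of $\langle\mu,X_1\rangle$), which the paper leaves implicit when it invokes Lemma~\ref{bestbound}.

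The paper's bookkeeping is mechanical and transfers verbatim to the derivatives, but it is crude. Yours needs term-by-term telescoping for $D_\mu$ and $D_\mu^2$, but it gives a much better exponent. For the zeroth-order bound you get $\alpha\approx 2q\lambda^2a$ with $q>1$ and $\beta=1/(4a)$, i.e.\ effectively $P^2\approx 8q$ instead of $144$. The stated bounds then follow because $g(L,\lambda,\mu,\Sigma,P)$ decays more slowly as $P$ grows. The same slack absorbs the extra logarithms in your bad-event bounds for derivatives; for instance $\|D_\mu T_L\|_F\le 4\lambda\|\mu\|\max_k\|X_k\|^3$ yields $(1+\ln L)^3$.

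There are three points to correct in the write-up.
\begin{itemize}
\item You write $\alpha\propto\lambda^2a^2$; in fact $\alpha\propto\lambda^2 a$. Only the ratio $\alpha/\beta$ scales like $\lambda^2a^2$.
\item Your explanation of $12\to16\to20$ through extra inverse powers of $S_L$ describes the paper's mechanism, not yours. In your argument those powers are essentially free, so you beat $g$ rather than recover it exactly.
\item Absorbing $R_{1,L}$ requires tracking $\eta_1=e^{\lambda\langle z,\mu\rangle^2}$, which is not of the form $e^{c\xi}$. On the good event it contributes about $e^{2\lambda\Psi^2}/L^2$. This is negligible at the optimal $\Psi$ provided $2\alpha+\beta>2\lambda$, which holds since $4q\lambda^2a+\frac{1}{4a}\ge 2\sqrt{q}\,\lambda>2\lambda$. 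The paper's displayed bound on $R_{1,L}$ does not carry this factor at all.
\end{itemize}
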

\begin{proof}
Throughout the proof, $C_{\mathrm{params}}>0$ denotes a constant depending only on the indicated parameters. We condition on $X_1=z$.

\paragraph{Notation.}
Define
\[
\eta_k(z)=\exp(\lambda\langle z,\mu\rangle\langle X_k,\mu\rangle),\quad
N_L(z)=\frac{1}{L-1}\sum_{k=2}^L \eta_k(z)X_k,\quad
S_L(z)=\frac{1}{L-1}\sum_{k=2}^L \eta_k(z),
\]
and
\[
N=\mathbb{E}[N_L(z)],\quad S=\mathbb{E}[S_L(z)],\quad
S_\theta=S+\theta(S_L-S),\quad N_\theta=N+\theta(N_L-N).
\]
Let
\[
\xi = \lambda^2\langle z,\mu\rangle^2 \mu^\top\Sigma\mu.
\]

\paragraph{General structure.}
All terms appearing below are finite sums of quantities of the form
\[
T_k = \alpha_k\,\mathbb{E}\!\left[\prod_i |Z_{i,k}|^{p_{i,k}}\right],
\]
where $Z_{i,k}$ belongs to
\[
N_L-N,\; S_L-S,\; D_\mu N_L-D_\mu N,\; D_\mu S_L-D_\mu S,\; S_\theta^{-1},\; N_\theta,\; D_\mu N_\theta,\; D_\mu S_\theta.
\]
Using Lemmas~\ref{boundsnd} and \ref{generalized_holder},
\[
T_k
\le
C_{\lambda,\Sigma}
L^{-\frac{1}{2}\sum_i p_{i,k}}
(1+\|z\|^{m_k})
\exp\!\left(\frac{1}{2}\Big(\sum_i p_{i,k}\Big)^2 \xi\right).
\]
We define
\[
P := \max_k \sum_i p_{i,k}.
\]

%------------------------------------------------------------
\paragraph{0-th order bound.}

We write
\[
T_L(z)=\frac{N_L(z)}{S_L(z)} + R_{1,L}(z),
\]
where
\[
R_{1,L}(z)=\frac{\eta_1(z)}{L}
\frac{zS_L(z)-N_L(z)}{S_L(z)\left(S_L(z)+\frac{\eta_1(z)}{L}\right)}.
\]

Using Lemma~\ref{boundsnd} with $p=2$,
\[
\mathbb{E}[\|R_{1,L}(z)\|^2]
\le
\frac{C_{\lambda,\Sigma}}{L^2}
(1+\|z\|^2)
\exp\!\left(\frac{2^2}{2}\xi\right).
\]

\medskip

\noindent
Consider the map
\[
F_1:\mathbb{R}^d \times \mathbb{R} \to \mathbb{R}^d,
\qquad
F_1(x,y)=\frac{x}{y}.
\]
Its differentials are
\[
D F_1(x,y)[h,k]
=
\frac{h}{y} - \frac{x k}{y^2},
\]
\[
D^2 F_1(x,y)[(h,k),(h,k)]
=
\frac{2x}{y^3}k^2 - \frac{2}{y^2}hk.
\]

Applying Taylor's formula at $(N,S)$, there exists $\theta\in(0,1)$ such that
\[
\frac{N_L}{S_L}-\frac{N}{S}
=
D F_1(N,S)[N_L-N, S_L-S]
+
R_{2,L},
\]
where
\[
R_{2,L}
=
\frac{1}{2}
D^2 F_1(N_\theta,S_\theta)
[(N_L-N,S_L-S),(N_L-N,S_L-S)].
\]

Thus
\[
R_{2,L}
=
\frac{(N+\theta(N_L-N))(S_L-S)^2}{(S+\theta(S_L-S))^3}
-
\frac{(N_L-N)(S_L-S)}{(S+\theta(S_L-S))^2},
\]
and
\[
\|R_{2,L}\|^2
\le
2\frac{\|N_{\theta}\|^2 (S_L-S)^4}{S_\theta^6}
+
2\frac{\|N_L-N\|^2 (S_L-S)^2}{S_\theta^4}.
\]

Applying Lemmas~\ref{boundsnd} and \ref{generalized_holder}, there exists $m_0>0$ such that
\[
\mathbb{E}[\|R_{2,L}(z)\|^2]
\le
\frac{C_{\lambda,\Sigma}}{L^2}
(1+\|z\|^{m_0})
\exp\!\left(\frac{12^2}{2}\xi\right).
\]

\paragraph{Origin of $P=12$.}
The highest order product is
\[
\|N_\theta\|^2 (S_L-S)^4 S_\theta^{-6},
\quad\Rightarrow\quad
P = 2+4+6 = 12.
\]

Moreover,
\[
\mathbb{E}\left[\left\|\frac{N_L-N}{S}-\frac{N(S_L-S)}{S^2}\right\|^2\right]
=
\frac{C_{\lambda,\Sigma}}{L}
(1+\|z\|^{m_0})
\exp\!\left(\frac{2^2}{2}\xi\right).
\]

Thus
\[
\mathbb{E}[\|T_L-T_\infty\|^2 \mid X_1=z]
\le
\frac{C_{\lambda,\Sigma}}{L}
(1+\|z\|^{m_0})
\exp\!\left(\frac{12^2}{2}\xi\right).
\]

Applying Lemma~\ref{bestbound},
\[
\mathbb{E}[\|T_L-T_\infty\|^2]
=
\mathcal O\!\big(g(L,\lambda,\mu,\Sigma,12)\big).
\]

%------------------------------------------------------------
\paragraph{1-st order bound.}

We write
\[
D_\mu T_L - D_\mu T_\infty
=
\left(\frac{D_\mu N_L}{S_L}-\frac{D_\mu N}{S}\right)
-
\left(\frac{N_L(D_\mu S_L)^\top}{S_L^2}-\frac{N(D_\mu S)^\top}{S^2}\right)
+
\widetilde R_{1,L}(z),
\]
where $\widetilde R_{1,L}(z)$ gathers all terms arising from differentiating the contribution of $\eta_1(z)$ in the numerator and denominator of $T_L$. .

\medskip

\noindent
Consider the map
\[
F_2:\mathbb{R}^{d\times d}\times\mathbb{R}\to\mathbb{R}^{d\times d},
\qquad
F_2(A,y)=\frac{A}{y}.
\]
Applying Taylor's formula at $(D_\mu N,S)$ yields an expansion of
\[
\frac{D_\mu N_L}{S_L}-\frac{D_\mu N}{S}.
\]

Similarly, define
\[
F_3:\mathbb{R}^d \times \mathbb{R}^d \times \mathbb{R} \to \mathbb{R}^{d\times d},
\qquad
F_3(x,y,w)=\frac{xy^\top}{w^2},
\]
and apply Taylor's formula at $(N,D_\mu S,S)$.

All resulting terms are finite sums of products handled above. Using Lemmas~\ref{boundsnd} and \ref{generalized_holder}, there exists $m_1>0$ such that
\[
\mathbb{E}[\|D_\mu T_L-D_\mu T_\infty\|^2 \mid X_1=z]
\le
\frac{C_{\lambda,\Sigma}}{L}
(1+\|z\|^{m_1})
\exp\!\left(\frac{16^2}{2}\xi\right).
\]

\paragraph{Origin of $P=16$.}
The highest order term is
\[
\|N_\theta\|^2 \|D_\mu S_\theta\|^2 (S_L-S)^4 S_\theta^{-8},
\quad\Rightarrow\quad
P = 2+2+4+8 = 16.
\]

Applying Lemma~\ref{bestbound},
\[
\mathbb{E}[\|D_\mu T_L-D_\mu T_\infty\|^2]
=
\mathcal O\!\big(g(L,\lambda,\mu,\Sigma,16)\big).
\]

%------------------------------------------------------------
\paragraph{2-nd order bound.}

We use the identity
\[
D_\mu^2\!\left(\frac{N_L}{S_L}\right)
=
\frac{D_\mu^2 N_L}{S_L}
-
\frac{2(D_\mu N_L)(D_\mu S_L)^\top}{S_L^2}
-
\frac{N_L D_\mu^2 S_L}{S_L^2}
+
\frac{2N_L(D_\mu S_L)(D_\mu S_L)^\top}{S_L^3},
\]
and the analogous expression for $N/S$.

Each difference is expanded using Taylor formulas for maps of the form
\[
(A,y)\mapsto \frac{A}{y},\qquad
(x,y,w)\mapsto \frac{xy^\top}{w^2},\qquad
(x,y,z,w)\mapsto \frac{xyz^\top}{w^3}.
\]

Using Lemmas~\ref{boundsnd} and \ref{generalized_holder}, there exists $m_2>0$ such that
\[
\mathbb{E}[\|D_\mu^2 T_L-D_\mu^2 T_\infty\|^2 \mid X_1=z]
\le
\frac{C_{\lambda,\Sigma}}{L}
(1+\|z\|^{m_2})
\exp\!\left(\frac{20^2}{2}\xi\right).
\]

\paragraph{Origin of $P=20$.}
The highest order term is
\[
\|N_\theta\|^2
\|D_\mu S_\theta\|^2
\|D_\mu S_\theta\|^2
(S_L-S)^4
S_\theta^{-10},
\quad\Rightarrow\quad
P = 2+2+2+4+10 = 20.
\]

Applying Lemma~\ref{bestbound},
\[
\mathbb{E}[\|D_\mu^2 T_L-D_\mu^2 T_\infty\|_F^2]
=
\mathcal O\!\big(g(L,\lambda,\mu,\Sigma,20)\big).
\]

\end{proof}
   
\begin{lemma}\label{bound}
    Let $f:\mathbb{R}^d\rightarrow\mathbb{R}^{m_1\times m_2}$ and $X_1,\ldots, X_L$ be i.i.d. random variables  $\mathcal{N}(0,\Sigma)$ , let $A_L=\frac{1}{L}\sum_{k=1}^L f(X_k)$, then for $p\geq 2$, we have $$\mathbb{E}[\Vert A_L-\mathbb{E}[A_L]\Vert_F^p]\leq C_p L^{-\frac{p}{2}}\mathbb{E}[\Vert f(X_1)\Vert_{F}^p],$$
    where $C_p$ is a constant that only depends on $p$, and $\Vert\cdot\Vert_F$ is the Frobenius norm on $\mathbb{R}^{m_1\times m_2}$.
\end{lemma}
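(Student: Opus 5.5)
The plan is to treat this as a Marcinkiewicz--Zygmund / Rosenthal type moment bound for sums of i.i.d.\ centered random matrices, reducing the matrix case to the scalar or vector case through the Frobenius norm. Set $Y_k := f(X_k)-\mathbb{E}[f(X_1)]$, so that $A_L-\mathbb{E}[A_L]=\frac{1}{L}\sum_{k=1}^L Y_k$. Here the $Y_k$ are i.i.d., centered, and take values in the Hilbert space $(\mathbb{R}^{m_1\times m_2},\langle\cdot,\cdot\rangle_F)\cong\mathbb{R}^{m_1m_2}$. We may assume $\mathbb{E}\Vert f(X_1)\Vert_F^p<\infty$; otherwise there is nothing to prove.

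\textbf{Step 1 (symmetrization and Khintchine).} Let $(\varepsilon_k)$ be independent Rademacher signs, independent of the $X_k$. Standard symmetrization gives
\[
\mathbb{E}\Big\Vert\sum_k Y_k\Big\Vert_F^p\le 2^p\,\mathbb{E}\Big\Vert\sum_k\varepsilon_kY_k\Big\Vert_F^p .
\]
Conditionally on the $X_k$, we then apply the Khintchine--Kahane inequality in a Hilbert space. Alternatively, apply scalar Khintchine coordinatewise and combine the coordinates with Minkowski in $L^{p/2}$, which is valid since $p\ge2$. This yields
\[
\mathbb{E}_\varepsilon\Big\Vert\sum_k\varepsilon_kY_k\Big\Vert_F^p\le B_p\Big(\sum_k\Vert Y_k\Vert_F^2\Big)^{p/2}.
\]
This is the Marcinkiewicz--Zygmund inequality in $\mathbb{R}^{m_1m_2}$. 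Getting the constant free of the dimension $m_1m_2$ is exactly where the Hilbert-space (Kahane) version is preferable to a crude coordinatewise argument; this is the only mildly delicate point.

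\textbf{Step 2 (convexity).} Since $p/2\ge1$, Jensen applied to the average gives
\[
\Big(\sum_{k=1}^L\Vert Y_k\Vert_F^2\Big)^{p/2}=L^{p/2}\Big(\frac1L\sum_k\Vert Y_k\Vert_F^2\Big)^{p/2}\le L^{p/2-1}\sum_k\Vert Y_k\Vert_F^p .
\]
Taking expectations, $\mathbb{E}\Vert\sum_kY_k\Vert_F^p\le 2^pB_p\,L^{p/2}\,\mathbb{E}\Vert Y_1\Vert_F^p$. Dividing by $L^p$ gives the rate $L^{-p/2}$.

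\textbf{Step 3 (removing the centering).} By the triangle inequality in $L^p$ and Jensen,
\[
\Vert f(X_1)-\mathbb{E}f(X_1)\Vert_{L^p}\le 2\Vert f(X_1)\Vert_{L^p},
\]
so $\mathbb{E}\Vert Y_1\Vert_F^p\le 2^p\,\mathbb{E}\Vert f(X_1)\Vert_F^p$. The claim then holds with $C_p=4^pB_p$. Gaussianity of the $X_k$ plays no role beyond guaranteeing the i.i.d.\ structure; the argument only needs the finite $p$-th moment.
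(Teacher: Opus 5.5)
Your proof is correct and has the same skeleton as the paper's: center to $Y_k=f(X_k)-\mathbb{E}[f(X_1)]$, bound $\mathbb{E}\Vert\sum_k Y_k\Vert_F^p\lesssim L^{p/2}\,\mathbb{E}\Vert Y_1\Vert_F^p$, then remove the centering via $\mathbb{E}\Vert Y_1\Vert_F^p\le 2^p\,\mathbb{E}\Vert f(X_1)\Vert_F^p$. The only difference is that the paper quotes the middle bound directly as Rosenthal's inequality, whereas you derive it from symmetrization, Khintchine--Kahane and convexity of $t\mapsto t^{p/2}$; incidentally, your coordinatewise variant (scalar Khintchine followed by Minkowski in $L^{p/2}$) already gives a dimension-free constant, so Kahane is not actually needed for that.
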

\begin{proof}
    Let $Y_k=f(X_k)-\mathbb{E}[f(X_1)]$. By construction, $\mathbb{E}[Y_k]=0$ and $Y_k$ are i.i.d., we have $$\mathbb{E}[\Vert A_L-\mathbb{E}[A_L]\Vert^p]=\frac{1}{L^p}\mathbb{E}\left[\Big\Vert\sum_{k=1}^L Y_k\Big\Vert^p\right],$$

    Rosenthal's inequality states that there exists a constant $R_p$  depending only on $p$ such that $$\mathbb{E}\left[\Big\Vert\sum_{k=1}^L Y_k\Big\Vert^p\right]\leq R_pL^{\frac{p}{2}}\mathbb{E}[\Vert Y_1\Vert^p],$$
    Besides by Jensen's inequality, $$\mathbb{E}[\Vert Y_1\Vert^p]\leq 2^p \mathbb{E}[\Vert f(X_1)\Vert^p].$$
    We conclude by taking $C_p=2^p R_p$.

\end{proof}
\begin{lemma}\label{boundsnd}
    Let $\lambda>0, z,\mu\in \mathbb{R}^d$, $\theta\in [0,1]$, $X,X_1,\ldots, X_L$ i.i.d. $\mathcal{N}(0,\Sigma)$, \begin{align*}
                \eta_k(z)&=\mathrm{exp}(\lambda\langle z,\mu\rangle\langle X_k,\mu\rangle),\\
                N_L(z)&=\frac{1}{L-1}\sum_{k=2}^L\eta_k(z)X_k,\\
                N&= \mathbb{E}[N_L],\\
                S_L(z)&=\frac{1}{L-1}\sum_{k=2}^L\eta_k(z),\\
                S&= \mathbb{E}[S_L],\\
                S_{\theta}&=S+\theta (S_L-S),\\
                N_{\theta}&=N+\theta (N_L-N),\\
            \end{align*}
            Then for $p>0$, there exists $C_{p,\Sigma},C_p>0$ (depending only on the constants in the subscripts) such that letting $\xi= \lambda^2\langle z,\mu\rangle^2\mu^\top\Sigma\mu$ we have:  \begin{align*}
                \mathbb{E}[S_{\theta}^{-p}]&\leq 2\mathrm{exp}\left(\frac{p^2}{2}\xi\right),\\
                \mathbb{E}[\Vert N_L-N\Vert^p]&\leq C_{p,\Sigma}L^{-\frac{p}{2}}(1+\lambda^p\Vert\mu\Vert^{2p}\Vert z\Vert^p)\mathrm{exp}\left(\frac{p^2}{2}\xi\right),\\
                \mathbb{E}[|S_L-S|^p]&\leq C_pL^{-\frac{p}{2}}\mathrm{exp}\left(\frac{p^2}{2}\xi\right),\\
                \mathbb{E}[\Vert D_{\mu} N_L-D_{\mu}N\Vert^p]&\leq C_{p,\Sigma}L^{-\frac{p}{2}}  \lambda^p\Vert\mu\Vert^p\Vert z\Vert^p(1+\lambda^{2p}\Vert\mu\Vert^{4p}\Vert z\Vert^{2p})\mathrm{exp}\left(\frac{p^2}{2}\xi\right),\\
                \mathbb{E}[\Vert D_{\mu} S_L-D_{\mu} S\Vert^p]&\leq C_{p,\Sigma}L^{-\frac{p}{2}} \lambda^p \Vert\mu\Vert^p\Vert z\Vert^p(1+\lambda^p\Vert\mu\Vert^{2p}\Vert z\Vert^p)\mathrm{exp}\left(\frac{p^2}{2}\xi\right).
            \end{align*}
\end{lemma}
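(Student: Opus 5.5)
The strategy is to compute the $\mu$-dependence of each summand explicitly. Each bound then reduces to a moment estimate for i.i.d.\ averages, handled by Lemma~\ref{bound}, plus an exact Gaussian exponential-moment computation. Write $s=\lambda\langle z,\mu\rangle$ and note that $\langle X,\mu\rangle\sim\mathcal{N}(0,\mu^\top\Sigma\mu)$. For any real $q$,
\[
\E[\eta(z)^q]=\E[\exp(qs\langle X,\mu\rangle)]=\exp\!\big(\tfrac{q^2}{2}\xi\big),
\qquad \xi=s^2\mu^\top\Sigma\mu .
\]
This identity is the source of every factor $\exp(p^2\xi/2)$ in the statement.

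\textbf{Fluctuation bounds.} These follow from Lemma~\ref{bound} applied to the i.i.d.\ summands $k=2,\ldots,L$; replacing $L-1$ by $L$ costs only a constant.
\begin{itemize}
\item For $S_L-S$, take $f(X)=\eta(z)$. Then $\E|S_L-S|^p\le C_pL^{-p/2}\E[\eta^p]=C_pL^{-p/2}e^{p^2\xi/2}$.
\item For $N_L-N$, take $f(X)=\eta X$. By Cauchy--Schwarz, $\E[\eta^p\|X\|^p]\le \E[\eta^{2p}]^{1/2}\E\|X\|^{2p}\,{}^{1/2}$, which gives an exponential of $p^2\xi$ rather than $p^2\xi/2$. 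To obtain the sharper stated constant, I will compute directly instead, using a Gaussian change of measure (Cameron--Martin): under the tilted law $\eta^p\,d\mathcal{N}/\E[\eta^p]$, the vector $X$ is Gaussian with mean $ps\Sigma\mu$ and covariance $\Sigma$. Hence
\[
\E[\eta^p\|X\|^p]=e^{p^2\xi/2}\,\E\|Y+ps\Sigma\mu\|^p\le C_{p,\Sigma}\,e^{p^2\xi/2}\big(1+\lambda^p\|\mu\|^{2p}\|z\|^p\big),
\]
since $|s|\,\|\Sigma\mu\|\le\lambda\|\Sigma\|\,\|z\|\,\|\mu\|^2$. This is exactly the polynomial prefactor in the claim.
\item For the derivatives, note that $D_\mu\eta=\lambda\eta\big(\langle X,\mu\rangle z+\langle z,\mu\rangle X\big)$ and $D_\mu(\eta X)=X(D_\mu\eta)^\top$. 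Each is $\eta$ times a polynomial in $X$ of degree at most $2$, carrying a prefactor of $\lambda\|\mu\|\|z\|$. Applying Lemma~\ref{bound} and the same tilting, every monomial $\|X\|^j$ becomes $\E\|Y+ps\Sigma\mu\|^j\lesssim 1+(\lambda\|\mu\|^2\|z\|)^j$. This yields the factors $\lambda^p\|\mu\|^p\|z\|^p\big(1+\lambda^{p}\|\mu\|^{2p}\|z\|^p\big)$ for $D_\mu S_L$ and $\lambda^p\|\mu\|^p\|z\|^p\big(1+\lambda^{2p}\|\mu\|^{4p}\|z\|^{2p}\big)$ for $D_\mu N_L$.
\end{itemize}

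\textbf{Inverse moment $\E[S_\theta^{-p}]$.} This is the delicate step. By convexity of $x\mapsto x^{-p}$ on $(0,\infty)$, and since $S_\theta=(1-\theta)S+\theta S_L$ with both $S$ and $S_L$ positive, we get $S_\theta^{-p}\le (1-\theta)S^{-p}+\theta S_L^{-p}\le S^{-p}+S_L^{-p}$. The first term is deterministic: $S=e^{\xi/2}\ge1$, so $S^{-p}\le 1$. For the second term, Jensen's inequality for the convex function $x\mapsto x^{-p}$ applied to the empirical average gives $S_L^{-p}\le\frac{1}{L-1}\sum_k\eta_k^{-p}$, hence $\E[S_L^{-p}]\le\E[\eta^{-p}]=e^{p^2\xi/2}$. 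Combining, $\E[S_\theta^{-p}]\le 1+e^{p^2\xi/2}\le 2e^{p^2\xi/2}$, as claimed.

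I expect the main obstacle to be this inverse-moment control, since $S_L$ may be small. The convexity/Jensen trick resolves it cleanly. The remaining bookkeeping, namely tracking the polynomial prefactors through the tilted Gaussian moments for the derivative terms, is routine but must be done carefully so that the stated degrees in $\|\mu\|$ and $\|z\|$ come out correctly.
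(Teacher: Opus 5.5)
Your proposal is correct and follows the paper's proof essentially step for step: Lemma~\ref{bound} for the fluctuation terms, the Gaussian change-of-measure identity (the paper's Lemma~\ref{pqbound}) to keep the exponent at $p^2\xi/2$ and produce the stated polynomial prefactors, and convexity plus Jensen for $\E[S_\theta^{-p}]$. One caveat applies equally to the paper's proof: Lemma~\ref{bound} requires $p\ge 2$, and for $p<2$ the fluctuation bounds with $\xi$-independent constants are actually false (by the CLT, $L^{p/2}\E|S_L-S|^p\to \E|G|^p\,(e^{2\xi}-e^{\xi})^{p/2}$ with $G\sim\mathcal{N}(0,1)$, which is not $O(e^{p^2\xi/2})$), so you should state them for $p\ge 2$, which suffices for their use in Lemma~\ref{lemma_concentration}.
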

\begin{proof}
            
            By Jensen's inequality we have that, for $p>0$: $S_{\theta}^{-p}\leq S^{-p}+S_L^{-p}$ and $$S_L^{-p}\leq \frac{1}{L-1}\sum_{k=2}^L \mathrm{exp}(-p\lambda\langle z,\mu\rangle\langle X_k,\mu\rangle),$$ thus $$\mathbb{E}[S_{\theta}^{-p}]\leq S^{-p}+\mathbb{E}[\mathrm{exp}(-p\lambda\langle z,\mu\rangle\langle X,\mu\rangle)]\leq 2\mathrm{exp}\left(\frac{1}{2}p^2 \lambda^2\langle z,\mu\rangle^2\mu^\top\Sigma\mu\right).$$

Using Lemma \ref{bound} with $f_1(x)=x\mathrm{exp}(\lambda\langle z,\mu\rangle\langle x,\mu\rangle)$ and $f_2(x)=\mathrm{exp}(\lambda\langle z,\mu\rangle\langle x,\mu\rangle)$ and Lemma \ref{pqbound}, we get \begin{align*}
    \mathbb{E}[\Vert N_L-N\Vert^p]&\leq C_pL^{-\frac{p}{2}}\mathbb{E}[\Vert f_1(X)\Vert^p]\\
    %&\leq C_{p}'L^{-\frac{p}{2}}(\Tr(\Sigma)^{\frac{p}{2}}+\Vert\Sigma\mu\Vert^p\lambda^p|\langle z,\mu\rangle|^p)\mathrm{exp}\left(\frac{1}{2}p^2\lambda^2\langle z,\mu\rangle^2\mu^\top\Sigma\mu\right)\\
    &\leq C_{p,\Sigma}L^{-\frac{p}{2}}(1+\Vert\mu\Vert^p\lambda^p|\langle z,\mu\rangle|^p)\mathrm{exp}\left(\frac{1}{2}p^2\lambda^2\langle z,\mu\rangle^2\mu^\top\Sigma\mu\right)\\
    &\leq C_{p,\Sigma}L^{-\frac{p}{2}}(1+\lambda^p\Vert\mu\Vert^{2p}\Vert z\Vert^p)\mathrm{exp}\left(\frac{1}{2}p^2\lambda^2\langle z,\mu\rangle^2\mu^\top\Sigma\mu\right),
\end{align*} and $$\mathbb{E}[|S_L-S|^p]\leq C_pL^{-\frac{p}{2}}\mathbb{E}[\Vert f_2(X)\Vert^p]=C_pL^{-\frac{p}{2}}\mathrm{exp}\left(\frac{1}{2}p^2\lambda^2\langle z,\mu\rangle^2\mu^\top\Sigma\mu\right).$$

Besides, with $$f_3(x)=\lambda\mathrm{exp}(\lambda\langle z,\mu\rangle\langle x,\mu\rangle)x(\langle z,\mu\rangle x+\langle x,\mu\rangle z)^\top,$$ and $$f_4(x)=\lambda\mathrm{exp}(\lambda\langle z,\mu\rangle\langle x,\mu\rangle)(\langle z,\mu\rangle x+\langle x,\mu\rangle z),$$
we derive the bounds \begin{align*}
    \mathbb{E}[\Vert D_{\mu} N_L-D_{\mu}N\Vert^p]&\leq C_pL^{-\frac{p}{2}}\mathbb{E}[\Vert f_3(X)\Vert_F^p]\\
    &\leq C_pL^{-\frac{p}{2}} C_{p,\Sigma} \lambda^p\Vert\mu\Vert^p\Vert z\Vert^p(1+\lambda^{2p}\langle z,\mu\rangle^{2p}\Vert\mu\Vert^{2p})\mathrm{exp}\left(\frac{1}{2}p^2\lambda^2\langle z,\mu\rangle^2\mu^\top\Sigma\mu\right)\\
    &\leq C_{p,\Sigma}L^{-\frac{p}{2}}  \lambda^p\Vert\mu\Vert^p\Vert z\Vert^p(1+\lambda^{2p}\Vert\mu\Vert^{4p}\Vert z\Vert^{2p})\mathrm{exp}\left(\frac{1}{2}p^2\lambda^2\langle z,\mu\rangle^2\mu^\top\Sigma\mu\right),\\
\end{align*}

and

\begin{align*}
    \mathbb{E}[\Vert D_{\mu} S_L-D_{\mu} S\Vert^p]&  \leq C_pL^{-\frac{p}{2}}\mathbb{E}[\Vert f_4(X)\Vert^p]\\
    &\leq C_pL^{-\frac{p}{2}} C_{p,\Sigma}\lambda^p \Vert\mu\Vert^p\Vert z\Vert^p(1+\lambda^p\Vert\mu\Vert^{2p}\Vert z\Vert^p)\mathrm{exp}\left(\frac{1}{2}p^2\lambda^2\langle z,\mu\rangle^2\mu^\top\Sigma\mu\right)\\
   &\leq C_{p,\Sigma}L^{-\frac{p}{2}} \lambda^p \Vert\mu\Vert^p\Vert z\Vert^p(1+\lambda^p\Vert\mu\Vert^{2p}\Vert z\Vert^p)\mathrm{exp}\left(\frac{1}{2}p^2\lambda^2\langle z,\mu\rangle^2\mu^\top\Sigma\mu\right).\\
\end{align*}
Furthermore, using Lemma \ref{bound} with $$f_5(x)=x\mathrm{exp}(\lambda\langle z,\mu\rangle\langle x,\mu\rangle)[\lambda^2(\langle z,h\rangle\langle x,\mu\rangle+\langle z,\mu\rangle\langle x,h\rangle)^2+2\lambda\langle z,h\rangle\langle x,h\rangle]^\top,$$
and
$$f_6(x)=\mathrm{exp}(\lambda\langle z,\mu\rangle\langle x,\mu\rangle)[\lambda^2(\langle z,h\rangle\langle x,\mu\rangle+\langle z,\mu\rangle\langle x,h\rangle)^2+2\lambda\langle z,h\rangle\langle x,h\rangle],$$ and Lemma \ref{pqbound}, we get
\begin{align*}
    \mathbb{E}[\Vert D_{\mu}^2 N_L[h]- D_{\mu}^2 N[h]\Vert_F^p]&\leq C_pL^{-\frac{p}{2}}\mathbb{E}[\Vert f_5(X_1)\Vert_F^p]\\
    &\leq  C_{p,\Sigma}\lambda^{2p}\Vert z\Vert^p\Vert h\Vert^p(1+\lambda^{2p}\langle z,\mu\rangle^{2p}\Vert \mu\Vert^{2p})\mathrm{exp}\left(\frac{p^2}{2}\lambda^2\langle z,\mu\rangle^2\mu^\top\Sigma\mu\right).
\end{align*}
and
\begin{align*}
    \mathbb{E}[\Vert D_{\mu}^2 S_L[h]- D_{\mu}^2 S[h]\Vert^p]&\leq  C_pL^{-\frac{p}{2}}\mathbb{E}[\Vert f_6(X_1)\Vert^p]\\
    &  \leq C_{p,\Sigma}\lambda^p\Vert z\Vert^p\Vert h\Vert^p(1+\lambda^p\langle z,\mu\rangle^p\Vert \mu\Vert^p)\mathrm{exp}\left(\frac{p^2}{2}\lambda^2\langle z,\mu\rangle^2\mu^\top\Sigma\mu\right).
\end{align*}
\end{proof}

\begin{lemma}\label{pqbound}
    Consider $\mu\in\mathbb{R}^d, \Sigma\in \mathbb{R}^{d\times d}$ symmetric and positive definite, $p,q>0$ and $X\sim \mathcal{N}(0,\Sigma)$, then $$\mathbb{E}[\Vert X\Vert^p \mathrm{exp}(q\langle X,\mu\rangle)]\leq C_{p,\Sigma}(1+q^p\Vert \mu\Vert^p)\mathrm{exp}\left(\frac{q^2}{2}\mu^\top\Sigma\mu\right).$$
\end{lemma}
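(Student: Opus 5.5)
The plan is to reduce the claim to a one-dimensional Gaussian computation. Write $X=\Sigma^{1/2}G$ with $G\sim\mathcal{N}(0,I_d)$ and $\nu:=\Sigma^{1/2}\mu$, so that $\langle X,\mu\rangle=\langle G,\nu\rangle$ and $\nu^\top\nu=\mu^\top\Sigma\mu$. Then use the exponential tilting (Girsanov/Cameron--Martin) identity
\[
\mathbb{E}[h(G)\exp(q\langle G,\nu\rangle)]=\exp\!\left(\tfrac{q^2}{2}\|\nu\|^2\right)\mathbb{E}[h(G+q\nu)],
\]
which follows from completing the square in the Gaussian density. Applying it with $h(g)=\|\Sigma^{1/2}g\|^p$ gives
\[
\mathbb{E}[\|X\|^p e^{q\langle X,\mu\rangle}]=\exp\!\left(\tfrac{q^2}{2}\mu^\top\Sigma\mu\right)\mathbb{E}\big[\|X+q\Sigma\mu\|^p\big],
\]
since $\Sigma^{1/2}(G+q\nu)=X+q\Sigma\mu$. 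This already produces the exact exponential factor in the statement.

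It remains to bound the shifted moment polynomially in $q\|\mu\|$. By convexity, $\|a+b\|^p\le 2^{(p-1)_+}(\|a\|^p+\|b\|^p)$ for any $p>0$; when $p<1$ one uses subadditivity of $t\mapsto t^p$ instead, with constant $1$. This yields
\[
\mathbb{E}\|X+q\Sigma\mu\|^p\le c_p\big(\mathbb{E}\|X\|^p+q^p\|\Sigma\|^p\|\mu\|^p\big).
\]
The moment $\mathbb{E}\|X\|^p$ is finite and depends only on $p$ and $\Sigma$; for instance, it is at most $\|\Sigma\|^{p/2}\mathbb{E}\|G\|^p$. Setting $C_{p,\Sigma}=c_p\max\{\mathbb{E}\|X\|^p,\|\Sigma\|^p\}$ gives the stated bound.

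No step here is a real obstacle. The only points requiring care are a clean justification of the tilting identity, which is a change of variables in the density, and handling $p\in(0,1)$ in the elementary power inequality. An alternative route, if one prefers to avoid tilting, is Cauchy--Schwarz: $\mathbb{E}[\|X\|^{2p}]^{1/2}\,\mathbb{E}[e^{2q\langle X,\mu\rangle}]^{1/2}$. However, this gives $\exp(q^2\mu^\top\Sigma\mu)$, which is the wrong constant in the exponent, so I would use the tilting argument to keep the exact factor $\tfrac{q^2}{2}$.
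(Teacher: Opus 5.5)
Your proposal is correct and is essentially the paper's proof. The paper also writes $\mathbb{E}[\|X\|^p e^{q\langle X,\mu\rangle}]=\exp(\tfrac{q^2}{2}\mu^\top\Sigma\mu)\,\mathbb{E}[\|X+q\Sigma\mu\|^p]$ and then splits the shifted moment, using the same constant $\max\{\mathbb{E}\|X\|^p,\|\Sigma\|_{op}^p\}$. Your factor $2^{(p-1)_+}$ is slightly more careful than the paper's $2^{p-1}$: for $p\in(0,1)$ concavity reverses the paper's inequality, and subadditivity with constant $1$, as you note, is what makes the stated range $p>0$ go through.
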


\begin{proof}
    We have that $$\mathbb{E}[\Vert X\Vert^p \mathrm{exp}(q\langle X,\mu\rangle)]=\mathrm{exp}\left(\frac{q^2}{2}\mu^\top\Sigma\mu\right)\mathbb{E}[\Vert X+q\Sigma\mu\Vert^p].$$
    And we bound $$\Vert X+q\Sigma\mu\Vert^p\leq 2^{p-1}(\Vert X\Vert^p+\Vert\Sigma\Vert_{op}^pq^p\Vert\mu\Vert^p).$$
    So there exists $C_{p,\Sigma}\eqdef 2^{p-1}\max\{\mathbb{E}[\Vert X\Vert^p],\Vert\Sigma\Vert_{op}^p\}>0$ such that $$\mathbb{E}[\Vert X+q\Sigma\mu\Vert^p]\leq C_{p,\Sigma} (1+q^p\Vert\mu\Vert^p).$$

    And we conclude.
    
\end{proof}
\begin{comment}
\begin{lemma}\label{convexp}
Let $\alpha>0$ and $X \sim \mathcal{N}(0, \Sigma)$, and let $\mu \in \mathbb{R}^d$. Then
\[
\mathbb{E}\big[ \exp(\alpha \langle X, \mu \rangle^2)\big] < +\infty
\quad \text{if and only if} \quad 
\alpha < \frac{1}{2 \mu^\top \Sigma \mu}.
\]
\end{lemma}

\begin{proof}
Set $Y := \langle X, \mu \rangle \sim \mathcal{N}(0, \sigma^2)$ with $\sigma^2 := \mu^\top \Sigma \mu$. Then we can write
\[
\mathbb{E}\big[   e^{\alpha \langle X, \mu \rangle^2}\big] = \mathbb{E}[ e^{\alpha Y^2}] = \frac{1}{\sqrt{2 \pi \sigma^2}} \int_{-\infty}^{\infty}  \exp\Big(\alpha y^2 - \frac{y^2}{2 \sigma^2}\Big) dy.
\]

Combine terms in the exponential:
\[
\alpha y^2 - \frac{y^2}{2\sigma^2} = - \frac{1 - 2 \alpha \sigma^2}{2 \sigma^2}   y^2.
\]

Hence the integral becomes
\[
\mathbb{E}[ e^{\alpha Y^2}] = \frac{1}{\sqrt{2 \pi \sigma^2}} \int_{-\infty}^{\infty}  \exp\Big(- \frac{1 - 2 \alpha \sigma^2}{2\sigma^2} y^2 \Big) dy.
\]

- If $1 - 2 \alpha \sigma^2 > 0$ (i.e., $\alpha < \frac{1}{2 \sigma^2}$), the function is integrable. Therefore the expectation is finite.  

- If $1 - 2 \alpha \sigma^2 \le 0$ (i.e., $\alpha \ge \frac{1}{2 \sigma^2}$), the exponent is positive and the integral diverges, giving $+\infty$.

Substituting back $\sigma^2 = \mu^\top \Sigma \mu$, we conclude
\[
\mathbb{E}\big[ \exp(\alpha \langle X, \mu \rangle^2)\big] < +\infty
\quad \iff \quad 
\alpha < \frac{1}{2 \mu^\top \Sigma \mu}.
\]
\end{proof}
\end{comment}
\begin{lemma}\label{generalized_holder}
    Let $Z_1,\ldots,Z_n$ be non-negative random variables such that for each $i\in \{1,\ldots,n\}$ there exists $C_{i,p}$ that grows at most exponentially in $p$ such that,
$$\mathbb{E}[ |Z_i|^p]\leq C_{i,p}\exp \left(\frac{p^2}{2}\xi\right),$$
for some $\xi>0$. Let $p_1,\ldots,p_n>0$. Then there exists $C$ that grows at most exponentially in $(p_1,\ldots,p_n)$ such that $$\mathbb{E}\left[\prod_{i=1}^n|Z_i|^{p_i}\right]\leq C\mathrm{exp}\left(\frac{1}{2}\left(\sum_{i=1}^n p_i\right)^2\xi\right)$$
\end{lemma}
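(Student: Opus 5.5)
The bound follows from a two-step Hölder reduction combined with a Gaussian-type moment bound that depends quadratically on the exponent. Let $p:=\sum_{i=1}^n p_i$. If some $p_i<1$, I first bring each factor into the form required by Hölder: I apply the generalized Hölder inequality with exponents $q_i:=p/p_i$. These satisfy $\sum_i 1/q_i=1$ and $q_i\ge 1$. This gives
\[
\mathbb{E}\Big[\prod_{i=1}^n |Z_i|^{p_i}\Big]\le \prod_{i=1}^n \mathbb{E}\big[|Z_i|^{p_i q_i}\big]^{1/q_i}=\prod_{i=1}^n \mathbb{E}\big[|Z_i|^{p}\big]^{p_i/p}.
\]
The point of this choice is that every factor is raised to the same total power $p$. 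That is exactly what produces the exponent $\tfrac12(\sum_i p_i)^2\xi$, instead of the weaker $\sum_i \tfrac12 p_i^2\xi$ one would need to improve on anyway.

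Next I plug in the hypothesis at exponent $p$. Each factor becomes
\[
\mathbb{E}[|Z_i|^p]^{p_i/p}\le C_{i,p}^{p_i/p}\exp\Big(\frac{p^2}{2}\xi\Big)^{p_i/p}=C_{i,p}^{p_i/p}\exp\Big(\frac{p\,p_i}{2}\xi\Big).
\]
Taking the product over $i$, the exponents sum to $\tfrac{p}{2}\xi\sum_i p_i=\tfrac{p^2}{2}\xi$, which is the claimed exponential. The prefactor is $C:=\prod_i C_{i,p}^{p_i/p}\le \max_i C_{i,p}$, since $\sum_i p_i/p=1$ and the $C_{i,p}$ may be taken $\ge 1$ without loss of generality.

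The only delicate point is checking that $C$ grows at most exponentially in $(p_1,\dots,p_n)$. By assumption $C_{i,p}\le A_i e^{B_i p}$, so $C\le (\max_i A_i)\exp\big((\max_i B_i)\sum_j p_j\big)$. This is at most exponential in the $p_j$, as required.

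I also need the hypothesis to apply at the exponent $p$. The assumption is stated for each $p>0$, so this holds; in the application (Lemma~\ref{boundsnd}) it is valid for every $p>0$.

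No genuine obstacle arises. The main care point is the choice of a common exponent $p$ in Hölder, since using the individual $p_i$ would not match the form of the hypothesis.
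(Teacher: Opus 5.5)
Your proof is correct and is essentially the paper's argument: you apply generalized H\"older with $q_i=\sum_j p_j/p_i$, so every factor is evaluated at the common exponent $\sum_j p_j$. This yields the exponent $\tfrac12(\sum_i p_i)^2\xi$ and the prefactor $\prod_i C_{i,\sum_j p_j}^{p_i/\sum_j p_j}$. One side remark is backwards, though: $\tfrac12\sum_i p_i^2\xi$ would be a stronger (smaller) exponent, not a weaker one. H\"older alone cannot reach it, since $(\sum_i p_i)^2$ is exactly the minimum of $\sum_i p_i^2 q_i$ under $\sum_i 1/q_i=1$, as the paper notes.
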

\begin{proof}
    We apply the generalized Hölder's inequality to get that for every $q_1,\ldots,q_n >0$ such that $\sum_{i=1}^n \frac{1}{q_i}=1$, \begin{align*}
        \mathbb{E}\left[\prod_{i=1}^n|Z_i|^{p_i}\right]&\leq \prod_{i=1}^n\mathbb{E}[|Z_i|^{p_iq_i}]^{\frac{1}{q_i}}\\
        &\leq \prod_{i=1}^n \left[C_{i,p_iq_i}\mathrm{exp}\left(\frac{p_i^2q_i^2}{2}\xi\right)\right]^{\frac{1}{q_i}}\\
        &= \left(\prod_{i=1}^n (C_{i,p_iq_i})^{\frac{1}{q_i}}\right)\mathrm{exp}\left(\frac{1}{2}\sum_{i=1}^n\left(p_i^2q_i\right)\xi\right).
        \end{align*}
        Choosing $q_i=\frac{\sum_{j=1}^n p_j}{p_i}$, which minimizes $\sum_{i=1}^n p_i^2q_i$ given $\sum_{i=1}^n \frac{1}{q_i}=1$, we conclude since $C_{i,\sum_{j=1}^n p_j}^{\frac{p_i}{\sum_{j=1}^n p_j}}$ grows exponentially in $p_i$ and $C=\prod_{i=1}^n C_{i,\sum_{j=1}^n p_j}^{\frac{p_i}{\sum_{j=1}^n p_j}}$ grows exponentially in $(p_1,\ldots,p_n)$. 
\end{proof}

\subsection{ICL technical propositions}
\begin{lemma}\label{lem:wishart}
    Let $\Sigma\sim W_d(V,n)$, and let  $\mu\in\mathbb{R}^d$ be deterministic. Then: \begin{enumerate}
        \item $\mathbb{E}[\operatorname{tr}\Sigma]=n\operatorname{tr}V.$
        \item $\mathbb{E}[\mu^{\top}\Sigma^{2}\mu]
= n(n+1)\,\mu^{\top}V^{2}\mu + n\,\operatorname{tr}(V)\,\mu^{\top}V\mu.$
\item $\mathbb{E}[(\mu^{\top}\Sigma^{2}\mu)(\mu^{\top}\Sigma^{2}\mu)]=n(n+2)[(n+3)(\mu^{\top}V\mu)(\mu^{\top}V^{2}\mu)+(\mu^{\top}V\mu)^2\operatorname{tr}(V)]$.
    \end{enumerate} 
\end{lemma}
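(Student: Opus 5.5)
We will use the standard representation of a Wishart matrix as a sum of Gaussian outer products. Write $\Sigma=\sum_{i=1}^n G_iG_i^\top$ with $G_1,\ldots,G_n\overset{\rm i.i.d.}{\sim}\mathcal N(0,V)$, which has law $W_d(V,n)$. Set $a=\mu^\top V\mu$ and $b=\mu^\top V^2\mu$. Each item then becomes a finite sum of Gaussian moments of the $G_i$. We split each sum according to which indices coincide. The resulting moments are exactly those computed in Proposition~\ref{computations}, applied with $V$ in place of $\Sigma$.

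\textbf{Item 1.} This is immediate: $\E[\operatorname{tr}\Sigma]=\sum_i\E\|G_i\|^2=n\operatorname{tr}V$.

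\textbf{Item 2.} Expand
\[
\mu^\top\Sigma^2\mu=\sum_{i,j}(\mu^\top G_i)(G_i^\top G_j)(G_j^\top\mu).
\]
The $n$ diagonal terms $i=j$ each give $\E[(\mu^\top G)^2\|G\|^2]=\operatorname{tr}(V)a+2b$ by item~5 of Proposition~\ref{computations}. The $n(n-1)$ off-diagonal terms each give $b$ by item~3. Summing,
\[
n\operatorname{tr}(V)a+2nb+n(n-1)b=n(n+1)b+n\operatorname{tr}(V)a .
\]

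\textbf{Item 3.} As written, the left-hand side repeats the factor $\mu^\top\Sigma^2\mu$. Its right-hand side, however, has the form $(\mu^\top V\mu)(\cdot)$, which is that of $\E[(\mu^\top\Sigma\mu)(\mu^\top\Sigma^2\mu)]$. This is also the quantity needed for $\E[ab]$ in the ICL risk. We therefore read the statement as the latter and prove that version.

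Expand the product as a triple sum
\[
\sum_{k,i,j}(\mu^\top G_k)^2(\mu^\top G_i)(G_i^\top G_j)(G_j^\top\mu)
\]
and classify the index patterns.
\begin{enumerate}
\item $i=j=k$, with $n$ terms. Each equals $\E[(\mu^\top G)^4\|G\|^2]=3\operatorname{tr}(V)a^2+12ab$ by item~6.
\item $i=j\neq k$, with $n(n-1)$ terms. By independence each equals $a\,(\operatorname{tr}(V)a+2b)$.
\item $i\neq j$ and $k\in\{i,j\}$, with $2n(n-1)$ terms. Conditioning on the index not equal to $k$ reduces each term to $\E[(\mu^\top G)^3\,\mu^\top VG]=3ab$, as in item~4.
\item $i,j,k$ pairwise distinct, with $n(n-1)(n-2)$ terms. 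Each factorizes into $a\cdot b$ using item~3.
\end{enumerate}

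Collecting coefficients: the coefficient of $\operatorname{tr}(V)a^2$ is $3n+n(n-1)=n(n+2)$. The coefficient of $ab$ is
\[
12n+2n(n-1)+6n(n-1)+n(n-1)(n-2)=n(n^2+5n+6)=n(n+2)(n+3).
\]
This gives the claimed formula.

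The main obstacle is the case bookkeeping in item 3. One must count the multiplicities of each index pattern correctly, and in the mixed case $k\in\{i,j\}$ correctly reduce the mixed third-order-times-linear moment to $3ab$. Each individual expectation follows from Isserlis' theorem once the pattern is fixed.
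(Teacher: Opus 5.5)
Your proof is correct and follows the same route as the paper: the outer-product representation $\Sigma=\sum_i G_iG_i^\top$, a split of the index sums by coincidence pattern, and Isserlis moments. You also read item 3 as $\E[(\mu^\top\Sigma\mu)(\mu^\top\Sigma^2\mu)]$, which is what the paper's own proof computes; the literal version cannot hold, since its two sides have different degrees of homogeneity in $V$. Your case bookkeeping is slightly cleaner than the paper's: for $i=j=k$ the paper lists the contribution $4ab+a^2\operatorname{tr}V$, which would not produce the stated coefficients, whereas your $3\operatorname{tr}(V)a^2+12ab$ is correct and yields exactly $n(n+2)$ and $n(n+2)(n+3)$.
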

\begin{proof}
We use the standard Gaussian representation of the Wishart distribution:
\[
\Sigma=\sum_{r=1}^n x_r x_r^\top,
\qquad
x_r \overset{iid}{\sim}\mathcal N(0,V).
\]

\smallskip
\begin{enumerate}
    \item 
Since $\operatorname{tr}(x_r x_r^\top)=\|x_r\|^2$ and
$\mathbb E[\|x_r\|^2]=\operatorname{tr}V$, linearity of expectation gives
\[
\mathbb E[\operatorname{tr}\Sigma]
=\sum_{r=1}^n \mathbb E[\|x_r\|^2]
=n\,\operatorname{tr}V.
\]

\smallskip
\item
Write
\[
\mu^\top\Sigma^2\mu
=\sum_{i,j=1}^n (\mu^\top \alpha_i)(\alpha_i^\top x_j)(\mu^\top x_j).
\]
Splitting the sum into the cases $i=j$ and $i\neq j$:

\emph{(i) Diagonal terms.} For $i=j$,
\[
\mathbb E\big[(\mu^\top x)^2(x^\top x)\big]
=\mu^\top V\mu\,\operatorname{tr}V + 2\,\mu^\top V^2\mu,
\]
by Isserlis' formula.

\emph{(ii) Off-diagonal terms.} For $i\neq j$, independence yields
\[
\mathbb E[(\mu^\top \alpha_i)(\alpha_i^\top x_j)(\mu^\top x_j)]
=\mu^\top V^2\mu.
\]

Counting terms,
\[
\mathbb E[\mu^\top\Sigma^2\mu]
=n\big(\mu^\top V\mu\,\operatorname{tr}V+2\mu^\top V^2\mu\big)
+n(n-1)\mu^\top V^2\mu,
\]
which simplifies to
\[
\mathbb E[\mu^\top\Sigma^2\mu]
= n(n+1)\mu^\top V^2\mu
+ n\,\operatorname{tr}V\,\mu^\top V\mu.
\]

\smallskip
\item 
Let $s_r=\mu^\top x_r$. Then
\[
\mu^\top\Sigma\mu=\sum_k s_k^2,
\qquad
\mu^\top\Sigma^2\mu=\sum_{i,j}s_i(\alpha_i^\top x_j)s_j.
\]
Hence
\[
\mathbb E[(\mu^\top\Sigma\mu)(\mu^\top\Sigma^2\mu)]
=\sum_{i,j,k}\mathbb E\big[s_k^2 s_i(\alpha_i^\top x_j)s_j\big].
\]

The expectation depends on coincidences among the indices $(i,j,k)$.
Using Isserlis' theorem and independence, one obtains:
\begin{itemize}
\item $i,j,k$ all distinct: contribution $(\mu^\top V\mu)(\mu^\top V^2\mu)$.
\item $i=j\neq k$: contribution $(\mu^\top V\mu)
\big(\mu^\top V\mu\,\operatorname{tr}V+2\mu^\top V^2\mu\big)$.
\item $i=k\neq j$ or $a=j\neq i$: contribution $3(\mu^\top V\mu)(\mu^\top V^2\mu)$.
\item $i=j=k$: contribution
$4(\mu^\top V\mu)(\mu^\top V^2\mu)
+(\mu^\top V\mu)^2\operatorname{tr}V$.
\end{itemize}

Summing all contributions with their combinatorial counts yields
\[
\mathbb E[(\mu^\top\Sigma\mu)(\mu^\top\Sigma^2\mu)]
=
n(n+2)\!\left[(n+3)(\mu^\top V\mu)(\mu^\top V^2\mu)
+(\mu^\top V\mu)^2\operatorname{tr}V\right].
\]

This concludes the proof.
\end{enumerate}
\end{proof}

\begin{lemma}\label{lemma:icl_expansion}
Let $\Sigma \sim W_d(V,n)$, then $\mathcal{R}_{\infty}^{\mathrm{ICL}}(\mu)=\mathbb{E}_{\Sigma\sim W_d(V,n)}[\mathcal{R}_{\mathrm{soft},\infty}^{(\Sigma)}(\mu)]$, and \begin{align*}
        \mathcal{R}_{\infty}^{\mathrm{ICL}}(\mu)&=n\operatorname{tr}(V)-2\lambda n[ (n+1)\,\mu^{\top}V^{2}\mu + \,\operatorname{tr}(V)\,\mu^{\top}V\mu]\\
        &\quad +\lambda^2 n(n+2)[(n+3)(\mu^{\top}V\mu)(\mu^{\top}V^{2}\mu)+(\mu^{\top}V\mu)^2\operatorname{tr}(V)].
    \end{align*}
    In particular, if $V=\xi^2I_d+\theta vv^t$ for $\Vert v\Vert=1, \xi>0,\theta>0$, and we let $\alpha=\langle \mu,v\rangle$ and $r=\Vert\mu\Vert$, we have that there exists $\tilde{\mathcal{R}}^{\mathrm{ICL}}:\mathbb{R}_+\times\mathbb{R}_+\rightarrow\mathbb{R}$ such that $$\mathcal{R}_{\infty}^{\mathrm{ICL}}(\mu)=\tilde{\mathcal{R}}^{\mathrm{ICL}}(r^2,\alpha^2),$$ where 
    \begin{align*}
\tilde{\mathcal{R}}^{\mathrm{ICL}}(r^2,\alpha^2)&= n\big(d\xi^2 +\theta\big) 
-2\lambda n\Big[
(n+1)\Big(\xi^4 r^2+(2\xi^2\theta+\theta^2)\alpha^2\Big)
+\big(d\xi^2+\theta\big)\Big(\xi^2 r^2+\theta\alpha^2\Big)
\Big] \\
&\quad
+\lambda^2 n(n+2)\Bigg\{
(n+3)\Big(\xi^2 r^2+\theta\alpha^2\Big)
\Big(\xi^4 r^2+(2\xi^2\theta+\theta^2)\alpha^2\Big) \\
&\qquad\qquad
+\big(d\xi^2+\theta\big)\Big(\xi^2 r^2+\theta\alpha^2\Big)^2
\Bigg\}.
    \end{align*}
Furthermore, for this particular $V$, the gradient satisfies
\[
\nabla \mathcal{R}_{\infty}^{\mathrm{ICL}}(\mu)
=
2\big(A(r,\alpha)\mu + B(r,\alpha)\alpha v\big),
\]
where
\[
A(r,\alpha)=a_1 r^2 + a_2 \alpha^2 - a_3,
\qquad
B(r,\alpha)=b_1 r^2 + b_2 \alpha^2 - b_3,
\]
for constants $a_i,b_i>0$ defined as \begin{align}\label{ab}
\begin{cases}
    a_1&=2\lambda^2n(n+2)\xi^4[(n+d+3)\xi^2+\theta],\\
    a_2 &=\lambda^2n(n+2)\xi^2\theta[(3n+2d+9)\xi^2+(n+5)\theta],\\
    a_3&=2\lambda n \xi^2[(n+d+1)\xi^2+\theta],\\
    b_1&=a_2,\\
    b_2&=2\lambda^2 n(n+2)\theta^2[(2n+d+6)\xi^2+(n+4)\theta],\\
    b_3&=2\lambda n\theta[(2n+d+2)\xi^2+(n+2)\theta].
\end{cases}
\end{align}
\end{lemma}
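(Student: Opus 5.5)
The plan is to average the closed form \eqref{rsoftinfty} of the infinite-prompt risk over $\Sigma\sim W_d(V,n)$. For fixed $\Sigma$,
\[
\mathcal{R}_{\mathrm{soft},\infty}^{(\Sigma)}(\mu)=\operatorname{tr}(\Sigma)-2\lambda\,\mu^\top\Sigma^2\mu+\lambda^2(\mu^\top\Sigma\mu)(\mu^\top\Sigma^2\mu).
\]
This is a polynomial in the entries of $\Sigma$, and every moment of a Wishart matrix is finite. So the expectation $\mathcal{R}_{\infty}^{\mathrm{ICL}}(\mu)=\mathbb{E}_\Sigma[\mathcal{R}_{\mathrm{soft},\infty}^{(\Sigma)}(\mu)]$ is well defined, and by linearity we can take the expectation term by term.

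\textbf{Step 1 (general $V$).} We apply Lemma~\ref{lem:wishart} to the three terms. Item 1 gives $\mathbb{E}[\operatorname{tr}\Sigma]=n\operatorname{tr}V$, and item 2 gives $\mathbb{E}[\mu^\top\Sigma^2\mu]$. Item 3 is to be read as the mixed moment $\mathbb{E}[(\mu^\top\Sigma\mu)(\mu^\top\Sigma^2\mu)]$; this is what its proof computes, even though its statement writes $\Sigma^2$ twice. Multiplying these three moments by $1$, $-2\lambda$ and $\lambda^2$ and summing gives the first displayed formula directly.

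\textbf{Step 2 (spiked $V$).} Now take $V=\xi^2I_d+\theta vv^\top$. Then
\[
V^2=\xi^4 I_d+(2\xi^2\theta+\theta^2)vv^\top,\qquad \operatorname{tr}V=d\xi^2+\theta.
\]
With $r=\|\mu\|$ and $\alpha=\langle\mu,v\rangle$, this gives
\[
p:=\mu^\top V\mu=\xi^2r^2+\theta\alpha^2,\qquad q:=\mu^\top V^2\mu=\xi^4r^2+c\,\alpha^2,\qquad c:=2\xi^2\theta+\theta^2 .
\]
Substituting these into Step 1 gives $\tilde{\mathcal R}^{\mathrm{ICL}}(r^2,\alpha^2)$. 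It is a polynomial of degree two in the pair $(r^2,\alpha^2)$.

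\textbf{Step 3 (gradient).} We use the chain rule with $\nabla_\mu r^2=2\mu$ and $\nabla_\mu \alpha^2=2\alpha v$. This gives
\[
\nabla\mathcal{R}_{\infty}^{\mathrm{ICL}}(\mu)=2\bigl(\partial_1\tilde{\mathcal R}\,\mu+\partial_2\tilde{\mathcal R}\,\alpha v\bigr).
\]
So we set $A=\partial_1\tilde{\mathcal R}$ and $B=\partial_2\tilde{\mathcal R}$. Because $\tilde{\mathcal R}$ is quadratic in $(r^2,\alpha^2)$, both $A$ and $B$ are affine in $(r^2,\alpha^2)$.

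Their constant terms come from the linear part $-2\lambda n[(n+1)q+(d\xi^2+\theta)p]$. The $r^2$-coefficient gives $-a_3=-2\lambda n\xi^2[(n+d+1)\xi^2+\theta]$. The $\alpha^2$-coefficient gives $-b_3=-2\lambda n\theta[(2n+d+2)\xi^2+(n+2)\theta]$.

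The remaining coefficients come from the quadratic part $\lambda^2n(n+2)[(n+3)pq+(d\xi^2+\theta)p^2]$. For instance,
\[
\partial_{r^2}\bigl[(n+3)pq+Tp^2\bigr]=(n+3)(\xi^2q+\xi^4p)+2T\xi^2p,\qquad T:=d\xi^2+\theta .
\]
Its $r^2$-coefficient is $2\xi^4[(n+d+3)\xi^2+\theta]$ after the prefactor, which gives $a_1$. The other coefficients $a_2$ and $b_2$ are obtained in the same way. The identity $b_1=a_2$ needs no separate computation: both are the mixed second derivative $\partial_1\partial_2\tilde{\mathcal R}$ of the quadratic $\tilde{\mathcal R}$. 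Positivity of all $a_i,b_i$ can be read off from the explicit expressions, since $\lambda,n,\xi,\theta>0$.

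The only real obstacle is bookkeeping. The delicate point is the mixed moment in Lemma~\ref{lem:wishart}(3): its combinatorial counts over index coincidences must sum to $n(n+2)(n+3)$ and $n(n+2)$. I would double-check this sum before relying on it, for instance by specializing to $d=1$, where $\Sigma\sim V\chi^2_n$ and the answer is $\mathbb{E}[\chi_n^6]V^3=n(n+2)(n+4)V^3$. This agrees with $n(n+2)[(n+3)+1]V^3$.
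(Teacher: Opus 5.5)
Your proposal is correct and follows the same route as the paper: average the closed form of $\mathcal{R}_{\mathrm{soft},\infty}^{(\Sigma)}$ using the Wishart moments of Lemma~\ref{lem:wishart}, substitute $V=\xi^2I_d+\theta vv^\top$, and identify $A=\partial_{r^2}\tilde{\mathcal R}^{\mathrm{ICL}}$ and $B=\partial_{\alpha^2}\tilde{\mathcal R}^{\mathrm{ICL}}$ via $\nabla r^2=2\mu$ and $\nabla\alpha^2=2\alpha v$. You are right to read item~3 as the mixed moment $\mathbb{E}[(\mu^\top\Sigma\mu)(\mu^\top\Sigma^2\mu)]$, and carrying out your bookkeeping does give $a_1,a_2,a_3,b_2,b_3$ exactly as stated, with $b_1=a_2$ following from equality of mixed partials.
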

\begin{proof}
The expression of $\mathcal{R}_{\infty}^{\mathrm{ICL}}(\mu)$ follows directly from \eqref{rsoftinfty} together with Lemma~\ref{lem:wishart}.

In the case where $V=\xi^2 I_d+\theta vv^\top$, expanding the quadratic forms $\mu^\top V \mu$ and $\mu^\top V^2 \mu$ in terms of $r^2=\|\mu\|^2$ and $\alpha=\langle \mu,v\rangle$ yields the representation
\[
\mathcal{R}_{\infty}^{\mathrm{ICL}}(\mu)
=
\tilde{\mathcal{R}}^{\mathrm{ICL}}(r^2,\alpha^2).
\]

Differentiating this expression with respect to $\mu$, using $\nabla r^2 = 2\mu$ and $\nabla \alpha^2 = 2\alpha v$, gives the stated gradient form
\[
\nabla \mathcal{R}_{\infty}^{\mathrm{ICL}}(\mu)
=
2\big(A(r,\alpha)\mu + B(r,\alpha)\alpha v\big),
\]
where $A=\nabla_{r^2}\tilde{\mathcal{R}}^{\mathrm{ICL}}$ and $B=\nabla_{\alpha^2}\tilde{\mathcal{R}}^{\mathrm{ICL}}$ are polynomials in $(r^2,\alpha^2)$ whose coefficients are obtained by explicit identification. The expressions of $a_i,b_i$ follow from direct computation.
\end{proof}

\begin{lemma}\label{conditions}
    Let $\xi^2,\theta,\lambda>0$ and $n\geq d\geq 1$, and $a_i,b_i$ defined as in \eqref{ab}, then $a_1b_3>a_2a_3$ and $(a_1+a_2)b_3>(a_2+b_2)a_3$.
\end{lemma}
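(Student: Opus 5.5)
Both inequalities are polynomial identities-plus-sign checks, and I would prove them by factoring out the common positive prefactor and then comparing coefficients monomial by monomial in the two variables $x:=\xi^2>0$ and $\theta>0$, using only $n\ge d\ge 1$.

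\emph{First inequality.} From \eqref{ab}, both $a_1b_3$ and $a_2a_3$ contain the factor $2\lambda^3n^2(n+2)\xi^4\theta>0$. After dividing it out, the claim $a_1b_3>a_2a_3$ is equivalent to
\[
2\big[(n+d+3)x+\theta\big]\big[(2n+d+2)x+(n+2)\theta\big] > \big[(3n+2d+9)x+(n+5)\theta\big]\big[(n+d+1)x+\theta\big].
\]
Expanding both sides as quadratic forms in $(x,\theta)$, I expect the differences of coefficients (left minus right) to be:
\begin{itemize}
\item on $x^2$: $n^2+nd+4n-d+3$;
\item on $x\theta$: $n^2+nd+5n-d+2$;
\item on $\theta^2$: $n-1$.
\end{itemize}
Since $n\ge d$, the first two are strictly positive, and the last is nonnegative. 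As $x,\theta>0$, the difference is strictly positive.

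\emph{Second inequality.} I would proceed the same way. First group
\[
a_1+a_2=\lambda^2n(n+2)\,x\,\Big[2x\big((n+d+3)x+\theta\big)+\theta\big((3n+2d+9)x+(n+5)\theta\big)\Big],
\]
\[
a_2+b_2=\lambda^2n(n+2)\,\theta\,\Big[x\big((3n+2d+9)x+(n+5)\theta\big)+2\theta\big((2n+d+6)x+(n+4)\theta\big)\Big].
\]
Since $b_3$ carries the factor $2\lambda n\theta$ and $a_3$ carries $2\lambda n x$, both sides share the positive factor $2\lambda^3n^2(n+2)x\theta$. After dividing it out, the claim reduces to comparing two homogeneous cubic polynomials in $(x,\theta)$:
\[
\Big[2(n+d+3)x^2+(3n+2d+11)x\theta+(n+5)\theta^2\Big]\Big[(2n+d+2)x+(n+2)\theta\Big]
\]
against
\[
\Big[(3n+2d+9)x^2+(5n+2d+17)x\theta+2(n+4)\theta^2\Big]\Big[(n+d+1)x+\theta\Big].
\]
I would expand both sides, collect the coefficients of $x^3$, $x^2\theta$, $x\theta^2$ and $\theta^3$, and show each difference is a polynomial in $(n,d)$ that is nonnegative for $n\ge d\ge1$, with at least one strictly positive. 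I would do this by substituting $n=d+k$ with $k\ge0$ and checking that all coefficients in $(d,k)$ become nonnegative.

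The main obstacle is this bookkeeping step for the cubic comparison. If some coefficient, say that of $\theta^3$ or $x\theta^2$, turns out to be negative, I would not rely on coefficientwise positivity. Instead I would use an AM--GM absorption, for instance $x\theta^2\le\tfrac12(x^2\theta+\theta^3)\cdot(\text{suitable weights})$, to dominate the negative term by the positive neighbouring monomials, again using $n\ge d$. The positivity of the leading $x^3$ coefficient should be immediate, since it is $2(n+d+3)(2n+d+2)-(3n+2d+9)(n+d+1)$, which is exactly the $x^2$ difference computed in the first part.
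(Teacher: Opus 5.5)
Your proposal is correct and is the paper's own argument: divide out the positive prefactor and check that every coefficient of the remaining homogeneous polynomial in $(\xi^2,\theta)$ is positive. Your quadratic differences match the paper's $c_1,c_2,c_3$ up to the factor $n+2$.

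Carrying out the cubic expansion (with $x=\xi^2$), the coefficients of $x^3$, $x^2\theta$, $x\theta^2$ and $\theta^3$ are $n^2+nd+4n-d+3$, $3n^2+2nd+13n-2d+8$, $3n^2+nd+14n-d+7$ and $n^2+5n+2$. All four are strictly positive for $n\ge 1$, so the AM--GM fallback is never needed. Note that the paper's $c_5$ contains a typo, $13+8$ for $13n+8$.
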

\begin{proof}
Developing the terms, we have that \begin{align*}
     a_1b_3-a_2a_3&= 2\lambda^3n^2[c_1\theta\xi^8+c_2\theta^2\xi^6+c_3\theta^3\xi^4],\\
         (a_1+a_2)b_3-(a_2+b_2)a_3&= 2\lambda^3n^2(n+2)[c_4\theta\xi^8+c_5\theta^2\xi^6+c_6\theta^3\xi^4+c_7\theta^4\xi^2],
 \end{align*} 
 with \begin{align*}
     c_1&=(n+2)(d(n-1)+n^2+4n+3),\\
     c_2&=(n+2)(d(n-1)+n^2+5n+2),\\
     c_3&=(n-1)(n+2),\\
     c_4&=d(n-1)+n^2+4n+3,\\
     c_5&=(2d(n-1)+3n^2+13+8),\\
     c_6&=d(n-1)+3n^2+14n+7,\\
     c_7&=n^2+5n+2.
 \end{align*}
 Since $n\geq d\geq1$, we have that every constant $c_1,\ldots,c_7$ is positive, concluding the lemma.
\end{proof}

\begin{proposition}[Families of stationary points]\label{families}
Under the notation of Lemma \ref{lemma:icl_expansion} and the parametrization $\mu = \alpha v + w$ with $w \perp v$ and $r = \|\mu\|$, all stationary points of $\mathcal{R}_{\infty}^{\mathrm{ICL}}$ belong to one of the following families:

\begin{enumerate}
\item \(\textbf{Trivial: }\mu=0.\)

\item \(\textbf{Orthogonal: }\alpha=0,\; w\neq 0\), with \(A(r,0)=0.\)

\item \(\textbf{Aligned: }w=0,\; \alpha\neq 0\), so that \(\mu=\alpha v\), with
\[
A(r,\alpha)+B(r,\alpha)=0.
\]

\item \(\textbf{Off-axis: }\alpha\neq 0,\; w\neq 0\), with
\[
A(r,\alpha)=0
\quad \text{and} \quad
B(r,\alpha)=0.
\]
\end{enumerate}
\end{proposition}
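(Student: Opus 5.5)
The plan is to start from the gradient formula of Lemma~\ref{lemma:icl_expansion},
\[
\nabla \mathcal{R}_{\infty}^{\mathrm{ICL}}(\mu)=2\big(A(r,\alpha)\mu+B(r,\alpha)\alpha v\big),
\]
and decompose the stationarity equation along the orthogonal splitting $\mathbb{R}^d=\mathrm{Span}(v)\oplus v^\perp$. Substituting $\mu=\alpha v+w$ with $w\perp v$, the condition $\nabla\mathcal{R}_{\infty}^{\mathrm{ICL}}(\mu)=0$ becomes
\[
A(r,\alpha)\,w+\big(A(r,\alpha)+B(r,\alpha)\big)\alpha\, v=0 .
\]
Since $w$ and $v$ are orthogonal, this is equivalent to the pair of scalar/vector equations
\[
A(r,\alpha)\,w=0,\qquad \big(A(r,\alpha)+B(r,\alpha)\big)\alpha=0 .
\]
This reduction is the only structural step. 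It uses $\|v\|=1$, which lets us read off the $v$-component, and the fact that $\alpha=\langle\mu,v\rangle$ is exactly the coefficient of $v$ in this decomposition.

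Next, I will perform an exhaustive case split according to whether $\alpha$ and $w$ vanish. There are four possibilities.

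\textbf{Case $\alpha=0$, $w=0$.} Then $\mu=0$, which gives the trivial family. Both equations hold automatically, so $0$ is indeed stationary.

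\textbf{Case $\alpha=0$, $w\neq 0$.} The second equation is automatic. The first equation forces $A(r,0)=0$, with $r=\|w\|$. This is the orthogonal family.

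\textbf{Case $\alpha\neq 0$, $w=0$.} The first equation is automatic. The second equation gives $A(r,\alpha)+B(r,\alpha)=0$, with $r=|\alpha|$. This is the aligned family.

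\textbf{Case $\alpha\neq 0$, $w\neq 0$.} The first equation forces $A(r,\alpha)=0$. Substituting into the second equation then gives $B(r,\alpha)\alpha=0$, hence $B(r,\alpha)=0$. This is the off-axis family.

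Since the four cases partition all $\mu\in\mathbb{R}^d$, every stationary point falls into one of the listed families. Conversely, each listed set of conditions makes both equations hold, so the families are exactly the stationary points.

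There is no real obstacle here; the statement is purely a classification. The step requiring the most care is justifying the gradient form itself, namely $\nabla r^2=2\mu$ and $\nabla\alpha^2=2\alpha v$ with $A=\partial_{r^2}\tilde{\mathcal R}^{\mathrm{ICL}}$ and $B=\partial_{\alpha^2}\tilde{\mathcal R}^{\mathrm{ICL}}$. That is taken from Lemma~\ref{lemma:icl_expansion}. Whether the off-axis family is actually nonempty, which would use Lemma~\ref{conditions}, is not needed for this proposition and is deferred to the subsequent analysis.
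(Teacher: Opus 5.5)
Your proposal is correct and follows essentially the same route as the paper: project the stationarity condition $A(r,\alpha)\mu+B(r,\alpha)\alpha v=0$ onto $\mathrm{Span}(v)$ and $v^\perp$ to get $A(r,\alpha)w=0$ and $\alpha\big(A(r,\alpha)+B(r,\alpha)\big)=0$, then split on whether $\alpha$ and $w$ vanish. Your converse check, that each family's conditions also suffice for stationarity, is a harmless addition that the statement does not require.
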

\begin{proof}
    A stationary point satisfies
\[
A(r,\alpha)\mu + B(r,\alpha)\alpha v = 0.
\]
Writing $\mu=\alpha v + w$ with $w\perp v$ and projecting onto $\mathrm{span}(v)$ and its orthogonal complement yields
\[
A(r,\alpha)w = 0,
\qquad
\alpha\big(A(r,\alpha)+B(r,\alpha)\big)=0.
\]
The conclusion follows by considering whether $\alpha=0$ or not and whether $w=0$ or not.
\end{proof}

\begin{proposition}[Characterization of families]\label{criticalglobal}

The function $\mathcal{R}_{\infty}^{\mathrm{ICL}}:\mathbb{R}^d \to \mathbb{R}$ satisfies:

\begin{enumerate}
    \item $\mu = 0$ is a local maximum: the Hessian has only negative eigenvalues along all nonzero directions.
    \item Any admissible orthogonal point is a strict saddle: the Hessian has exactly one negative eigenvalue along $v$, one positive eigenvalue along the vector's own direction and $d-2$ zero eigenvalues.
    \item Any admissible aligned point is a strict local minimum. Moreover, the only admissible aligned points are $\mu^\star = \pm \alpha^\star v$, for $\alpha^\star>0$ defined in \eqref{astar}.
    \item There are no admissible off-axis solutions.
\end{enumerate}

Since $\mathcal{R}_{\infty}^{\mathrm{ICL}}$ is coercive, these two points are the global minimizers of the function. Finally, for almost every initialization $\mu_0 \in \mathbb{R}^d$, the gradient flow of $\mathcal{R}_{\infty}^{\mathrm{ICL}}$ converges to one of the two global minimizers $\pm \alpha^\star v$.
\end{proposition}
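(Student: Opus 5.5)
The plan is to work entirely through the gradient representation of Lemma~\ref{lemma:icl_expansion}, namely $\nabla \mathcal{R}_{\infty}^{\mathrm{ICL}}(\mu)=2(A\mu+B\alpha v)$ with $A=a_1r^2+a_2\alpha^2-a_3$ and $B=a_2 r^2+b_2\alpha^2-b_3$, together with the classification into families from Proposition~\ref{families}. Differentiating once more gives the Hessian
\[
\tfrac12\nabla^2\mathcal{R}_{\infty}^{\mathrm{ICL}}(\mu)=A I_d + B\,vv^\top + 2\mu(a_1\mu+a_2\alpha v)^\top + 2\alpha v(a_2\mu+b_2\alpha v)^\top .
\]
Each family is then treated by evaluating this matrix on the natural orthogonal frame $\{v,\ w/\|w\|,\ \text{rest of }\{v,w\}^\perp\}$.

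For the trivial point, $\mu=0$ gives $\tfrac12\nabla^2=-a_3I_d-b_3vv^\top$, which is negative definite since $a_3,b_3>0$. For an orthogonal point, $\alpha=0$ and $A(r,0)=0$, so $r^2=a_3/a_1$. The Hessian then acts as follows. On $w$ it gives $2a_1r^2>0$. On $\{v,w\}^\perp$ it gives $A=0$, which accounts for the $d-2$ zero eigenvalues coming from the rotational symmetry of the sphere of such points. On $v$ it gives $B(r,0)=a_2a_3/a_1-b_3$, and the cross term $v^\top\nabla^2 w$ vanishes because $\alpha=0$. This curvature is negative exactly when $a_1b_3>a_2a_3$, which is the first inequality of Lemma~\ref{conditions}. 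For aligned points, $\mu=\alpha v$ and $r=|\alpha|$. The condition $A+B=0$ reads $(a_1+2a_2+b_2)\alpha^2=a_3+b_3$, which defines $(\alpha^\star)^2$ as in \eqref{astar}. The Hessian has the following eigenvalues. Along $v$ it is $2(a_1+2a_2+b_2)\alpha^2>0$. Along $v^\perp$ it is $A(\alpha^\star,\alpha^\star)=(a_1+a_2)(\alpha^\star)^2-a_3$. Substituting $(\alpha^\star)^2$, the positivity of this quantity reduces to $(a_1+a_2)b_3>(a_2+b_2)a_3$, the second inequality of Lemma~\ref{conditions}. Its value is therefore the $\hat s$ of \eqref{hats}, and both aligned points $\pm\alpha^\star v$ are strict local minima.

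For off-axis points I would solve the linear system $A=B=0$ in the unknowns $(r^2,\alpha^2)$. Cramer's rule gives
\[
\alpha^2=\frac{a_1b_3-a_2a_3}{a_1b_2-a_2^2},\qquad r^2-\alpha^2=\|w\|^2=\frac{(a_2+b_2)a_3-(a_1+a_2)b_3}{a_1b_2-a_2^2}\cdot(\pm1),
\]
where the sign has to be tracked carefully. The strategy is to show that admissibility, meaning $\alpha^2>0$ and $\|w\|^2>0$, forces incompatible signs. The key point is that the numerator of $\alpha^2$ is positive and the numerator of $\|w\|^2$ is negative by Lemma~\ref{conditions}. If $a_1b_2-a_2^2>0$ this gives $\|w\|^2<0$, and if $a_1b_2-a_2^2<0$ it gives $\alpha^2<0$. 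The degenerate case $a_1b_2=a_2^2$ makes the system inconsistent, since the right-hand sides $a_3,b_3$ are not proportional in the same ratio, again by Lemma~\ref{conditions}. This sign bookkeeping, together with confirming the exact expression for $\|w\|^2$, is the step I expect to be the most delicate. If needed I would verify the sign of $a_1b_2-a_2^2$ by direct expansion from \eqref{ab}.

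Finally, coercivity follows because the quartic term $\lambda^2n(n+2)[(n+3)(\mu^\top V\mu)(\mu^\top V^2\mu)+\operatorname{tr}(V)(\mu^\top V\mu)^2]$ is bounded below by $c\|\mu\|^4$ with $c>0$, as $V\succ0$. All critical points other than $\pm\alpha^\star v$ are either the local maximum or strict saddles, and the two minima have equal value by the symmetry $\mu\mapsto-\mu$. Lemma~\ref{lem:localtoglobal} therefore shows they are the global minimizers. The orthogonal saddles are not isolated, since they form a sphere in $v^\perp$, so I cannot invoke nondegeneracy. Instead I would argue that their union is contained in a finite union of center-stable manifolds, each of measure zero because the Hessian has a negative direction along $v$, as in \cite[Theorem III.7]{shub} and \citep{lee2016gradient, panageas2017gradient}. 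Since the function is polynomial, the Łojasiewicz inequality gives convergence of bounded trajectories to a single critical point. Hence almost every initialization converges to $\pm\alpha^\star v$.
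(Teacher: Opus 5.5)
Your proposal is correct and follows essentially the same route as the paper: the same Hessian evaluated family by family, the two inequalities of Lemma~\ref{conditions} for the orthogonal and aligned cases, and the same Cramer's-rule sign analysis split on $\Delta=a_1b_2-a_2^2$ for the off-axis case. Two small corrections: the factor you hedged as $\pm1$ is simply $+1$, i.e.\ $r^2-\alpha^2=\big((a_2+b_2)a_3-(a_1+a_2)b_3\big)/\Delta$; and with your $\tfrac12\nabla^2$ normalization the $v^\perp$ eigenvalue $A(\alpha^\star,\alpha^\star)$ equals $\hat s/2$, not $\hat s$.

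Your final paragraph supplies what the paper's proof only asserts, namely coercivity via the quartic lower bound and almost-everywhere convergence via \L{}ojasiewicz plus center-stable manifolds. You are also right that the orthogonal critical points form a $(d-2)$-sphere in $v^\perp$, a continuum once $d\ge 3$, which is exactly why that non-isolated-saddle argument is needed.
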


\begin{proof}
We recall by Lemma \ref{lemma:icl_expansion} that $\nabla \mathcal{R}_{\infty}^{\mathrm{ICL}}(\mu)=2(A(r,\alpha)\mu+B(r,\alpha)\alpha v)$, where $$A(r,\alpha)=a_1r^2+a_2\alpha^2-a_3, \quad B(r,\alpha)=b_1r^2+b_2\alpha^2-b_3,$$
for $a_i,b_i, i=\{1,2,3\}$ defined as \begin{align}
\begin{cases}
    a_1&=2\lambda^2n(n+2)\xi^4[(n+d+3)\xi^2+\theta],\\
    a_2 &=\lambda^2n(n+2)\xi^2\theta[(3n+2d+9)\xi^2+(n+5)\theta],\\
    a_3&=2\lambda n \xi^2[(n+d+1)\xi^2+\theta],\\
    b_1&=a_2,\\
    b_2&=2\lambda^2 n(n+2)\theta^2[(2n+d+6)\xi^2+(n+4)\theta],\\
    b_3&=2\lambda n\theta[(2n+d+2)\xi^2+(n+2)\theta].
\end{cases}
\end{align} By Lemma \ref{conditions}, we have that 
\begin{align}
    \label{c1} a_1 b_3 &> a_2 a_3, \\
    \label{c2} (a_1+a_2) b_3 &> (a_2+b_2) a_3,
\end{align} holds for any $\lambda,\xi^2, \theta>0$ and $n\geq d\geq 1$.\\

\noindent Differentiating the gradient we obtain the following Hessian:
\begin{align*}
\nabla^2 \mathcal{R}_{\infty}^{\mathrm{ICL}}(\mu)&=2AI_d + 2Bvv^\top+4(\partial_{r^2}A)\mu\mu^\top+4\alpha^2(\partial_{\alpha^2}B) v v^\top+4\alpha(\partial_{\alpha^2}A)(\mu v^\top+ v\mu^\top).\\
&=2AI_d + 2Bvv^\top+4a_1\mu\mu^\top+4\alpha^2b_2v v^\top+4\alpha a_2(\mu v^\top+ v\mu^\top).
\end{align*}
We check that:
\begin{itemize}
\item Origin: We have that $\nabla^2 \mathcal{R}_{\infty}^{\mathrm{ICL}}(0)=2A(0,0)I_d+2B(0,0)vv^\top=-2a_3-2b_3vv^\top$, since $a_3,b_3>0$, all eigenvalues are strictly negative.
\item Orthogonal: Here $r^2=\frac{a_3}{a_1}$ and the Hessian simplifies to $\nabla^2 \mathcal{R}_{\infty}^{\mathrm{ICL}}(\mu)=4a_1\mu\mu^\top+2B(r,0)vv^\top$. In direction $\mu$ the associated eigenvalue is $4a_1r^2>0$ and in direction $v$, is $2B(r,0)=2(b_1\frac{a_3}{a_1}-b_3)=2(a_2\frac{a_3}{a_1}-b_3)$, which is negative due to \eqref{c1}. In any other direction $w\in \{\mu,v\}^{\perp}$, we will have null eigenvalues, in particular $\mathrm{dim}(\{\mu,v\}^{\perp})=d-2$. 
\item Aligned: Here $\mu=\alpha v$, so $r^2=\alpha^2=\frac{a_3+b_3}{a_1+a_2+b_1+b_2}=\frac{a_3+b_3}{a_1+2a_2+b_2}$, if \begin{equation}\label{astar}
    \alpha^\star =\sqrt{\frac{a_3+b_3}{a_1+2a_2+b_2}},
\end{equation}
then $\mu=\pm \alpha^\star v$ and the Hessian becomes $\nabla^2 \mathcal{R}_{\infty}^{\mathrm{ICL}}(\mu)=2A(I-vv^\top)+4(a_3+b_3)vv^\top$, in direction $v$ the associated eigenvalue is $4(a_3+b_3)>0$. In perpendicular directions to $v$, the associated eigenvalue is $$2A(r,\alpha)=2[(a_1+a_2)\frac{a_3+b_3}{a_1+2a_2+b_2}-a_3],$$ which is positive due to \eqref{c2}. 
\item Off-axis: This solution exists only if $a_1r^2+a_2\alpha^2-a_3=0$ and $a_2r^2+b_2\alpha^2-b_3=0$, and $r^2>\alpha^2>0$, we solve the system of equations $$\begin{pmatrix}a_1 & a_2\\a_2 &b_2 \end{pmatrix}\begin{pmatrix}
    r^2 \\\alpha^2
\end{pmatrix}=\begin{pmatrix}
    a_3 \\ b_3
\end{pmatrix}.$$ Let $\Delta=a_1b_2-a_2^2$, if:
\begin{itemize}
    \item $\Delta=0$, the matrix is singular and due to \eqref{c1}, we have that the system has no solution.
    \item $\Delta<0$, the solution $\alpha^2=\frac{a_1b_3-a_2a_3}{\Delta}$ is negative due to \eqref{c1}, and we have a contradiction with the fact that $\alpha^2\geq 0$.
    \item $\Delta>0$, solving the system yields $r^2-\alpha^2=\frac{(a_2+b_2)a_3-(a_1+a_2)b_3}{\Delta}$, and by \eqref{c2}, then $r^2-\alpha^2<0$, which is a contradiction.
\end{itemize}  
Thus, under our two conditions \eqref{c1}-\eqref{c2}, no off-axis solution exists.
\end{itemize}
\end{proof}

\begin{proposition}\label{criticalpointsicl}
Let $\Sigma \sim W_d(V,n)$ with $V=\xi^2I_d+\theta vv^t$ and $\Vert v\Vert$=1. Then, for $L$ large enough, we can characterize the landscape of critical points of $\mathcal{R}_{L}^{\mathrm{ICL}}$ within a compact region. More precisely, for sufficiently large $\rho>0$, there exists $L_0\in\mathbb{N}$, such that for $L\geq L_0$, the set of critical points of $\mathcal{R}_{L}^{\mathrm{ICL}}$ that lies in $B(0,\rho)$ is contained in the union of the following sets:
$$\mathrm{crit}(\mathcal{R}_{L}^{\mathrm{ICL}})\cap B(0,\rho)\subseteq \{\mu_{L,0}^\star\}\cup\{\pm\mu_{L,\parallel}^\star\}\cup \{U_j: j=1,\ldots,2(d-1)\},$$  
where:
\begin{enumerate}
\item \label{item1}
The point $\mu_{L,0}^\star$ satisfies $\mu_{L,0}^\star \to 0$ as $L \to \infty$. This point is a strict local maximum.

\item \label{item2}
The points $\pm\mu_{L,\parallel}^\star$ satisfy $\pm\mu_{L,\parallel}^\star \to \pm\alpha^\star v$ as $L \to \infty$, with $\alpha^\star$ according to \eqref{astar}. These points are local minimizers.

\item \label{item3}
For each orthogonal critical point $\mu_\perp^{(j)},\ j=1,\dots,2(d-1)$ of $\mathcal{R}_\infty^{\mathrm{ICL}}$, there exists a neighborhood $U_j$ such that every critical point of $\mathcal{R}_L^{\mathrm{ICL}}$ in $U_j$ is a strict saddle.

\end{enumerate}
\end{proposition}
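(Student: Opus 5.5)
The plan mirrors the proof of Proposition~\ref{criticalpointsl}, combining the $C^2_{\mathrm{loc}}$ convergence of Proposition~\ref{uniform_c2_icl} with the landscape of $\mathcal{R}_\infty^{\mathrm{ICL}}$ from Propositions~\ref{families} and~\ref{criticalglobal}. The orthogonal critical points are degenerate (the Hessian has $d-2$ zero eigenvalues), so Lemma~\ref{persistence} cannot be applied verbatim to them. Instead, the critical set of $\mathcal{R}_\infty^{\mathrm{ICL}}$ splits into three nondegenerate isolated points, $0$ and $\pm\alpha^\star v$, and the sphere
\[
\mathcal{S}_\perp=\{w\perp v:\ \|w\|^2=a_3/a_1\},
\]
which has dimension $d-2$. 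The $2(d-1)$ points $\mu_\perp^{(j)}$ are chosen on $\mathcal{S}_\perp$; for instance $\pm\sqrt{a_3/a_1}\,e_j$ for an orthonormal basis $(e_j)$ of $v^\perp$.

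\textbf{Step 1 (isolated points).} Apply items 1--2 of Lemma~\ref{persistence} with $f_L=\mathcal{R}_L^{\mathrm{ICL}}$ and $f=\mathcal{R}_\infty^{\mathrm{ICL}}$ around $0$ and $\pm\alpha^\star v$. This yields radii $r_0,r_\pm$ and unique nondegenerate critical points $\mu_{L,0}^\star\to0$ and $\pm\mu_{L,\parallel}^\star\to\pm\alpha^\star v$. The Hessian at $0$ is negative definite and the Hessian at $\pm\alpha^\star v$ is positive definite (Proposition~\ref{criticalglobal}). Convergence of Hessians plus continuity of the spectrum then gives a strict local maximum and strict local minima for $L$ large. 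The symmetry $\mathcal{R}_L^{\mathrm{ICL}}(-\mu)=\mathcal{R}_L^{\mathrm{ICL}}(\mu)$, which holds because the softmax layer depends on $\mu\mu^\top$, lets us label the two minima as $\pm$ one another.

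\textbf{Step 2 (neighbourhoods of the orthogonal family).} At any $\mu_\perp\in\mathcal{S}_\perp$ we have
\[
v^\top\nabla^2\mathcal{R}_\infty^{\mathrm{ICL}}(\mu_\perp)v=2B(r,0)=2\Big(a_2\tfrac{a_3}{a_1}-b_3\Big)<0
\]
by \eqref{c1}, and this value is the same for every point of $\mathcal{S}_\perp$. By continuity, and compactness of $\mathcal{S}_\perp$, we can choose $\delta>0$ so that $v^\top\nabla^2\mathcal{R}_\infty^{\mathrm{ICL}}(\mu)v\le B(r,0)<0$ on the tube $\mathcal{T}_\delta=\{\mu:\operatorname{dist}(\mu,\mathcal{S}_\perp)<\delta\}$. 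Cover $\mathcal{T}_\delta$ by the neighbourhoods $U_j$, taken for example as unions of a finite cover of the compact tube, grouped around the $2(d-1)$ points.

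Proposition~\ref{uniform_c2_icl} gives uniform Hessian convergence on $B(0,\rho)$. Hence for $L$ large, $v^\top\nabla^2\mathcal{R}_L^{\mathrm{ICL}}(\mu)v\le B(r,0)/2<0$ on all of $\mathcal{T}_\delta$. Every critical point of $\mathcal{R}_L^{\mathrm{ICL}}$ lying in some $U_j$ therefore has a negative Hessian direction. To call it a strict saddle, in the sense "not a local minimum with a negative curvature direction", it remains to exclude a local maximum. For that, use the radial direction $\mu/\|\mu\|$: for $\mathcal{R}_\infty^{\mathrm{ICL}}$ the curvature along it is $4a_1r^2>0$ on $\mathcal{S}_\perp$. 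By continuity this curvature stays positive on $\mathcal{T}_\delta$ (shrinking $\delta$ if needed), and it persists for $L$ large. The Hessian is thus indefinite, which gives item~\ref{item3}.

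\textbf{Step 3 (nothing else in $B(0,\rho)$).} Let $K$ be the union of the balls from Step 1 and the tube $\mathcal{T}_\delta$. Since $\mathcal{R}_\infty^{\mathrm{ICL}}$ has no critical points in the compact set $B(0,\rho)\setminus K$, we get $\eta:=\inf_{B(0,\rho)\setminus K}\|\nabla\mathcal{R}_\infty^{\mathrm{ICL}}\|>0$. Uniform $C^1$ convergence then gives $\|\nabla\mathcal{R}_L^{\mathrm{ICL}}\|\ge\eta/2$ there for $L$ large, exactly as in part 3 of Lemma~\ref{persistence}, and the inclusion follows. Here $\rho$ must be large enough that $K\subset B(0,\rho)$.

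The main obstacle is Step 2. Because of the degeneracy, critical points near $\mathcal{S}_\perp$ cannot be counted and may not even exist; this is why the statement only asserts an inclusion together with a qualitative saddle property. The delicate part is making the negative and positive curvature bounds uniform over the whole compact sphere, rather than only near finitely many representatives, so that the $U_j$ cover every possible perturbed critical point.
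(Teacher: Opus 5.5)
Your proposal is correct, and for the orthogonal family it takes a genuinely different and more careful route than the paper.

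Items 1--2 are handled the same way in both: Lemma~\ref{persistence} around $0$ and $\pm\alpha^\star v$, plus preservation of inertia. Your symmetry remark $\mathcal{R}_L^{\mathrm{ICL}}(-\mu)=\mathcal{R}_L^{\mathrm{ICL}}(\mu)$, which identifies the two perturbed minimizers as $\pm\mu_{L,\parallel}^\star$, is a detail the paper leaves implicit.

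For item 3 the paper treats $\mathrm{crit}(\mathcal{R}_\infty^{\mathrm{ICL}})$ as the finite set $\{0,\pm\alpha^\star v,\mu_\perp^{(1)},\ldots,\mu_\perp^{(2(d-1))}\}$. It chooses balls $B(\mu_\perp^{(j)},r_j)$ containing no other critical point and lets $K$ be the union of finitely many balls. It then asserts $\inf_{K^c\cap B(0,\rho)}\|\nabla\mathcal{R}_\infty^{\mathrm{ICL}}\|>0$ and transfers the saddle property ball by ball through Weyl's inequality with $\gamma=\min\{\lambda_+,-\lambda_-\}$.

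That argument implicitly needs the orthogonal critical points to be isolated, which holds only for $d=2$. For $d\ge 3$ they form the $(d-2)$-sphere $\mathcal{S}_\perp$; the $d-2$ null Hessian eigenvalues are exactly its tangent directions. So no ball around $\mu_\perp^{(j)}$ can exclude other critical points. Moreover, small balls around finitely many representatives leave part of $\mathcal{S}_\perp$ in $K^c$, where the infimum of the gradient norm is zero.

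Your tube $\mathcal{T}_\delta$ around all of $\mathcal{S}_\perp$ is what makes both the exclusion step and the saddle classification valid in every dimension. You make the curvature bounds uniform by compactness, using Rayleigh quotients along $v$ and along $\mu/\|\mu\|$ in place of eigenvalue perturbation at single points. The price is that the $U_j$ become possibly large neighbourhoods of the chosen representatives whose union covers $\mathcal{T}_\delta$, which the wording of the statement allows.

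The only bookkeeping left is to take $\delta$ and the Step~1 radii small enough that the tube and the three balls are disjoint. This is possible because $0$ and $\pm\alpha^\star v$ lie at positive distance from $\mathcal{S}_\perp$.
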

\begin{proof}
The first two items \ref{item1} and \ref{item2} follow directly from Proposition \ref{persistence}. It remains to prove \ref{item3}. Let \[ \mathrm{crit}(\mathcal{R}_\infty^{\mathrm{ICL}}) = \{0,\pm \alpha^\star v,\mu_\perp^{(1)},\ldots,\mu_\perp^{(2(d-1))}\}, \] where $\mu_\perp^{(j)}$ denote the orthogonal critical points of the limiting objective. The points $0$ and $\pm\alpha^\star v$ are nondegenerate, while each $\mu_\perp^{(j)}$ is degenerate as it has $(d-2)$ null eigenvalues. Choose $r_0,r_\parallel>0$ such that the balls \[ B(0,r_0),\qquad B(\pm \alpha^\star v,r_\parallel) \] contain no other critical point of $\mathcal{R}_\infty^{\mathrm{ICL}}$. By Proposition \ref{persistence}, for $L$ sufficiently large there exist unique critical points \[ \mu_{L,0}^\star \in B(0,r_0), \qquad \mu_{L,\parallel}^\star \in B(\alpha^\star v,r_\parallel), \qquad -\mu_{L,\parallel}^\star \in B(-\alpha^\star v,r_\parallel). \] Next, for each orthogonal critical point $\mu_\perp^{(j)}$, choose $r_j>0$ such that $B(\mu_\perp^{(j)},r_j)$ contains no other critical point of $\mathcal{R}_\infty^{\mathrm{ICL}}$. Let $\rho>0$ big enough such that \[ K := B(0,r_0) \cup B(\alpha^\star v,r_\parallel) \cup B(-\alpha^\star v,r_\parallel) \cup \bigcup_{j=1}^{2(d-1)}B(\mu_\perp^{(j)},r_j)\subset B(0,\rho). \] Since $\nabla \mathcal{R}_\infty^{\mathrm{ICL}}$ is continuous and has no zero on $K^c$, for every $\rho>0$ there exists \[ \eta_\rho := \inf_{x\in K^c\cap B(0,\rho)} \|\nabla \mathcal{R}_\infty^{\mathrm{ICL}}(x)\| >0 . \] Because $\mathcal{R}_L^{\mathrm{ICL}}\to \mathcal{R}_\infty^{\mathrm{ICL}}$ in $C^1_{\mathrm{loc}}$, we have \[ \sup_{x\in B(0,\rho)} \|\nabla \mathcal{R}_L^{\mathrm{ICL}}(x)-\nabla \mathcal{R}_\infty^{\mathrm{ICL}}(x)\| \to0 . \] Therefore $\mathcal{R}_L^{\mathrm{ICL}}$ has no critical point in $K^c\cap B(0,\rho)$, and then $\mathrm{crit}(\mathcal{R}_L^{\mathrm{ICL}})\cap B(0,\rho)\subseteq K$.

Finally, fix one orthogonal critical point $\mu_\perp^{(j)}$ and consider the ball
$B(\mu_\perp^{(j)},r_j)$. By Proposition~\ref{criticalglobal}, the Hessian
$\nabla^2\mathcal{R}_\infty^{\mathrm{ICL}}(\mu_\perp^{(j)})$ has one positive
eigenvalue $\lambda_+>0$ and one negative eigenvalue $\lambda_-<0$.
Let
\[
\gamma := \min\{\lambda_+,-\lambda_-\} >0 .
\]

Since $\nabla^2\mathcal{R}_\infty^{\mathrm{ICL}}$ is continuous,
there exists $r_j>0$ (possibly smaller than before) such that for every
$\mu\in B(\mu_\perp^{(j)},r_j)$ the matrix
$\nabla^2\mathcal{R}_\infty^{\mathrm{ICL}}(\mu)$ has an eigenvalue at least
$\gamma/2$ and another at most $-\gamma/2$.

Because $\mathcal{R}_L^{\mathrm{ICL}}\to
\mathcal{R}_\infty^{\mathrm{ICL}}$ in $C^2_{\mathrm{loc}}$, we have
\[
\sup_{\mu\in B(\mu_\perp^{(j)},r_j)}
\left\|
\nabla^2\mathcal{R}_L^{\mathrm{ICL}}(\mu)
-
\nabla^2\mathcal{R}_\infty^{\mathrm{ICL}}(\mu)
\right\|
\to 0 .
\]

Hence for $L$ sufficiently large and every $\mu\in B(\mu_\perp^{(j)},r_j)$,
\[
\left\|
\nabla^2\mathcal{R}_L^{\mathrm{ICL}}(\mu)
-
\nabla^2\mathcal{R}_\infty^{\mathrm{ICL}}(\mu)
\right\|
\le \frac{\gamma}{4}.
\]

By continuity of the eigenvalues (Weyl's inequality), the Hessian
$\nabla^2\mathcal{R}_L^{\mathrm{ICL}}(\mu)$ has one eigenvalue
at least $\gamma/4$ and another at most $-\gamma/4$ for all
$\mu\in B(\mu_\perp^{(j)},r_j)$.

Let $\mu_L$ be a critical point of $\mathcal{R}_L^{\mathrm{ICL}}$
in $B(\mu_\perp^{(j)},r_j)$. Then
$\nabla\mathcal{R}_L^{\mathrm{ICL}}(\mu_L)=0$, and the Hessian at this
point has both a positive and a negative eigenvalue. Therefore
$\mu_L$ is a strict saddle.
\end{proof}

\section{Numerical experiments}\label{sec:numerical}

In this section, we present numerical experiments that illustrate and empirically validate the theoretical results developed throughout the paper. In particular, we study the convergence behavior of the different models toward the principal eigenvector of the underlying covariance structure, as well as the effect of key parameters such as the prompt length and the ambient dimension.

\subsection{Experimental Setup}

We consider a covariance matrix of the form
\[
\Sigma = A A^\top + 0.1 I_d,
\]
where $A \in \mathbb{R}^{d \times d}$ has i.i.d. standard Gaussian entries. The target direction is given by the principal eigenvector $u_1$ associated with the largest eigenvalue of $\Sigma$.

All experiments are conducted using stochastic gradient descent (SGD) with constant step size. More precisely, let $\mathcal{R}(\mu)$ denote the objective function of interest (either the empirical risk or the population risk, depending on the setting). The iterates $\{\mu_k\}_{k\geq 0}$ are defined by
\[
\mu_{k+1}
=
\mu_k - \gamma \, g_k,
\]
where $\gamma > 0$ is a constant learning rate and $g_k$ is a stochastic gradient estimator of $\nabla \mathcal{R}(\mu_k)$.

In the finite-prompt setting, $g_k$ is computed from a random batch of samples. For instance, in the softmax model, we draw $X_1^{(k)}, \dots, X_L^{(k)} \sim \mathcal{N}(0,\Sigma)$ and define
\[
g_k = \nabla_\mu \widehat{\mathcal{R}}_{L}(\mu_k; X_1^{(k)}, \dots, X_L^{(k)}),
\]
where $\widehat{\mathcal{R}}_{L}$ is the empirical risk associated with the sampled prompt. More precisely, at each iteration we draw a batch of size $B$, consisting of independent prompts $(X_{b,1}^{(k)}, \dots, X_{b,L}^{(k)})_{b=1}^B$ with $X_{b,j}^{(k)} \sim \mathcal{N}(0,\Sigma)$, and define
\[
\widehat{\mathcal{R}}_{L}(\mu)
=
\frac{1}{B}
\sum_{b=1}^B
\left\|
X_{b,1}^{(k)} - T_L^\mu(X_{b,1}^{(k)}, \dots, X_{b,L}^{(k)})
\right\|^2,
\]
so that
\[
g_k = \nabla_\mu \widehat{\mathcal{R}}_{L}(\mu_k).
\]

In the infinite-prompt setting, we treat \(\Sigma\) as known (or equivalently \(V = \xi^2 I_d + vv^\top\) in the spiked Wishart model), and we perform deterministic gradient descent, i.e., the gradient is computed directly from the population objective:
\[
g_k = \nabla \mathcal{R}(\mu_k).
\]

The initialization $\mu_0$ is sampled uniformly from $\mathbb{S}^{d-1}$. Unless otherwise specified, we use the following parameters:
\begin{itemize}
    \item Learning rate: $\gamma = 10^{-4}$,
    %\item Number of iterations: $T = 3000$,
    \item Batch size: $B=256$,
    %\item Softmax scaling: $\lambda = 0.1$,
    \item Number of independent runs: $10$.
\end{itemize}

At each iteration $k$, we assess performance via the alignment between the normalized iterate and the target direction. In the standard setting, this is given by
\[
\left|\left\langle \frac{\mu_k}{\|\mu_k\|}, u_1 \right\rangle\right|,
\]
where $u_1$ denotes the principal eigenvector of $\Sigma$. \\

In the spiked Wishart setting, where $\Sigma \sim W_d(\xi^2 I_d + v v^\top)$, we instead measure alignment with the spike direction:
\[
\left|\left\langle \frac{\mu_k}{\|\mu_k\|}, v \right\rangle\right|.
\]
\begin{remark}
Gradient computations in the numerical experiments were carried out using JAX \citep{jax}.
\end{remark}

\subsection{Softmax Attention: Finite and Infinite Prompt}

We first study the softmax attention model in both finite and infinite prompt regimes.

In the finite prompt setting, at each iteration we sample $X_1, \dots, X_L \sim \mathcal{N}(0,\Sigma)$ and perform stochastic gradient updates using the empirical risk of \eqref{rsoftl}. In the infinite prompt setting, we instead optimize on the closed from of the population risk \eqref{rsoftinfty}, which corresponds to the limit as $L \to \infty$.

Figure~\ref{fig:softmax_finite} shows the convergence behavior in the finite prompt case, with $L=100$. Figure~\ref{fig:softmax_infinite} shows the corresponding infinite prompt dynamics. Both figures superposed are shown in Figure~\ref{comparison}.

We observe that in both regimes the iterates converge toward the principal eigenvector. Moreover, the infinite prompt setting exhibits smoother and more stable convergence, as it removes sampling noise. These observations are consistent with the theoretical analysis and illustrate how the finite prompt model approximates the infinite prompt limit.
\begin{comment}
\begin{figure}[htbp]
    \centering
    \includegraphics[width=0.7\linewidth]{plot_softmax_finite_L.pdf}
    \caption{Softmax attention with finite prompt length.}
    \label{fig:softmax_finite}
\end{figure}

\begin{figure}[htbp]
    \centering
    \includegraphics[width=0.7\linewidth]{plot_softmax_infinite.pdf}
    \caption{Softmax attention in the infinite prompt regime.}
    \label{fig:softmax_infinite}
\end{figure}
\end{comment}
\begin{figure}[htbp]
    \centering
    
    \begin{subfigure}[t]{0.48\linewidth}
        \centering
        \includegraphics[width=\linewidth]{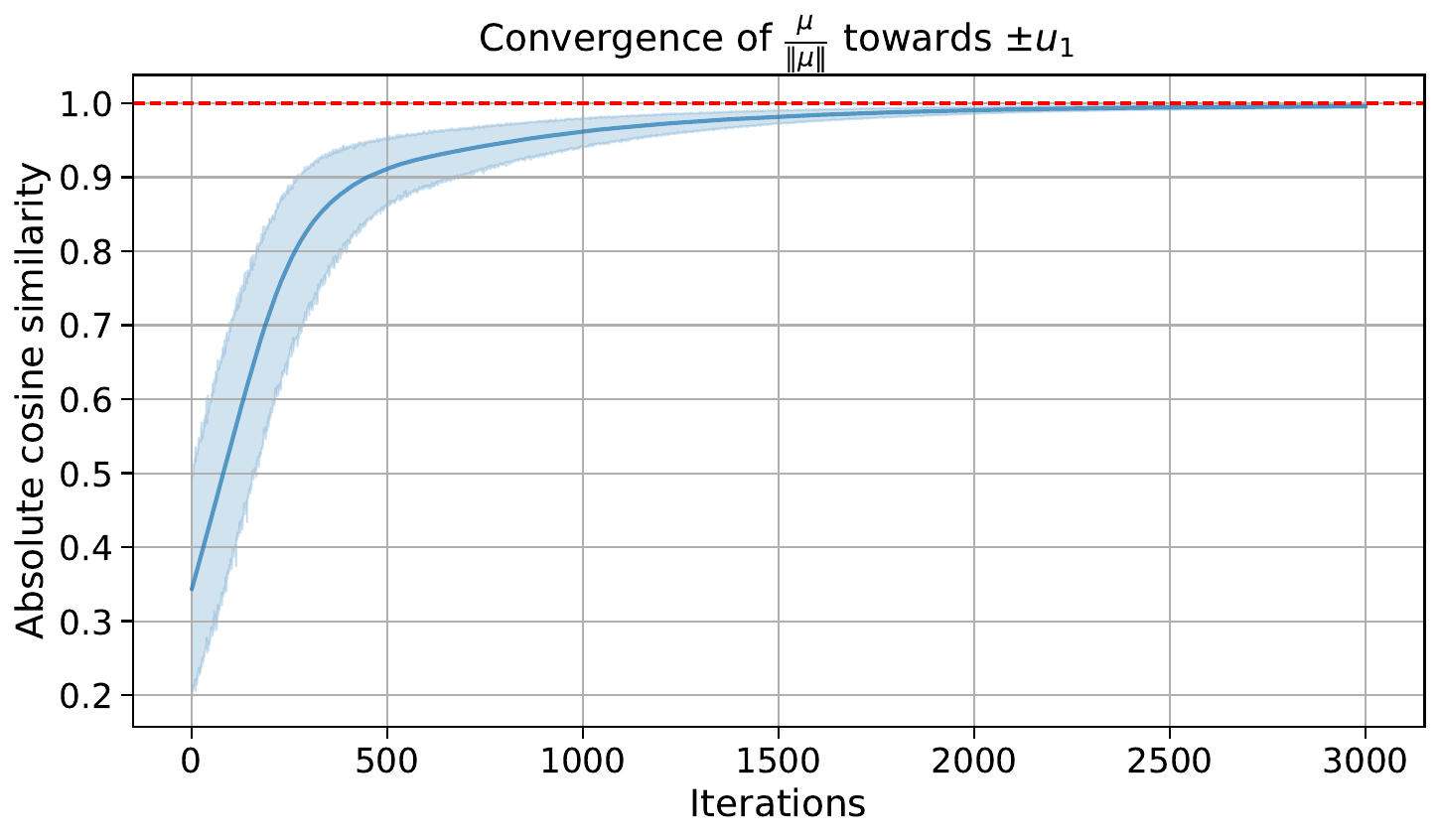}
        \caption{SGD on finite prompt risk, $L=100$.}
        \label{fig:softmax_finite}
    \end{subfigure}
    \hfill
    \begin{subfigure}[t]{0.48\linewidth}
        \centering
        \includegraphics[width=\linewidth]{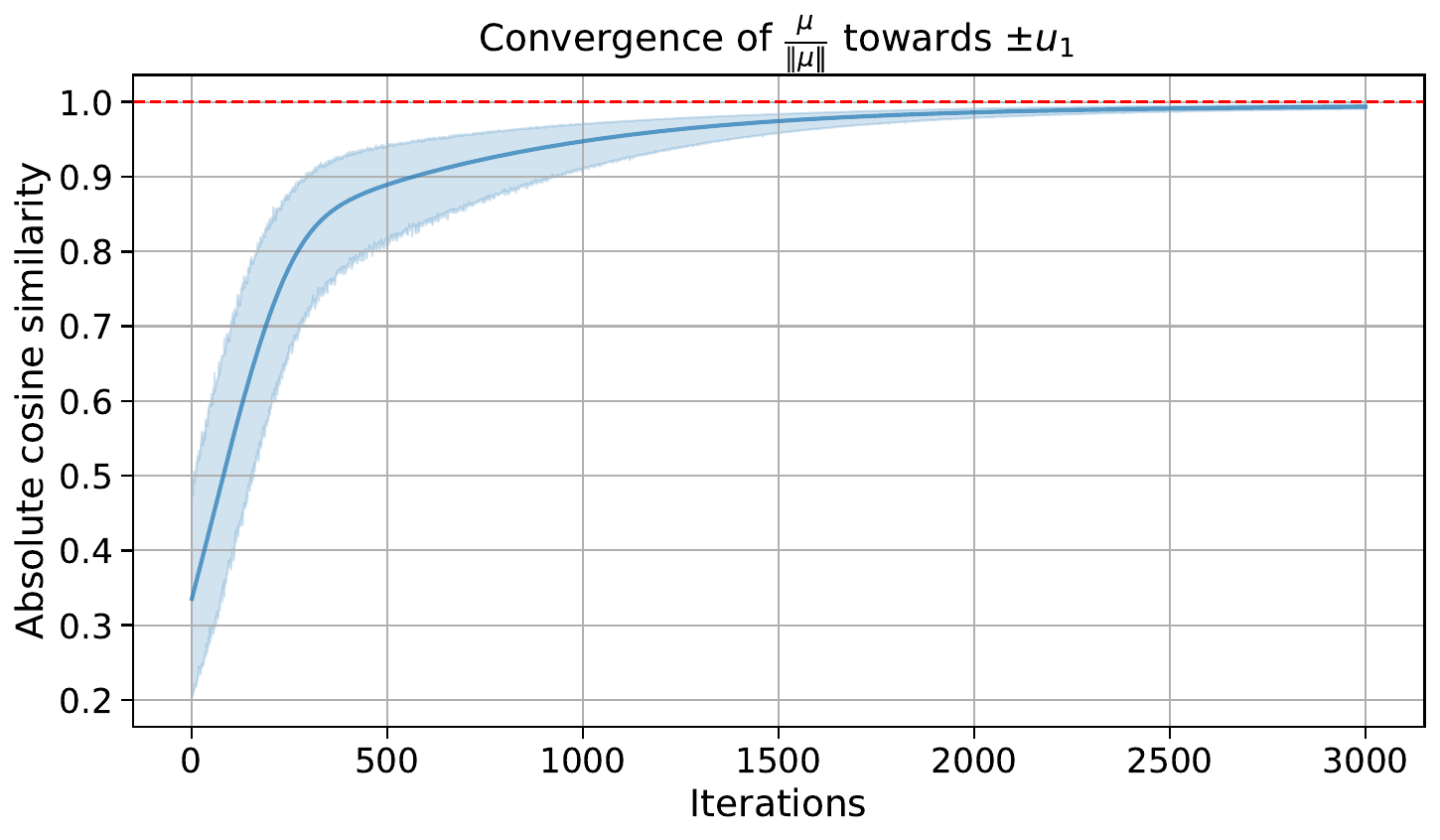}
        \caption{GD on infinite prompt risk.}
        \label{fig:softmax_infinite}
    \end{subfigure}
    
    \caption{Softmax attention: finite vs. infinite prompt regimes.}
    \label{fig:softmax_comparison}
\end{figure}

\subsection{Linear Attention}

We next consider the linear attention model introduced in Section~\ref{sec:app:linear_version}. This model replaces the softmax weighting with a linear aggregation rule, leading to a simpler objective.

As illustrated in Figure~\ref{fig:linear_attention_combined}, we consider the setting \(d=5\), \(L=6\), and \(\lambda=0.01\). In Figure~\ref{linear1}, we run SGD on the empirical risk associated with~\eqref{rlin}, while in Figure~\ref{linear2}, we perform gradient descent on its analytic counterpart~\eqref{rlinl}. 

In both cases, the iterates converge toward the principal eigenvector of \(\Sigma\). This demonstrates that the recovery of the leading principal component is not specific to the softmax mechanism, but rather reflects a more general phenomenon driven by the structure of the aggregation underlying attention.

\begin{figure}[htbp]
    \centering
    \begin{subfigure}{0.48\linewidth}
        \centering
        \includegraphics[width=\linewidth]{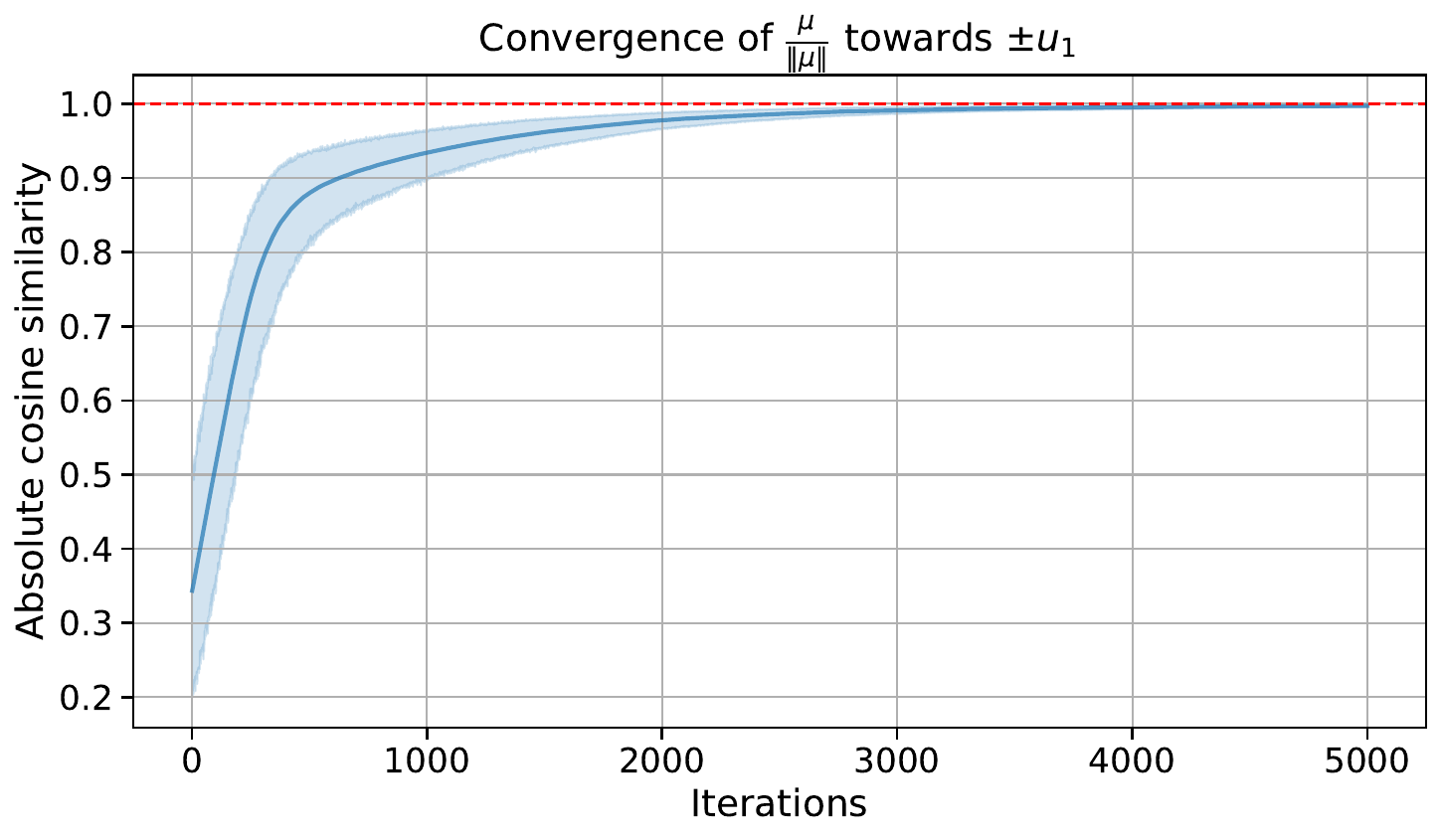}
        \caption{SGD on empirical risk of linear attention.}
        \label{linear1}
    \end{subfigure}
    \hfill
    \begin{subfigure}{0.48\linewidth}
        \centering
        \includegraphics[width=\linewidth]{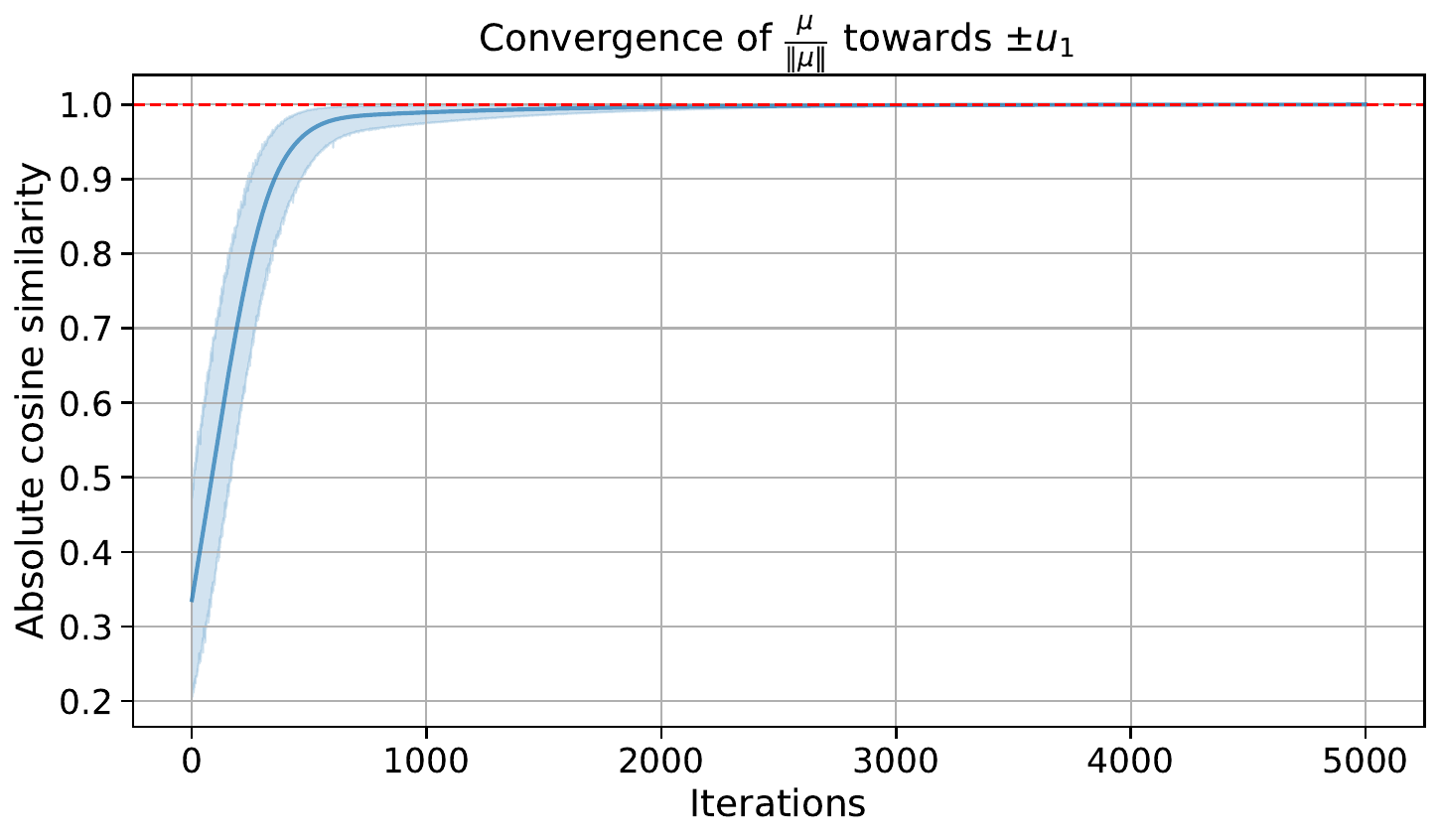}
        \caption{GD on analytic risk of linear attention.}
        \label{linear2}
    \end{subfigure}
    \caption{Convergence of linear attention toward the principal eigenvector under empirical and analytic risks.}
    \label{fig:linear_attention_combined}
\end{figure}

\subsection{Scaling with Prompt Length}

We now investigate the role of the prompt length $L$ in both the softmax and linear attention settings. In both cases, we consider prompt lengths $L$ ranging from $3$ to $50$ using $20$ evenly spaced values. We fix the dimension to $d=5$, and run the optimization for $T=5000$ iterations. For each value of $L$, we perform $10$ independent runs and measure the final alignment.

The only difference between the two settings lies in the choice of the parameter $\lambda$: we use $\lambda=0.1$ for the softmax model and $\lambda=0.001$ for the linear attention model.

As shown in Figure~\ref{fig:softmax_vs_L}, performance improves with $L$ in both settings, illustrating the transition from the finite-prompt regime to the population regime. In particular, the alignment increases and stabilizes as $L$ grows, providing empirical evidence that the finite-prompt model converges toward its infinite-prompt counterpart. Moreover, the consistency between the softmax and linear cases suggests that this behavior is not specific to the softmax mechanism, but rather reflects a more general phenomenon tied to the structure of the aggregation.

\begin{figure}[htbp]
    \centering
    \begin{subfigure}[t]{0.48\linewidth}
        \centering
        \includegraphics[width=\linewidth]{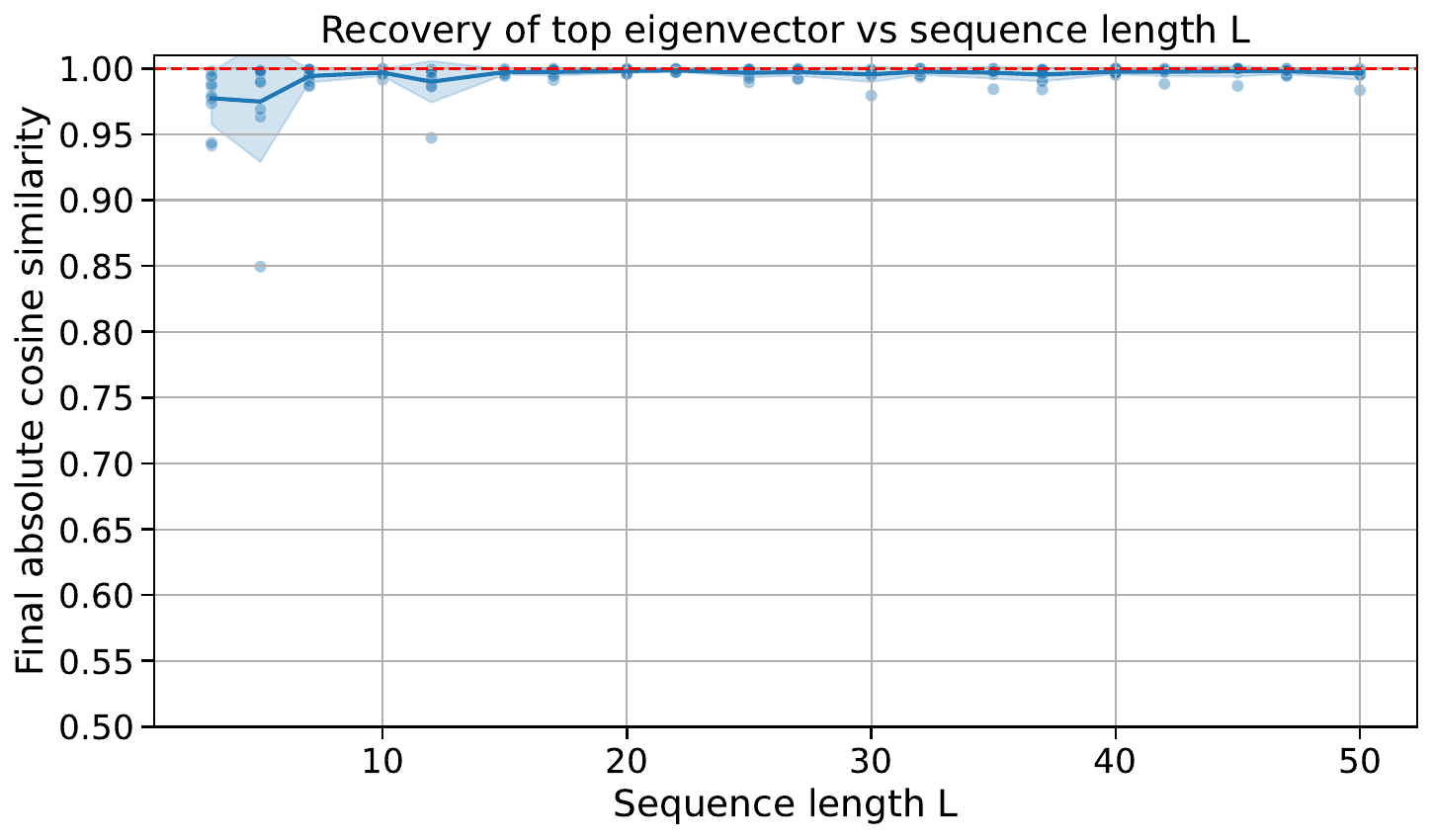}
        \caption{Softmax attention}
    \end{subfigure}
    \hfill
    \begin{subfigure}[t]{0.48\linewidth}
        \centering
        \includegraphics[width=\linewidth]{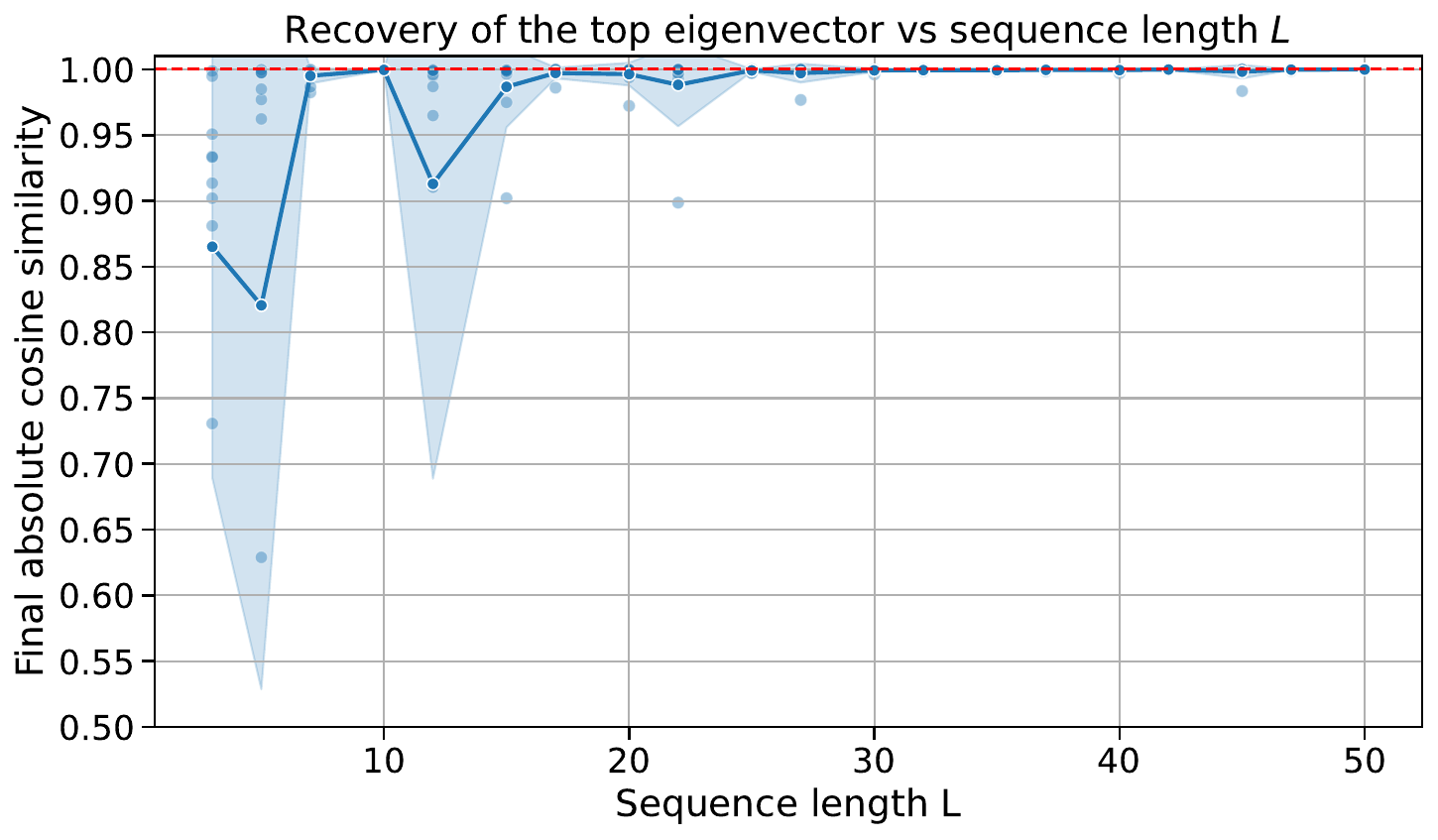}
        \caption{Linear attention}
    \end{subfigure}
    \caption{Final alignment as a function of the prompt length $L$ for both softmax and linear attention models.}
    \label{fig:softmax_vs_L}
\end{figure}

\subsection{Scaling with Dimension}

We also study the effect of the ambient dimension \(d\) across both the softmax and linear attention models. For dimensions \(d\) ranging from 3 to 100 in increments of 5, we generate a new covariance matrix \(\Sigma\) for each dimension and evaluate the final alignment after \(T=5000\) iterations, averaging over 10 independent runs in each case.

In all experiments, we scale the hyperparameters with the dimension. In the softmax model, we set the learning rate \(\gamma = 0.5/d^2\) and \(\lambda = 0.1/d\), while in the linear attention model we use \(\gamma = 1/d^2\) and \(\lambda = 0.01/d\). We also fix the context length to \(L = d\) in the linear case.

For the softmax model, we analyze the infinite-prompt regime, which admits an explicit closed-form expression depending on \(\Sigma\) (see \eqref{rsoftinfty}). For the linear attention model, we consider its finite-prompt formulation, which also admits an explicit closed-form expression (for fixed \(L\)) depending on \(\Sigma\) (see \eqref{rlinl}).

Figure~\ref{fig:dimension_scaling} reports the resulting performance as a function of \(d\). In both models, we observe that increasing the dimension makes recovery more challenging, reflecting the growing difficulty of estimating the principal component in higher-dimensional spaces. Despite this degradation, both methods consistently retain a strong alignment with the leading eigenvector, highlighting the robustness of the underlying mechanism.
\begin{comment}
\begin{figure}[htbp]
    \centering
    \includegraphics[width=0.7\linewidth]{plot_alignment_vs_dimension.pdf}
    \includegraphics[width=0.7\linewidth]{plot_linear_vs_dimension.pdf}
    \caption{Final alignment as a function of the dimension $d$ for softmax (top) and linear attention (bottom).}
    \label{fig:dimension_scaling}
\end{figure}
\end{comment}
\begin{figure}[htbp]
    \centering
    
    \begin{subfigure}[t]{0.48\linewidth}
        \centering
        \includegraphics[width=\linewidth]{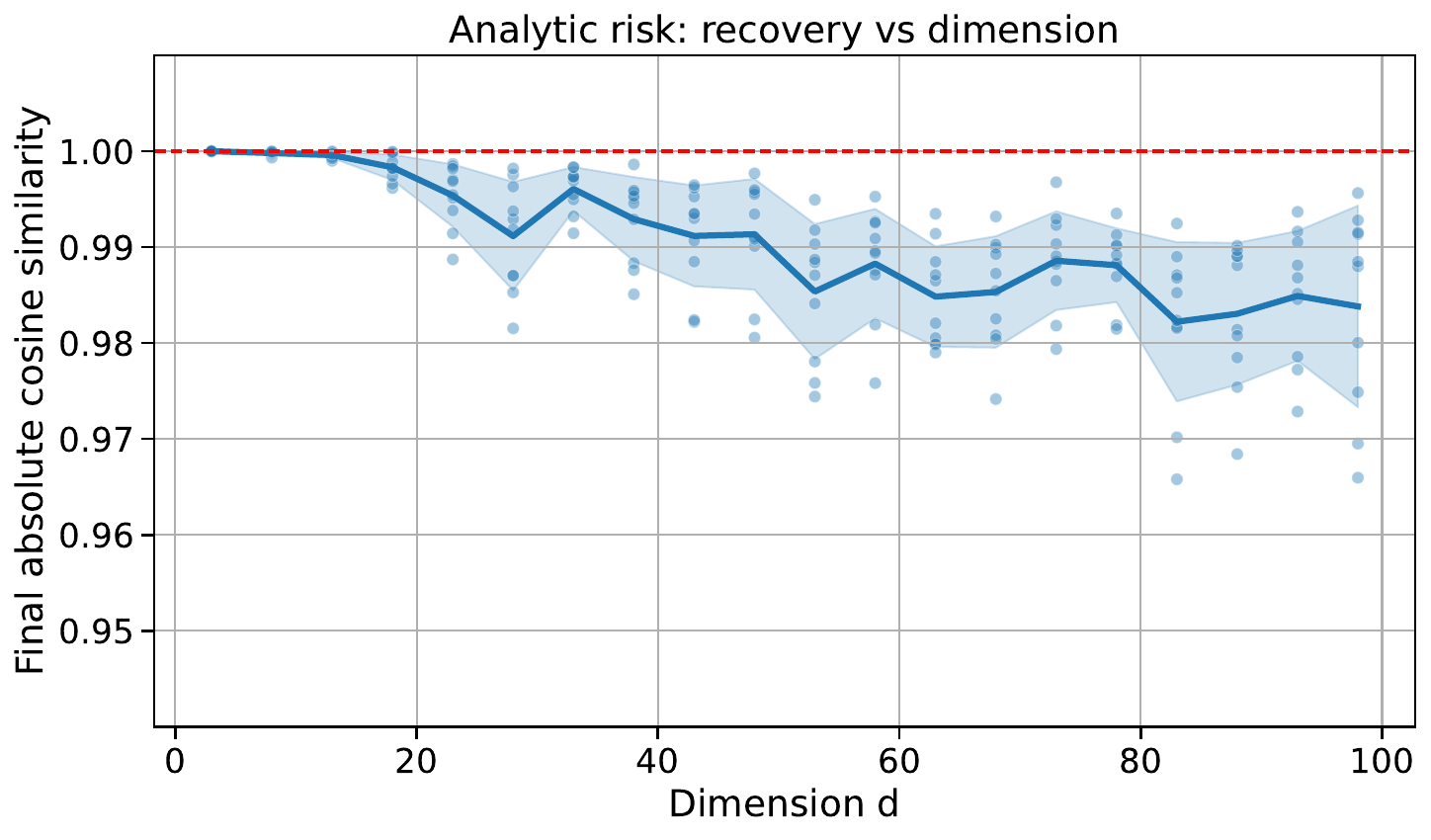}
        \caption{Softmax attention}
        \label{fig:dim_softmax}
    \end{subfigure}
    \hfill
    \begin{subfigure}[t]{0.48\linewidth}
        \centering
        \includegraphics[width=\linewidth]{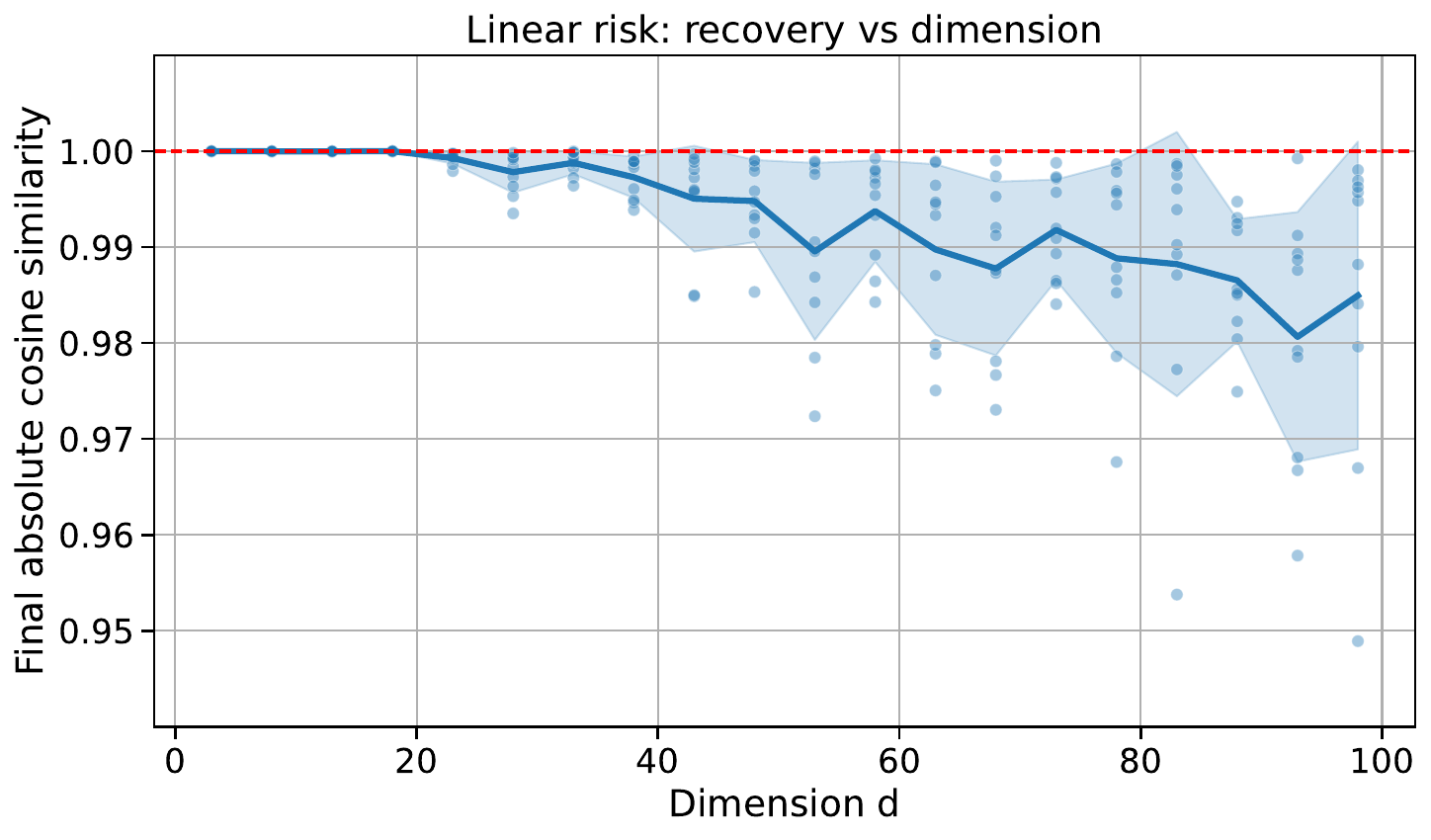}
        \caption{Linear attention}
        \label{fig:dim_linear}
    \end{subfigure}
    
    \caption{Final alignment as a function of the dimension $d$.}
    \label{fig:dimension_scaling}
\end{figure}

\subsection{In-Context Learning: Finite and Infinite Prompt}

Finally, we consider the in-context learning (ICL) setting, where the covariance matrix is itself random and follows a spiked Wishart distribution:
\[
\Sigma \sim W_d\big(\xi^2 I_d + \theta v v^\top, n\big),
\]
with $d=5, \xi = 1$, $\theta = 2$, and $n = 10$.

 In the finite-prompt regime, we approximate the risk \eqref{icl_L} using Monte Carlo sampling with $100$ samples of $\Sigma$ and $100$ samples of data per covariance matrix. In the infinite-prompt regime, we optimize the population risk directly using Lemma \ref{lemma:icl_expansion}.

Figures~\ref{fig:icl_finite} and \ref{fig:icl_infinite} show convergence toward the spike direction $v$. Both figures superposed are shown in Figure~\ref{comparison_ICL}. %The infinite case exhibits smoother convergence, while the finite case reflects additional variability due to sampling.

\begin{figure}[htbp]
    \centering
    
    \begin{subfigure}[t]{0.48\linewidth}
        \centering
        \includegraphics[width=\linewidth]{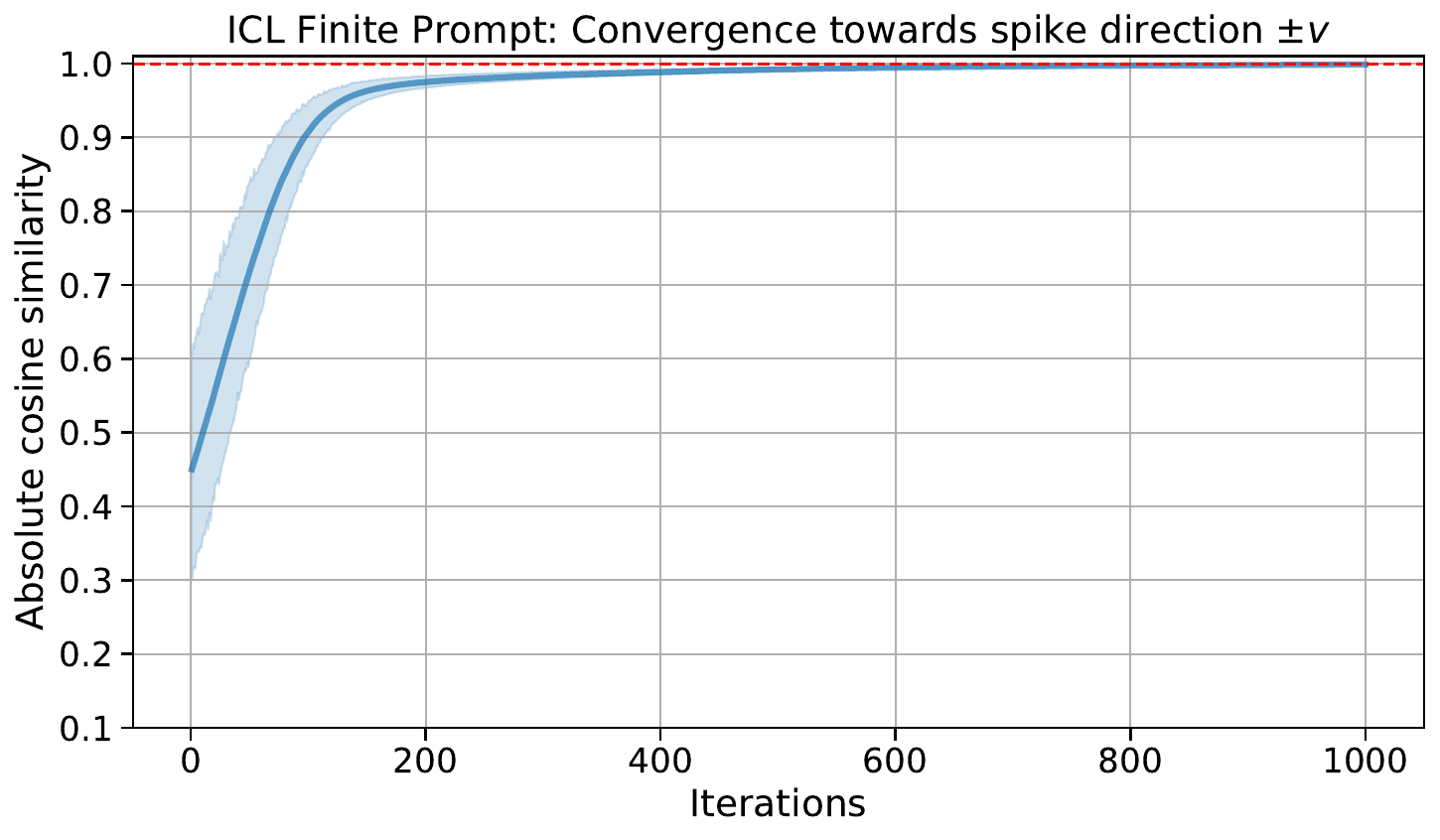}
        \caption{SGD on finite prompt ICL risk, $L=100$}
        \label{fig:icl_finite}    
    \end{subfigure}
    \hfill
    \begin{subfigure}[t]{0.48\linewidth}
        \centering
        \includegraphics[width=\linewidth]{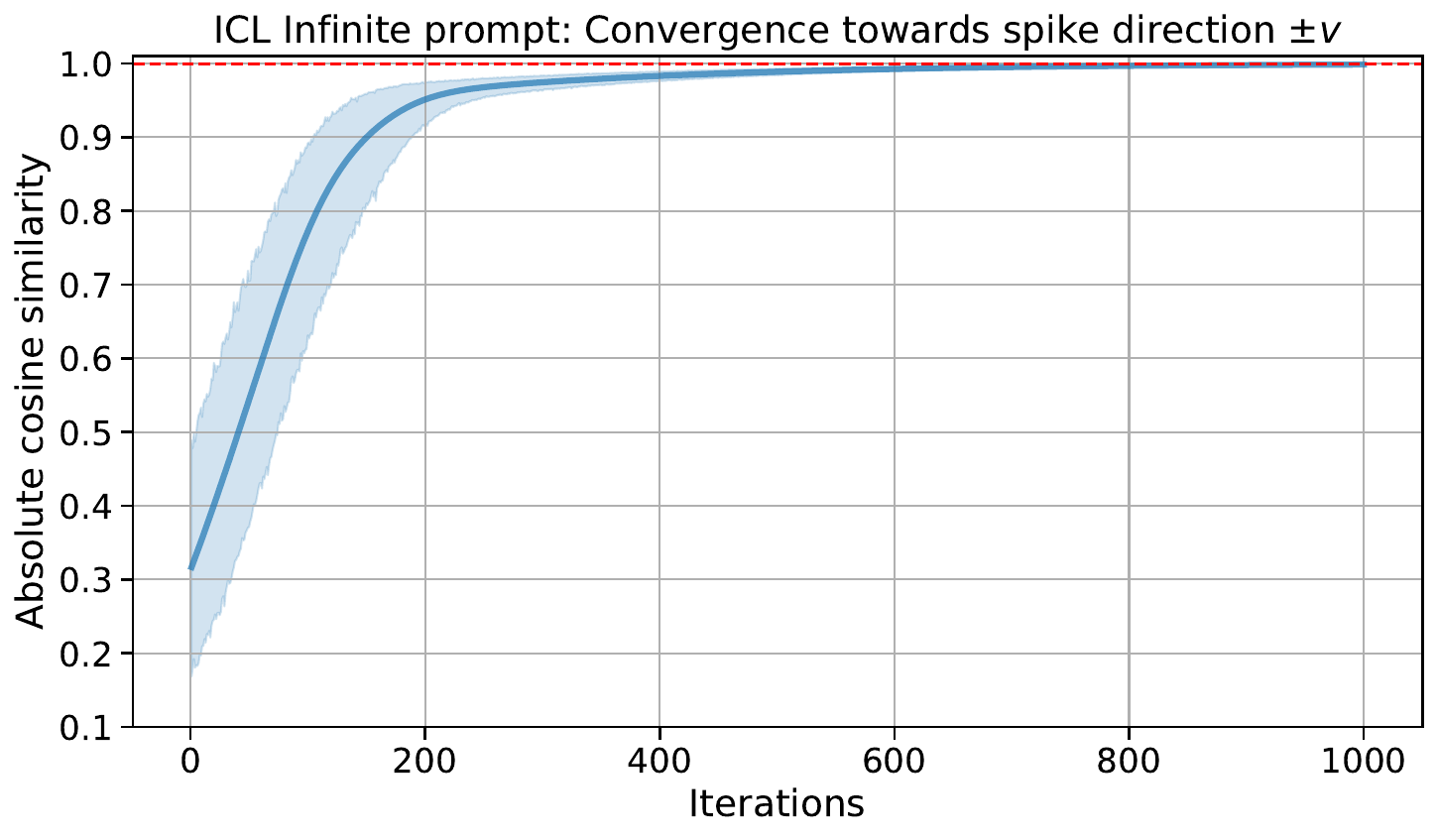}
        \caption{GD on infinite prompt ICL risk}
        \label{fig:icl_infinite}
    \end{subfigure}
    
    \caption{In-context learning: finite vs. infinite prompt regimes.}
    \label{fig:icl_comparison}
\end{figure}

\subsection{In-context learning: Scaling with Prompt Length}
We now study the effect of the prompt length $L$ in the in-context learning (ICL) setting. We consider values of $L$ ranging from 3 to 100 in increments of 5. For each value of $L$, we perform $10$ independent runs and report the final alignment after $T=3000$ iterations.

In the finite-prompt regime, for each iteration we sample covariance matrices from the spiked Wishart distribution and generate data accordingly, while in the infinite-prompt regime we directly optimize the corresponding population risk.

Figure~\ref{icl1} shows the final alignment as a function of $L$. We observe that, similarly to the standard softmax setting, performance improves as the prompt length increases. This reflects the fact that larger prompts provide a better approximation of the population objective, reducing the variability induced by sampling both the data and the covariance matrices.

These results further support the theoretical prediction that the finite-prompt ICL model converges toward its infinite-prompt counterpart as $L$ grows.

\subsection{Scaling with Dimension}
In this experiment, we investigate how the ambient dimension \(d\) affects the performance of the infinite-prompt in-context learning (ICL) model. We consider dimensions \(d\) ranging from 3 to 100 in increments of 5. For each value of \(d\), we fix \(n = d\), sample a covariance matrix \(\Sigma\), and evaluate the alignment of the learned direction with the spike direction \(v\) after \(T = 2000\) iterations, where the hyperparameters are scaled with the dimension, with learning rate \(\gamma = 0.5/d^2\) and \(\lambda = 0.1/d\).

Figure~\ref{icl2} reports the resulting alignment as a function of \(d\). As the dimension increases, we observe a gradual degradation in performance, reflecting the increased difficulty of extracting the principal component in higher-dimensional settings. Nevertheless, the model maintains a significant alignment with the leading eigenvector across all dimensions, illustrating the robustness of the ICL mechanism in the infinite-prompt limit.

\begin{figure}[htbp]
    \centering
    \begin{subfigure}[t]{0.48\linewidth}
        \centering
        \includegraphics[width=\linewidth]{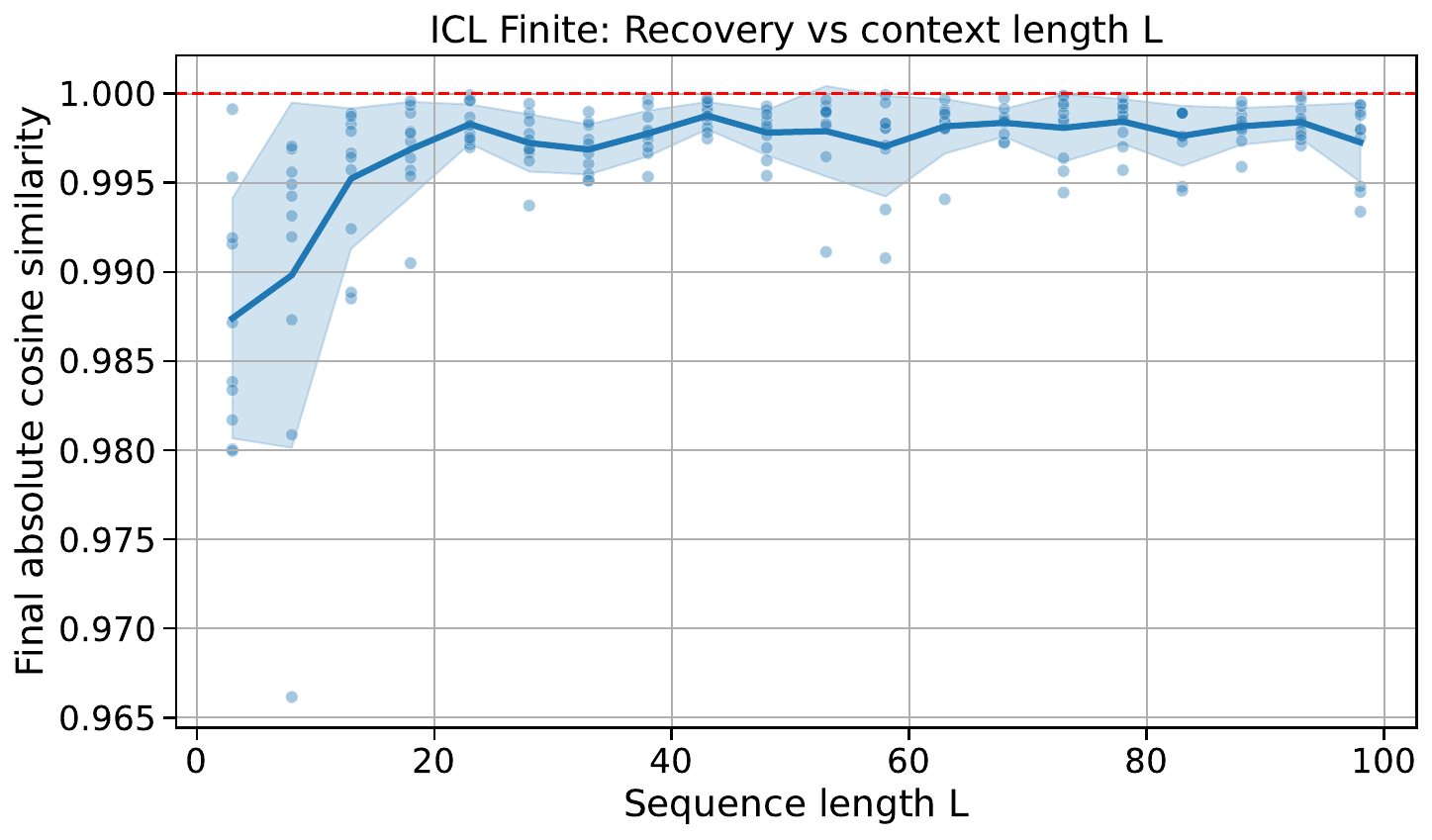}
        \caption{Final alignment as a function of the prompt length $L$.}
        \label{icl1}
    \end{subfigure}
    \hfill
    \begin{subfigure}[t]{0.48\linewidth}
        \centering
        \includegraphics[width=\linewidth]{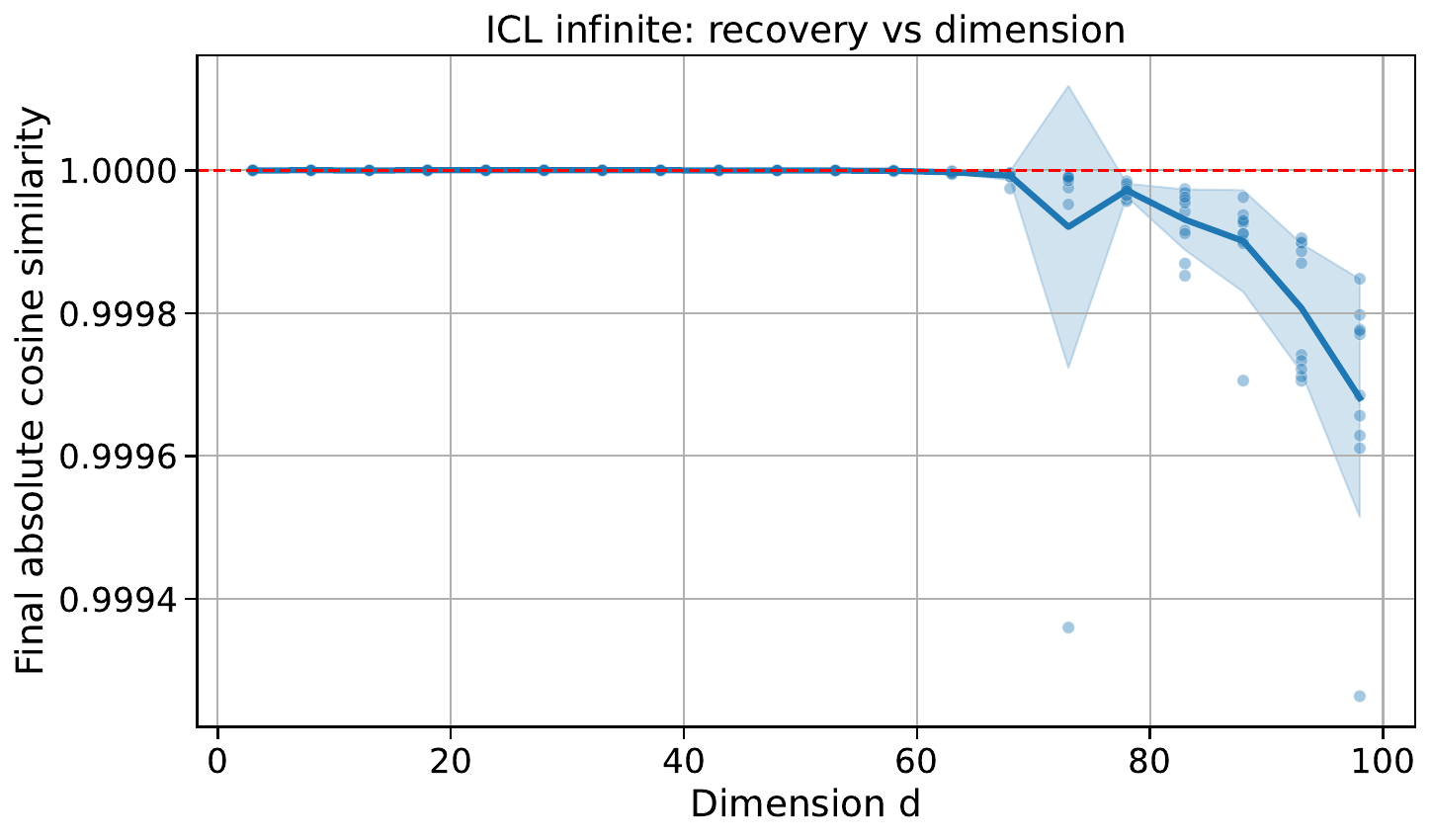}
        \caption{Final alignment as a function of the dimension $d$.}
        \label{icl2}
    \end{subfigure}
    \caption{ICL performance as a function of prompt length and dimension.}
    \label{fig:icl_combined}
\end{figure}

\subsection{Discussion on the numerical experiments}

Overall, the numerical experiments strongly support our theoretical findings. Across all models, we observe consistent recovery of the principal component or spike direction. The experiments highlight the role of finite-prompt effects and illustrate the convergence of finite models toward their corresponding population limits.
In this section, we present numerical experiments that illustrate and empirically validate the theoretical results developed throughout the paper. In particular, we study the convergence behavior of the different models toward the principal eigenvector of the underlying covariance structure, as well as the effect of key parameters such as the prompt length and the ambient dimension. The experiments run in a few minutes on a standard laptop, except for Figure \ref{icl1}, which may take up to an hour due to Monte Carlo sampling of the annealed expectations.

%%%%%%%%%%%%%%%%%%%%%%%%%%%%%%%%%%%%%%%%%%%%%%%%%%%%%%%%%%%%

\end{document}